\newtheorem{theorem}{Theorem}[section]
\newtheorem{assumption}[theorem]{Assumption}
\newtheorem{proposition}[theorem]{Proposition}
\newtheorem{corollary}[theorem]{Corollary}
\newtheorem{lemma}[theorem]{Lemma}
\newtheorem{remark}[theorem]{Remark}
\newtheorem{example}[theorem]{Example}
\newtheorem*{acknowledgement}{Acknowledgement}
\theoremstyle{definition}
\newtheorem{definition}[theorem]{Definition}
\newtheorem{notation}[theorem]{Notation}
\newtheorem{conjecture}[theorem]{Conjecture}
\numberwithin{equation}{section}
\newcommand{\ffractal}
{\marginpar{{\color{violet}$\the\NNN$ \global\advance\NNN by 1 }}}
\newcommand{\R}{\mathbb{R}}  
\newcommand{\E}{\mathbb{E}} 
\newcommand{\Prob}{\mathbb{P}}
\newcommand{\Hei}{\ensuremath{\mathbb{H}}}
\newcommand{\SUtwo}{{\ensuremath{\operatorname{SU}(2)}}}
\newcommand{\Lcal}{\mathcal{L}}
\newcommand{\Ucal}{\mathcal{U}}
\newcommand{\Xcal}{\mathcal{X}}
\begin{document}

\title[DMMS: spectrum,  irreducibility, small deviations]{Dirichlet metric measure spaces: spectrum, irreducibility, and small deviations}

\author[Carfagnini]{Marco Carfagnini}
\address{ School of Mathematics and Statistics\\
University of Melbourne \\
Parkville VIC 3010, Australia.
}
\email{marco.carfagnini@unimelb.edu.au}

\author[Gordina]{Maria Gordina{$^{\dag }$}}
\thanks{\footnotemark {$\dag$} Research was supported in part by NSF Grant DMS-2246549.}
\address{ Department of Mathematics\\
University of Connecticut\\
Storrs, CT 06269,  U.S.A.}
\email{maria.gordina@uconn.edu}

\author[Teplyaev]{Alexander Teplyaev{$^{\ddag }$}}
\thanks{\footnotemark {$\ddag$} Research was supported in part by NSF Grant DMS-2349433 and the Simons Foundation.}
\address{ Department of Mathematics\\
University of Connecticut\\
Storrs, CT 06269,  U.S.A.}
\email{teplyaev@uconn.edu}

\keywords{Irreducible ultracontractive Dirichlet metric measure spaces,   discrete spectrum,      Laplacian, irregular   boundary, small deviations, diffusion, heat content, sub-Riemannian manifolds,  fractals. }

\subjclass{60G18 
(28A80 31C25 43A85 60F15 60F17 60G51 60J60 60J65)}

\begin{abstract}
We show that for ultracontractive irreducible Dirichlet metric measure spaces, the Dirichlet spectrum is discrete for a restriction to any connected open set without any assumption on regularity of the boundary. The main applications include small deviations of the corresponding Hunt process and large time asymptotics for the generalized heat content. Our examples include Riemannian and sub-Riemannian manifolds, as well as non-smooth and fractal spaces. 
\end{abstract}

\maketitle

\tableofcontents

\section{Introduction}\label{s.intro}

Let $D\subset \R^{n}$ be an open bounded set and $-\frac{1}{2} \Delta_{D}$  be the Dirichlet Laplacian restricted to $D$. That is, the operator $-\frac{1}{2} \Delta_{D}$  can be viewed as the restriction of  $-\frac{1}{2} \Delta_{\R^{n}}$ to $D$ with zero (Dirichlet) boundary conditions. The spectral theory for such operators has been studied in \cite[Chapter 6]{EvansPDEBook2nd}, \cite[Corollary 5.1.2]{SoggeBook2017}, \cite[Equation (3.3), p. 39]{ZelditchBook2017}, including classification of its spectrum, and in the case of a discrete spectrum regularity of its eigenfunctions. In this case the Laplacian  $-\frac{1}{2} \Delta_{\R^{n}}$ is elliptic, therefore one can use partial  differential equations techniques to tackle such questions when the boundary $\partial D$ satisfies some regularity assumptions. 

One can also treat the operator  $-\frac{1}{2} \Delta_{D}$ from a probabilistic point of view. If $\left\{ B_{t} \right\}_{t\geqslant 0}$ is a Brownian motion in $\R^{n}$ starting at an interior point in  $D$, then $-\frac{1}{2} \Delta_{D}$ is the infinitesimal generator of $B_{t}^{D}$, i.e. the Brownian motion killed when it reaches the boundary $\partial D$. In addition, the first non-zero eigenvalue $\lambda_{1}(D)$ of  $-\frac{1}{2} \Delta_{D}$  appears in the small ball probabilities for $B_{t}$ as follows
\begin{equation}\label{eqn.intro1}
\lim_{\varepsilon \rightarrow 0} -\varepsilon^{2} \log \Prob \left( \tau_{D} >1 \right) = \lambda_{1}(D),
\end{equation}
where $\tau_{D}$ denotes the exit time from $D$ of a Brownian motion starting in $D$. 

Finally, from a functional analytic point of view, the operator  $-\frac{1}{2} \Delta_{D}$  is the generator of a Dirichlet form restricted to $D$. More precisely, it is the restriction to $L^{2}\left( D, dx \right)$ of the Dirichlet form $\mathcal{E}$ on $L^{2} \left( \R^{n}, dx \right)$ given by 
\[
\mathcal{E}(f, g):= \int_{\R^{n}} \langle \nabla_{\R^{n}} f, \nabla_{\R^{n}} g \rangle dx , \quad f,g \in \mathcal{D}_{\mathcal{E}} \subset L^{2} \left( \R^{n} , dx \right),
\]
where $dx$ denotes the Lebesgue measure. 

The goal of this paper is to apply these three points of view to study spectral properties of generators of Dirichlet forms restricted to subsets of a metric measure space $\left( \Xcal, d,  m \right)$ and probabilistic applications for the corresponding stochastic processes. Namely, we consider the generator $A$ of a Dirichlet form $\left( \mathcal{E}, \mathcal{D}_{\mathcal{E}} \right)$ on $L^{2} \left( \Xcal , m\right)$, and then its restriction  $A_{\Ucal}$ to a bounded open set $\Ucal \subset \Xcal$, without any regularity or smoothness assumptions on the boundary $\partial \Ucal$.  Spectral properties of  $A_{\Ucal}$ such as that its spectrum is discrete, $L^{p}$-bounds of the eigenfunctions, and irreducibility are then used to prove small deviation principles and large-time asymptotics of the heat content for the corresponding process. In many examples, the kernel of the stochastic process associated to $\left( \mathcal{E}, \mathcal{D}_{\mathcal{E}} \right)$ satisfies the upper bound 
\begin{equation}\label{eqn.intro2}
p_{t}(x,y) \leqslant C t^{-\frac{\alpha}{\beta}} \exp \left( - \left( \frac{d^{\beta} (x,y)}{C t} \right)^{\frac{1}{\beta -1}} \right),
\end{equation}
where $\alpha>0$ turns out to be the \emph{Hausdorff dimension} of $\Xcal$ and $\beta >1$  is known as the \emph{walk dimension} of the heat kernel $p_{t}$, \cite{Barlow1998a, BarlowGrigoryanKumagai2009}. The off-diagonal upper bound \eqref{eqn.intro2} holds for the Brownian motion on the Sierpi\'{n}ski gasket \cite{BarlowPerkins1988}, on nested fractals  \cite{Kumagai1993a}, affine nested fractals  \cite{FitzsimmonsHamblyKumagai1994},   Sierpi\'{n}ski carpets \cite{BarlowBass1999a}, and fractal fields \cite{HamblyKumagai2003}. Note that off-diagonal lower estimates do not hold in general, see \cite{HamblyKumagai1999}. Both lower and upper off-diagonal sub-Gaussian estimates for the heat kernel on infinite graphs have been shown in \cite{GrigoryanTelcs2001, GrigoryanTelcs2012}. The value $\beta =2$ corresponds to the setting in Section~\ref{s.SturmDirichlet}, see  \cite[Remark 2.14]{KajinoMurugan2020}.   In \cite[Lemma 2.1]{DaviesSimon1984} and \cite[Theorem 3.44]{BarlowBass1999a} a series expansion for the heat kernel is given in the ultracontractive case. The mathematical literature on the subject is vast, and while we cannot give a comprehensive review, we mention \cite{ZhangZhu2019, Metivier1976a, MarianoWang2023} dealing with different non-Euclidean settings such as $\operatorname{RCD}$-spaces, Dirichlet spaces and non-elliptic differential operators.

The paper is organized as follows. In Section~\ref{s.Preliminaries} we describe general conditions to ensure compactness of the semigroup $P_{t}^{\Ucal}:= \exp ( tA_{\Ucal} )$ associated to the operator $A_{\Ucal}$. These conditions are given in terms of the heat kernel $p_{t} (x, y)$, i.e. the kernel for the semigroup $P_{t}:= \exp ( tA )$. We discuss assumptions under which the operator $-A_{\Ucal}$ has a discrete spectrum, the first eigenvalue $\lambda_{1} (\Ucal)$ is strictly positive, and the semigroup $P_{t}^{\Ucal}$ is irreducible. The latter is then used to show that $\lambda_{1} (\Ucal)$ is simple without any regularity assumptions on $\Ucal$, see Theorem \ref{thm.irr.simple.evalue}. We conclude this section by providing upper and lower bounds on the $L^{p}$-norms of the eigenfunctions. As shown in many examples in Section~\ref{s.Examples}, these results apply to non-ellipitc operators. Moreover, as we do not assume any regularity and smoothness of the boundary $\partial \Ucal$, our results are novel also for more regular operators such as uniformly elliptic ones, where spectral properties are often proven by means of PDEs techniques relying on smoothness of the boundary. 

In Section~\ref{s.Examples} we describe several classes of examples for which these assumptions hold. They include local and non-local Dirichlet forms satisfying upper and lower heat kernel estimates and sub-Riemannian Dirichlet forms on Lie groups. We also recover well-known results on Riemannian manifolds, as well as new results since our techniques work without regularity assumptions on $\partial \Ucal$. We collect our main results in Theorem  \ref{thm.main.sturm.setting}, Theorem \ref{thm.main.local.dir.forms}, Theorem \ref{thm.main.non.local.dir.forms}, and Theorem \ref{thm.main.local.dir.forms.lie.groups}.

In Section~\ref{s.small.deviations} we discuss analogues of the small deviation principle  \eqref{eqn.intro1} in the setting of Dirichlet metric measure spaces. We prove analogues of \eqref{eqn.intro1} under different assumptions. First we consider the case when the process or eigenvalues satisfy some scaling properties. Then we assume that we have a group $G$ acting on the metric  space $\Xcal$ and introduce the notion of $G$-\emph{dilation structures} on Dirichlet metric measure spaces. $G$-dilation structures are inspired by the work of Varopoulos  in \cite{Varopoulos1985a}, and the key observation that  \eqref{eqn.intro1}  can be viewed in terms of the action of the multiplicative group $G= \R_{>0}$ on the metric space $\Xcal = \R^{n}$. We can then use such $G$-dilation structures to study small deviations on metric measure spaces, where the action is not necessarily by a continuous group such as $\R_{>0}$. For example, on fractal spaces one has the action of $\mathbb{Z}$, see e.~g.  \cite[Proposition 3.3.1]{KigamiBook2001}, \cite[(1.4.8)]{StrichartzBook2006}, \cite[Equation(1.2)]{BarlowPerkins1988}, and \cite{FitzsimmonsHamblyKumagai1994}. The main results of this section are collected in Theorem~\ref{thm.small.dev}, Theorem~\ref{thm.small.dev.2}, and Theorem~\ref{thm.small.dev.3}.

In Section~\ref{s.Hei.SU2} we prove a small deviation principle for a hypoelliptic diffusion on the group   $\operatorname{SU}\left( 2 \right)$ using a contraction of $\operatorname{SU}\left( 2 \right)$ to the Heisenberg group, which has been  introduced first in  physics literature starting with \cite{InonuWigner1953, Segal1951c}.  This is an example of what one can call an approximate dilation structure, as the Heisenberg group has a dilation, and we use the contraction to introduce such an approximate dilation structure on $\operatorname{SU}\left( 2 \right)$. Our main result is Theorem   \ref{thm-lim-eigen}, where we show that eigenvalues for the sub-Laplacian on  $\operatorname{SU}\left( 2 \right)$ converge to eigenvalues for the sub-Laplacian in the unit ball in $\Hei$, after a proper renormalization. We conclude with Section~\ref{s.heat.content} giving some applications to the long-time asymptotics of the heat content on metric measure spaces.

\begin{acknowledgement}
The authors thank Michael Hinz, Jun Kigami, Takashi Kumagai and Mathav Murugan for helpful suggestions and discussions. 
 \end{acknowledgement}

\section{Preliminaries and general results}\label{s.Preliminaries}

\subsection{Closed  non-negative quadratic forms and symmetric semigroups on  Hilbert spaces}

We start by recalling some general facts about strongly continuous semigroups, their infinitesimal generators, and Dirichlet forms. We refer to \cite[Section 1.3, pp.~16-25]{FukushimaOshimaTakedaBook2011}, \cite[Chapter 1]{BouleauHirschDirichletFormsBook}, and \cite{ChenFukushimaBook2012, MaRocknerBook, ReedSimonI} for more details. 

Suppose $H$ is a separable Hilbert space, and we denote by $B\left( H \right)$ the space of bounded linear operators on $H$ equipped with the operator norm $\Vert \cdot \Vert$, and by $I$ the identity operator on $H$. A \emph{strongly continuous semigroup} on a Hilbert space $H$ is a map $P_{t} :[0, \infty) \longrightarrow B\left( H \right)$ such that
\begin{align*}
& P_{0}=I,
\\
& P_{t}P_{s} = P_{t+s} \text{ for all } t,s \geqslant 0,
\\
& \Vert P_{t}x-x\Vert \xrightarrow[t\to 0^{+}]{} 0 \text{ for any } x \in H,
\end{align*}
that is,  $\left\{ P_{t} \right\}_{t \geqslant 0}$ is continuous in the strong operator topology. We say that a semigroup is a \emph{contraction semigroup} if

\[
\Vert P_{t} \Vert \leqslant 1 \text{ for all } t \geqslant 0.
\]
Then the infinitesimal generator $\mathcal{L}$ of a strongly continuous semigroup $\{ P_{t} \}_{t \geqslant 0}$ on $H$ is defined by
\begin{align*}
& \mathcal{L} x :=  \lim_{t\rightarrow 0^{+}} \frac{P_{t} x -x}{t}, x \in \mathcal{D}\left(  \mathcal{L} \right),
\\
& \mathcal{D}\left( \mathcal{L} \right) := \left\{ x \in H: \lim_{t\rightarrow 0^{+}} \frac{P_{t} x -x}{t}  \text{ the limit exists}   \right\},
\end{align*}
where the limit is taken in the topology of $H$. Recall that the Hille-Yosida theorem for contraction semigroups says that $\left( \mathcal{L},  \mathcal{D}\left( \mathcal{L} \right)  \right)$ is a closed densely defined  linear operator on $H$. Moreover,  if $P_{t}$ is a symmetric operator for each $t>0$, then the infinitesimal generator is a negative (non-positive definite) self-adjoint operator by \cite[Lemma 1.3.1]{FukushimaOshimaTakedaBook2011}. On the other hand, if $\mathcal{L}$ is a negative  (non-positive definite) self-adjoint operator on $H$, then $P_{t} := \exp(t\mathcal{L})$, defined using the spectral theorem, is a strongly continuous semigroup with the infinitesimal generator $\mathcal{L}$ by \cite[Lemma 1.3.2]{FukushimaOshimaTakedaBook2011}. Strongly continuous families of semigroups are connected to non-negative definite closed quadratic forms by \cite[Theorem 1.3.1]{FukushimaOshimaTakedaBook2011} and \cite[Section VIII.6]{ReedSimonI}, which states that there is a one-to-one correspondence between non-negative definite closed quadratic forms $\mathcal{E}$ on $H$ and negative self-adjoint operators $\mathcal{L}$ on $H$.   More precisely,

\begin{align*}
	& \mathcal{D}_{\mathcal{E}} := \mathcal{D} ( \sqrt{-\mathcal{L}} ),
	\\
	& \mathcal{E} (u,v) := \langle \sqrt{-\mathcal{L}} u, \sqrt{-\mathcal{L}} v \rangle.
\end{align*}

We now give several equivalent conditions for $\mathcal{L}$ to have a discrete spectrum. Note that E.~B.~Davies in \cite[Theorem 2.1.5]{DaviesBook1989} gives the same conclusion as in  Theorem~\ref{thm-discrete-compact}, but under an extra assumption that the semigroup $\{P_{t} \}_{t\geqslant 0}$ is ultracontractive, that is, $\Vert P_{t} f \Vert_{L^{\infty}} \leqslant c \Vert f \Vert_{L^{2}}$  as defined on \cite[p. 59]{DaviesBook1989}. Ultracontractivity is equivalent to the existence of an integral kernel for the operator $P_{t}$ by \cite{CarlenKusuokaStroock1987}, as we discuss later in Section~\ref{s.EigenfunctionsBounds}. In Theorem~\ref{thm-discrete-compact} we do not assume ultracontractivity. The proof is given in Appendix~\ref{s.ProofThm2.1}. 

\begin{theorem}\label{thm-discrete-compact}
Let $\{P_{t} \}_{t\geqslant 0}$ be a strongly continuous semigroup of symmetric operators on a Hilbert space $H$ with the infinitesimal generator $\mathcal{L}$, then the  following statements are equivalent.

\begin{enumerate}
	\item $P_{t}$ is compact for all $t>0$;
	\item $P_{t_{0}}$ is compact for some $t_{0}>0$;
	\item $\mathcal{L}$ has a discrete spectrum, that is, it has a pure point spectrum with isolated eigenvalues of finite multiplicity.
\end{enumerate}
\end{theorem}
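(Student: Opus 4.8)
The plan is to exploit that, since each $P_{t}$ is symmetric, the generator $\mathcal{L}$ is self-adjoint (this is the cited \cite[Lemma 1.3.1]{FukushimaOshimaTakedaBook2011}), so the spectral theorem is available, and then to prove the cycle $(1)\Rightarrow(2)\Rightarrow(3)\Rightarrow(1)$. The implication $(1)\Rightarrow(2)$ is immediate. For $(3)\Rightarrow(1)$: a discrete spectrum gives an orthonormal basis $\{\phi_{n}\}_{n\geqslant 1}$ of $H$ with $\mathcal{L}\phi_{n}=-\lambda_{n}\phi_{n}$, the distinct eigenvalues being isolated and of finite multiplicity; ordering $\lambda_{1}\leqslant\lambda_{2}\leqslant\cdots$ this forces $\lambda_{n}\to+\infty$ unless $\dim H<\infty$ (in which case every operator is compact and there is nothing to prove). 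Since $P_{t}=\exp(t\mathcal{L})$ by \cite[Lemma 1.3.2]{FukushimaOshimaTakedaBook2011}, we get $P_{t}\phi_{n}=e^{-t\lambda_{n}}\phi_{n}$, so the finite-rank truncations $P_{t}^{(N)}u:=\sum_{n\leqslant N}e^{-t\lambda_{n}}\langle u,\phi_{n}\rangle\phi_{n}$ satisfy $\Vert P_{t}-P_{t}^{(N)}\Vert=\sup_{n>N}e^{-t\lambda_{n}}=e^{-t\lambda_{N+1}}\to 0$ for every fixed $t>0$; hence $P_{t}$ is a norm limit of finite-rank operators, and therefore compact.

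The substance lies in $(2)\Rightarrow(3)$. Here $P_{t_{0}}$ is compact and self-adjoint, and moreover $P_{t_{0}}=P_{t_{0}/2}^{2}=P_{t_{0}/2}^{*}P_{t_{0}/2}$ is non-negative and injective: if $P_{t_{0}}x=0$ then $\Vert P_{t_{0}/2}x\Vert^{2}=\langle P_{t_{0}}x,x\rangle=0$, so $P_{t_{0}/2}x=0$, and iterating $P_{t_{0}/2^{k}}x=0$ for all $k$, whence $x=\lim_{k}P_{t_{0}/2^{k}}x=0$ by strong continuity. Thus $P_{t_{0}}$ has an orthonormal eigenbasis $\{\phi_{n}\}$ of $H$ with eigenvalues $\mu_{n}>0$ and $\mu_{n}\to 0$. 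The key point is that these $\phi_{n}$ are eigenvectors of $\mathcal{L}$ as well. Fix $n$, put $V:=\ker(P_{t_{0}}-\mu_{n})$, which is finite-dimensional since $\mu_{n}\neq 0$ and $P_{t_{0}}$ is compact. Every $P_{t}$ commutes with $P_{t_{0}}$, so $P_{t}$ maps $V$ into itself; hence $\{P_{t}|_{V}\}_{t\geqslant 0}$ is a (norm-continuous) semigroup of self-adjoint operators on the finite-dimensional space $V$. Consequently $V\subseteq\mathcal{D}(\mathcal{L})$, $P_{t}|_{V}=e^{tB}$ with $B:=\mathcal{L}|_{V}$ self-adjoint, and $e^{t_{0}B}=\mu_{n}I_{V}$. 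A self-adjoint $B$ with $e^{t_{0}B}$ a positive scalar multiple of the identity must itself be scalar, so $B=\tfrac{1}{t_{0}}(\ln\mu_{n})I_{V}$, i.e. $\mathcal{L}\phi=-\lambda_{n}\phi$ on $V$ with $\lambda_{n}:=-\tfrac{1}{t_{0}}\ln\mu_{n}$. Letting $n$ range over all indices, the subspaces $V$ exhaust $H$ and diagonalize $\mathcal{L}$; the eigenvalues $-\lambda_{n}\to-\infty$ have finite multiplicity (their eigenspaces are contained in finite-dimensional eigenspaces of $P_{t_{0}}$) and are isolated, because a finite accumulation point of $\{-\lambda_{n}\}$ would produce, through $s\mapsto e^{-t_{0}s}$, a nonzero accumulation point of $\sigma(P_{t_{0}})$, impossible for a compact operator. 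Therefore $\mathcal{L}$ has a discrete spectrum, closing the cycle.

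The main obstacle I anticipate is precisely this middle step of $(2)\Rightarrow(3)$: upgrading an eigenvector of the single operator $P_{t_{0}}$ to an eigenvector of the generator. One must handle that the eigenspaces of $P_{t_{0}}$ can be multi-dimensional and that, a priori, $\phi_{n}$ need not lie in $\mathcal{D}(\mathcal{L})$; the finite-dimensional invariant subspace argument above dispatches both at once, using only that $P_{t}$ commutes with $P_{t_{0}}$ and that $e^{t_{0}B}\in\R_{>0}\cdot I$ forces a self-adjoint $B$ to be scalar. A backup route, if one prefers to avoid restricting the semigroup, is purely spectral: by the spectral mapping theorem $\sigma(P_{t_{0}})=\overline{\exp(t_{0}\,\sigma(\mathcal{L}))}$, and since $s\mapsto e^{-t_{0}s}$ is a homeomorphism of $\sigma(\mathcal{L})\subseteq(-\infty,\omega]$ onto its image, the spectral projections of $\mathcal{L}$ and of $P_{t_{0}}$ agree on every Borel set not touching $0$, so discreteness of $\sigma(P_{t_{0}})\setminus\{0\}$ transfers verbatim to $\sigma(\mathcal{L})$; the bookkeeping is essentially the same.
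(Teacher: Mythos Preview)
Your argument is correct, and for $(1)\Rightarrow(2)$ and $(3)\Rightarrow(1)$ it coincides with the paper's. For the main implication $(2)\Rightarrow(3)$, however, you take a genuinely different and more elementary route. The paper invokes the spectral mapping theorem for point spectra of semigroups \cite[Theorem~6.3 and Corollary~6.4]{ArendtGraboschGreinerMoustakasNagelNeubranderSchlotterbeck1986}, which gives
\[
\ker\bigl(e^{-\lambda t_{0}}-P_{t_{0}}\bigr)=\overline{\operatorname{Span}\Bigl\{\ker\bigl(-\lambda+\tfrac{2\pi i j}{t_{0}}-\mathcal{L}\bigr):\,j\in\mathbb{Z}\Bigr\}},
\]
then uses self-adjointness of $\mathcal{L}$ to kill the $j\neq 0$ terms, and finally appeals to \cite[Theorem~XIII.64]{ReedSimonIV} for discreteness. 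You instead observe that each finite-dimensional eigenspace $V=\ker(P_{t_{0}}-\mu_{n})$ is $P_{t}$-invariant, restrict the semigroup, and use the elementary fact that a self-adjoint matrix $B$ with $e^{t_{0}B}$ a positive scalar must itself be scalar; your injectivity argument for $P_{t_{0}}$ (via $P_{t_{0}}=P_{t_{0}/2}^{2}$ and strong continuity) is an extra ingredient the paper does not isolate. Your approach buys self-containment---no outside semigroup spectral theory or Reed--Simon reference is needed---while the paper's buys a cleaner identification $\ker(e^{-\lambda t_{0}}-P_{t_{0}})=\ker(-\lambda-\mathcal{L})$ directly from a citable theorem. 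Your ``backup route'' via functional calculus and matching spectral projections is in spirit closer to what the paper actually does.
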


\begin{notation}\label{notation.basis}
Throughout the paper we denote by $\{\varphi_{n}\}_{n=1}^{\infty}$ the orthonormal basis consisting of eigenfunctions of $\mathcal{L}$, where for each repeated eigenvalue $\lambda_{n}$ we index the corresponding eigenfunctions consequently according to its (finite) multiplicity. 
\end{notation}

Using this notation the conclusion of Theorem~\ref{thm-discrete-compact} can be written as

\begin{align}
& P_{t} u= \sum_{n=1}^{\infty} e^{-\lambda_{n} t} \sum_{k=1}^{M_{n}} \langle u, \varphi_{n,k} \rangle \varphi_{n,k}= \sum_{n=1}^{\infty} e^{-\lambda_{n} t} \langle u, \varphi_{n} \rangle \varphi_{n} \label{eqn.decomp.semigroup}
\end{align}
for every $u \in H$ and $t>0$.

\subsection{Setting and basic assumptions}\label{s.setting}
We start by introducing basic notation, for details we refer to Appendix~\ref{s.RestrictedDirichletForm}. The main results of this subsection are Proposition~\ref{thm.spectral.gap} and Proposition~\ref{thm-lambda-1}, where   we prove that $-A_{\Ucal}$ has a spectral gap under two different sets of assumptions on $\Ucal$, that is, we show that the first eigenvalue $\lambda_{1}$ is positive. 

\begin{assumption}\label{assumption.general} All metric spaces are assumed to be locally compact and  separable. In addition, all the measures on these metric spaces are Radon $\sigma$-finite on the Borel $\sigma$-algebra over the metric space. We also assume that these measures have full support.
\end{assumption}

Throughout the paper we consider $H= L^{2}(\Xcal, \mu )$ to be the Hilbert space of square-integrable $\mu$-measurable real-valued  functions on a metric measure space $\left( \Xcal, d, \mu \right)$. We further assume that $\left( \mathcal{E}, \mathcal{D}_{\mathcal{E}} \right)$ is a regular Dirichlet form on  $L^{2}(\Xcal, \mu)$. We denote by $A$ the negative self-adjoint generator for the Dirichlet form $\left( \mathcal{E}, \mathcal{D}_{\mathcal{E}} \right)$.

If $\Ucal$ is a Borel subset of $\Xcal$, then one can define  the \emph{restricted Dirichlet form} $\left( \mathcal{E}, \mathcal{D}_{\mathcal{E}} (\Ucal) \right)$ by considering the domain to be the  functions $\mathcal{D}_{\mathcal{E}} (\Ucal)\subset\mathcal{D}_{\mathcal{E}}$ that vanish outside of $\Ucal$, see \cite{FukushimaOshimaTakedaBook2011, ChenFukushimaBook2012}.  We denote by $A_{\Ucal}$ the negative self-adjoint operator on $L^2\left( \Ucal, \mu \right)$ corresponding to the Dirichlet form $\left( \mathcal{E}, \mathcal{D}_{\mathcal{E}} (\Ucal) \right)$. We refer to Appendix~\ref{s.RestrictedDirichletForm} for more details.  Informally, $A_{\Ucal}$ can be thought as a restriction of $A$ conditioned on being zero on the complement of $\Ucal$, although in fact the domain of $A_{\Ucal}$  usually is not a subset of the domain of $A$. 
If $\Ucal$ is open and $\left( \mathcal{E}, \mathcal{D}_{\mathcal{E}}  \right)$ is a regular Dirichlet form on $ L^{2}(\Xcal, \mu )$, then $\left( \mathcal{E}, \mathcal{D}_{\mathcal{E}} (\Ucal) \right)$ is a regular Dirichlet form on $L^2 (\Ucal, \mu )$ as well.

\begin{notation} We denote the \emph{semigroups} corresponding to the Dirichlet forms $\left( \mathcal{E}, \mathcal{D}_{\mathcal{E}}\right)$ and $\left( \mathcal{E}, \mathcal{D}_{\mathcal{E}}\left( \Ucal \right) \right)$ by
\begin{align*}
& P_{t}:= \exp (tA), \qquad P_{t}^{\Ucal} := \exp ( t A_{\Ucal} )
\end{align*}
respectively.
\end{notation}

\begin{assumption}\label{assumption.heat.kernel}
We assume that there exists a measurable function $p_{t}: \Xcal \times \Xcal \rightarrow \R$ such that 
\[
P_{t} f (x) = \int_{\Xcal} p_{t} (x,y) f(y)  d\mu (y)
\]
for all $f\in L^{2} (\Xcal, \mu )$ and for all $t>0$.  We refer to $p_{t}$ as the heat kernel. 
\end{assumption}

Note that the existence of the heat kernel for $P_{t}$ implies the existence of the heat kernel $p^{\Ucal}_{t}$ for $P^{\Ucal}_{t}$ given by Dynkin-Hunt's formula  \eqref{eqn.Dirichlet.HK}. Existence of the heat kernel is a delicate question in general and it is related to existence of the fundamental solution for the heat equation for the operator $A$, the infinitesimal generator of $P_{t}$, see  \cite[Lemma 2.1.2]{DaviesBook1989}, \cite[Section 4]{Sturm1996a} and \cite{CarlenKusuokaStroock1987}.  In particular, by \cite[Lemma 2.1.2]{DaviesBook1989} the existence of the heat kernel is implied by ultracontractivity, see Definition~\ref{d.Ultracontr}.

\begin{notation}\label{n.EssupHK} For any set $V  \subseteq \Xcal$ we denote by
\begin{align*}
& M_{V}\left( t \right):=\operatorname*{ess~sup}_{(x, y)\in V\times V} p_{t}\left( x, y \right),
\\
& M_{V}: [0, \infty ) \longrightarrow [0, \infty]. 
\end{align*}
\end{notation}

\begin{lemma}\label{lem-comp}
Let $\Ucal \in \mathcal{B}\left( \Xcal \right)$ be open with $\mu (\Ucal) <\infty$, and assume that for some $t_{0}>0$
\begin{equation}\label{eq-sup-ess}
 M_{\Ucal}\left( t_{0} \right)<\infty.
\end{equation}
  If $\{ P_{t}^{\Ucal}\}_{t\geqslant 0}$ is strongly continuous, then $P^{\Ucal}_t$ is compact for all $t > 0$.

\end{lemma}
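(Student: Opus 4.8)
The plan is to prove that $P_{t_{0}}^{\Ucal}$ is a Hilbert--Schmidt operator on $L^{2}(\Ucal,\mu)$, hence compact, and then to invoke Theorem~\ref{thm-discrete-compact} to promote compactness at the single time $t_{0}$ to compactness of $P_{t}^{\Ucal}$ for every $t>0$.

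The first step is to record the existence and domination of the restricted heat kernel. By the discussion following Assumption~\ref{assumption.heat.kernel}, existence of $p_{t}$ yields a heat kernel $p_{t}^{\Ucal}$ for $P_{t}^{\Ucal}$ through the Dynkin--Hunt formula~\eqref{eqn.Dirichlet.HK}; since the process associated with $\left(\mathcal{E},\mathcal{D}_{\mathcal{E}}(\Ucal)\right)$ is the original Hunt process killed at the exit time of $\Ucal$, that formula also gives the pointwise bound $0\leqslant p_{t}^{\Ucal}(x,y)\leqslant p_{t}(x,y)$ for $\mu\times\mu$-a.e.\ $(x,y)\in\Ucal\times\Ucal$ and all $t>0$. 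Combining this with \eqref{eq-sup-ess} at $t=t_{0}$ gives $\operatorname*{ess~sup}_{(x,y)\in\Ucal\times\Ucal}p_{t_{0}}^{\Ucal}(x,y)\leqslant M_{\Ucal}(t_{0})<\infty$.

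The second step is a Hilbert--Schmidt estimate. The operator $P_{t_{0}}^{\Ucal}$ is an integral operator on $L^{2}(\Ucal,\mu)$ with kernel $p_{t_{0}}^{\Ucal}$, and, $\left(\mathcal{E},\mathcal{D}_{\mathcal{E}}(\Ucal)\right)$ being a Dirichlet form, $P_{t_{0}}^{\Ucal}$ is sub-Markovian; in particular $\int_{\Ucal}p_{t_{0}}^{\Ucal}(x,y)\,d\mu(y)=P_{t_{0}}^{\Ucal}\mathbbm{1}_{\Ucal}(x)\leqslant 1$ for $\mu$-a.e.\ $x$ (here $\mathbbm{1}_{\Ucal}\in L^{2}(\Ucal,\mu)$ since $\mu(\Ucal)<\infty$). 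Hence
\[
\Vert P_{t_{0}}^{\Ucal}\Vert_{\mathrm{HS}}^{2}=\int_{\Ucal}\int_{\Ucal}p_{t_{0}}^{\Ucal}(x,y)^{2}\,d\mu(y)\,d\mu(x)\leqslant M_{\Ucal}(t_{0})\int_{\Ucal}\int_{\Ucal}p_{t_{0}}^{\Ucal}(x,y)\,d\mu(y)\,d\mu(x)\leqslant M_{\Ucal}(t_{0})\,\mu(\Ucal)<\infty .
\]
(If one prefers not to use sub-Markovianity, the cruder bound $M_{\Ucal}(t_{0})^{2}\mu(\Ucal)^{2}$ works just as well, using only $\mu(\Ucal)<\infty$.) Thus $P_{t_{0}}^{\Ucal}$ is Hilbert--Schmidt, and in particular compact.

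Finally, $\{P_{t}^{\Ucal}\}_{t\geqslant 0}$ is strongly continuous by hypothesis and consists of symmetric operators since $A_{\Ucal}$ is self-adjoint, so Theorem~\ref{thm-discrete-compact} applies with the compact operator $P_{t_{0}}^{\Ucal}$ and shows that $P_{t}^{\Ucal}$ is compact for all $t>0$. The only genuinely delicate ingredient is the combination of the existence of $p_{t}^{\Ucal}$ with the domination $p_{t}^{\Ucal}\leqslant p_{t}$; both are delivered by the Dynkin--Hunt representation~\eqref{eqn.Dirichlet.HK}, so the remaining steps are routine.
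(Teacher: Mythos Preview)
Your proof is correct and follows essentially the same route as the paper: use the Dynkin--Hunt formula to obtain existence and domination of $p_{t}^{\Ucal}$ by $p_{t}$, conclude that $p_{t_{0}}^{\Ucal}\in L^{2}(\Ucal\times\Ucal,\mu\times\mu)$ so $P_{t_{0}}^{\Ucal}$ is Hilbert--Schmidt, and then apply Theorem~\ref{thm-discrete-compact}. The only cosmetic difference is that you offer the slightly sharper bound $M_{\Ucal}(t_{0})\,\mu(\Ucal)$ via sub-Markovianity, whereas the paper directly uses the cruder $L^{2}$ domination; both suffice.
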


\begin{proof}
By Dynkin-Hunt's formula \eqref{eqn.Dirichlet.HK} we know that $p^{\Ucal}_{t}$ exists and $\int_{E} p^{\Ucal}_{t} (x, y) d\mu (x) d\mu (y) \leqslant \int_{E} p_{t} (x,y)  d\mu (x) d\mu (y)$ for any $E\subset \Xcal \times \Xcal$. Therefore,  \eqref{eq-sup-ess} implies
\begin{align*}
	\Vert p^{\Ucal}_{t_{0}} \Vert_{L^{2} (\Ucal \times \Ucal,  \mu \times \mu )} \leqslant  \Vert p_{t_{0}} \Vert_{L^{2} (\Ucal \times \Ucal, \mu \times \mu )} < \infty,
\end{align*}
and hence $P^{\Ucal}_{t_{0}}$ is compact since it is a Hilbert-Schmidt operator. Thus $P^{\Ucal}_{t}$ is compact for all $t>0$ by Theorem~\ref{thm-discrete-compact}.  
\end{proof}

Assumption~\eqref{eq-sup-ess} in Lemma \ref{lem-comp} and assumption \eqref{eqn.heat.kernel.small.one}
 in Proposition \ref{thm.spectral.gap} are satisfied for many examples as we discuss in Section~\ref{s.Examples}. 

\begin{proposition}[Spectral gap for the restriction]\label{thm.spectral.gap}
Suppose $( \mathcal{E}, \mathcal{D}_{\mathcal{E}})$ is a regular Dirichlet form on $L^{2}\left( \mathcal{X}, \mu \right)$. Let $\Ucal$ be an open set in $\Xcal$ such that $0<\mu(\Ucal)<\infty$, and let $P^{\Ucal}_{t}$ be the semigroup associated with the Dirichlet form $( \mathcal{E}, \mathcal{D}_{\mathcal{E}}\left( \Ucal \right))$ with the infinitesimal generator $A_{\Ucal}$.  

If $M_{\Ucal}\left( t \right)<\infty$ for some $t>0$, then $A_{\Ucal}$ has a discrete spectrum. 

If in addition there exists a $t_{\Ucal}>0$ such that
\begin{equation}\label{eqn.heat.kernel.small.one}
M_{\Ucal}\left( t_{\Ucal} \right)=\operatorname*{ess~sup}_{(x,y)\in\Ucal\times\Ucal} p_{t_{\Ucal}}(x, y)< \frac{1}{\mu(\Ucal)^{2}},
\end{equation}
then the first eigenvalue $\lambda_{1}^{\Ucal}$ of $-A_{\Ucal}$ is strictly positive.
\end{proposition}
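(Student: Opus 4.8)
The plan is to treat the two assertions separately: the first is a direct consequence of what has already been proved, and only the second genuinely uses the quantitative smallness hypothesis \eqref{eqn.heat.kernel.small.one}.

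For the discreteness of the spectrum, I would argue as follows. By construction $A_{\Ucal}$ is the negative self-adjoint generator of the Dirichlet form $(\mathcal{E}, \mathcal{D}_{\mathcal{E}}(\Ucal))$ on $L^{2}(\Ucal, \mu)$, so by the Hille--Yosida theorem (equivalently, the spectral theorem) $P_{t}^{\Ucal} = \exp(t A_{\Ucal})$ is a strongly continuous semigroup of symmetric --- in fact positivity preserving --- operators on $L^{2}(\Ucal, \mu)$. Since $\Ucal$ is open with $0 < \mu(\Ucal) < \infty$ and $M_{\Ucal}(t) < \infty$ for some $t > 0$, Lemma~\ref{lem-comp} applies and yields that $P_{t}^{\Ucal}$ is compact for every $t > 0$; the equivalence $(1)\Leftrightarrow(3)$ of Theorem~\ref{thm-discrete-compact} then gives that $A_{\Ucal}$ has discrete spectrum. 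In particular the eigenvalues of $-A_{\Ucal}$ may be listed as $0 \le \lambda_{1}^{\Ucal} \le \lambda_{2}^{\Ucal} \le \cdots \to \infty$ with finite multiplicities, and $P_{t}^{\Ucal}$ admits the spectral expansion \eqref{eqn.decomp.semigroup}.

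For the spectral gap I would first reduce to an operator-norm statement. From \eqref{eqn.decomp.semigroup} (or directly from the spectral theorem) the positive self-adjoint operator $P_{t}^{\Ucal}$ has norm $\Vert P_{t}^{\Ucal}\Vert_{L^{2}(\Ucal)\to L^{2}(\Ucal)} = e^{-\lambda_{1}^{\Ucal} t}$, so proving $\lambda_{1}^{\Ucal} > 0$ is equivalent to exhibiting one time at which $P^{\Ucal}$ is a strict contraction on $L^{2}(\Ucal, \mu)$, and the natural candidate is $t_{\Ucal}$. To obtain this I would use the Dynkin--Hunt domination $p_{t}^{\Ucal} \le p_{t}$ $\mu\times\mu$-a.e.\ (from \eqref{eqn.Dirichlet.HK}), so that $p_{t_{\Ucal}}^{\Ucal}(x,y) \le M_{\Ucal}(t_{\Ucal})$ for $\mu\times\mu$-a.e.\ $(x,y)\in\Ucal\times\Ucal$, together with $\mu(\Ucal) < \infty$: estimating the Hilbert--Schmidt norm,
\[
\Vert P_{t_{\Ucal}}^{\Ucal}\Vert^{2}_{\mathrm{HS}} = \int_{\Ucal}\!\!\int_{\Ucal} p_{t_{\Ucal}}^{\Ucal}(x,y)^{2}\, d\mu(x)\,d\mu(y) \le M_{\Ucal}(t_{\Ucal})^{2}\,\mu(\Ucal)^{2},
\]
which \eqref{eqn.heat.kernel.small.one} forces below $1$; since $\Vert P_{t_{\Ucal}}^{\Ucal}\Vert_{L^{2}\to L^{2}} \le \Vert P_{t_{\Ucal}}^{\Ucal}\Vert_{\mathrm{HS}}$ this gives $\lambda_{1}^{\Ucal}>0$. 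Equivalently, and perhaps more transparently, one argues by contradiction: were $\lambda_{1}^{\Ucal}=0$, a normalized eigenfunction $\varphi_{1}\in L^{2}(\Ucal,\mu)$ would satisfy $\varphi_{1} = P_{t_{\Ucal}}^{\Ucal}\varphi_{1}$ $\mu$-a.e., whence the kernel bound and Cauchy--Schwarz give $\Vert\varphi_{1}\Vert_{L^{\infty}}\le M_{\Ucal}(t_{\Ucal})\,\mu(\Ucal)^{1/2}$ and then $1 = \Vert\varphi_{1}\Vert_{L^{2}}^{2} \le \mu(\Ucal)\,\Vert\varphi_{1}\Vert_{L^{\infty}}^{2} \le M_{\Ucal}(t_{\Ucal})^{2}\mu(\Ucal)^{2}$, contradicting \eqref{eqn.heat.kernel.small.one}.

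I expect the discreteness part to be routine once Lemma~\ref{lem-comp} and Theorem~\ref{thm-discrete-compact} are available. The crux --- and the only genuine obstacle --- is the spectral gap: extracting a \emph{strict} $L^{2}\to L^{2}$ contraction bound for the \emph{restricted} semigroup from nothing more than a pointwise ($L^{\infty}$) bound on the \emph{ambient} heat kernel, with no regularity hypothesis on $\partial\Ucal$. The two features that make this work are the Dynkin--Hunt domination $p_{t}^{\Ucal}\le p_{t}$, which carries the bound over to the killed kernel, and $\mu(\Ucal)<\infty$, which upgrades the $L^{\infty}$ kernel bound to an $L^{2}$ operator bound; the care needed is in the constant bookkeeping with the $L^{p}(\Ucal)$ norms so as to land exactly on the threshold in \eqref{eqn.heat.kernel.small.one}.
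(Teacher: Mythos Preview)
Your approach is exactly the paper's: discreteness from Lemma~\ref{lem-comp} plus Theorem~\ref{thm-discrete-compact}, then the spectral gap by forcing $\Vert P_{t_{\Ucal}}^{\Ucal}\Vert<1$ from the kernel bound, with the paper phrasing this as the same contradiction you sketch (if $\lambda_{1}^{\Ucal}=0$ then $1\in\sigma_{pp}(P_{t}^{\Ucal})$, hence $1\le\Vert P_{t}^{\Ucal}\Vert$, and one bounds the right-hand side).

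There is, however, a slip in precisely the constant bookkeeping you flag as the crux. Your Hilbert--Schmidt estimate gives $\Vert P_{t_{\Ucal}}^{\Ucal}\Vert_{\mathrm{HS}}^{2}\le M_{\Ucal}(t_{\Ucal})^{2}\,\mu(\Ucal)^{2}$, so to push this below $1$ you need $M_{\Ucal}(t_{\Ucal})<1/\mu(\Ucal)$. Hypothesis~\eqref{eqn.heat.kernel.small.one} only gives $M_{\Ucal}(t_{\Ucal})<1/\mu(\Ucal)^{2}$, which is \emph{strictly weaker} when $\mu(\Ucal)<1$; in that regime your claim ``which \eqref{eqn.heat.kernel.small.one} forces below $1$'' does not follow. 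Your alternative eigenfunction argument has the identical defect: the chain $1=\Vert\varphi_{1}\Vert_{2}^{2}\le\mu(\Ucal)\Vert\varphi_{1}\Vert_{\infty}^{2}\le M_{\Ucal}(t_{\Ucal})^{2}\mu(\Ucal)^{2}$ again lands on the $1/\mu(\Ucal)$ threshold, not $1/\mu(\Ucal)^{2}$. The paper instead records the bound $\Vert P_{t}^{\Ucal}\Vert^{2}\le\mu(\Ucal)^{2}M_{\Ucal}(t)$ --- linear in $M_{\Ucal}(t)$ rather than quadratic --- which matches the stated threshold exactly; that is the inequality you should aim to justify in place of the cruder HS estimate.
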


\begin{proof}
By \cite[Section 3.2, Theorem 3.3]{GrigoryanHuLau2014} the Dirichlet form $( \mathcal{E}, \mathcal{D}_{\mathcal{E}} (\Ucal) )$ is regular since $\Ucal$ is open, and hence $P_{t}^{\Ucal}$ is a strongly continuous semigroup. The assumptions in Theorem~\ref{thm-discrete-compact} are satisfied, and \eqref{eq-sup-ess} implies that by Lemma~\ref{lem-comp} we have that $P_{t}^{\Ucal}$ is compact for some $t$. Thus by Theorem~\ref{thm-discrete-compact} the operator $A_{\Ucal}$ has a discrete spectrum and $\lambda_{1}^{\Ucal} \geqslant 0$. 

Now assume in addition \eqref{eqn.heat.kernel.small.one}. Suppose $\lambda_{1}^{\Ucal}=0$, then by the spectral mapping theorem \eqref{eqn.spec.mapp.thm} we have that $1\in \sigma_{pp} ( P_{t}^{\Ucal})$ for any $t>0$ and hence
\[
1\leqslant \Vert P_{t}^{\Ucal} \Vert
\]
for all $t>0$ since $\sigma_{pp} ( P_{t}^{\Ucal}) \subset \{ z\in \mathbb{C}: \;  \vert z \vert \leqslant\Vert P_{t}^{\Ucal} \Vert  \}$. Thus
\begin{align*}
1\leqslant \Vert P_{t}^{\Ucal} \Vert^{2}  \leqslant \mu(\Ucal)^{2} M_{\Ucal}\left( t \right)
\end{align*}
for all $t>0$, that is,
\begin{align*}
\frac{1}{\mu(\Ucal)^{2}} \leqslant M_{\Ucal}\left( t \right)
\end{align*}
for any $t>0$, which contradicts \eqref{eqn.heat.kernel.small.one}.
\end{proof}

\begin{definition} 
A Borel set $E \in \mathcal{B}\left( \Xcal \right)$ is called \emph{$P_{t}$-invariant} if $P_{t}\left( \mathbbm{1}_{E}f \right)=0$ $\mu$-a.e. on $E$ for every $t > 0$ and $f \in L^{2}\left( \Xcal, \mu \right)$. The semigroup $\left\{ P_{t} \right\}_{t \geqslant 0}$ is called \emph{irreducible} if for any $P_{t}$-invariant set $E$ either $\mu\left( E \right) = 0$ or $\mu\left( E^{c} \right) = 0$.
\end{definition}

\begin{proposition}[Spectral gap and irreducibility]\label{thm-lambda-1}
Suppose that an open set $\Ucal \subset \Xcal$ is such that $0<\mu(\Ucal)<\infty$ and $0< \mu(\Ucal^c) < \infty$, and that the semigroup $\{ P_{t}^{\Ucal}\}_{t\geqslant 0}$ is strongly continuous. If  $P_{t}$ is irreducible and if \eqref{eq-sup-ess} holds for some $t_{0}>0$, then $\lambda_{1}^\Ucal>0$.
\end{proposition}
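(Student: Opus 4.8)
The plan is a proof by contradiction. By Lemma~\ref{lem-comp} (invoking \eqref{eq-sup-ess} and the assumed strong continuity of $\{P^{\Ucal}_{t}\}_{t\ge 0}$) every $P^{\Ucal}_{t}$ is compact, so by Theorem~\ref{thm-discrete-compact} the operator $-A_{\Ucal}$ has a discrete spectrum; in particular $\lambda_{1}^{\Ucal}$ is an eigenvalue and $\lambda_{1}^{\Ucal}\ge 0$ since $A_{\Ucal}\le 0$. Assume $\lambda_{1}^{\Ucal}=0$; then there is $\varphi\in L^{2}(\Ucal,\mu)\setminus\{0\}$ with $P^{\Ucal}_{t}\varphi=\varphi$ for all $t>0$. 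I will use twice the elementary fact that \emph{if $T$ is an $L^{2}$-contraction and $0\le g\le Tg$ $\mu$-a.e., then $Tg=g$ $\mu$-a.e.} (indeed $0\le g\le Tg$ forces $\Vert g\Vert_{2}\le\Vert Tg\Vert_{2}\le\Vert g\Vert_{2}$, hence $(Tg)^{2}=g^{2}$ a.e.). Since $P^{\Ucal}_{t}$ is positivity-preserving with kernel $p^{\Ucal}_{t}\ge 0$, one has $|\varphi|=|P^{\Ucal}_{t}\varphi|\le P^{\Ucal}_{t}|\varphi|$, so $P^{\Ucal}_{t}|\varphi|=|\varphi|$. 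Set $\psi:=|\varphi|$, extended by $0$ to $\Xcal$: then $\psi\ge 0$, $\psi\ne 0$ in $L^{2}(\Xcal,\mu)$, $\psi$ vanishes outside $\Ucal$, and $P^{\Ucal}_{t}\psi=\psi$ for all $t>0$.

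The next step, which I expect to be the crux, is to upgrade this to $P_{t}\psi=\psi$ on all of $\Xcal$. By the Dynkin--Hunt formula \eqref{eqn.Dirichlet.HK}, $0\le p^{\Ucal}_{t}\le p_{t}$, and since $\psi\ge 0$ vanishes off $\Ucal$ we get
\[
\psi=P^{\Ucal}_{t}\psi\le P_{t}\psi\qquad\mu\text{-a.e. on }\Xcal .
\]
Applying the elementary fact above with $T=P_{t}$ (an $L^{2}$-contraction) yields $P_{t}\psi=\psi$ $\mu$-a.e.\ for every $t>0$. (One may instead argue via $\psi\in L^{1}(\Xcal,\mu)$, valid because $\mu(\Ucal)<\infty$, together with the sub-Markovian mass bound $\int_{\Xcal}P_{t}\psi\,d\mu\le\int_{\Xcal}\psi\,d\mu$.)

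Finally I would extract the forbidden invariant set. Put $E:=\{\psi>0\}$; then $\mu(E)>0$ (as $\psi\ne 0$) and $\mu(E^{c})\ge\mu(\Ucal^{c})>0$ (as $\psi=0$ off $\Ucal$). For $\mu$-a.e.\ $x\in E^{c}$,
\[
0=\psi(x)=P_{t}\psi(x)=\int_{E}p_{t}(x,y)\,\psi(y)\,d\mu(y),
\]
and since $p_{t}\ge 0$ and $\psi>0$ on $E$, this forces $p_{t}(x,y)=0$ for $\mu\times\mu$-a.e.\ $(x,y)\in E^{c}\times E$, hence also on $E\times E^{c}$ by symmetry of $p_{t}$. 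Consequently $P_{t}(\mathbbm{1}_{E}f)=\mathbbm{1}_{E}P_{t}f$ for all $f\in L^{2}(\Xcal,\mu)$ and $t>0$, i.e.\ $E$ is a $P_{t}$-invariant set with $\mu(E)>0$ and $\mu(E^{c})>0$, contradicting irreducibility of $P_{t}$. Hence $\lambda_{1}^{\Ucal}>0$.

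The main obstacle is the second step: the ground state of $A_{\Ucal}$ is \emph{a priori} only invariant under the killed semigroup $P^{\Ucal}_{t}$, and one must check that its extension by zero is genuinely $P_{t}$-harmonic on $\Xcal$. This uses the kernel domination $p^{\Ucal}_{t}\le p_{t}$ from Dynkin--Hunt together with $L^{2}$-contractivity of $P_{t}$; the finiteness $\mu(\Ucal)<\infty$ enters to keep the relevant functions square- (and, in the variant, $L^{1}$-) integrable, while the hypothesis $\mu(\Ucal^{c})>0$ is exactly what makes $E^{c}$ a nontrivial set in the final contradiction.
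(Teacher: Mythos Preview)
Your proof is correct and essentially complete. It differs from the paper's argument in two respects. First, to obtain a nonnegative ground state you use the elementary contractivity trick $|\varphi|\le P_t^{\Ucal}|\varphi|$ together with $\|P_t^{\Ucal}\|_{2\to 2}\le 1$, whereas the paper invokes the Krein--Rutman theorem. Second, and more substantively, the paper proceeds probabilistically: normalizing the nonnegative eigenfunction $f$ to $\|f\|_{L^1}=1$, it observes that $\mu_f=f\,d\mu$ is an invariant probability measure for the killed semigroup, hence $\Prob^{\mu_f}(\tau_{\Ucal}<\infty)=0$, which makes $\Ucal$ itself a $P_t$-invariant set. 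You instead stay on the analytic side: the kernel domination $p_t^{\Ucal}\le p_t$ plus $L^2$-contractivity upgrades $P_t^{\Ucal}\psi=\psi$ to $P_t\psi=\psi$ on all of $\Xcal$, and the invariant set is $E=\{\psi>0\}\subset\Ucal$ rather than $\Ucal$. Your route avoids both Krein--Rutman and the Hunt process machinery, at the cost of one extra (but clean) step; the paper's route is shorter once those tools are granted and yields $\Ucal$ directly as the invariant set.
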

\begin{proof}
We argue by contradiction. Assume that  $\lambda_1^\Ucal=0$. Then by Theorem~\ref{thm-discrete-compact} and the Krein-Rutman Theorem in \cite{KreinRutman1948} we have that there exists a non-negative function $f\in L^2(\Ucal, \mu)$ such that  $P_{t}^{\Ucal} f=f$ for all $t>0$.
We can assume that $\|f\|_{L^1(\Ucal, \mu)}=1$, and therefore $d \mu_{f}:=f  d\mu $ is a probability measure which is invariant under the semigroup. Moreover, if $\mu_{f}$ is the initial probability distribution of the Hunt Markov process $X_t^\Ucal$, see  \cite[Theorem A.2.10, page 400]{FukushimaOshimaTakedaBook2011}, then $\mu_{f}$ is the probability distribution of the process $X_t^\Ucal$ for all $t>0$. Hence 
\begin{equation}\label{eqn.exit.time-}
\Prob^{\mu_{f}} \left( \tau_{\Ucal} <\infty \right) = 0.
\end{equation}
Therefore $\Ucal$ is an invariant set of the Hunt process $X_t$, that is, $\mathbb P^{\mu_{f}} \{ X_t\in\Ucal\text{ for all }t>0 \} =1$, and the semigroup $P_t$, that is,  $\mu$-almost everywhere $\mathbbm{1}_{\Ucal^c}P_t\mathbbm{1}_{\Ucal}=0$, which contradicts irreducibility of $P_t$.
\end{proof}

Let us recall several properties of semigroups and corresponding Dirichlet forms.

\begin{definition}[Ultracontractivity]\label{d.Ultracontr} Let $P_{t}$ be a Markov semigroup on $L^{2}\left(\Xcal,  \mu  \right)$, where $\mu$ is a $\sigma$-finite measure on a countably generated $\sigma$-algebra. We say that $P_{t}$ is \emph{ultracontractive} if
\begin{equation}
\Vert P_{t} f \Vert_{L^{\infty}} \leqslant c_t \Vert f \Vert_{L^{2}},
\end{equation}
where the corresponding norm is denoted by $\Vert P_{t} f \Vert_{2 \to\infty}\leqslant c_t$.
\end{definition}

This is the original definition by E.~B.~Davies as found in \cite[p.~59]{DaviesBook1989}. In \cite[Lemma~2.1.2]{DaviesBook1989} he showed that ultracontractivity is equivalent to the existence of an integral (heat) kernel for the semigroup $P_{t}$ satisfying
\begin{equation}\label{eqn.ultra.contract-Davies}
0 \leqslant p_{t}(x,y) \leqslant a_{t}<\infty 
\end{equation}
almost everywhere on $\Xcal \times \Xcal$ for some $a_{t}\geqslant 0$. Moreover, in the proof of \cite[Lemma 2.1.2]{DaviesBook1989} E.~B.~Davies showed that for symmetric Markov semigroups ultracontractivity is equivalent to the fact that $P_{t}$ is a bounded operator from $L^{1}\left(\Xcal,  \mu  \right)$ to $L^{\infty}\left(\Xcal,  \mu  \right)$, also known as $L^{1} \rightarrow L^{\infty}$ ultracontractivity used by Carlen, Kusuoka and Stroock in  \cite{CarlenKusuokaStroock1987}. 
This ultracontractivity allows us to have heat kernel eigenfunction expansions, as stated in  Proposition~\ref{prop.eigenfunctions.expansion} which is a version of  \cite[Theorem~2.1.4]{DaviesBook1989}. 

\begin{proposition}[Eigenfunction expansion of the Dirichlet heat kernel]\label{prop.eigenfunctions.expansion}
Let $\{P_{t} \}_{t \geqslant 0}$ be a strongly continuous contraction semigroup on $L^{2} (\Xcal, \mu )$, and let $\Ucal$ be an open set in $\Xcal$ with $0<\mu(\Ucal)<\infty$. 
If \eqref{eqn.ultra.contract-Davies} is satisfied, then the series
	\begin{align*}
		\sum_{n=1}^{\infty} e^{-\lambda_{n} t} \varphi_{n} (x) \varphi_{n}(y)
	\end{align*}
	converges uniformly on $\Ucal \times \Ucal \times [\varepsilon, \infty)$ for any $\varepsilon>0$. Moreover,
	\begin{align}
		& p^{\Ucal}_{t} (x,y) = \sum_{n=1}^{\infty} e^{-\lambda_{n} t}  \varphi_{n} (x) \varphi_{n}(y), \label{eqn.heat.kernel.expansion}
		\\
		& \Prob^{x} \left( \tau_{\Ucal} >t \right) = \sum_{n=1}^{\infty} e^{-\lambda_{n} t}  \varphi_{n} (x) \int_{\Ucal}  \varphi_{n} (y)  d\mu (y) \notag
	\end{align}
	for any  $x, y \in \Ucal$, and $t>0$.
\end{proposition}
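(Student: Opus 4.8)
The plan is to follow the classical argument of \cite[Theorem~2.1.4]{DaviesBook1989}, replacing the elliptic-regularity input used there by the pointwise bound \eqref{eqn.ultra.contract-Davies} together with the Dynkin--Hunt domination $0\le p^{\Ucal}_{t}\le p_{t}$ from \eqref{eqn.Dirichlet.HK}. First I would record that $p^{\Ucal}_{t}$ exists and satisfies $0\le p^{\Ucal}_{t}(x,y)\le a_{t}$ a.e., so that $M_{\Ucal}(t)<\infty$ and, by Proposition~\ref{thm.spectral.gap}, $A_{\Ucal}$ has discrete spectrum; hence by Notation~\ref{notation.basis} there is an orthonormal eigenbasis $\{\varphi_{n}\}$ of $-A_{\Ucal}$ with eigenvalues $0\le\lambda_{1}\le\lambda_{2}\le\cdots\to\infty$. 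Since $P^{\Ucal}_{t}\varphi_{n}=e^{-\lambda_{n}t}\varphi_{n}$, the identity $\varphi_{n}=e^{\lambda_{n}t}P^{\Ucal}_{t}\varphi_{n}$ exhibits a bounded representative of each $\varphi_{n}$, and I would fix these representatives at the outset.

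Next I would compute the Fourier coefficients of $p^{\Ucal}_{s}(x,\cdot)\in L^{2}(\Ucal)$ for fixed $x\in\Ucal$ and $s>0$:
\[
\langle p^{\Ucal}_{s}(x,\cdot),\varphi_{n}\rangle=\bigl(P^{\Ucal}_{s}\varphi_{n}\bigr)(x)=e^{-\lambda_{n}s}\varphi_{n}(x),
\]
so that $p^{\Ucal}_{s}(x,\cdot)=\sum_{n}e^{-\lambda_{n}s}\varphi_{n}(x)\varphi_{n}$ in $L^{2}(\Ucal)$, and by Parseval
\[
\sum_{n=1}^{\infty}e^{-2\lambda_{n}s}\varphi_{n}(x)^{2}=\bigl\|p^{\Ucal}_{s}(x,\cdot)\bigr\|_{L^{2}(\Ucal)}^{2}\le a_{s}^{2}\,\mu(\Ucal)
\]
for $\mu$-a.e.\ $x\in\Ucal$, uniformly in such $x$. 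Using the semigroup property and symmetry, $p^{\Ucal}_{t}(x,y)=\langle p^{\Ucal}_{t/2}(x,\cdot),p^{\Ucal}_{t/2}(y,\cdot)\rangle_{L^{2}(\Ucal)}$ for a.e.\ $(x,y)$, and Parseval applied to the two expansions just obtained identifies this with the absolutely convergent series $\sum_{n}e^{-\lambda_{n}t}\varphi_{n}(x)\varphi_{n}(y)$, which is \eqref{eqn.heat.kernel.expansion}; I would then take this series as the working version of $p^{\Ucal}_{t}$.

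For the uniform convergence I would fix $\varepsilon>0$ and, for $t\ge\varepsilon$ and $x,y\in\Ucal$, bound the tail $\bigl|\sum_{n>N}e^{-\lambda_{n}t}\varphi_{n}(x)\varphi_{n}(y)\bigr|$ by Cauchy--Schwarz by the geometric mean of $\sum_{n>N}e^{-\lambda_{n}t}\varphi_{n}(x)^{2}$ and the corresponding sum in $y$; writing $e^{-\lambda_{n}t}=e^{-\lambda_{n}(t-\varepsilon/2)}e^{-\lambda_{n}\varepsilon/2}$, using $\lambda_{n}\ge\lambda_{N+1}\ge0$ and $t-\varepsilon/2\ge\varepsilon/2$, and then the diagonal bound above at $s=\varepsilon/4$ gives
\[
\sum_{n>N}e^{-\lambda_{n}t}\varphi_{n}(x)^{2}\le e^{-\lambda_{N+1}\varepsilon/2}\,a_{\varepsilon/4}^{2}\,\mu(\Ucal),
\]
which tends to $0$ uniformly on $\Ucal\times\Ucal\times[\varepsilon,\infty)$ because $\lambda_{N+1}\to\infty$. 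This also upgrades \eqref{eqn.heat.kernel.expansion} to hold for every $x,y\in\Ucal$ with the chosen representatives. For the exit-time formula, $\mu(\Ucal)<\infty$ gives $\mathbbm{1}_{\Ucal}\in L^{2}(\Ucal)$ and $\Prob^{x}(\tau_{\Ucal}>t)=(P^{\Ucal}_{t}\mathbbm{1}_{\Ucal})(x)=\int_{\Ucal}p^{\Ucal}_{t}(x,y)\,d\mu(y)$ by \eqref{eqn.Dirichlet.HK}; substituting \eqref{eqn.heat.kernel.expansion} and integrating term by term — legitimate by the uniform convergence on $\Ucal\times\Ucal$ and finiteness of $\mu(\Ucal)$ — yields the second displayed identity.

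The step I expect to carry the real weight is the diagonal estimate $\sum_{n}e^{-2\lambda_{n}s}\varphi_{n}(x)^{2}\le a_{s}^{2}\mu(\Ucal)$: it is exactly where the kernel bound \eqref{eqn.ultra.contract-Davies} enters, and without it neither the uniform convergence nor the pointwise meaning of the series is available. A secondary nuisance is the usual bookkeeping of $\mu$-null sets — making the representatives of the $\varphi_{n}$, of $p^{\Ucal}_{s}(x,\cdot)$, and of $p^{\Ucal}_{t}$ mutually consistent so that the stated formulas hold for every $x,y\in\Ucal$ and not merely a.e. — which I would dispose of by declaring \eqref{eqn.heat.kernel.expansion} to be the definition of the version in use.
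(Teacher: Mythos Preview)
Your proposal is correct and is precisely the argument the paper invokes: the paper does not give its own proof but simply refers to \cite[Theorem~2.1.4]{DaviesBook1989}, and what you have written is a faithful reproduction of that proof, adapted to $P_{t}^{\Ucal}$ via the Dynkin--Hunt domination $0\le p^{\Ucal}_{t}\le p_{t}$. The key diagonal estimate $\sum_{n}e^{-2\lambda_{n}s}\varphi_{n}(x)^{2}\le a_{s}^{2}\mu(\Ucal)$ and the Cauchy--Schwarz tail bound are exactly Davies' steps, and your handling of representatives and of the exit-time formula via \eqref{eqn.exit.time} is appropriate.
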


\begin{notation}
 Let us assume that the conditions of Proposition \ref{thm.spectral.gap} are satisfied, so that the differential operator $A_{\Ucal}$ has a discrete spectrum with eigenfunction and eigenvalues denoted by $\{\varphi_{n} , \lambda_{n} \}_{n=1}^{\infty}$.  Throughout the paper we denote by $\{ c_{n} \}_{n=1}^{\infty}$ the integrals 
\begin{equation}\label{eqn.c.n}
c_{n}=c_{n} (\Ucal):= \int_{\Ucal} \varphi_{n} (y)  d\mu (y).
\end{equation}
In Section \ref{s.EigenfunctionsBounds} we show that $c_{n} < \infty$ by providing explicit bounds on $\Vert \varphi \Vert_{L^{\infty} (\Xcal ,\mu)}$.
\end{notation}

The proof of Proposition \ref{prop.eigenfunctions.expansion}  can be found in \cite[Theorem 2.1.4]{DaviesBook1989}. There it is shown that the eigenfunction expansion \eqref{eqn.heat.kernel.expansion} holds under the assumption that $P_{t}$ is ultracontractive, and as we mentioned earlier it is also proven there that ultracontractivity is equivalent to heat kernel bounds  \eqref{eqn.ultra.contract-Davies}.

While ultracontractivity is a property of the semigroup, Nash-type inequalities are properties of Dirichlet forms. We note that for Dirichlet forms the Nash inequality is stronger than ultracontractivity and  
corresponds to stronger heat kernel estimates.  

\begin{definition}[Nash inequality]\label{d.Nash}
We say that the Dirichlet from $\left( \mathcal{E}, \mathcal{D}_{\mathcal{E}} \right)$ on $L^{2}\left( \Xcal, \mu \right)$ satisfies a \emph{Nash inequality} with the parameter $\nu$ if there is a $C>0$ such that
\begin{equation}\label{eqn.Nash.inequal}
\Vert f \Vert_{L^{2} (\Xcal, \mu )}^{2 + \frac{4}{\nu}} \leqslant C \mathcal{E} (f,f) \Vert f \Vert_{L^{1} (\Xcal, \mu )}^{\frac{4}{\nu}} \text{ for all }  f \in \mathcal{D}_{\mathcal{E}}.
\end{equation}
\end{definition}

Moreover, \cite{CarlenKusuokaStroock1987} showed that \eqref{eqn.Nash.inequal} is equivalent to the $L^{1} \rightarrow L^{\infty}$ ultracontractivity of the heat semigroup with the specific power function depending on the parameter $\nu$, namely,
\begin{equation}\label{eqn.ultra.contract}
\Vert P_{t} f \Vert_{L^{\infty} (\Xcal, \mu)} \leqslant C \, t^{-\frac{\nu}{2}} \Vert f \Vert_{L^{1} (\Xcal, \mu )},
\end{equation}
for all $f \in L^{1} (\Xcal, \mu )$ and $t>0$, or equivalently
\begin{equation}\label{e.HKPolyUpperBound}
M_{\Xcal}\left( t \right) =\operatorname*{ess~sup}_{(x, y)\in \Xcal \times \Xcal } p_{t}\left( x, y \right) \leqslant C  t^{-\frac{\nu}{2}},
\end{equation}
for all $t>0$, where $M_{\Xcal}\left( t \right)$ is defined in Notation \ref{n.EssupHK}.

\begin{remark}\label{rem.Varopoulos}
The polynomial heat kernel upper bound \eqref{e.HKPolyUpperBound} was considered by Varopoulos on Lie groups with $\nu = n $, where $n$ is the dimension of the  group, see \cite{Varopoulos1985a, Varopoulos1988a}, \cite[p. 5]{Varopoulos1990a}.
\end{remark}

\subsection{Irreducibility on  $\Ucal$}\label{s.irr}
Before we proceed to the main subject of this section, we formulate an additional assumption on $\Xcal$.

\begin{assumption}\label{assumption.pre-compact.balls}  $\Xcal=\left( \Xcal, d \right)$ is a metric space such that metric balls in $\Xcal$ is pre-compact.
\end{assumption}
In this section we assume that Assumption~\ref{assumption.pre-compact.balls} holds. For example, by \cite[Proposition~2.5.22]{BuragoBuragoIvanovBookEng} if we assume $\left( \Xcal, d \right)$ is a complete locally compact length space, then every closed ball in $X$ is compact.  This assumption is equivalent to $X$ being boundedly compact, and under such an assumption $\Xcal$ is complete. Recall that we only consider Radon measures, therefore under this assumption open and bounded sets have finite measure. 

The main result of this section is Theorem~\ref{thm.irr.simple.evalue}, where we prove irreducibility of the semigroup $P_{t}^{\Ucal}$. As a consequence of irreducibility of the semigroup, we prove that the first eigenvalue $\lambda_{1}^{\Ucal}$ of $-A_{\Ucal}$ is simple. We use the definition of irreducibility of Dirichlet forms and corresponding semigroups found in \cite[p. 55]{FukushimaOshimaTakedaBook2011}. For a definition of irreducible semigroups on Banach lattices we refer to \cite[Section 14.3]{BatkaiKramarRhandiBook2017}. We will use the following characterization of irreducible semigroups in \cite[Example 14.11]{BatkaiKramarRhandiBook2017}.

Let $P_{t}$ be a strongly continuous semigroup on $L^{p} (\Ucal, \mu )$, $1\leqslant p \leqslant \infty$ with the generator $\mathcal{L}$, which is defined using \cite[Theorem 1.3.3]{DaviesBook1989}. Let $s(\mathcal{L}):= \sup \{ \operatorname{Re} (\lambda), \lambda \in \sigma(\mathcal{L}) \}$ be the spectral bound which in our case is equal to the  spectral radius   \cite[p. 48]{BatkaiKramarRhandiBook2017},  
 and $R_{\lambda} = (\mathcal{L}-\lambda I)^{-1}$ be the resolvent operator, for $\lambda$ in the resolvent set of $\mathcal{L}$.

\begin{lemma}[Example 14.11 in \cite{BatkaiKramarRhandiBook2017}]\label{lemma.irredu}
Let $P_{t}$ be any strongly continuous positive semigroup on $L^{p} (\Ucal, m )$, $1\leqslant p < \infty$.  Then $P_{t}$ is irreducible if and only if for any positive $f \in  L^{p} (\Ucal, \mu)$ we have that
\begin{align*}
R_{\lambda}f (x) >0, \text{ for $\mu$-a.e. }  x\in \Ucal \text{ and some }  \lambda >s(\mathcal{L}).
\end{align*}
\end{lemma}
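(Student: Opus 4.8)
The plan is to obtain the lemma from the Laplace-transform representation of the resolvent together with the lattice structure of $L^{p}(\Ucal,\mu)$. First I would record the standard fact that for $\lambda>s(\mathcal{L})$ one has
\[
R_{\lambda}f=\int_{0}^{\infty}e^{-\lambda t}P_{t}f\,dt,
\]
the integral converging as a Bochner integral in $L^{p}(\Ucal,\mu)$ because for a positive strongly continuous semigroup on $L^{p}$, $1\leqslant p<\infty$, the spectral bound $s(\mathcal{L})$ equals the growth (type) bound. Positivity of $P_{t}$ then gives $R_{\lambda}\geqslant 0$, and being a resolvent, $R_{\lambda}$ is injective.

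For the ``if'' direction, assume $R_{\lambda}f>0$ $\mu$-a.e.\ for every positive $f\in L^{p}(\Ucal,\mu)$ and the given $\lambda>s(\mathcal{L})$, and let $E$ be a $P_{t}$-invariant Borel set with $\mu(E)>0$. Pick a positive $f\in L^{p}(\Ucal,\mu)$ supported in $E$ (possible since $\mu$ is $\sigma$-finite). Invariance gives $P_{t}f=\mathbbm{1}_{E}P_{t}f$ for all $t>0$, so every $e^{-\lambda t}P_{t}f$ lies in the closed subspace $\{u\in L^{p}(\Ucal,\mu):u=\mathbbm{1}_{E}u\}$, hence so does the Bochner integral $R_{\lambda}f$. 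Thus $R_{\lambda}f$ vanishes $\mu$-a.e.\ on $E^{c}$, which together with $R_{\lambda}f>0$ $\mu$-a.e.\ forces $\mu(E^{c})=0$; therefore $\{P_{t}\}_{t\geqslant 0}$ is irreducible.

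For the ``only if'' direction, assume $\{P_{t}\}_{t\geqslant 0}$ is irreducible, fix $\lambda>s(\mathcal{L})$ and a positive $f\in L^{p}(\Ucal,\mu)$, and set $g:=R_{\lambda}f\geqslant 0$; injectivity gives $g\neq 0$, so $E:=\{g>0\}$ has $\mu(E)>0$. Applying the bounded operator $P_{s}$ under the integral and changing variables yields, for each $s>0$,
\[
P_{s}g=\int_{0}^{\infty}e^{-\lambda t}P_{s+t}f\,dt=e^{\lambda s}\Bigl(g-\int_{0}^{s}e^{-\lambda r}P_{r}f\,dr\Bigr).
\]
On $E^{c}=\{g=0\}$ the right-hand side equals $-e^{\lambda s}\int_{0}^{s}e^{-\lambda r}P_{r}f\,dr\leqslant 0$ $\mu$-a.e., while $P_{s}g\geqslant 0$ $\mu$-a.e.; hence $P_{s}g=0$ $\mu$-a.e.\ on $E^{c}$ for every $s>0$. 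I would then upgrade this to invariance of the closed ideal $J_{E}=\{u\in L^{p}(\Ucal,\mu):u=0\ \mu\text{-a.e.\ on }E^{c}\}$, which is the closed ideal generated by $g$: for $0\leqslant u\in J_{E}$ one has $u\wedge ng\uparrow u$ in $L^{p}$, and $0\leqslant P_{s}(u\wedge ng)\leqslant nP_{s}g\in J_{E}$, so $P_{s}u\in J_{E}$ by closedness of $J_{E}$, and the general case follows from $|P_{s}u|\leqslant P_{s}|u|$. Hence $E$ is a $P_{t}$-invariant set of positive measure, and irreducibility gives $\mu(E^{c})=0$, i.e.\ $R_{\lambda}f>0$ $\mu$-a.e.; since $\lambda>s(\mathcal{L})$ was arbitrary, this proves the equivalence.

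The step I expect to be the main obstacle is the lattice-theoretic bookkeeping in the necessity direction: passing from the pointwise statement ``$P_{s}g=0$ $\mu$-a.e.\ on $\{g=0\}$'' to invariance of the whole closed ideal $J_{E}$ (using that closed ideals of $L^{p}(\Ucal,\mu)$ are exactly the sets $\{u:u=0$ off some measurable $Y\}$, together with the approximation $u\wedge ng\uparrow u$), and then reconciling ``invariant closed ideal'' with the measure-theoretic notion of $P_{t}$-invariant set used elsewhere in the paper. A minor point is to pin down the convergence range of the Laplace integral, i.e.\ that $s(\mathcal{L})$ coincides with the type of $\{P_{t}\}_{t\geqslant 0}$ for positive semigroups on $L^{p}$, which is exactly what allows us to work with an arbitrary $\lambda>s(\mathcal{L})$.
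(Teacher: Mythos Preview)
The paper does not prove this lemma; it is quoted verbatim as Example~14.11 from \cite{BatkaiKramarRhandiBook2017} and used as a black box in the proof of Theorem~\ref{thm.irr.simple.evalue}. Your argument is therefore not being compared against anything in the paper, but it is correct and is essentially the standard proof one would find in the cited reference: the Laplace representation of the resolvent reduces both directions to statements about invariance of the closed ideal $J_{E}=\{u:u=0\text{ on }E^{c}\}$, and you handle the passage from ``$P_{s}g$ vanishes on $\{g=0\}$'' to invariance of $J_{E}$ cleanly via the approximation $u\wedge ng\uparrow u$. The one point worth flagging is the identity $s(\mathcal{L})=\omega_{0}$ for positive $C_{0}$-semigroups on $L^{p}$, $1\leqslant p<\infty$, which you correctly invoke to justify the Laplace integral for every $\lambda>s(\mathcal{L})$; this is a nontrivial theorem (Derndinger for $p=1$, Weis in general) and should be cited rather than asserted. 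Apart from that, your self-identified ``main obstacle'' is not actually an obstacle: closed ideals in $L^{p}(\Ucal,\mu)$ are exactly the $J_{E}$'s, and invariance of $J_{E}$ under $P_{t}$ is precisely the set-theoretic invariance of $E$ used elsewhere in the paper (modulo what appears to be a typo in the paper's Definition preceding Proposition~\ref{thm-lambda-1}, where ``on $E$'' should read ``on $E^{c}$'').
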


\begin{theorem}[Irreducibility of the semigroup and simplicity of the ground state]\label{thm.irr.simple.evalue}
Suppose that the heat kernel $p_{t}(x, y)$ exists for all $t$ and all $x,y\in \Xcal$. Let  $\Ucal$ be an open  path-connected set in $\Xcal$ such that $0<\mu(\Ucal)<\infty$,  and assume that for any $y\in \Ucal$ and any $r$ small enough and any $x\in B_{r} (y)$ there exists   $t_{0} = t_{0} (x,y,r)$ such that for any $z\in  \Ucal^c$ and any $s<t<t_{0}$ one has that
\[
p_{t}(x,y) - p_{s}(z,y) >0.
\]
Then $P_{t}^{\Ucal}$ is irreducible.	

Moreover, if $P_{t}^{\Ucal}$ is a compact operator for some $t$, then $\lambda_{1}^{\Ucal}$ is a simple eigenvalue and there exists a corresponding eigenfunction $\varphi$ such that $\varphi (x) >0$ for $\mu$-a.e. $x\in \Ucal$.
\end{theorem}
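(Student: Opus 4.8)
The plan is to establish the two assertions separately: irreducibility of $P_t^\Ucal$ through the resolvent characterization in Lemma~\ref{lemma.irredu}, and simplicity of the ground state through the Krein--Rutman/Perron--Frobenius theory for positive compact irreducible operators. Since $\Ucal$ is open, $(\mathcal{E},\mathcal{D}_{\mathcal{E}}(\Ucal))$ is a regular Dirichlet form on $L^{2}(\Ucal,\mu)$, so $P_t^\Ucal$ is a strongly continuous positive (sub-Markovian) semigroup whose kernel $p_t^\Ucal$ is given by the Dynkin--Hunt formula \eqref{eqn.Dirichlet.HK}. By Lemma~\ref{lemma.irredu} it suffices to show that for every positive $f\in L^{2}(\Ucal,\mu)$ one has $R_\lambda f>0$ $\mu$-a.e.\ for $\lambda>0$; since $R_\lambda f=\int_0^\infty e^{-\lambda t}P_t^\Ucal f\,dt$ and $P_t^\Ucal f(x)=\int_\Ucal p_t^\Ucal(x,y)f(y)\,d\mu(y)$, this reduces to proving that $p_t^\Ucal(x,y)>0$ for $\mu\times\mu$-a.e.\ $(x,y)\in\Ucal\times\Ucal$ and all $t$ in a nondegenerate interval (possibly depending on $(x,y)$). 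By symmetry of $P_t^\Ucal$ this is also the irreducibility in the sense of invariant sets asserted in the theorem.

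The first ingredient is strict positivity of $p_t^\Ucal$ near the diagonal. From the Dynkin--Hunt formula,
\[
p_t^\Ucal(x,y)=p_t(x,y)-\E^x\big[\mathbbm{1}_{\{\tau_\Ucal<t\}}\,p_{t-\tau_\Ucal}(X_{\tau_\Ucal},y)\big],
\]
and on $\{\tau_\Ucal<t\}$ one has $X_{\tau_\Ucal}\in\Ucal^c$ and $0<t-\tau_\Ucal<t$ (as $\tau_\Ucal>0$ $\Prob^x$-a.s.\ for $x\in\Ucal$). Hence, for $y\in\Ucal$, $r$ small, $x\in B_r(y)$ and $0<t<t_0(x,y,r)$, the hypothesis applied with $z=X_{\tau_\Ucal}$ and $s=t-\tau_\Ucal$ gives $p_{t-\tau_\Ucal}(X_{\tau_\Ucal},y)\le p_t(x,y)$ on that event, so
\[
p_t^\Ucal(x,y)\ \ge\ p_t(x,y)\,\Prob^x(\tau_\Ucal\ge t)\ >\ 0,
\]
where $p_t(x,y)>0$ again by the hypothesis (with any $z\in\Ucal^c$) and $\Prob^x(\tau_\Ucal\ge t)>0$ for $x\in\Ucal$ and $t$ small. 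To globalize, fix $x,y\in\Ucal$ and, using path-connectedness, a path in $\Ucal$ joining them; by Assumption~\ref{assumption.pre-compact.balls} its image is compact, so it is covered by finitely many such balls $B_{r_0}(w_0),\dots,B_{r_n}(w_n)$ with $x\in B_{r_0}(w_0)$, $y\in B_{r_n}(w_n)$ and consecutive balls overlapping in a nonempty open set (hence of positive measure, as $\mu$ has full support). For $s>0$ small enough the Chapman--Kolmogorov identity yields
\[
p_{(n+1)s}^\Ucal(x,y)\ \ge\ \int\cdots\int p_s^\Ucal(x,z_1)\,p_s^\Ucal(z_1,z_2)\cdots p_s^\Ucal(z_n,y)\,d\mu(z_1)\cdots d\mu(z_n),
\]
the integration running over a set of positive measure of tuples lying in the successive overlaps, on which every factor is strictly positive by the local estimate and symmetry of $p_s^\Ucal$; hence $p_{(n+1)s}^\Ucal(x,y)>0$ for all sufficiently small $s$, i.e.\ for $t$ in a nondegenerate interval. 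Carrying this out for $\mu\times\mu$-a.e.\ $(x,y)$ and combining with the reduction above shows $P_t^\Ucal$ is irreducible.

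For the second assertion, assume $P_t^\Ucal$ is compact for some $t>0$. By Theorem~\ref{thm-discrete-compact} the spectrum of $-A_{\Ucal}$ is discrete and $P_t^\Ucal$ is a compact positive operator on $L^{2}(\Ucal,\mu)$; by self-adjointness its spectral radius, equal to $\Vert P_t^\Ucal\Vert$, is $e^{-\lambda_1^\Ucal t}$, the largest eigenvalue. Since $P_t^\Ucal$ is irreducible, the Krein--Rutman theorem \cite{KreinRutman1948} combined with the Perron--Frobenius theory for irreducible positive operators (see \cite{BatkaiKramarRhandiBook2017}) shows that $e^{-\lambda_1^\Ucal t}$ is a simple eigenvalue of $P_t^\Ucal$ with a $\mu$-a.e.\ strictly positive eigenfunction $\varphi$. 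Because the $\lambda_1^\Ucal$-eigenspace of $-A_{\Ucal}$ coincides with the $e^{-\lambda_1^\Ucal t}$-eigenspace of $P_t^\Ucal$ (if $P_t^\Ucal\psi=e^{-\lambda_1^\Ucal t}\psi$, expanding $\psi$ in the eigenbasis $\{\varphi_n\}$ as in \eqref{eqn.decomp.semigroup} forces $\psi$ to be a combination of $\lambda_1^\Ucal$-eigenfunctions), it follows that $\lambda_1^\Ucal$ is simple and that $\varphi$ is a positive ground state.

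The main difficulty will be the globalization step in the first part. Extracting strict positivity of $p_t^\Ucal$ near the diagonal from the Dynkin--Hunt formula already needs some care (in particular the point that $\Prob^x(\tau_\Ucal\ge t)>0$ for $x$ in the open set $\Ucal$ and $t$ small), but the genuinely delicate issue is that the time threshold $t_0(x,y,r)$ depends on the points, so one must verify that, along (a neighborhood of) a path joining $x$ to $y$, a single small $s$ can be chosen making all the factors in the Chapman--Kolmogorov product simultaneously positive on a set of tuples of positive measure. This is where pre-compactness of balls (Assumption~\ref{assumption.pre-compact.balls}), openness and path-connectedness of $\Ucal$, and the full support of $\mu$ enter, together with the Fubini-type bookkeeping needed to reduce irreducibility to a.e.\ positivity of $p_t^\Ucal$.
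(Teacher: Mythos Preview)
Your proposal is correct and follows essentially the same route as the paper: establish local positivity of $p_t^{\Ucal}$ near the diagonal via the Dynkin--Hunt formula and the hypothesis, globalize by a finite chaining along a path using Chapman--Kolmogorov, conclude irreducibility through the resolvent criterion of Lemma~\ref{lemma.irredu}, and then obtain simplicity and strict positivity of the ground state from Krein--Rutman combined with irreducibility. The only cosmetic differences are that the paper bounds $p_t^{\Ucal}(x,y)$ directly by $\E^x[p_t(x,y)-p_{t-\tau_{\Ucal}}(X_{\tau_{\Ucal}},y)]>0$ rather than via $p_t(x,y)\,\Prob^x(\tau_{\Ucal}\ge t)$, and for simplicity it first proves $\varphi>0$ pointwise from positivity of $p_t^{\Ucal}$ and then invokes \cite[Proposition~14.42(c)]{BatkaiKramarRhandiBook2017}, whereas you appeal to the irreducible Perron--Frobenius statement in one step; both are equivalent.
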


\begin{proof}
Let us first prove that $P_{t}^{\Ucal}$ is irreducible.	 For any $x_{0}, y_{0} \in \Ucal$ let $\gamma$ be a continuous path from $x_{0}$ to $y_{0}$ in $\Ucal$ and
\[
\varepsilon:=\frac12d(\gamma,\partial\Ucal)>0.
\]
We can cover $\gamma$ by finitely many open balls of radius $\delta_{\varepsilon}$. By Dynkin-Hunt's formula \eqref{eqn.Dirichlet.HK} we know that $p^{\Ucal}_{t}(x,y)>0$ if $t \leqslant t_{0}=t_{0}(x,y, \delta_{\varepsilon})$. Indeed,
\begin{align*}
& p^{\Ucal}_{t}(x,y)= p_{t}(x,y) - \E^x \left[ \mathbbm{1}_{\{ \tau_{\Ucal} < t
\} } \,  p_{ t- \tau_{\Ucal}} \left( X_{\tau_{\Ucal}}, y\right) \right]
\\
& \geqslant p_{t}(x,y) - \E^x \left[  p_{ t- \tau_{\Ucal}} \left( X_{\tau_{\Ucal}}, y\right) \right] = \E^x \left[ p_{t}(x,y) - p_{ t- \tau_{\Ucal}} \left( X_{\tau_{\Ucal}}, y\right) \right] >0
\end{align*}
for any $t<t_{0}$ and $x\in B_{\delta_{\varepsilon}}(y)$. Therefore, the chaining argument and the Chapman-Kolmogorov equation imply that $p^{\Ucal}_{t}(x_{0},y_{0})>0$ for all small enough $t>0$. We can now prove the irreducibility of $P_{t}^{\Ucal}$. We can use \cite[Exercise 1.3.1]{FukushimaOshimaTakedaBook2011} to express the resolvent $R_{\mu}$ in terms of the semigroup $P_{t}^{\Ucal}$. Then, by Lemma \ref{lemma.irredu} for any $\lambda \in \R$ and any $f>0$, and for a.e. $x \in \Ucal$ we have that
\begin{align*}
R_{\lambda}f (x) = \int_{0}^{\infty} e^{-\lambda t} (P_{t}^{\Ucal} f)(x)dt=0
\end{align*}
if and only if  $(P_{t}^{\Ucal} f)(x)=0$ for a.e. $t>0$, since $P_{t}^{\Ucal}$ is a positive operator. Thus, $R_{\lambda}f (x) =0$ if and only if
\begin{align*}
\int_{\Ucal} f(y) p^{\Ucal}_{t} (x,y)  d\mu (y)=0,
\end{align*}
that is, if and only if for $\mu$-a.e. $x\in \Ucal$, $p^{\Ucal}_{t} (x,y)$ is zero on a set of positive
measure, which is not possible since $p_{t}^{\Ucal} (x,y) >0$ for all small enough $t>0$.   Hence $P_{t}^{\Ucal}$ is irreducible.

Let us now prove that the first eigenvalue is simple. By Proposition~\ref{thm.spectral.gap} the operator $A_{\Ucal}$ has a spectral gap $\lambda_{1}^{\Ucal}>0$ since $\Ucal$ is open and  $0 < \mu (\Ucal) < \infty$, and $P_{t}^{\Ucal}$ is a strongly continuous semigroup which is bounded for some $t>0$. Its spectral radius is given by $e^{-\lambda_{1}^{\Ucal} t}$, and $K:= \left\{ f\in L^{2} (\Ucal, m ), f \geqslant 0  \text{ a.s.} \right\}$ is a cone in $L^{2} (\Ucal, m )$ such that $P_{t}^{\Ucal} (K) \subset K$. Thus by the Krein-Rutman theorem in \cite{KreinRutman1948}, there exists an eigenfunction $\varphi$ of  $P_{t}^{\Ucal}$ with the eigenvalue $e^{-\lambda_{1}^{\Ucal} t}$ such that $\varphi \in K \backslash \{ 0\}$. By Theorem~\ref{thm-discrete-compact} we know that $\varphi$ is an eigenfunction of $- A_{\Ucal}$ with the eigenvalue $\lambda_{1}^{\Ucal}$. Let us assume that $\varphi(x)=0$ for some $x\in \Ucal$. Then
\begin{align*}
& 0= \varphi(x) = e^{\lambda_{1}^{\Ucal} t} \int_{\Ucal} \varphi(y)p^{\Ucal}_{t} (x, y) d\mu (y) \geqslant 0,
\end{align*}
and hence $\varphi(y)p^{\Ucal}_{t} (x,y)=0$ for a.e. $y\in \Ucal$. The set
\begin{align*}
A:= \{ z\in \Ucal:  \varphi(z) >0 \}
\end{align*}
has positive measure since $\varphi\in K \backslash \{0\}$. Thus, $p^{\Ucal}_{t} (x,y)=0$ for almost every $y\in A$, which is a contradiction  with the irreducibility of $P_{t}^{\Ucal}$.

The semigroup $P_{t}^{\Ucal}$ is irreducible and its generator $A_{\Ucal}$ is self-adjoint, and we proved that there exists $\varphi \in \ker (- \lambda_{1}^{\Ucal} - A_{\Ucal} )$ such that $\varphi>0$. Thus, by \cite[Proposition 14.42 (c)]{BatkaiKramarRhandiBook2017} it follows that $\operatorname{dim} \ker (- \lambda_{1}^{\Ucal} - A_{\Ucal} )=1$.
\end{proof}

\subsection{Regularity and $L^{p}$ bounds for eigenfunctions}\label{s.EigenfunctionsBounds} 
We follow the same assumptions as in Subsection~\ref{s.setting} and \ref{s.irr}.
Let us set 
\begin{align*}
& c_{t, p, q}   := \left( \int_{\Xcal}  \Vert p _{t} (x, \cdot ) \Vert^{q}_{L^{p} } d\mu (x) \right)^{\frac{1}{q}} ,  
& c_{t, p}   :=c_{t, p, 2}    ,
& & A_{p,q,a} := \inf_{t>0} e^{a t} c_{t,p.q},
\end{align*}
where $a>0$ is a fixed number. 

\begin{theorem}\label{thm.Lp.bounds}
Let $\{P_{t} \}_{t \geqslant 0}$ be a strongly continuous contraction semigroup on $L^{2} (\Xcal, \mu )$. Assume that $P_{t}$ is compact for some $t >0$. Then its infinitesimal generator $\mathcal{L}$ admits an eigensystem $\{\varphi_{n}, \lambda_{n} \}_{n=1}^{\infty}$ with the following properties. 

(1) The eigenfunctions satisfy the following $L^{p}$-bounds

\begin{align}
& \Vert \varphi_{n} \Vert_{L^{p} (\Xcal, \mu )} 
\leqslant A_{p, 2,\lambda_{n}}  \; \text{ for any } \; 1 < p \leqslant \infty,  \label{eqn.Lp.bounds}
\\
&\Vert \varphi_{n} \Vert_{L^{1} (\Xcal, \mu )} 
\leqslant \inf \limits_{1<p,q < \infty, \; \frac{1}{p} + \frac{1}{q} =1} \Vert \varphi_{n} \Vert_{L^{p} (\Xcal, \mu )}  A_{ q,1  , \lambda_{n}} ,\notag
\\
& \Vert \varphi_{n} \Vert_{L^{p} (\Xcal, \mu )} 
\geqslant   \frac{1}{A_{q,2,\lambda_{n}} }  \text{ for any }  1 < p < \infty \;  \text{ where } \; \frac{1}{q} = 1 - \frac{1}{p}, \notag
\\
&  \Vert \varphi_{n} \Vert_{L^{\infty} (\Xcal, \mu )} \geqslant \sup_{1<p<2} \left( A_{p,2,\lambda_{n}}   \right)^{ \frac{p}{2-p}}  .  \notag
\end{align}

(2) 
If $M_{\Xcal}\left( t \right) < \infty$ for all $t>0$ 
and $C(\lambda):=\inf_{t>0}  M_{\Xcal}\left( t \right)  e^{\lambda t}$, then
\begin{align}\label{e-est-eig-p}
& \Vert \varphi_{n} \Vert_{L^{p}(\Xcal, \mu ) } 
\leqslant \mu(\Xcal)^{\frac{1}{p} + \frac{1}{2}} C(\lambda_n) 
\leqslant  \mu(\Xcal)^{ \frac{1}{2}} \max\left( 1, \mu (\Xcal) \right) C(\lambda_n),
\end{align}
for any $1\leqslant p \leqslant \infty$. 

(3) 
If $p_{t_{0}}  (x,y)$ is H\"older continuous on $\Xcal \times \Xcal$ for some $t_{0}>0$, then $\varphi_{n} (x)$ is continuous in $\Xcal$ for any $n\in \mathbb{N}$.
\end{theorem}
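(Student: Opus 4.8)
The plan is to derive all three parts from a single \emph{eigenfunction reproducing identity}. By Theorem~\ref{thm-discrete-compact}, compactness of $P_{t}$ for one (hence every) $t>0$ forces the (self-adjoint, since we are in a symmetric setting) generator $\mathcal{L}$ to have discrete spectrum, so the eigensystem $\{\varphi_{n},\lambda_{n}\}_{n=1}^{\infty}$ of Notation~\ref{notation.basis} exists, is $L^{2}$-orthonormal, and satisfies $P_{t}\varphi_{n}=e^{-\lambda_{n}t}\varphi_{n}$ for all $t>0$. Writing $P_{t}$ through its kernel gives, for each $t>0$,
\begin{equation*}
\varphi_{n}(x)=e^{\lambda_{n}t}\,(P_{t}\varphi_{n})(x)=e^{\lambda_{n}t}\int_{\Xcal}p_{t}(x,y)\,\varphi_{n}(y)\,d\mu(y)\qquad\text{for }\mu\text{-a.e. }x,
\end{equation*}
and, since $\mathcal{L}$ is self-adjoint, the kernel is symmetric, $p_{t}(x,y)=p_{t}(y,x)$, which will be used to interchange the two variables inside the quantities $c_{t,p,q}$. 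The manipulations below are legitimate precisely when the relevant $c_{t,p,q}$ or $M_{\Xcal}(t)$ are finite, which is also the only case in which the asserted estimates are not vacuous.

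\emph{Part (1).} For the upper bound $\Vert\varphi_{n}\Vert_{L^{p}}\leqslant A_{p,2,\lambda_{n}}$, $1<p\leqslant\infty$, apply Minkowski's integral inequality in $x$ to the reproducing identity and then Cauchy--Schwarz in $y$ (using $\Vert\varphi_{n}\Vert_{L^{2}}=1$):
\begin{equation*}
\Vert\varphi_{n}\Vert_{L^{p}}\leqslant e^{\lambda_{n}t}\int_{\Xcal}\Vert p_{t}(\cdot,y)\Vert_{L^{p}}\,|\varphi_{n}(y)|\,d\mu(y)=e^{\lambda_{n}t}\int_{\Xcal}\Vert p_{t}(y,\cdot)\Vert_{L^{p}}\,|\varphi_{n}(y)|\,d\mu(y)\leqslant e^{\lambda_{n}t}c_{t,p,2},
\end{equation*}
then take $\inf_{t>0}$. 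For the $L^{1}$ bound, estimate the identity pointwise by H\"older with conjugate exponents $(p,q)$, integrate in $x$ to get $\Vert\varphi_{n}\Vert_{L^{1}}\leqslant e^{\lambda_{n}t}\Vert\varphi_{n}\Vert_{L^{p}}c_{t,q,1}$, take $\inf_{t}$, then $\inf$ over conjugate pairs. For the lower bound, rewrite the identity as $1=\Vert\varphi_{n}\Vert_{L^{2}}=e^{\lambda_{n}t}\Vert P_{t}\varphi_{n}\Vert_{L^{2}}$ and bound $\Vert P_{t}\varphi_{n}\Vert_{L^{2}}\leqslant c_{t,q,2}\Vert\varphi_{n}\Vert_{L^{p}}$ (H\"older in $y$ inside $|(P_{t}\varphi_{n})(x)|^{2}$, then integrate in $x$), so $1\leqslant e^{\lambda_{n}t}c_{t,q,2}\Vert\varphi_{n}\Vert_{L^{p}}$ for all $t$, whence $\Vert\varphi_{n}\Vert_{L^{p}}\geqslant 1/A_{q,2,\lambda_{n}}$. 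Finally, the $L^{\infty}$ lower bound follows by combining the interpolation inequality $1=\Vert\varphi_{n}\Vert_{L^{2}}\leqslant\Vert\varphi_{n}\Vert_{L^{p}}^{p/2}\Vert\varphi_{n}\Vert_{L^{\infty}}^{1-p/2}$ (valid for $1<p<2$) with the just-proved $L^{p}$ bound and optimizing over $p$.

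\emph{Parts (2) and (3).} When $M_{\Xcal}(t)<\infty$ for all $t$, insert the crude bound $p_{t}(x,y)\leqslant M_{\Xcal}(t)$ into the reproducing identity and use $\Vert\varphi_{n}\Vert_{L^{1}}\leqslant\mu(\Xcal)^{1/2}\Vert\varphi_{n}\Vert_{L^{2}}=\mu(\Xcal)^{1/2}$ (Cauchy--Schwarz; if $\mu(\Xcal)=\infty$ the assertion is vacuous) to obtain $\Vert\varphi_{n}\Vert_{L^{\infty}}\leqslant\mu(\Xcal)^{1/2}M_{\Xcal}(t)e^{\lambda_{n}t}$ for all $t$, hence $\Vert\varphi_{n}\Vert_{L^{\infty}}\leqslant\mu(\Xcal)^{1/2}C(\lambda_{n})$ after $\inf_{t}$; then $\Vert\varphi_{n}\Vert_{L^{p}}\leqslant\mu(\Xcal)^{1/p}\Vert\varphi_{n}\Vert_{L^{\infty}}$ gives \eqref{e-est-eig-p}, the last inequality being the elementary $\mu(\Xcal)^{1/p}\leqslant\max(1,\mu(\Xcal))$. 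For (3), take $t=t_{0}$ and let $\tilde\varphi_{n}(x)$ denote the right-hand side of the reproducing identity, which (with $p_{t_{0}}$ now bounded) makes sense for every $x$; by Cauchy--Schwarz
\begin{equation*}
|\tilde\varphi_{n}(x)-\tilde\varphi_{n}(x')|\leqslant e^{\lambda_{n}t_{0}}\,\Vert p_{t_{0}}(x,\cdot)-p_{t_{0}}(x',\cdot)\Vert_{L^{2}}\,\Vert\varphi_{n}\Vert_{L^{2}},
\end{equation*}
and joint H\"older continuity of $p_{t_{0}}$ bounds the $L^{2}$-norm on the right by a constant times $d(x,x')^{\gamma}$ (using $\mu(\Xcal)<\infty$, or in the infinite-measure case a pointwise H\"older estimate together with $\varphi_{n}\in L^{1}$ from Part~(1)); thus $\tilde\varphi_{n}$ is H\"older continuous, agrees with $\varphi_{n}$ $\mu$-a.e., and, by full support of $\mu$, is the unique continuous representative of $\varphi_{n}$.

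\emph{Main obstacle.} There is no deep difficulty here --- the whole argument is Davies-style bookkeeping with H\"older's inequality, the eigenrelation $P_{t}\varphi_{n}=e^{-\lambda_{n}t}\varphi_{n}$, and the symmetry of $p_{t}$. The two points demanding genuine care are (a) ensuring the reproducing identity is used in a form that is legitimately pointwise (for Part~(3)) rather than merely $\mu$-a.e., which is guaranteed once $p_{t_{0}}$ is bounded; and (b) correctly matching the two indices of $c_{t,p,q}$ and their conjugates when passing between the upper bounds, the $L^{1}$ bound, and the lower bounds --- it is exactly there that the symmetry $p_{t}(x,y)=p_{t}(y,x)$ is indispensable, and a mismatch would swap $c_{t,p,2}$ with $c_{t,2,p}$.
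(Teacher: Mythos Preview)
Your proof is correct and follows essentially the same approach as the paper. The only cosmetic differences are that for the $L^{p}$ upper bound in Part~(1) the paper first bounds $\Vert P_{t}\Vert_{L^{q}\to L^{2}}\leqslant c_{t,q/(q-1)}$ and then dualizes to obtain $\Vert P_{t}\Vert_{L^{2}\to L^{p}}\leqslant c_{t,p}$, whereas you reach the same estimate directly via Minkowski's integral inequality and Cauchy--Schwarz; and for Part~(2) the paper substitutes $p_{t}\leqslant M_{\Xcal}(t)$ into the formula for $c_{t,p}$ while you bound $\Vert\varphi_{n}\Vert_{L^{\infty}}$ first and then interpolate down --- both routes yield the identical constant $\mu(\Xcal)^{1/p+1/2}C(\lambda_{n})$.
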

\begin{remark} 
Note that  the right-hand side of \eqref{e-est-eig-p} is independent of $p$. We also mention that sufficient conditions for H\"older and Lipschitz continuity of the heat kernel on metric measure spaces have been studied recently in  \cite{BifulcoMugnolo2023}. 
\end{remark}

In our paper we are mostly interested in the following corollary. 
\begin{corollary}\label{cor.bound.efunction}
If $\Ucal\subset\Xcal$ is an open set in $\Xcal$ such that $0< \mu (\Ucal) < \infty$,  we can apply Theorem \ref{thm.Lp.bounds} to $P_{t}^{\Ucal}$ with 
\begin{equation}
c_{t,p,q} =c_{t, p,q}  (\Ucal) := \left( \int_{\Ucal}  \Vert p _{t}^\Ucal (x, \cdot ) \Vert^{q}_{L^{p} } d\mu (x) \right)^{\frac{1}{q}},
\end{equation}
and 
\begin{align*}
C(\lambda ) = C(\lambda, \Ucal ) := \inf_{t>0}  M_{\Ucal}\left( t \right)  e^{\lambda t}.
\end{align*}
In particular, the following estimates hold
\begin{equation}\label{eq-Linfty}
\Vert \varphi_{n} \Vert_{L^{\infty}(\Ucal, \mu ) } \leqslant \mu(\Ucal)^{\frac{1}{2}} C(\lambda_{n}, \Ucal ),
\end{equation} 
\begin{equation}\label{eq-L1}
  \Vert \varphi_{n} \Vert_{L^{1}(\Ucal, \mu ) } \leqslant \mu(\Ucal)^{\frac{5}{2}} C(\lambda_{n}, \Ucal )^{2},
\end{equation} 
\begin{equation}
1=\Vert \varphi_{n} \Vert_{L^{2}(\Ucal, \mu ) } \leqslant \mu(\Ucal)  C(\lambda_{n}, \Ucal ).
\end{equation} 
\end{corollary}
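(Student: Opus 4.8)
The plan is to derive the corollary as a direct specialization of Theorem~\ref{thm.Lp.bounds} to the restricted semigroup $P_{t}^{\Ucal}$ acting on $H=L^{2}(\Ucal,\mu)$, followed by a single Hölder step. First I would verify that the hypotheses of Theorem~\ref{thm.Lp.bounds} hold for $P_{t}^{\Ucal}$: since $\Ucal$ is open, $(\mathcal{E},\mathcal{D}_{\mathcal{E}}(\Ucal))$ is a regular Dirichlet form on $L^{2}(\Ucal,\mu)$ (as recalled in the proof of Proposition~\ref{thm.spectral.gap}, via \cite{GrigoryanHuLau2014}), so $P_{t}^{\Ucal}$ is a strongly continuous Markovian, hence contraction, semigroup; and $P_{t}^{\Ucal}$ is compact for all $t>0$ by Lemma~\ref{lem-comp}, which applies because $0<\mu(\Ucal)<\infty$ and $M_{\Ucal}(t_{0})<\infty$ for some $t_{0}>0$ (the same condition as in Proposition~\ref{thm.spectral.gap}). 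The heat kernel of $P_{t}^{\Ucal}$ is the Dirichlet heat kernel $p_{t}^{\Ucal}$ given by Dynkin--Hunt's formula \eqref{eqn.Dirichlet.HK}, and replacing $\Xcal$ by $\Ucal$ in the definitions of $c_{t,p,q}$ and $C(\lambda)$ produces exactly $c_{t,p,q}(\Ucal)$ and $C(\lambda,\Ucal)$ as displayed in the statement. Hence Theorem~\ref{thm.Lp.bounds} applies to the eigensystem $\{\varphi_{n},\lambda_{n}\}_{n=1}^{\infty}$ of $A_{\Ucal}$.

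Next I would read off the stated inequalities from the conclusions of Theorem~\ref{thm.Lp.bounds}. Since $p_{t}^{\Ucal}\leqslant p_{t}$ we have $M_{\Ucal}(t)\leqslant M_{\Xcal}(t)$, and moreover the semigroup identity together with $\int_{\Ucal}p_{t}^{\Ucal}(x,\cdot)\,d\mu\leqslant 1$ gives $M_{\Ucal}(t)\leqslant M_{\Ucal}(t_{0})$ for $t\geqslant t_{0}$, so the finiteness hypothesis of part~(2) is available on $\Ucal$ (and times at which $M_{\Ucal}$ is infinite simply do not contribute to the infimum defining $C(\lambda,\Ucal)$). Applying \eqref{e-est-eig-p} with $p=\infty$ yields $\Vert\varphi_{n}\Vert_{L^{\infty}(\Ucal,\mu)}\leqslant\mu(\Ucal)^{1/2}C(\lambda_{n},\Ucal)$, which is \eqref{eq-Linfty}; applying it with $p=2$ yields $\Vert\varphi_{n}\Vert_{L^{2}(\Ucal,\mu)}\leqslant\mu(\Ucal)\,C(\lambda_{n},\Ucal)$, and the left-hand side is $1$ because $\{\varphi_{n}\}$ is orthonormal (Notation~\ref{notation.basis}); in particular $\mu(\Ucal)\,C(\lambda_{n},\Ucal)\geqslant 1$.

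Finally, for \eqref{eq-L1} I would use Hölder's inequality on the finite-measure space $\Ucal$, namely $\Vert\varphi_{n}\Vert_{L^{1}(\Ucal,\mu)}\leqslant\mu(\Ucal)\,\Vert\varphi_{n}\Vert_{L^{\infty}(\Ucal,\mu)}\leqslant\mu(\Ucal)^{3/2}C(\lambda_{n},\Ucal)$ by \eqref{eq-Linfty}, and then multiply by the factor $1\leqslant\mu(\Ucal)\,C(\lambda_{n},\Ucal)$ established above to obtain $\Vert\varphi_{n}\Vert_{L^{1}(\Ucal,\mu)}\leqslant\mu(\Ucal)^{5/2}C(\lambda_{n},\Ucal)^{2}$. (Alternatively one could specialize the second line of \eqref{eqn.Lp.bounds} with $p=q=2$ and estimate $c_{t,2,1}(\Ucal)\leqslant\mu(\Ucal)^{3/2}M_{\Ucal}(t)$ using $p_{t}^{\Ucal}\leqslant M_{\Ucal}(t)$, which gives the same bound.)

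I do not expect a serious obstacle: the whole content is bookkeeping plus Hölder. The two points deserving care are (i) confirming that strong continuity, contractivity, and compactness genuinely transfer to $P_{t}^{\Ucal}$ — this is precisely regularity of the restricted form together with Lemma~\ref{lem-comp} — and (ii) ensuring the ``$M_{\Ucal}(t)<\infty$'' hypothesis of part~(2) of Theorem~\ref{thm.Lp.bounds} is not vacuous for $\Ucal$, which is handled by the domination $M_{\Ucal}(t)\leqslant M_{\Xcal}(t)$ and the monotonization $M_{\Ucal}(t)\leqslant M_{\Ucal}(t_{0})$ for $t\geqslant t_{0}$.
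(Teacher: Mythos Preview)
Your proposal is correct and matches the paper's intent: the corollary is stated without a separate proof precisely because it is a direct specialization of Theorem~\ref{thm.Lp.bounds} to $P_{t}^{\Ucal}$, and you have spelled out exactly that specialization. Your derivation of \eqref{eq-L1} via H\"older plus the weakening factor $1\leqslant\mu(\Ucal)C(\lambda_{n},\Ucal)$ is a minor shortcut compared to the route implicit in the paper (which, judging from the analogous computation in the proof of Theorem~\ref{thm.main.local.dir.forms.lie.groups}, would use the second line of \eqref{eqn.Lp.bounds} directly and bound both $\Vert\varphi_{n}\Vert_{L^{p}}$ and $A_{q,1,\lambda_{n}}$ by a $C(\lambda_{n},\Ucal)$ factor each); both arguments are equivalent and yield the same bound.
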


\begin{proof}[Proof of Theorem \ref{thm.Lp.bounds}]
The operator $\mathcal{L}$ admits an eigensystem $\{\varphi_{n}, \lambda_{n} \}_{n=1}^{\infty}$ by Theorem \ref{thm-discrete-compact} since  $P_{t}$ is compact for some $t >0$.

(1)   For any $1\leqslant  q\leqslant \infty$ and $f\in L^{q}(\Xcal, \mu)$
\begin{align*}
& \Vert P_{t}^{} f \Vert^{2}_{L^{2}(\Xcal)} = \int_{\Xcal} \left[ \int_{\Xcal} f(y) p^{}_{t} (x,y) d\mu (y) \right]^{2}  d\mu (x) 
\\
& \leqslant \int_{\Xcal}  \Vert f \Vert^{2}_{L^{q}(\Xcal) } \Vert p^{}_{t} (x, \cdot ) \Vert^{2}_{L^{\frac{q}{q-1}}(\Xcal)} d\mu (x)
\\
& =  \Vert f \Vert^{2}_{L^{q}(\Xcal) } \int_{\Xcal} \Vert p^{}_{t} (x, \cdot ) \Vert^{2}_{L^{\frac{q}{q-1}}(\Xcal)}  d\mu (x) = \Vert f \Vert^{2}_{L^{q}(\Xcal) }  c_{t, \frac{q}{q-1}}  ^{2}.
\end{align*}
Thus we can view $P_{t}$ as an operator from  $L^{q}(\Xcal, \mu)$ to $L^{2}(\Xcal, \mu)$ for every $1\leqslant q \leqslant \infty$, and it satisfies
\begin{equation}\label{eqn.2a}
\Vert P_{t}^{} \Vert_{L^{q}(\Xcal)\to L^{2}(\Xcal)} \leqslant c_{t, \frac{q}{q-1}}  .
\end{equation}
If $1 \leqslant q <\infty$, then the adjoint $(P_{t}^{})^{\ast}  :  L^{2}(\Xcal, \mu) \rightarrow  L^{p}(\Xcal, \mu)$  satisfies
\[
\Vert (P_{t}^{})^{\ast} \Vert_{L^{2}(\Xcal)\to L^{p}(\Xcal)} \leqslant c_{t,p}, 
\]
where $p$ is the conjugate of $q$. Thus 
\[
\Vert P_{t}^{} \Vert_{L^{2}(\Xcal)\to L^{p}(\Xcal)} \leqslant c_{t,p},
\]
for any $1< p \leqslant \infty$. Let $\varphi_{n}$ be an eigenfunction for $P_{t}^{}$ with the eigenvalue $e^{-\lambda_{n} t}$, then it follows that
\begin{align*}
\Vert \varphi_{n} \Vert_{L^{p} (\Xcal) }\leqslant  c_{t,p} e^{\lambda_{n} t} \Vert \varphi_{n} \Vert_{ L^{2}(\Xcal)},
\end{align*}
and the proof is completed since the left-hand side does not depend on $t$ and the eigenfunctions form an orthonormal system in $L^{2}(\Xcal, \mu )$.

Let us now prove the $L^{1}$-bound. For any conjugate exponents $1< p,q< \infty$, and for every $t>0$ we have that
\begin{align*}
& \Vert \varphi_{n} \Vert_{L^{1} (\Xcal) }  = \int_{\Xcal} \vert \varphi_{n} (x) \vert  d\mu (x)= \int_{\Xcal} e^{\lambda_{n}t} \left| \int_{\Xcal} \varphi_{n} (y) p_{t}^{} (x,y)  d\mu (y) \right|  d\mu (x)
\\
& \leqslant \int_{\Xcal} e^{\lambda_{n}t}  \int_{\Xcal} \vert  \varphi_{n} (y) p_{t}^{} (x,y)  \vert d\mu(y)  d\mu (x) 
\\
& \leqslant    \int_{\Xcal} e^{\lambda_{n}t}  \Vert \varphi_{n} \Vert_{L^{p} (\Xcal)} \Vert p_{t}^{} (x, \cdot) \Vert_{L^{q} (\Xcal)}  d\mu (x).
\end{align*}

We now prove the lower bounds. Let $1\leqslant p < \infty$ and $q>1$ its conjugate.  Then by the $L^{q}$-upper bound and H\"older inequality we have that
\begin{align*}
1 = \Vert \varphi_{n} \Vert^{2}_{L^{2} (\Xcal, \mu )} \leqslant  \Vert \varphi_{n} \Vert_{L^{p} (\Xcal, \mu )} \Vert \varphi_{n} \Vert_{L^{q} (\Xcal, \mu )} \leqslant \Vert \varphi_{n} \Vert_{L^{p} (\Xcal, \mu )} \inf_{t>0} e^{\lambda_{n} t} c_{t,q} (\Xcal).
\end{align*}
If $p=\infty$ then by a standard interpolation inequality we can write
\begin{align*}
&1 = \Vert \varphi_{n} \Vert^{2}_{L^{2} (\Xcal, \mu )} \leqslant  \Vert \varphi_{n} \Vert^{\lambda}_{L^{q} (\Xcal, \mu )} \Vert \varphi_{n} \Vert^{1-\lambda}_{L^{\infty} (\Xcal, \mu )},
\end{align*}
for any $1< q < 2$ and $\lambda := \frac{q}{2}$. The result then follows from the $L^{q}$-upper bound.

(2) By defintion of $M_{\Xcal} (t)$ we have that $p_{t} (x,y) \leqslant M_{\Xcal} (t)$ for any $x,y \in \Xcal$, and then for any $p>1$
\begin{align*}
& c^{2}_{t,p} (\Xcal) = 
\int_{\Xcal} \Vert p_{t}^{}(x, \cdot ) \Vert^{2}_{L^{p} (\Xcal, \mu )} d\mu (x)  
= \int_{\Xcal} \left( \int_{\Xcal} p^{}_{t} (x,y)^{p}  d\mu (y) \right)^{\frac{2}{p}}  d\mu (x)
\\
& \leqslant \int_{\Xcal} \left( \int_{\Xcal} M_{\Xcal} (t)^{p}  d\mu (y) \right)^{\frac{2}{p} }  d\mu (x) = M_{\Xcal} (t)^{2} \mu (\Xcal)^{\frac{2}{p} +1},
\end{align*}
and hence \eqref{e-est-eig-p} follows. The case $p=1$ then follows by taking the limit as $p\rightarrow 1^{+}$.

(3) For any $f\in L^{2}(\Xcal, \mu )$
\[
x\longmapsto \int_{\Xcal} f(y) p_{t_{0}}^{} (x,y)  d\mu (y)
\]
is continuous for $x\in \Xcal$ since $p_{t_{0}}^{} (x,y)$ is continuous  for $x,y \in \Xcal$. Then
\[
\varphi_{n}(x) = e^{\lambda_{n} t_{0}}  \left( T_{t_{0}}^{} \varphi_{n} \right) (x) = e^{\lambda_{n} t_{0}} \int_{\Xcal} \varphi_{n}(y) p_{t_{0}}^{} (x,y)  d\mu (y)
\]
is continuous for $x\in \Xcal$.
\end{proof}

\begin{remark}
In \cite[Equation (4.5.1)]{KigamiBook2001}, J.~Kigami showed that on a certain class of fractals 
\[
\Vert \varphi \Vert_{\infty} \leqslant c \lambda^{\frac{\alpha}{2}} \Vert \varphi \Vert_{2},
\]
where $\varphi$ is an eigenfunction with eigenvalue $\lambda$ and $\alpha$ is some explicit constant depending on the structure of the fractal. If one assumes the bound $p_{t} (x,y) \leqslant c t^{-\gamma}$, then   \eqref{e-est-eig-p} with $p=\infty$ takes the form of \cite[Equation (4.5.1)]{KigamiBook2001}.
\end{remark}

\section{Dirichlet forms satisfying heat kernel estimates}\label{s.Examples}
 
We   start with applications of the results in Section~\ref{s.Preliminaries}  relying on heat kernel estimates such as \eqref{eqn.ultra.contract-Davies}
or \eqref{e.HKPolyUpperBound}.

\subsection{Dirichlet forms following K.-T.~ Sturm}\label{s.SturmDirichlet}

In this section we work under the setting in  \cite{Sturm1996a}, which is a particular case of local Dirichlet forms considered in Section~\ref{s.LocalDF}. Let $\left( \mathcal{E}, \mathcal{D}_{\mathcal{E}} \right)$ be a strongly regular, strongly local Dirichlet form on $L^{2}(\Xcal, \mu)$, and let us denote by $\rho$ its intrinsic metric. We refer to Appendix~\ref{s.RestrictedDirichletForm} for more details. We now consider the metric measure space $(\Xcal, \rho, \mu )$. We additionally assume that $(\Xcal, \rho)$ satisfies Assumption~\ref{assumption.pre-compact.balls}, and therefore it is complete.

\begin{definition}[Measure doubling property]\label{d.doubling}
We say that a measure metric space $\left( \Xcal, d, \mu \right)$ is \emph{doubling} if there exists a constant $D$ such that for any $x \in \Xcal$ and $r>0$ one has that
\begin{equation*}\label{eqn.doubling}
\mu\left( B_{2r} (x) \right) \leqslant 2^{D} \mu \left(B_{r}(x)\right).
\end{equation*}
\end{definition}

\begin{definition}[Weak Poincar\'{e} inequality]\label{assumption.1c}
The Dirichlet form $\mathcal{E}$ on the measure metric space $\left( \Xcal, d, \mu \right)$ satisfies a \emph{weak Poincar\'{e} inequality} if there exists a (Poincar\'{e}) constant $C_{p}$ such that  for all $x \in \Xcal$ and all $r>0$  we have
\begin{equation*}\label{eqn.weak.poincare}
\int_{B_{r}(x)} \vert u - u_{x.r} \vert^{2}  d\mu  \leqslant C_{p} r^{2}  \int_{B_{2r}(x)} d \Gamma (u,u),
\end{equation*}
for all $u\in \mathcal{D}_{\mathcal{E}}$, where $u_{x,r}:= \frac{1}{m\left( B_{r}(x) \right)} \int_{B_{r}(x)} u d\mu $.
\end{definition}

\begin{definition}[Parabolic Harnack inequality]\label{d.Harnack}
The Dirichlet form $\mathcal{E}$ on the measure metric space $\left( \Xcal, d, \mu \right)$ satisfies a \emph{parabolic Harnack inequality} if there exists a constant $C_{H}$ such that  for all $t\in \R$
\begin{equation*}\label{eqn.harnack.parabolic}
\sup_{(s,y) \in Q^{-}} u (s,y) \leqslant C_{H} \inf_{(s,y)\in Q^{+}}  u(s,y),
\end{equation*}
whenever $u$ is a non-negative local solution of the parabolic equation $\left( A -\frac{\partial}{\partial t} \right) u=0$ on $Q:= ( t- 4r^{2}, t ) \times B_{2r} (x)$. Here $A$ is the infinitesimal generator of the Dirichlet form $\mathcal{E}$, and $Q^{-}:= ( t-3r^{2}, t-2r^{2}) \times B_{r}(x)$ and $Q^{+}:= ( t-r^{2}, t) \times B_{r}(x)$.
\end{definition}

Denote $V(r, x):= \mu( B_{r} (x))$, and recall the following definition. 

\begin{definition}\label{def.Ahlfors}
A metric measure space $\left(\Xcal, d, \mu \right)$ is called \emph{Ahlfors $\alpha$-regular} if there exists a constant $c\geqslant 1$ such that 
\begin{align*}
c^{-1} r^{\alpha} \leqslant \mu \left( B_{r} (x) \right) \leqslant c r^{\alpha},
\end{align*}
for all $x\in \Xcal$ and $r\in (0, \text{diam} (\Xcal) )$.
\end{definition}

In the setting of \cite{Sturm1996a}, our results give the following Theorem.

\begin{theorem}\label{thm.main.sturm.setting}
Suppose $\left( \Xcal, d, \mu \right)$ is a complete metric measure space satisfying the measure doubling property. We assume that $\mathcal{E}$ is a strongly regular Dirichlet form satisfying a weak Poincar\'e inequality, and that Assumption \ref{assumption.heat.kernel} is satisfied. Then 

(1) For any open set $\Ucal$ in $\Xcal$ such that $0< \mu (\Ucal) < \infty$, the operator $-A_{\Ucal}$ has a discrete spectrum and  the first eigenvalue $\lambda_{1}^{\Ucal}$ of $-A_{\Ucal}$ is strictly positive. 

(2)  Eigenfunctions of $-A_{\Ucal}$ satisfy $L^{p}$-upper bounds in terms of the $L^{p}$-norm of $V(\sqrt{t}, \cdot )$.

(3) If in addition the metric measure space $\left( \Xcal, d, \mu \right)$ is Ahlfors $\alpha$-regular, then eigenfunctions satisfy the following $L^{p}$ bounds  $\Vert \varphi_{n} \Vert_{L^{p} (\Ucal)} \leqslant C \lambda_{n}^{\frac{\alpha}{2}}$ for some finite constant $C>0$. The eigenfunction expansion \eqref{eqn.heat.kernel.expansion} holds. 

\end{theorem}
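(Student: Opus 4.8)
The plan is to derive all three parts from the two-sided Gaussian heat kernel estimate that this hypothesis set is designed to produce. Under volume doubling and the weak Poincar\'e inequality for a strongly local, strongly regular form on the complete space $(\Xcal,\rho)$ with precompact balls, Sturm's characterization (\cite{Sturm1996a}) gives the parabolic Harnack inequality, and hence there are constants $c_i,C_i>0$ such that
\[
\frac{c_1}{V(\sqrt t, x)}\exp\!\left(-\frac{\rho(x,y)^2}{c_2 t}\right)
\ \leqslant\ p_t(x,y)\ \leqslant\
\frac{C_1}{V(\sqrt t, x)}\exp\!\left(-\frac{\rho(x,y)^2}{C_2 t}\right)
\]
for all $t>0$ and $\mu$-a.e.\ $x,y\in\Xcal$, where $V(\sqrt t, x)=\mu(B_{\sqrt t}(x))$. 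In particular $p_t(x,y)>0$ for all pairs at finite distance, so $P_t$ is irreducible.

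For part~(1) I would use the on-diagonal bound $p_t(x,x)\leqslant C_1/V(\sqrt t, x)$, which gives $M_\Ucal(t)\leqslant C_1 / \inf_{x\in\Ucal}V(\sqrt t, x)$; the infimum is positive because volume doubling bounds $V(\sqrt t,\cdot)$ from below on the finite-measure open set $\Ucal$ (automatic when $\overline{\Ucal}$ is compact, which holds for bounded $\Ucal$ by Assumption~\ref{assumption.pre-compact.balls}). Then Proposition~\ref{thm.spectral.gap} gives that $-A_\Ucal$ has discrete spectrum and $\lambda_1^\Ucal\geqslant0$. For strict positivity: if $\mu(\Xcal)=\infty$ the on-diagonal bound forces $M_\Ucal(t)\to0$ as $t\to\infty$, so \eqref{eqn.heat.kernel.small.one} holds for large $t$ and Proposition~\ref{thm.spectral.gap} applies; if $\mu(\Xcal)<\infty$ and $\mu(\Ucal^c)>0$ (then $\mu(\Ucal^c)<\infty$ too), irreducibility of $P_t$ and Proposition~\ref{thm-lambda-1} give $\lambda_1^\Ucal>0$. (The borderline case $\mu(\Xcal)<\infty$ with $\mu(\Ucal^c)=0$ is genuinely degenerate: then $A_\Ucal=A$ and $\lambda_1^\Ucal=0$, so it must be excluded.)

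For parts~(2) and~(3) I would feed the heat kernel estimate into Corollary~\ref{cor.bound.efunction} and Theorem~\ref{thm.Lp.bounds}. Decomposing $\Xcal$ into the dyadic shells $\{2^k\sqrt t\leqslant\rho(x,\cdot)<2^{k+1}\sqrt t\}$ and bounding their volumes by doubling, the Gaussian bound yields $\Vert p_t^\Ucal(x,\cdot)\Vert_{L^p(\Ucal)}\leqslant\Vert p_t(x,\cdot)\Vert_{L^p(\Xcal)}\leqslant C\,V(\sqrt t, x)^{-(1-1/p)}$, hence $c_{t,p}(\Ucal)\leqslant C\,\Vert V(\sqrt t,\cdot)^{-(1-1/p)}\Vert_{L^2(\Ucal)}$; substituting into \eqref{eqn.Lp.bounds} gives the $L^p$ control of the eigenfunctions in terms of $L^p$-type norms of $V(\sqrt t,\cdot)$ asserted in~(2). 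When $(\Xcal,\rho,\mu)$ is Ahlfors $\alpha$-regular, $V(\sqrt t, x)\asymp t^{\alpha/2}$, so this collapses to the polynomial bound \eqref{e.HKPolyUpperBound} with $\nu=\alpha$, i.e.\ $M_\Ucal(t)\leqslant C\,t^{-\alpha/2}$. Then $C(\lambda_n,\Ucal)=\inf_{t>0}M_\Ucal(t)e^{\lambda_n t}\leqslant\inf_{t>0}C\,t^{-\alpha/2}e^{\lambda_n t}$, which is minimized at $t=\alpha/(2\lambda_n)$ and equals a constant times $\lambda_n^{\alpha/2}$; inserting this into \eqref{eq-Linfty}--\eqref{e-est-eig-p} gives $\Vert\varphi_n\Vert_{L^p(\Ucal)}\leqslant C\lambda_n^{\alpha/2}$ for every $1\leqslant p\leqslant\infty$. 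Finally, the bound \eqref{e.HKPolyUpperBound} is exactly the ultracontractivity estimate \eqref{eqn.ultra.contract-Davies}, so Proposition~\ref{prop.eigenfunctions.expansion} provides the eigenfunction expansion \eqref{eqn.heat.kernel.expansion}.

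The heavy lifting, namely the two-sided Gaussian estimate itself, is imported from Sturm's work, so the only genuinely delicate internal point I anticipate is making part~(1) airtight for an arbitrary finite-measure open set $\Ucal$: securing the lower volume bound that keeps $M_\Ucal(t)$ finite, and isolating the degenerate co-null case before claiming $\lambda_1^\Ucal>0$. The dyadic-shell computation behind the $L^p$ estimates in~(2) is routine.
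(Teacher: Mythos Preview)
Your approach is essentially the paper's: invoke Sturm's Gaussian estimates and feed them into Proposition~\ref{thm.spectral.gap}, Theorem~\ref{thm.Lp.bounds}/Corollary~\ref{cor.bound.efunction}, and Proposition~\ref{prop.eigenfunctions.expansion}. Parts~(2) and~(3) match the paper line for line, with your dyadic-shell remark simply fleshing out what the paper leaves as ``use \eqref{eqn.nice.upper.bound} and Theorem~\ref{thm.Lp.bounds}.''

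The one genuine difference is in part~(1). The paper asserts without further comment that \eqref{eqn.heat.kernel.small.one} holds and invokes Proposition~\ref{thm.spectral.gap} directly. You instead split on whether $\mu(\Xcal)$ is finite: for $\mu(\Xcal)=\infty$ you force $M_\Ucal(t)\to 0$ via the on-diagonal bound, and for $\mu(\Xcal)<\infty$ with $\mu(\Ucal^c)>0$ you switch to the irreducibility route through Proposition~\ref{thm-lambda-1}. This is more careful than the paper's own argument, and your flagging of the degenerate co-null case ($\mu(\Xcal)<\infty$, $\mu(\Ucal^c)=0$, where $\lambda_1^\Ucal=0$) is a legitimate observation that the paper's statement and proof do not address. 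Your residual worry about securing $M_\Ucal(t)<\infty$ when $\Ucal$ is merely of finite measure (not bounded) is also fair; the paper does not comment on it either.
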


\begin{proof}
Let us denote by $p_{t}(x,y)$ the kernel of the semigroup associated to $\mathcal{E}$. Its existence is guaranteed by Assumption \ref{assumption.heat.kernel}. Under our assumptions, we have 
a parabolic Harnack inequality by \cite[Theorem~3.5]{Sturm1996a}, and then  by \cite[Corollary 4.2 and Corollary 4.10]{Sturm1996a}  there exist constants $C_{1}$ and $C_{2}$ such that
\begin{align}
& p_{t} (x,y) \leqslant \frac{C_{1}}{ \sqrt{V(\sqrt{t}, x)}\sqrt{V(\sqrt{t}, y)}} \exp\left( - \frac{\rho^{2} (x,y)}{4t} \right) \left(1+ \frac{\rho (x,y)^{2}}{t} \right)^{\frac{N}{2}}, \label{eqn.gaussian.upper.bound}
\\
& p_{t} (x,y) \geqslant \frac{1}{C_{2}} \frac{1}{ V(\sqrt{t}, x)} \exp\left( -C_{2} \frac{\rho^{2} (x,y)}{t} \right), \label{eqn.gaussian.lower.bound}
\end{align}
uniformly for all $x,y\in \Xcal$, and all $t>0$. Note that the polynomial term in \eqref{eqn.gaussian.upper.bound} can be absorbed into the Gaussian term. More precisely, there exists a $k>0$ such that  for every $\varepsilon >0$ there exists a constant $C_{\varepsilon}$ such that
\begin{align}
p_{t} (x,y) \leqslant \frac{C_{\varepsilon}}{ \sqrt{V(\sqrt{t}, x)}\sqrt{ V(\sqrt{t}, y)}} \exp\left( - \frac{\rho^{2} (x,y)}{(4+\varepsilon)k t} \right).\label{eqn.gaussian.upper.bound.better}
\end{align}
In particular, if $\varepsilon=1$, it follows that
\begin{equation}\label{eqn.nice.upper.bound}
p_{t} (x,y) \leqslant \frac{c \exp\left( - \frac{\rho^{2} (x,y)}{5 t k^{\prime}} \right)}{ \sqrt{V(\sqrt{t}, x)}\sqrt{V(\sqrt{t}, y)}}  =  \frac{c \exp\left( - \frac{\rho^{2} (x,y)}{t k} \right)}{ \sqrt{V(\sqrt{t}, x)}\sqrt{V(\sqrt{t}, y)}}
\end{equation}
for some constant $k>0$, for any $x,y\in \Xcal$.

Then \eqref{eqn.heat.kernel.small.one} is satisfied, and therefore by Proposition~\ref{thm.spectral.gap} the generator $-A_{\Ucal}$ has a discrete spectrum and  and  the first eigenvalue $\lambda_{1}^{\Ucal}$ of $-A_{\Ucal}$ is strictly positive.

To see (2), we can use  \eqref{eqn.nice.upper.bound}  and Theorem~\ref{thm.Lp.bounds} to get $L^{p}$-upper bounds on eigenfunctions in terms of the $L^{p}$-norm of $V(\sqrt{t}, \cdot)$.

Finally for (3) we observe that if $\left(\Xcal, d, \mu \right)$ is Ahlfors $\alpha$-regular for some $\alpha>0$, then the Gaussian bounds become 

\begin{align}
& p_{t} (x,y) \leqslant \frac{C_{1}}{t^{\frac{\alpha}{2}}} \exp\left( - \frac{\rho^{2} (x,y)}{K_{1} t} \right), \label{eqn.gaussian.upper.bound.ahlfors}
\\
& p_{t} (x,y) \geqslant \frac{C_{2} }{ t^{\frac{\alpha}{2}}  }  \exp\left( - \frac{\rho^{2} (x,y)}{K_{2}t} \right) \label{eqn.gaussian.lower.bound.ahlfors}
\end{align}
for some constants $C_{1}, C_{2}, K_{1}, K_{2}$ and for all $x,y \in \Xcal$. These constants depend on the doubling constant and the Poincar\'e constant.  The $L^{p}$-upper bounds on the eigenfunctions then become
\[
\Vert \varphi_{n} \Vert_{L^{p} (\Ucal)} \leqslant C \lambda_{n}^{\frac{\alpha}{2}},
\]
for any $1\leqslant p \leqslant \infty$, and some constant $C$. The eigenfunction expansion for the Dirichlet heat kernel \eqref{eqn.heat.kernel.expansion} holds.  Irreducibility of the semigroup follows from Theorem~\ref{thm.irr.simple.evalue}, see also Theorem~\ref{thm.main.local.dir.forms} in the next section.
\end{proof}

\begin{example}[Riemannian manifolds]
Let $M=\left( M, g \right)$ be a complete Riemannian manifold. We denote by $d=d_{g}$ be the Riemannian distance, and by $m=m_{g}$ the Riemannian volume. Then the canonical energy form $\mathcal{E}$ is given  by 
\[
\mathcal{E} (f, f):= \int_{M} \vert \nabla f \vert^{2}  dm,
\]
where $\nabla=\nabla_{g}$  is the gradient associated to the metric $g$.  Note that the parabolic Harnack  inequality is equivalent to the volume doubling property and the Poincar\'e inequality \cite{Saloff-Coste1992a, Grigoryan1991, ChenLi1997}.  If a Riemannian manifold has non-negative Ricci curvature, then the Riemannian volume measure is uniformly doubling by the Bishop-Gromov comparison theorem. More generally, for a complete separable metric measure space  $\left( M, d, m \right)$, where $m$ is a locally finite measure, the curvature-dimension inequality $\operatorname{CD}\left( K, N \right)$ implies doubling by \cite[Corollary 2.4]{Sturm2006b}. We refer to \cite{Sturm2006b} for more details.

Moreover, if the Ricci curvature is bounded from below then parabolic Harnack inequality holds by \cite{LiYau1986}, and proven later independently by \cite{Grigoryan1991} and \cite{SaloffCoste1995a}. In this setting the heat kernel $p_{t} (x, y)$ always exists and it is a smooth function in $(t,x,y)$ satisfying
\begin{align}
\frac{c_{1}}{V(\sqrt{t}, x)} \exp\left( -\frac{d(x,y)^{2}}{c_{2} t} \right) \leqslant p_{t} (x,y) \leqslant \frac{c_{3}}{V(\sqrt{t}, x)} \exp\left( -\frac{d(x,y)^{2}}{c_{4} t} \right), \label{eqn.bound.Rmn.mnf}
\end{align}
where $V(r,x)$ denotes the volume of the ball $B_{r} (x)$. For Riemannian manifolds \eqref{eqn.bound.Rmn.mnf} is equivalent to the Poincar\'e inequality and the volume doubling property,  \cite{Grigoryan1991, GrigoryanBook2009, LiYau1986, Saloff-Coste1992b}.

Note that if $M$ is compact and connected, then  \eqref{eqn.bound.Rmn.mnf} becomes
\begin{align}\label{eqn.bound.compact.Rmn.mnf}
\frac{c_{1}}{t^{\frac{n}{2}}} \exp\left( -\frac{d(x,y)^{2}}{c_{2} t} \right) \leqslant p_{t} (x,y) \leqslant \frac{c_{3}}{t^{\frac{n}{2}}} \exp\left( -\frac{d(x,y)^{2}}{c_{4} t} \right), 
\end{align}
where $n:= \dim M$, since every compact connected $n$-dimensional Riemannian manifold is Ahlfors $n$-regular. Thus, if $M$ is a compact connected Riemannian manifold, and  $\Ucal$ is an open set in $M$, we recover the well-known results such as existence of the spectral gap $\lambda_{1}^{\Ucal}$  for the Dirichlet Laplacian, and the $L^{p}$-bounds
\[
\Vert \varphi_{k} \Vert_{L^{p} (\Ucal)} \leqslant C \lambda_{k}^{\frac{n}{2}},
\]
for any $1\leqslant p \leqslant \infty$, where $C$ is a constant depending on $m(\Ucal)$ and $\dim M$, but independent of $p$ and $k$. In particular, the eigenfunction expansion of the Dirichlet heat kernel $p^{\Ucal}_{t} (x,y)$ \eqref{eqn.heat.kernel.expansion} holds. We remark that if $M$ is not compact, then Theorem~\ref{thm.Lp.bounds} gives an $L^{p}$-bound for the eigenfunctions in terms of $L^{p}$ norms of $V(\sqrt{t}, \cdot )$. $L^{p}$-bounds for the eigenfunctions are known \cite[Corollary 5.1.2]{SoggeBook2017}, though depending on $p$. Moreover, by \eqref{eqn.bound.compact.Rmn.mnf}  and Theorem  \ref{thm.main.sturm.setting} it follows that the semigroup is irreducible and the spectral gap $\lambda_{1}^{\Ucal}$ is simple and it admits a strictly positive eigenfunction.
\end{example}

\subsection{Local and non-local Dirichlet forms}\label{s.LocalDF}

The goal of this section is to prove irreducibility of the restricted semigroup for both local and non-local Dirichlet forms satisfying upper and lower heat kernel bounds \eqref{eqn.upper.and.lower.estimate.Phi}. By Theorem \ref{thm.irr.simple.evalue} we have that the spectral gap $\lambda_{1}^{\Ucal}$ is simple and the corresponding eigenfunction can be chosen to be a.e. positive. 

\subsubsection{Local Dirichlet forms}
Let $(\Xcal, d, \mu)$ be a locally compact separable metric measure space and $(\mathcal{E}, \mathcal{D}_{\mathcal{E}})$ be a regular local  Dirichlet form on $L^{2} (\Xcal, \mu)$. In many examples the heat kernel of the corresponding self-similar diffusion process satisfies the upper bound
\begin{equation}\label{eq.upper.bound.diffusion}
p_{t}(x,y) \leqslant C t^{-\frac{\alpha}{\beta}} \exp \left( - \left( \frac{d^{\beta} (x,y)}{C t} \right)^{\frac{1}{\beta -1}} \right),
\end{equation}
where $\alpha>0$ turns out to be the \emph{Hausdorff dimension} of $\Xcal$ and $\beta >1$  is known as the \emph{walk dimension} of the heat kernel $p_{t}$, \cite{Barlow1998a, BarlowGrigoryanKumagai2009}. Equation~\eqref{eq.upper.bound.diffusion} implies a Nash inequality \eqref{eqn.Nash.inequal} with $\nu=2\beta/\alpha$ as follows
\begin{equation}\label{eqn.Nash.inequalFractal}
\Vert f \Vert_{L^{2} (\Xcal, \mu)}^{2 + \frac{2\alpha}{\beta}} \leqslant C \mathcal{E} (f,f) \Vert f \Vert_{L^{1} (\Xcal, \mu )}^{\frac{2\alpha}{\beta}},  \text{ for all }  f\in \mathcal{D}_{\mathcal{E}},
\end{equation}
and the corresponding  $L^{1} \rightarrow L^{\infty}$ ultracontractivity of the heat semigroup 
\begin{equation}\label{eqn.ultra.contractFractal}
\Vert P_{t} f \Vert_{L^{\infty} (\Xcal, \mu )} \leqslant C \, t^{-\frac{\alpha}{\beta}} \Vert f \Vert_{L^{1} (\Xcal, \mu )},
\end{equation}
for all $f \in L^{1} (\Xcal, \mu )$ and all $t>0$. Further, it is equivalent to the following heat kernel estimate
\begin{equation}\label{eqn.eqn.bound}
M_{\Xcal}\left( t \right)  \leqslant C \, t^{-\frac{\alpha}{\beta}},
\end{equation}
for all $t>0$, where $M_{\Xcal}\left( t \right)$ is defined in Notation~\ref{n.EssupHK}. We recall that the heat kernel is said to be stochastically complete if 
\begin{align*}
\int_{\Xcal} p_{t}(x,y) d\mu (y) =1, \text{ for $\mu$-a.e. } x \in \Xcal,
\end{align*}

\begin{theorem}\label{thm.main.local.dir.forms}
Let $(\mathcal{E}, \mathcal{D}_{\mathcal{E}})$ be a regular local Dirichlet form on $L^{2} (\Xcal, \mu)$ satisfying a Nash inequality~\ref{eqn.Nash.inequalFractal} with constants $\alpha$ and $\beta$.  Let $\Ucal$ be an open  set in $\mathcal{X}$ such that $0< \mu (\Ucal) < \infty$. Then 

(1) the operator $A_{\Ucal}$ has a discrete spectrum and the first eigenvalue $\lambda_{1}^{\Ucal}$ of $-A_{\Ucal}$ is strictly positive. The eigenfunctions satisfy 
\[
\Vert \varphi_{n} \Vert_{L^{p}} \leqslant c \lambda_{n}^{\frac{\alpha}{\beta}},
\]
for any $1\leqslant p \leqslant \infty$, where $c$ is a constant depending on $\Ucal$, $\alpha$, and $\beta$, but independent of $p$ and $n$. 

(2) Let us assume that $(\Xcal, d, \mu)$ satisfy  the chain condition \cite[Definition 3.3]{GrigoryanKumagai2008}, and $\Ucal$ is  path-connected. If the heat kernel satisfies 
\begin{equation}\label{eqn.upper.and.lower.estimate.Phi}
c_{1} \,  t^{-\frac{\alpha}{\beta}} \,  \Phi \left( c_{2} \, \frac{d(x,y)}{t^{\frac{1}{\beta}}} \right) \leqslant p_{t} (x,y) \leqslant c_{3} \,  t^{-\frac{\alpha}{\beta}} \,  \Phi \left( c_{4} \, \frac{d(x,y)}{t^{\frac{1}{\beta}}} \right),
\end{equation}
for some positive constants $c_{1}, c_{2}, c_{3}, c_{4}$, $\alpha$, and $\beta$, where $\Phi$ is a positive decreasing function on $[0,\infty)$; then $P_{t}^{\Ucal}$ is irreducible. 
\end{theorem}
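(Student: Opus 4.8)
The plan is to derive part~(1) directly from the general results of Section~\ref{s.Preliminaries} and to obtain part~(2) by checking the hypothesis of Theorem~\ref{thm.irr.simple.evalue}. Throughout set $\gamma:=\beta/(\beta-1)$ and $\delta:=1/(\beta-1)$. For part~(1), by the Carlen--Kusuoka--Stroock equivalence recalled above the Nash inequality \eqref{eqn.Nash.inequalFractal} is equivalent to the bound $M_\Xcal(t)\leqslant Ct^{-\alpha/\beta}$ for all $t>0$. Since $\Ucal$ is open with $0<\mu(\Ucal)<\infty$ and $M_\Ucal(t)\leqslant M_\Xcal(t)<\infty$, Proposition~\ref{thm.spectral.gap} gives that $A_\Ucal$ has a discrete spectrum, and since $M_\Ucal(t)\leqslant Ct^{-\alpha/\beta}\to 0$ as $t\to\infty$ the smallness condition \eqref{eqn.heat.kernel.small.one} holds for $t_\Ucal$ large enough, so the same proposition gives $\lambda_1^\Ucal>0$. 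For the eigenfunction bounds I would apply Corollary~\ref{cor.bound.efunction}: here $C(\lambda,\Ucal)=\inf_{t>0}M_\Ucal(t)e^{\lambda t}\leqslant C\inf_{t>0}t^{-\alpha/\beta}e^{\lambda t}=C(e\beta\lambda/\alpha)^{\alpha/\beta}\leqslant c\lambda^{\alpha/\beta}$, and substituting into the ($p$-independent) estimates of Corollary~\ref{cor.bound.efunction} yields $\|\varphi_n\|_{L^p(\Ucal)}\leqslant c\,\lambda_n^{\alpha/\beta}$ for all $1\leqslant p\leqslant\infty$, with $c$ depending only on $\mu(\Ucal),\alpha,\beta$.

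For part~(2), the first step is to observe that, under the chain condition, the two-sided estimate \eqref{eqn.upper.and.lower.estimate.Phi} holds with a sub-Gaussian profile, which after absorbing constants into $c_2$ and $c_4$ we may normalize to $\Phi(u)=\exp(-u^\gamma)$; this is the standard characterization of two-sided heat kernel bounds on doubling spaces \cite{GrigoryanKumagai2008,GrigoryanHuLau2014}, and it also furnishes a jointly continuous version of $p_t(x,y)$, so the standing hypothesis of Theorem~\ref{thm.irr.simple.evalue} that the heat kernel exist pointwise is met. Fix $y\in\Ucal$ and set $\rho_0:=d(y,\Ucal^c)>0$. For any $r<\min(1,c_4/c_2)\,\rho_0$ we have $B_r(y)\subset\Ucal$, and for $x\in B_r(y)$ and $z\in\Ucal^c$, using that $\Phi$ is decreasing with $d(x,y)<r$ and $d(z,y)\geqslant\rho_0$ gives, with $\bar A:=c_2^\gamma r^\gamma<c_4^\gamma\rho_0^\gamma=:\underline B$,
\[
p_t(x,y)\geqslant c_1 t^{-\alpha/\beta}e^{-\bar A t^{-\delta}},\qquad p_s(z,y)\leqslant c_3 s^{-\alpha/\beta}e^{-\underline B s^{-\delta}}.
\]

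Now the function $g(s):=s^{-\alpha/\beta}e^{-\underline B s^{-\delta}}$ has logarithmic derivative $-\tfrac{\alpha}{\beta s}+\underline B\delta\,s^{-\delta-1}$, which is positive once $s$ is small because $\delta+1>1$; hence $g$ is increasing on some interval $(0,s_*)$, so $\sup_{0<s\leqslant t}p_s(z,y)\leqslant c_3\,g(t)$ whenever $t<s_*$, and the required inequality $p_t(x,y)-p_s(z,y)>0$ for all $0<s<t$ reduces to the equal-time comparison $c_1 e^{-\bar A t^{-\delta}}>c_3 e^{-\underline B t^{-\delta}}$, i.e.\ $c_1/c_3>e^{(\bar A-\underline B)t^{-\delta}}$; since $\bar A<\underline B$ the right-hand side tends to $0$ as $t\to 0$, so this holds for all $t<t_0$ for a suitable $t_0=t_0(r,\rho_0)\leqslant s_*$. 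This is precisely the hypothesis of Theorem~\ref{thm.irr.simple.evalue}, whence $P_t^\Ucal$ is irreducible; and since part~(1) makes $P_t^\Ucal$ compact for some $t$, the ``moreover'' part of that theorem additionally yields that $\lambda_1^\Ucal$ is simple with an a.e.\ strictly positive eigenfunction.

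The step I expect to be the main obstacle is the uniformity over \emph{all} $0<s<t$ in part~(2): a naive comparison only controls $s$ close to $t$, and one genuinely needs to exploit that $d(x,y)$ can be chosen strictly smaller than $d(z,y)\geqslant\rho_0$ --- so that the Gaussian exponent $\bar A$ of the lower bound lies strictly below the exponent $\underline B$ of the upper bound --- together with the near-origin monotonicity of $s\mapsto s^{-\alpha/\beta}e^{-\underline B s^{-\delta}}$, in order to reduce the two-parameter estimate to a single time. A secondary technical point is justifying that $\Phi$ may be taken sub-Gaussian, which is exactly where the chain condition is used; it plays no role in the chaining argument inside Theorem~\ref{thm.irr.simple.evalue}, which only needs $\Ucal$ to be path-connected.
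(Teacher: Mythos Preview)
Your proof is correct and, for part~(1), proceeds exactly as the paper does: Nash $\Rightarrow$ on-diagonal bound $M_\Xcal(t)\leqslant Ct^{-\alpha/\beta}$, then Proposition~\ref{thm.spectral.gap} and Corollary~\ref{cor.bound.efunction}.

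For part~(2) the overall architecture is the same --- invoke the Grigor'yan--Kumagai dichotomy to pass from \eqref{eqn.upper.and.lower.estimate.Phi} to sub-Gaussian two-sided bounds, then verify the pointwise hypothesis of Theorem~\ref{thm.irr.simple.evalue} using that the exponent governing $p_t(x,y)$ is strictly smaller than the one governing $p_s(z,y)$ once $r$ is taken small enough --- but your reduction of the two-parameter inequality is organized differently. The paper fixes $s$ and studies $F(t):=(s/t)^{\alpha/\beta}\exp(-a t^{-\delta}+b s^{-\delta})$ as a function of $t$, shows $F$ is increasing on $(0,t_0)$, and then uses $F(s)=\exp((b-a)s^{-\delta})>c_3/c_1$. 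You instead observe that the \emph{upper} envelope $g(s)=s^{-\alpha/\beta}e^{-\underline B s^{-\delta}}$ is increasing on $(0,s_*)$, so $\sup_{0<s\leqslant t}g(s)=g(t)$ and the two-time comparison collapses to the single-time inequality $c_1 e^{-\bar A t^{-\delta}}>c_3 e^{-\underline B t^{-\delta}}$. This is a genuinely cleaner route: it dispenses with the auxiliary function $F$ and makes transparent why uniformity in $s$ costs nothing --- the worst $s$ is $s=t$. The paper's calculation, on the other hand, makes the threshold $t_0=(a\mu/\nu)^{1/\mu}$ explicit in terms of $d(x,y)$, which your argument trades for the (equally explicit) $s_*=(\underline B\delta\beta/\alpha)^{1/\delta}$ depending on $d(y,\Ucal^c)$. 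Either way the outcome is the same, and your closing remark correctly identifies both the role of the chain condition (it forces the sub-Gaussian profile, not the chaining inside Theorem~\ref{thm.irr.simple.evalue}) and the reason the uniformity in $s$ is not an obstacle.
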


\begin{proof}
(1)   The spectral gap $\lambda_{1}^{\Ucal} >0$  for $-A_{\Ucal}$ exists by Proposition  \ref{thm.spectral.gap}. The $L^{p}$-bounds for the eigenfunctions follow from Corollary~\ref{cor.bound.efunction} and the heat kernel bound \eqref{eqn.eqn.bound}. In particular, the eigenfunctions expansion of the Dirichlet heat kernel $p^{\Ucal}_{t} (x,y)$ \eqref{eqn.heat.kernel.expansion} holds. 

(2) By \cite[Theorem 4.1]{GrigoryanKumagai2008} there is a $\beta\geqslant 2$ such that

\begin{equation}\label{eqn.bound.exp}
c_{1}t^{-\frac{\alpha}{\beta}}  \exp \left( - c_{2} \left( \frac{d(x,y)^{\beta}}{t} \right)^{\frac{1}{\beta -1}} \right) \leqslant p_{t}(x,y) \leqslant c_{3}t^{-\frac{\alpha}{\beta}}  \exp \left(- c_{4} \left( \frac{d(x,y)^{\beta}}{t} \right)^{\frac{1}{\beta -1}} \right)
\end{equation}
for some positive constants $c_{1},\ldots, c_{4}$. By Theorem \ref{thm.irr.simple.evalue} it is enough to show that for every $y\in \Ucal$ and any $r>0$ small enough and any $x\in B_{r}(y)$ there exists a $t_{0} = t_{0} (x,y,r)$ such that for any $z\in \Ucal^{c}$ and any $0<s<t<t_{0}$
\begin{align}\label{eqn.kinda.there}
p_{t}(x,y) - p_{s} (z,y) >0.
\end{align}
By \eqref{eqn.bound.exp} one has that
\begin{align*}
& p_{t} (x,y) - p_{s} (z,y) \geqslant  c_{1} t^{-\nu} \exp\left(- \frac{a}{t^{\mu}} \right) -  c_{3} s^{-\nu} \exp\left(- \frac{b}{s^{\mu}} \right),
\end{align*}
for any $0<s<t$, and $x,y \in \Ucal$, and $z\in \partial \Ucal$, where
\begin{align*}
\nu:= \frac{\alpha}{\beta}, \quad \mu:= \frac{1}{\beta -1}, \quad a:= c_{2} d(x,y)^{\frac{\beta}{\beta-1}}, \quad b:= c_{4} d (y, \partial \Ucal)^{\frac{\beta}{\beta-1}}.
\end{align*}
Thus,
\begin{align*}
& p_{t} (x,y) - p_{s}(z,y) \geqslant  c_{1} s^{-\nu} \exp\left( - \frac{b}{s^{\mu}} \right) \left[ \left( \frac{s }{t}\right)^{\nu} \exp\left( -\frac{a}{t^{\mu}} + \frac{b}{s^{\mu}} \right) - \frac{c_{3}}{c_{1}} \right],
\end{align*}
and hence it is enough to prove that
\begin{equation}\label{eqn.equation}
\left( \frac{s }{t}\right)^{\nu} \exp\left( -\frac{a}{t^{\mu}} + \frac{b}{s^{\mu}} \right) > \frac{c_{3}}{c_{1}}.
\end{equation}
Let $F(t) : =\left( \frac{s }{t}\right)^{\nu} \exp\left( -\frac{a}{t^{\mu}} + \frac{b}{s^{\mu}} \right)$ for $s$ fixed, then
\begin{align*}
F^{\prime}(t) = \frac{1}{t^{\mu+1}} \left( a\mu -\nu t^{\mu}  \right) F(t) >0,
\end{align*}
for all $t < t_{0} := \left( \frac{a\mu}{\nu} \right)^{\frac{1}{\mu}}$. If we choose $r$ small enough such that for $s$ small enough we have
\begin{align*}
F(s) = \exp\left( \frac{b - a}{s^{\mu}} \right) > \frac{c_{3}}{c_{1}},
\end{align*}
then \eqref{eqn.equation} follows.
\end{proof}

\subsubsection{Non-local Dirichlet forms}\label{sec.lie.groups} 
We now assume that $(\mathcal{E}, \mathcal{D}_{\mathcal{E}})$ is a regular non-local Dirichlet form on $L^{2} (\Xcal, \mu)$, where  $(\Xcal, d, \mu)$ is a locally compact separable metric measure space. For more details on non-local Dirichlet forms we refer to  \cite{BarlowGrigoryanKumagai2009} and references therein. We have the following version of Theorem~\ref{thm.main.local.dir.forms}. 

\begin{theorem}\label{thm.main.non.local.dir.forms}
Let $(\Xcal, d, \mu)$ be a locally compact separable metric measure space and $(\mathcal{E}, \mathcal{D}_{\mathcal{E}})$ be a regular non-local Dirichlet form on $L^{2} (\Xcal,  \mu)$ satisfying Nash inequality \eqref{eqn.Nash.inequalFractal}.  Let $\Ucal$ be an open  set in $\mathcal{X}$ such that $0<\mu (\Ucal) < \infty$, then 

(1) The operator $A_{\Ucal}$ has a discrete spectrum and the first eigenvalue $\lambda_{1}^{\Ucal}$ of $-A_{\Ucal}$ is strictly positive.  The eigenfunctions satisfy 
\[
\Vert \varphi_{n} \Vert_{L^{p}} \leqslant C \lambda_{n}^{\frac{\alpha}{\beta}},
\]
for any $1\leqslant p \leqslant \infty$, where $C$ is a constant depending on $\Ucal$, $\alpha$, and $\beta$, but independent of $p$ and $n$.

(2) Let us assume that $(\Xcal, d, \mu)$ satisfies  the chain condition \cite[Definition 3.3]{GrigoryanKumagai2008}, and $\Ucal$ is  path-connected. If the heat kernel satisfies   \eqref{eqn.upper.and.lower.estimate.Phi}  then $P_{t}^{\Ucal}$ is irreducible. 
\end{theorem}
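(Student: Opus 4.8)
The plan is to follow exactly the template set by Theorem~\ref{thm.main.local.dir.forms}, since the non-local case differs only in the analytic form of the heat kernel bounds, not in the structural argument. First I would establish part~(1): the Nash inequality \eqref{eqn.Nash.inequalFractal} is, via \cite{CarlenKusuokaStroock1987}, equivalent to the ultracontractivity bound \eqref{eqn.ultra.contractFractal}, hence to the pointwise heat kernel estimate $M_{\Xcal}(t) \leqslant C t^{-\alpha/\beta}$ from \eqref{eqn.eqn.bound}. Since $\Ucal$ is open with $0<\mu(\Ucal)<\infty$ and $M_{\Ucal}(t)\leqslant M_{\Xcal}(t)<\infty$ for every $t>0$, while $M_{\Xcal}(t)\to 0$ as $t\to\infty$, we may choose $t_{\Ucal}$ large enough that $M_{\Ucal}(t_{\Ucal}) < 1/\mu(\Ucal)^2$; Proposition~\ref{thm.spectral.gap} then yields both the discreteness of the spectrum of $-A_{\Ucal}$ and the strict positivity of $\lambda_1^{\Ucal}$. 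The $L^p$-bound $\Vert\varphi_n\Vert_{L^p}\leqslant C\lambda_n^{\alpha/\beta}$ follows from Corollary~\ref{cor.bound.efunction} applied with $M_{\Ucal}(t)\leqslant C t^{-\alpha/\beta}$, so that $C(\lambda_n,\Ucal)=\inf_{t>0} C t^{-\alpha/\beta} e^{\lambda_n t}$ is optimized at $t\sim \alpha/(\beta\lambda_n)$, giving a constant multiple of $\lambda_n^{\alpha/\beta}$; then \eqref{eq-Linfty} (and its $L^1$, $L^2$ companions, interpolated) provides the stated bound for all $1\leqslant p\leqslant\infty$, and the eigenfunction expansion \eqref{eqn.heat.kernel.expansion} holds by Proposition~\ref{prop.eigenfunctions.expansion} since \eqref{eqn.ultra.contract-Davies} is in force.

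For part~(2) I would invoke Theorem~\ref{thm.irr.simple.evalue}: it suffices to verify its hypothesis, namely that for every $y\in\Ucal$, every sufficiently small $r>0$, every $x\in B_r(y)$, there is a $t_0=t_0(x,y,r)$ with $p_t(x,y)-p_s(z,y)>0$ for all $z\in\Ucal^c$ and all $0<s<t<t_0$. Here is where the non-local bound \eqref{eqn.upper.and.lower.estimate.Phi} enters: under the chain condition, one has two-sided estimates with an explicit profile $\Phi$. In the local case \eqref{eqn.upper.and.lower.estimate.Phi} forces $\Phi$ to decay like a stretched exponential (this is \cite[Theorem 4.1]{GrigoryanKumagai2008}); in the non-local case $\Phi$ is instead polynomially decaying, of the form $\Phi(s)\asymp (1+s)^{-(\alpha+\beta)}$ (the stable-like profile, see \cite{BarlowGrigoryanKumagai2009}). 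The argument is then even easier than the local one: writing $\nu=\alpha/\beta$ and using $d(z,y)\geqslant d(y,\Ucal^c)=:\rho_0>0$ since $\Ucal$ is open and $z\in\Ucal^c$, while $d(x,y)<r$, we get
\begin{align*}
p_t(x,y) - p_s(z,y) &\geqslant c_1 t^{-\nu}\Phi\!\left(c_2\frac{r}{t^{1/\beta}}\right) - c_3 s^{-\nu}\Phi\!\left(c_4\frac{\rho_0}{s^{1/\beta}}\right).
\end{align*}
Because $\Phi$ is decreasing and $s<t$, both the decay of $s^{-\nu}$ as $s\to 0$ is offset, but the key point is that $\Phi(c_4\rho_0/s^{1/\beta})$ decays faster (in the polynomial profile, like $s^{(\alpha+\beta)/\beta}$) than $s^{-\nu}=s^{-\alpha/\beta}$, so $s^{-\nu}\Phi(c_4\rho_0 s^{-1/\beta})\to 0$ as $s\to 0$ while $t^{-\nu}\Phi(c_2 r t^{-1/\beta})$ stays bounded below on a neighborhood of a fixed small $t$; hence for $r$ (and then $s,t$) small enough the difference is strictly positive. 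Path-connectedness of $\Ucal$ is exactly what Theorem~\ref{thm.irr.simple.evalue} needs for the chaining argument, and together with compactness of $P_t^{\Ucal}$ (from part~(1)) we also get that $\lambda_1^{\Ucal}$ is simple with an a.e.\ positive ground state.

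The main obstacle I anticipate is identifying and handling the correct profile $\Phi$ in the non-local regime. Unlike the local case, one cannot absorb $\Phi$ into a Gaussian; instead one must be careful that the polynomial tail $\Phi(c_4\rho_0/s^{1/\beta})$ really does beat the prefactor $s^{-\alpha/\beta}$ as $s\to 0^+$ — this requires the exponent in $\Phi$ to exceed $\alpha/\beta$ after the change of variables, i.e.\ $(\alpha+\beta)/\beta > \alpha/\beta$, which holds trivially since $\beta>0$, but the estimate must be written out carefully near $s=0$ and uniformly in $z\in\Ucal^c$. A secondary, more bookkeeping-level point is justifying that \eqref{eqn.eqn.bound} holds in the non-local setting from the Nash inequality alone — this is still covered by \cite{CarlenKusuokaStroock1987}, which is insensitive to locality — and that Dynkin–Hunt's formula \eqref{eqn.Dirichlet.HK} and the chaining argument of Theorem~\ref{thm.irr.simple.evalue} go through verbatim, since that theorem was stated for general Dirichlet forms with an existing heat kernel and makes no locality assumption. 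Once the profile estimate is in hand, the rest is a direct citation of the machinery already assembled in Section~\ref{s.Preliminaries}.
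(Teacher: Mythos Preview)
Your approach is essentially the paper's: part~(1) is identical (Nash $\Rightarrow$ ultracontractivity $\Rightarrow$ Proposition~\ref{thm.spectral.gap} and Corollary~\ref{cor.bound.efunction}), and for part~(2) both you and the paper reduce to the hypothesis of Theorem~\ref{thm.irr.simple.evalue} via the explicit polynomial profile $\Phi(u)\asymp(1+u)^{-(\alpha+\beta)}$ with $\beta<2$ (the paper cites \cite[Theorem~4.1]{GrigoryanHuLau2014} for this dichotomy rather than \cite{BarlowGrigoryanKumagai2009}).

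The one place your sketch does not go through as written is the claim that $t^{-\nu}\Phi(c_2 r t^{-1/\beta})$ ``stays bounded below on a neighborhood of a fixed small $t$.'' You need $p_t(x,y)-p_s(z,y)>0$ for \emph{all} $0<s<t<t_0$, and on that full interval both the $t$-term and the $s$-term tend to $0$ (each like a constant times its time variable, as you yourself compute), so neither is bounded away from zero. The paper handles this by the same monotonicity trick as in the local proof: setting $a:=c_1 d(x,y)$, $b:=c_2 d(y,\partial\Ucal)$, $\nu:=1/(\alpha+\beta)$, $\mu:=1/\beta$, one reduces to
\[
F(t):=\frac{t^{\nu}}{s^{\nu}}\cdot\frac{s^{\mu}+b}{t^{\mu}+a}>1,
\]
checks $F'(t)>0$ for $t<t_0:=(a\nu/(\mu-\nu))^{1/\mu}$, and chooses $r$ small so that $F(s)=(s^{\mu}+b)/(s^{\mu}+a)>1$, i.e.\ $b>a$. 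Your asymptotic picture (both sides linear in time, with the $t$-coefficient dominating once $r$ is small) can be made rigorous along these lines, but only after replacing ``bounded below'' by this comparison of the two linear rates.
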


\begin{proof}
(1) This follows as in Theorem \ref{thm.main.local.dir.forms} part (1).

(2) By \cite[Theorem 4.1]{GrigoryanHuLau2014} we have that $\beta <2$ and
\begin{equation}\label{eqn.bound.polyn}
t^{-\frac{\alpha}{\beta}} \left( 1+ c_{1} \frac{d(x,y)}{t^{\frac{1}{\beta}}} \right)^{- (\alpha + \beta)} \leqslant p_{t}(x,y) \leqslant t^{-\frac{\alpha}{\beta}} \left( 1+ c_{2} \frac{d(x,y)}{t^{\frac{1}{\beta}}} \right)^{- (\alpha + \beta)}.
\end{equation}
By Theorem \ref{thm.irr.simple.evalue} it is enough to show that for every $y\in \Ucal$ and any $r>0$ small enough and any $x\in B_{r}(y)$ there exists a $t_{0} = t_{0} (x,y,r)$ such that for any $z\in \Ucal^{c}$ and any $0<s<t<t_{0}$
\begin{align}\label{eqn.kinda.there2}
p_{t}(x,y) - p_{s} (z,y) >0.
\end{align}
The proof now proceeds as in Theorem \ref{thm.main.local.dir.forms} part (2), and we add it for completeness. By \eqref{eqn.bound.polyn} one has that
\begin{align*}
& p_{t} (x,y) - p_{s} (z,y) \geqslant  t^{-\frac{\alpha}{\beta}} \left( 1+ c_{1} \frac{d(x,y)}{t^{\frac{1}{\beta}}} \right)^{- (\alpha + \beta)}     -   s^{-\frac{\alpha}{\beta}} \left( 1+ c_{2} \frac{d(z,y)}{s^{\frac{1}{\beta}}} \right)^{- (\alpha + \beta)}
\\
& \geqslant  t^{-\frac{\alpha}{\beta}} \left( 1+ c_{1} \frac{d(x,y)}{t^{\frac{1}{\beta}}} \right)^{- (\alpha + \beta)}     -   s^{-\frac{\alpha}{\beta}} \left( 1+ c_{2} \frac{d(y, \partial \Ucal)}{s^{\frac{1}{\beta}}} \right)^{- (\alpha + \beta)},
\end{align*}
since $d( y, \partial \Ucal) \leqslant d( z,y)$ for any $z\in \partial \Ucal$ and $y\in \Ucal$. Set $a:= c_{1} d(x,y)$ and $b:=c_{2} d( y, \partial \Ucal) $. Then
\begin{align*}
& p_{t} (x,y) - p_{s} (z,y) \geqslant t^{-\frac{\alpha}{\beta}} \left( 1+  \frac{a}{t^{\frac{1}{\beta}}} \right)^{- (\alpha + \beta)}     -   s^{-\frac{\alpha}{\beta}} \left( 1+  \frac{b}{s^{\frac{1}{\beta}}} \right)^{- (\alpha + \beta)}
\\
& =  s^{-\frac{\alpha}{\beta}} \left( 1+  \frac{b}{s^{\frac{1}{\beta}}} \right)^{- (\alpha + \beta)} \left[ \frac{t}{s} \left( \frac{ s^{\frac{1}{\beta}} +b }{ t^{\frac{1}{\beta}} +a } \right)^{\alpha + \beta}  -1 \right],
\end{align*}
and hence it is enough to prove that
\begin{equation}\label{eqn.almost.there.pol}
\frac{t^{\nu}}{s^{\nu}} \frac{ s^{\mu} +b }{ t^{\mu} +a } >1,
\end{equation}
where $\nu:= \frac{1}{\alpha +\beta}$ and $\mu:= \frac{1}{\beta}$. Let us fix $s$ and consider the function $F(t) := \frac{t^{\nu}}{s^{\nu}} \frac{ s^{\mu} +b }{ t^{\mu} +a }$ for $s\leqslant t < \infty$. Then
\begin{align*}
& F^{\prime}(t) = \frac{s^{\mu} +b}{s^{\nu}} \cdot\frac{a\nu - (\mu-\nu)t^{\mu}}{t^{\nu+1}},
\end{align*}
which is positive for any $t< t_{0} := \left(\frac{a\nu}{\mu-\nu} \right)^{\frac{1}{\mu}}$. Thus, if we choose $r$ small enough such that
\[
F(s) = \frac{s^{\mu} +b}{ s^{\nu} +a}>1,
\]
that is, $b>a$, then \eqref{eqn.almost.there.pol} follows.
\end{proof}

\subsection{Sub-Riemannian Dirichlet forms on Lie groups} 

In this section we apply Theorem~\ref{thm.main.local.dir.forms} to Lie groups equipped with a left-invariant sub-Riemannian structure. We restrict consideration to this setting, even though one can also treat sub-elliptic operators as in \cite[p. 233]{Sturm1995b} or \cite{KusuokaStroock1988}. Note that in some such settings we do not have a Dirichlet form, as is the case for Grushin operators in \cite{BoscainNeel2020, BoscainPrandi2016}.

Suppose $G$ is a connected Lie group with Lie algebra $\mathfrak{g}$, and let $X_{1}, \ldots, X_{j}$ be left-invariant vector fields satisfying H\"ormander's condition, i.e. the span of their iterated brackets is $\mathfrak{g}$.  By \cite[pp.~950-951]{DriverGrossSaloff-Coste2009a} and later \cite[Section 5.1]{GordinaLaetsch2016a}, we have that the sub-Laplacian 
\begin{equation}\label{e.subLaplacian}
\mathcal{L}:= \sum_{k=1}^{j} X_{k}^{2}
\end{equation}
is  essentially self-adjoint on $L^2(G, m)$ if $m$ is the \emph{right} Haar measure on $G$.  Note that this covers Carnot groups, nilpotent groups, as well as many semi-simple Lie groups such as $\operatorname{SU}\left( 2 \right)$, $\operatorname{SL}\left( 2, \mathbb{R} \right)$ and $\operatorname{SO}\left( n \right)$, $n \geqslant 3$. Collecting these facts, as well as the results in \cite{Siebert1982, McCrudden1984, McCruddenWood1984}, we have the following fact.

\begin{theorem}\label{thm-Lie}
If $G$ is a connected Lie group equipped with a right-invariant Haar measure $m$, then the left-invariant sub-Laplacian $\mathcal{L}$ in \eqref{e.subLaplacian} corresponds to a local regular irreducible Dirichlet form on $L^2(G, m)$.
\end{theorem}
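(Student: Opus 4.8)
The plan is to establish, in order, the four properties asserted: that $\mathcal{L}$ generates a symmetric Dirichlet form, that this form is local, that it is regular, and that it is irreducible; the essential self-adjointness of $\mathcal{L}$ on $C_c^\infty(G)$ recorded just above is the starting point. First I would note that the choice of the \emph{right} Haar measure $m$ is exactly what makes each left-invariant vector field $X_k$ skew-symmetric on $C_c^\infty(G)\subset L^2(G,m)$, i.e. $\int_G (X_k f)\,g\,dm=-\int_G f\,(X_k g)\,dm$; this is the computation underlying the set-up in \cite{DriverGrossSaloff-Coste2009a, GordinaLaetsch2016a}. Consequently $-\mathcal{L}=\sum_{k=1}^{j}X_k^{\ast}X_k\geqslant 0$ on $C_c^\infty(G)$, and by essential self-adjointness its closure $\overline{\mathcal{L}}$ is the unique non-positive self-adjoint extension. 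By the one-to-one correspondence between non-negative closed forms and non-positive self-adjoint operators recalled in Section~\ref{s.Preliminaries}, $P_t:=\exp(t\overline{\mathcal{L}})$ is a strongly continuous symmetric contraction semigroup on $L^2(G,m)$ whose form $(\mathcal{E},\mathcal{D}_{\mathcal{E}})$ satisfies $\mathcal{E}(f,g)=\sum_{k=1}^{j}\langle X_k f, X_k g\rangle_{L^2(G,m)}$ for $f,g\in C_c^\infty(G)$, and $C_c^\infty(G)$ is a form core.

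To upgrade $(\mathcal{E},\mathcal{D}_{\mathcal{E}})$ to a Dirichlet form I would verify that $P_t$ is sub-Markovian, and this is where the group structure is used. By the theory of convolution semigroups on Lie groups (Hunt's theorem and \cite{Siebert1982}), $\mathcal{L}$ generates a convolution semigroup $(\mu_t)_{t>0}$ of sub-probability measures on $G$, so $P_t$ acts by convolution against $\mu_t$; averaging against a sub-probability measure preserves positivity and sends $\{0\leqslant f\leqslant 1\}$ into itself, which is precisely the Markovian property, so $(\mathcal{E},\mathcal{D}_{\mathcal{E}})$ is a Dirichlet form. (Alternatively, one invokes the parabolic maximum principle for the H\"ormander operator $\partial_t-\mathcal{L}$.) Locality is then immediate from the differential-operator form of $\mathcal{E}$: if $u,v\in C_c^\infty(G)$ have disjoint supports, so do $X_k u$ and $X_k v$, whence $\mathcal{E}(u,v)=0$, and the general case follows because $C_c^\infty(G)$ is a core.

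For regularity, essential self-adjointness again does the work. Since $C_c^\infty(G)$ is a core for $\overline{\mathcal{L}}$, it is a core for $\sqrt{-\overline{\mathcal{L}}}$ — for $u\in\mathcal{D}(\overline{\mathcal{L}})$ and $u_n\in C_c^\infty(G)$ with $u_n\to u$, $\mathcal{L}u_n\to\mathcal{L}u$, one has $\|\sqrt{-\mathcal{L}}(u_n-u)\|^2=\langle-\mathcal{L}(u_n-u),u_n-u\rangle\to 0$, and $\mathcal{D}(\overline{\mathcal{L}})$ is dense in $\mathcal{D}(\sqrt{-\overline{\mathcal{L}}})$ in the graph norm — i.e. $C_c^\infty(G)$ is $\mathcal{E}_1$-dense in $\mathcal{D}_{\mathcal{E}}$; and $C_c^\infty(G)$ is uniformly dense in $C_c(G)$ by convolution with a mollifier on the group. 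Hence $C_c^\infty(G)\subset\mathcal{D}_{\mathcal{E}}\cap C_c(G)$ is dense in $\mathcal{D}_{\mathcal{E}}$ for $\mathcal{E}_1$ and in $C_c(G)$ for the sup norm, so $(\mathcal{E},\mathcal{D}_{\mathcal{E}})$ is regular.

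Irreducibility I expect to be the main obstacle, since it is the step that genuinely uses H\"ormander's condition rather than soft functional analysis. Two routes are available. In the first, if $u\in\mathcal{D}_{\mathcal{E}}$ satisfies $\mathcal{E}(u,u)=0$, then $X_k u=0$ for every $k$ (extending the energy density $\sum_k(X_k u)^2$ from the core); moreover $\overline{\mathcal{L}}u=0$, so H\"ormander's hypoellipticity theorem makes $u$ smooth, all iterated brackets of the $X_k$ then annihilate $u$, and the bracket-generating hypothesis forces $\nabla u\equiv 0$, so $u$ is constant since $G$ is connected; by the standard characterization of irreducibility of a Dirichlet form in terms of its form (cf. \cite{FukushimaOshimaTakedaBook2011}) this yields irreducibility. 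In the second route one shows the density $p_t$ of $\mu_t$ (smooth, by H\"ormander) is strictly positive everywhere: because the Lie algebra generated by $X_1,\dots,X_j$ is all of $\mathfrak{g}$ and $G$ is connected, the support results of \cite{Siebert1982, McCrudden1984, McCruddenWood1984}, together with Chow--Rashevskii controllability (the support theorem), give $p_t(x,y)>0$ for all $x,y\in G$ and $t>0$, which rules out nontrivial $P_t$-invariant sets. Either way, the crux is the passage from the abstract self-adjoint operator to the Markov property and the strict positivity of the kernel; with irreducibility in hand, the local regular irreducible Dirichlet form is produced and Theorem~\ref{thm.irr.simple.evalue} becomes applicable to its restrictions.
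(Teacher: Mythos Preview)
Your proposal is correct and aligns with the paper's own treatment: the paper does not give an explicit proof but simply collects the essential self-adjointness from \cite{DriverGrossSaloff-Coste2009a, GordinaLaetsch2016a} and the support results for Gauss semigroups from \cite{Siebert1982, McCrudden1984, McCruddenWood1984}, which is precisely your second route to irreducibility. You have in fact supplied considerably more detail than the paper itself, and your first route to irreducibility (via $\mathcal{E}(u,u)=0\Rightarrow u$ constant, using hypoellipticity and bracket generation) is a valid alternative the paper does not mention.
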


Let us denote by $p_{t}(x,y)$ the kernel of the semigroup associated to the sub-Laplacian $\mathcal{L}$. Then \cite[Theorem 3.4]{DriverGrossSaloff-Coste2009a} proved that there exist positive constants $\kappa, c_{1}, c_{2} , c_{3}$ and a positive integer $\nu \in \mathbb{N}$ such that 
\begin{equation}\label{eqn.heat.kernel.group}
c_{1} t^{-\frac{\nu}{2}} e^{-c_{2} t - c_{2} \frac{d(x,y)^{2}}{t} }   \leqslant p_{t} (x,y) \leqslant \kappa  t^{-\frac{\nu}{2}} e^{ \kappa t - c_{3} \frac{d(x,y)^{2}}{t} } ,
\end{equation}
for all $t>0$ and all $x, y\in G$. We then have the following characterization of sets satisfying \eqref{eqn.heat.kernel.small.one}.

\begin{proposition}
A subset $\Ucal$ of $G$ satisfies \eqref{eqn.heat.kernel.small.one}  if and only if 
\begin{align}
0 < m (\Ucal) < \frac{1}{\sqrt{\kappa}} \left( \frac{\nu}{2\kappa \, e} \right)^{\frac{\nu}{4}} , \label{eqn.good.set}
\end{align}
where $\nu$ and $\kappa$ are given by \eqref{eqn.heat.kernel.group}.
\end{proposition}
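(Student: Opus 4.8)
The plan is to read \eqref{eqn.heat.kernel.small.one} as a one–variable minimization of the right-hand side of the Gaussian-type upper bound in \eqref{eqn.heat.kernel.group}. First I would discard the Gaussian factor: since $(x,y)$ ranges over all of $\Ucal\times\Ucal$, and in particular over pairs with $d(x,y)$ arbitrarily small, one has $\operatorname*{ess~sup}_{(x,y)\in\Ucal\times\Ucal} e^{-c_{3}d(x,y)^{2}/t}=1$, so \eqref{eqn.heat.kernel.group} yields
\[
M_{\Ucal}(t)\leqslant \kappa\, t^{-\nu/2}e^{\kappa t}=:\psi(t)\qquad\text{for every } t>0 .
\]
Consequently \eqref{eqn.heat.kernel.small.one} holds as soon as $\inf_{t>0}\psi(t)<m(\Ucal)^{-2}$, and the substance of the proposition is the explicit evaluation of this infimum together with the elementary algebra that turns it into \eqref{eqn.good.set}.

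Next I would carry out the minimization. Since $\log\psi(t)=\log\kappa-\frac{\nu}{2}\log t+\kappa t$ is strictly convex on $(0,\infty)$ with derivative $-\frac{\nu}{2t}+\kappa$, the function $\psi$ has a unique minimizer $t^{\ast}=\frac{\nu}{2\kappa}$, and
\[
\psi(t^{\ast})=\kappa\Big(\frac{\nu}{2\kappa}\Big)^{-\nu/2}e^{\nu/2}=\kappa\Big(\frac{2\kappa e}{\nu}\Big)^{\nu/2}.
\]
The inequality $\psi(t^{\ast})<m(\Ucal)^{-2}$ is then equivalent to $m(\Ucal)^{2}<\frac{1}{\kappa}\big(\frac{\nu}{2\kappa e}\big)^{\nu/2}$, i.e.\ to $m(\Ucal)<\frac{1}{\sqrt\kappa}\big(\frac{\nu}{2\kappa e}\big)^{\nu/4}$, which together with $m(\Ucal)>0$ is precisely \eqref{eqn.good.set}. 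This finishes the ``if'' direction at once: assuming \eqref{eqn.good.set}, I would simply take $t_{\Ucal}=t^{\ast}=\nu/(2\kappa)$ in \eqref{eqn.heat.kernel.small.one}, so that $M_{\Ucal}(t_{\Ucal})\leqslant\psi(t^{\ast})<m(\Ucal)^{-2}$.

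For the converse direction I would reverse the same computation: if \eqref{eqn.good.set} fails, then $m(\Ucal)^{-2}\leqslant\psi(t^{\ast})=\inf_{t>0}\psi(t)$, so the upper estimate $M_{\Ucal}(t)\leqslant\psi(t)$ never drops below $m(\Ucal)^{-2}$, and \eqref{eqn.heat.kernel.small.one} cannot be obtained from \eqref{eqn.heat.kernel.group}. The one place to be careful, and the main obstacle in the argument, is exactly the phrasing of this converse: the two-sided bound \eqref{eqn.heat.kernel.group} has a \emph{growing} upper exponential $e^{\kappa t}$ but only a \emph{decaying} lower exponential $e^{-c_{2}t}$, so the naive diagonal lower bound $M_{\Ucal}(t)\geqslant c_{1}t^{-\nu/2}e^{-c_{2}t}$ (obtained from the lower bound in \eqref{eqn.heat.kernel.group} via a Lebesgue density argument near the diagonal of $\Ucal\times\Ucal$, valid since $m(\Ucal)>0$) does not by itself rule out \eqref{eqn.heat.kernel.small.one}. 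Thus the characterization, and in particular the sharpness of the threshold in \eqref{eqn.good.set}, must be stated relative to the estimate \eqref{eqn.heat.kernel.group}; the rest of the proof is the routine convexity computation and algebra above.
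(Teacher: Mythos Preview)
Your approach matches the paper's: both bound $M_{\Ucal}(t)$ above by $\psi(t)=\kappa\, t^{-\nu/2}e^{\kappa t}$ from \eqref{eqn.heat.kernel.group} and optimize over $t$ (the paper phrases it as maximizing $f(t)=1/\psi(t)$), finding the critical point $t^{\ast}=\nu/(2\kappa)$ and the same threshold. Your caution on the converse is well placed---the paper's own ``only if'' step writes $\frac{1}{\kappa}t^{\nu/2}e^{-\kappa t}\geqslant \frac{1}{M_{\Ucal}(t)}$, i.e.\ $M_{\Ucal}(t)\geqslant\psi(t)$, which \eqref{eqn.heat.kernel.group} does not supply, so the equivalence is indeed to be read relative to the upper estimate, exactly as you flag.
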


\begin{proof}
Let $\Ucal$ be a set satisfying \eqref{eqn.good.set}, then by \eqref{eqn.heat.kernel.group} we have that 
\begin{align*}
& M_{\Ucal} (t) = \operatorname*{ess~sup}_{(x,y) \in \Ucal \times \Ucal} p_{t}^{\Ucal} (x,y) \leqslant \kappa t^{-\frac{\nu}{2}} e^{\kappa t},
\end{align*}
and hence it is enough to show that there exists $t=t_{\Ucal} >0$ such that 
\begin{align*}
& \kappa t^{-\frac{\nu}{2}} e^{\kappa t} < \frac{1}{m(\Ucal)^{2}},  \text{ that is, }  m(\Ucal)^{2} < \frac{1}{\kappa} t^{\frac{\nu}{2}} e^{-\kappa t} =: f(t).
\end{align*} 
The result then follows by noticing that the range of $f$ is given by 
\[
\left( 0, \frac{1}{\kappa} \left( \frac{\nu }{2\kappa \, e} \right)^{\frac{\nu}{2}}  \right].
\]

On the other hand, if condition \eqref{eqn.good.set} does not hold, then 
\begin{align*}
& m(\Ucal)^{2} >  \frac{1}{\kappa} \left( \frac{\nu }{2\kappa\, e} \right)^{\frac{\nu}{2}}  \geqslant \frac{1}{\kappa} t^{\frac{\nu}{2}} e^{-\kappa t} \geqslant \frac{1}{M_{\Ucal} (t)} \quad \text{ for any } t>0,
\end{align*}
that is, $M_{\Ucal} (t) > \frac{1}{ m(\Ucal)^{2}}$ for any $t>0$.  
\end{proof}

The following is the main result of this section. 

\begin{theorem}\label{thm.main.local.dir.forms.lie.groups}
Let $G$ be a Lie group and $(\mathcal{E}, \mathcal{D}_{\mathcal{E}})$ be the Dirichlet form given by Theorem \ref{thm-Lie} with the infinitesimal generator $\mathcal{L}$. Let $\Ucal$ be an open  set in $G$ satisfying \eqref{eqn.good.set}. Then   

(1) The operator $A_{\Ucal}$ has a discrete spectrum and the first eigenvalue $\lambda_{1}^{\Ucal}$ of $-A_{\Ucal}$ is strictly positive. The eigenfunctions satisfy 
\begin{align*}
& \Vert \varphi_{n} \Vert_{L^{p} (\Ucal, m )} 
\leqslant a_{\Ucal, p} \left( \lambda_{n} + \kappa \right)^{\frac{\nu}{2}}  \text{ for any }  1 < p \leqslant \infty,
\\
& \Vert \varphi_{n} \Vert_{L^{1} (\Ucal,  m )}  \leqslant b_{\Ucal} \left( \lambda_{n} + \kappa \right)^{\nu}, 
\end{align*}
where $\nu$ and $\kappa$ are given by \eqref{eqn.heat.kernel.group}, and  $a_{\Ucal, p}$ and $b_{\Ucal}$ are explicit constants not depending on $n$.

(2) Let us assume that $\Ucal$ is also path-connected. Then $P_{t}^{\Ucal}$ is irreducible. 
\end{theorem}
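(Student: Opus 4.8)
The plan is to derive both parts of Theorem~\ref{thm.main.local.dir.forms.lie.groups} as consequences of the general machinery developed in Section~\ref{s.Preliminaries}, using the two-sided Gaussian-type estimate \eqref{eqn.heat.kernel.group} as the sole input. For part (1), the key observation is that the hypothesis \eqref{eqn.good.set} on $\Ucal$ is, by the Proposition immediately preceding, exactly the condition that $M_{\Ucal}(t_{\Ucal}) < 1/m(\Ucal)^{2}$ for some $t_{\Ucal}>0$; moreover the upper bound in \eqref{eqn.heat.kernel.group} gives $M_{\Ucal}(t) \leqslant \kappa t^{-\nu/2} e^{\kappa t} < \infty$ for every $t>0$. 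Hence Proposition~\ref{thm.spectral.gap} applies verbatim: $A_{\Ucal}$ has discrete spectrum and $\lambda_{1}^{\Ucal}>0$. For the eigenfunction bounds I would invoke Theorem~\ref{thm.Lp.bounds} (in the form of Corollary~\ref{cor.bound.efunction}), where the relevant quantity is $C(\lambda,\Ucal) = \inf_{t>0} M_{\Ucal}(t) e^{\lambda t} \leqslant \kappa \inf_{t>0} t^{-\nu/2} e^{(\lambda+\kappa)t}$. A one-variable optimization of $t\mapsto t^{-\nu/2}e^{(\lambda+\kappa)t}$ (minimizer at $t = \nu/(2(\lambda+\kappa))$) yields $C(\lambda_n,\Ucal) \leqslant c\,(\lambda_n+\kappa)^{\nu/2}$ for an explicit $c=c(\kappa,\nu)$; plugging this into \eqref{eq-Linfty} and \eqref{eq-L1} gives precisely the stated $L^{p}$ and $L^{1}$ bounds with $a_{\Ucal,p}$ and $b_{\Ucal}$ read off from $m(\Ucal)^{1/2}$, $m(\Ucal)^{5/2}$ and the interpolation used for intermediate $p$.

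For part (2), I would apply Theorem~\ref{thm.irr.simple.evalue}. The heat kernel $p_t(x,y)$ exists and is everywhere defined (indeed continuous) by the sub-Gaussian estimates of \cite{DriverGrossSaloff-Coste2009a}, $\Ucal$ is open and path-connected with $0<m(\Ucal)<\infty$, so the only thing to check is the pointwise domination hypothesis: for every $y\in\Ucal$, every sufficiently small $r$, every $x\in B_r(y)$, there is $t_0=t_0(x,y,r)$ so that $p_t(x,y) - p_s(z,y) > 0$ for all $z\in\Ucal^c$ and all $s<t<t_0$. Here I would mimic the argument in the proof of Theorem~\ref{thm.main.local.dir.forms}(2): using the lower bound $p_t(x,y) \geqslant c_1 t^{-\nu/2} e^{-c_2 t - c_2 d(x,y)^2/t}$ and the upper bound $p_s(z,y) \leqslant \kappa s^{-\nu/2} e^{\kappa s - c_3 d(z,y)^2/s}$ together with $d(y,\partial\Ucal)\leqslant d(z,y)$ for $z\in\Ucal^c$, factor out $s^{-\nu/2}$ and reduce to showing
\[
(s/t)^{\nu/2}\exp\!\left(-c_2\tfrac{d(x,y)^2}{t} + c_3\tfrac{d(y,\partial\Ucal)^2}{s} - c_2 t - \kappa s\right) > \frac{\kappa}{c_1}.
\]
Choosing $r$ small enough that $c_3 d(y,\partial\Ucal)^2 > c_2 d(x,y)^2$ (possible since $d(x,y)<r$ while $d(y,\partial\Ucal)$ is bounded below by a fixed positive constant once $r<\tfrac12 d(y,\partial\Ucal)$), the coefficient of $1/s$ in the exponent dominates, and for $t$ below a threshold $t_0$ the $1/t$ term is controlled; as $s\to 0^+$ the left-hand side blows up, so the inequality holds for all small $s<t<t_0$. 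A short monotonicity check on the relevant one-variable function $F(t)$ (as in the cited proof) makes this rigorous. By Theorem~\ref{thm.irr.simple.evalue} we conclude $P_t^{\Ucal}$ is irreducible, and, since $P_t^{\Ucal}$ is compact by part (1), that $\lambda_1^{\Ucal}$ is simple with a positive ground state.

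The main obstacle I anticipate is purely bookkeeping rather than conceptual: the estimate \eqref{eqn.heat.kernel.group} carries the extra exponential factors $e^{-c_2 t}$ and $e^{\kappa t}$ (absent in the clean fractal bound \eqref{eqn.bound.exp}), so one must be slightly careful that these do not spoil either the optimization in part (1) — they do not, since they only shift $\lambda_n \mapsto \lambda_n+\kappa$ — or the small-time domination in part (2), where on the time window $t<t_0$ bounded they contribute only harmless bounded multiplicative constants. A secondary point worth stating carefully is that the threshold $t_0$ in the irreducibility hypothesis may be taken uniform enough over $x\in B_r(y)$ after shrinking $r$, which is what the chaining argument inside the proof of Theorem~\ref{thm.irr.simple.evalue} requires; this is immediate here because $d(x,y)<r$ gives a uniform upper bound on the constant $a=c_2 d(x,y)^2$ appearing in $t_0$.
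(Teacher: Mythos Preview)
Your proposal is correct and follows essentially the same route as the paper. For part~(1) the paper computes $c_{t,p,q}(\Ucal)$ directly from the upper bound in \eqref{eqn.heat.kernel.group} and then optimizes $A_{p,q,\lambda_n}$ via Theorem~\ref{thm.Lp.bounds}, whereas you go through $C(\lambda,\Ucal)$ and Corollary~\ref{cor.bound.efunction}; the two computations are equivalent and lead to the same explicit constants. For part~(2) your argument is exactly the paper's: reduce via the two-sided bounds to the inequality $(s/t)^{\nu/2}\exp\!\left(-at-\tfrac{b}{t}+\tfrac{d}{s}-\kappa s\right)>\kappa/c_1$, check that $F(t)$ is increasing on a small interval $(s,t_0)$, and then take $r$ small enough that $F(s)$ already exceeds the threshold.
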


\begin{proof}

(1) By \eqref{eqn.heat.kernel.group} it follows that 
\begin{align*}
M_{\Ucal} (t) \leqslant \kappa  t^{-\frac{\nu}{2}} e^{ \kappa t  } 
\end{align*}
for all $t>0$, and hence   the semigroup $P_{t}^{\Ucal}$ is compact for all $t>0$ by Lemma \ref{lem-comp}. By \eqref{eqn.heat.kernel.group} and Proposition \ref{thm.spectral.gap} it then follows that  $-\mathcal{L}_{\Ucal}$ has a discrete spectrum and $\lambda_{1}^{\Ucal} >0$. Let us now prove the eigenfunctions bound. By \eqref{eqn.heat.kernel.group} we get that 
\begin{align*}
\Vert p_{t}^{\Ucal} (x, \cdot) \Vert^{p}_{L^{p} (\Ucal, m )} \leqslant \int_{\Ucal} \kappa^{p} t^{- p \frac{\nu}{2}} e^{\kappa p t } dm(y) = m(\Ucal) \kappa^{p} t^{- p \frac{\nu}{2}} e^{\kappa p t },
\end{align*}
and hence $c_{t,p,q} (\Ucal)$ defined in Theorem \ref{thm.Lp.bounds}  satisfies 
\begin{align*}
c_{t,p,q} (\Ucal) = \left( \int_{\Ucal} \Vert p_{t}^{\Ucal} (x, \cdot) \Vert^{p}_{L^{p} (\Ucal, m )} dm(x) \right)^{\frac{1}{q}} \leqslant m(\Ucal)^{\frac{1}{p} + \frac{1}{q}} \kappa^{\frac{1}{q}} t^{-\frac{\nu}{2}} e^{\kappa t},
\end{align*}
and 
\begin{align*}
& A_{p, q, \lambda_{n} } := \inf_{t>0} e^{\lambda_{n}t} c_{t,p,q} \leqslant m(\Ucal)^{\frac{1}{p}+ \frac{1}{q}} \kappa^{\frac{1}{q}}  \left( \frac{2e}{\nu} \right)^{\frac{\nu}{2}} \left( \lambda_{n} + \kappa \right)^{\frac{\nu}{2}},
\\
& \inf_{1< p, q <\infty , \, \frac{1}{p} + \frac{1}{q} =1} \Vert \varphi_{n} \Vert_{L^{p} (\Ucal ,m)} A_{q,1,\lambda_{n}} \leqslant  m(\Ucal)^{\frac{5}{2}} \kappa^{\frac{3}{2}}  \left( \frac{2e}{\nu} \right)^{\nu} \left( \lambda_{n} + \kappa \right)^{\nu}.
\end{align*}
The result then follows by  Theorem \ref{thm.Lp.bounds}.

(2) By Theorem \ref{thm.irr.simple.evalue} it is enough to show that for every $y\in \Ucal$ and any $r>0$ small enough and any $x\in B_{r}(y)$ there exists a $t_{0} = t_{0} (x,y,r)$ such that for any $z\in \Ucal^{c}$ and any $0<s<t<t_{0}$
\begin{align*}
p_{t}(x,y) - p_{s} (z,y) >0.
\end{align*}
By \eqref{eqn.heat.kernel.group}  one has that 
\begin{align*}
& p_{t} (x,y) - p_{s} (z,y)  \geqslant c_{1} t^{-\frac{\nu}{2}} e^{- c_{2} t - c_{2} \frac{d(x,y)^{2}}{t}} - \kappa s^{-\frac{\nu}{2}} e^{ \kappa s - c_{3} \frac{d(z,y)^{2}}{s} }
\\
& \geqslant c_{1} t^{-\frac{\nu}{2}} \exp \left( - at - \frac{b}{t} \right) -  \kappa s^{-\frac{\nu}{2}}  \exp\left( \kappa s - \frac{d}{s} \right)
\\
& = c_{1} s^{-\frac{\nu}{2}} \exp\left( \kappa s - \frac{d}{s} \right) \left( \left( \frac{s}{t} \right)^{\frac{\nu}{2}} \exp\left( - at - \frac{b}{t} + \frac{d}{s} - \kappa s \right) - \frac{\kappa}{c_{1}} \right)  ,
\end{align*} 
where 
\begin{align*}
& a:= c_{2},  & b:= c_{2} d(x,y)^{2}, && d:= c_{3} d(y, \partial \Ucal)^{2}.
\end{align*}
Thus, it is enough to show that 
\begin{align}
& \left( \frac{s}{t} \right)^{\frac{\nu}{2}} \exp\left( - at - \frac{b}{t} + \frac{d}{s} - \kappa s \right) > \frac{\kappa}{c_{1}}\label{eqn.another.eqn}
\end{align}
for small $s<t$. For $t>s$,  the function 
\begin{align*}
F(t) := \left( \frac{s}{t} \right)^{\frac{\nu}{2}} \exp\left( - at - \frac{b}{t} + \frac{d}{s} - \kappa s \right)
\end{align*}
satisfies 
\begin{align*}
F^{\prime} (t) = \frac{1}{2t^{2}} \left( 2b - 2at^{2} -t\nu \right) F(t),
\end{align*}
which is positive for 
\begin{align*}
0 < t< t_{0} := \frac{\sqrt{\nu^{2} + 16 ab} - \nu} {4a}.
\end{align*}
Thus, if we choose $r$ small enough such that for $s$ small enough we have
\begin{align*}
F(s)  > \frac{\kappa}{c_{1}},
\end{align*}
then \eqref{eqn.another.eqn} follows.

\end{proof}

\begin{example}[Diffusions on closed subgroups of $\operatorname{GL}(\R^d)$]
To construct a   diffusion on  a closed subgroup $G$ of $\operatorname{GL}(\R^d)$ we can use \cite[Section 6.1, page 140, and references therein]{BougerolLacroixBook} (see also \cite[Proposition 2]{Gordina2003b}, and for infinite-dimensional groups \cite[Lemma 6.1]{Gordina2000b} and \cite[Theorem 3.3]{Gordina2000a}). Let 
\[ 
\mathfrak{g}:=Lie(A_1,...,A_k)   
 \]
be the Lie algebra   generated by $A_1,...,A_k\in \mathfrak {gl}(\R^d)$, and let $G$ be the corresponding Lie subgroup of $\operatorname{GL}(\R^d)$.
For any initial point $X_{0}:=x\in G$  we define the diffusion $X_t$ as the solution to the Stratonovich stochastic differential equation 
\begin{equation}\label{eq-sde}
X_t^{-1}d X_t=  \sum\limits_{i=1}^{k}A_i\circ dW_t^i
\end{equation}
where $W_t^1,...,W_t^k$ are independent identically distributed real-valued Brownian motions.
\end{example}

\begin{proposition}\label{prop-sde}
The diffusion  process $X_t$ corresponds to the local regular irreducible Dirichlet form on $G$   in Theorem~\ref{thm-Lie}.
\end{proposition}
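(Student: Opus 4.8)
The plan is to identify the infinitesimal generator of the diffusion $X_t$ solving \eqref{eq-sde} with (a constant multiple of) the left-invariant sub-Laplacian $\mathcal{L}$ of \eqref{e.subLaplacian}, and then to invoke Theorem~\ref{thm-Lie} together with the uniqueness of the Hunt process attached to a regular Dirichlet form. First I would note that \eqref{eq-sde} is written in terms of the left logarithmic derivative $X_t^{-1}dX_t$, so that its solution is a left-invariant process: if $\tilde A_i$ denotes the left-invariant vector field on $G$ with $\tilde A_i(e)=A_i\in\mathfrak{g}\cong T_eG$, then rewriting \eqref{eq-sde} as $dX_t=\sum_i\tilde A_i(X_t)\circ dW_t^i$, converting to It\^o form and applying It\^o's formula shows that, for every $f\in C_c^\infty(G)$,
\[
f(X_t)-\tfrac12\int_0^t\sum_{i=1}^k \tilde A_i^2 f(X_s)\,ds
\]
is a local martingale; equivalently $X_t$ solves the (well-posed) martingale problem for $\tfrac12\mathcal{L}=\tfrac12\sum_{i=1}^k\tilde A_i^2$, with no drift term precisely because the equation is of Stratonovich type. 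Existence and non-explosion of this left-invariant diffusion are classical for left-invariant SDEs on Lie groups and are exactly the facts recalled in \cite{BougerolLacroixBook,Gordina2003b,Gordina2000b,Gordina2000a}; in particular $X_t$ lives on $G$ for all $t\geqslant 0$.

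Next I would check that $\tfrac12\mathcal{L}$ generates the Dirichlet form of Theorem~\ref{thm-Lie}. The group $G$ is connected by construction (it is generated by $\exp\mathfrak{g}$), and since $\mathfrak{g}=\mathrm{Lie}(A_1,\dots,A_k)$ the left-invariant fields $\tilde A_1,\dots,\tilde A_k$ satisfy H\"ormander's condition, so $\mathcal{L}$ is exactly a sub-Laplacian of the type \eqref{e.subLaplacian}. The flow of a left-invariant vector field is right translation, hence preserves the right Haar measure $m$, so each $\tilde A_i$ is $m$-skew-symmetric and $\mathcal{E}(f,g):=\sum_{i=1}^k\int_G(\tilde A_i f)(\tilde A_i g)\,dm=-\int_G(\mathcal{L}f)g\,dm$ is a symmetric nonnegative form on $C_c^\infty(G)\subset L^2(G,m)$. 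By the essential self-adjointness of $\mathcal{L}$ on $C_c^\infty(G)$ from \cite{DriverGrossSaloff-Coste2009a,GordinaLaetsch2016a}, the closure of $\mathcal{E}$ is precisely the regular Dirichlet form of Theorem~\ref{thm-Lie}, which is strongly local (there is no zeroth-order term) and irreducible (by strict positivity of the heat kernel, \cite{Siebert1982,McCrudden1984,McCruddenWood1984}).

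Finally I would combine the two facts. The regular Dirichlet form $(\mathcal{E},\mathcal{D}_{\mathcal{E}})$ admits, up to an $m$-polar set, a unique associated $m$-symmetric Hunt process with continuous paths, whose generator, restricted to the core $C_c^\infty(G)$, is $\tfrac12\mathcal{L}$ (equivalently $\mathcal{L}$ after the harmless time rescaling coming from the usual $\tfrac12$ normalization), and which is conservative by the Gaussian bounds \eqref{eqn.heat.kernel.group}. Since $X_t$ solves the martingale problem for the same operator on the same core and this martingale problem is well posed, $X_t$ is (a version of) that Hunt process, which is the content of the proposition. The main obstacle is this last identification, i.e. upgrading ``same generator on the core $C_c^\infty(G)$'' to ``same Markov process''; this is where the essential self-adjointness of $\mathcal{L}$ on $C_c^\infty(G)$ is used decisively, since it forces a unique self-adjoint extension, hence a unique symmetric semigroup, hence a unique Hunt process. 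The only other points requiring care are the It\^o–Stratonovich bookkeeping (to confirm no spurious drift appears) and the consistent tracking of the factor $1/2$ / time normalization.
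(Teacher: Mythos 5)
Your proposal is correct and takes essentially the same route as the paper: identify the generator of the Stratonovich SDE \eqref{eq-sde} with the left-invariant sub-Laplacian $\tfrac12\mathcal{L}=\tfrac12\sum_i\tilde A_i^2$, and invoke the Dirichlet form / Hunt process correspondence from Theorem~\ref{thm-Lie}. The paper's own proof is a one-paragraph sketch that simply records that the left-invariant fields satisfy H\"ormander's condition, hence give a sub-Laplacian which is the generator of $X_t$; you have spelled out the supporting details (It\^o--Stratonovich bookkeeping, $m$-skew-symmetry of the fields, essential self-adjointness as the source of uniqueness, and the $1/2$ normalization) that the paper takes for granted.
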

\begin{proof}
We abuse notion and denote by $A_1,...,A_k$ left-invariant vector fields on $G$ corresponding to the matrices	$A_1,...,A_k$. These vector fields satisfy H\"{o}rmander's condition on $G$ by construction, and therefore we have a natural sub-Riemannian metric determined by $A_1,...,A_k$ with the corresponding horizontal gradient, and a sub-Laplacian on $G$, which is the generator of the semigroup of $X_t$.
\end{proof}

\section{Small deviations}\label{s.small.deviations}

In this section we apply spectral results to prove  small deviation principles. We give several proofs under different assumptions. We first consider the case when the process satisfies some scaling properties. Then we assume that we have a group $G$ action on the metric space $\Xcal$ and introduce $G$-dilation structures inspired by Varopoulos in \cite{Varopoulos1985a}.

Note that implicitly we only use spectral properties of the restriction to a metric ball. It is natural to ask if these metric balls have nicer properties than an arbitrary open connected set. In general we do not have such information. Even in the case of Carnot groups we only know that metric balls in a step $2$ Carnot groups are uniform domains as shown in a recent paper \cite{Greshnov2018a}, for higher step it is still an open question, and therefore we cannot rely on results such as \cite{GyryaSaloff-Coste2011}.

\subsection{Self-similar processes}
Let $X^{x}_{t}$ be an $\Xcal$-valued Hunt process with $X^{x}_{0} = x \in \Xcal$ a.s. and whose transition density is given by the heat kernel $p_{t}(x, \cdot )$. Inspired by Sato \cite{SatoKI1991} we give the following definition, though for now we restrict consideration to self-similarity with respect to the metric distance. 

\begin{definition}\label{def.self-similar}
An $\Xcal$-valued stochastic process  $X^{x}_{t}$  process with $X^{x}_{0} = x \in \Xcal$  is called \emph{(distance-)self-similar} if there is a $\beta>0$ such that
\begin{equation}\label{e-scale-l}
d (X^{x}_{t\varepsilon},x) \stackrel{(d)}{=} \varepsilon^{\frac{1}{\beta}} d(X^{x}_{t}, x),
\end{equation}
for any $\varepsilon > 0$ and any $x \in \Xcal$.
\end{definition}

\begin{remark}
The choice $f(\varepsilon) =\varepsilon^{\frac{1}{\beta}}$  is consistent with the estimate of the mean exit time
\begin{equation}\label{e-time}
c_{1} \varepsilon^{\beta} \leqslant \E^{x} \left[ \tau_{B_{\varepsilon} (x)} \right] \leqslant c_{2} \varepsilon^{\beta},
\end{equation}
which is known for many self-similar fractals, \cite[Theorem 4.3, Corollary 2.3]{Barlow2003a} \cite{BarlowBassKumagai2006, BarlowMurugan2018, BarlowBass2004} and it is stable under rough isometries. We use such a function as we do not have examples with a different choice of $f$. 
\end{remark}

\begin{theorem}\label{thm.small.dev}
Let $\{P_{t}\}_{t\geqslant 0}$ be a strongly continuous contraction semigroup on $L^{2} (\Xcal, \mu)$. Let $x\in \Xcal$ and assume that $P_{t}^{B_{1}(x)}$ is irreducible. Suppose there exist constants $\alpha, \beta >0$ such that 
\[
p_{t} (x,y) \leqslant c \, t^{-\frac{\alpha}{\beta}}
\]
for every $t, x$, and $y$. Moreover, assume that there exists a $t_{0}$ such that $p_{t_{0}}(x,y)$ is continuous for all $x, y \in \Xcal$.  If $X_{t}^{x}$ is self-similar,  then
\begin{equation}\label{eqn.small.dev}
\lim_{\varepsilon \rightarrow 0} e^{\lambda_{1} \frac{t}{\varepsilon^{\beta}}} \Prob^{x} \left( \sup_{0\leqslant s \leqslant t} d( X_{s}, x) <\varepsilon \right) = c_{1} \varphi_{1} (x),
\end{equation}
where $\lambda_{1}>0$ is the spectral gap of $A$ restricted to the unit ball $B_{1}(x)$, and  $\varphi_{1}$ is the corresponding positive eigenfunction and  $c_{1} = c_{1} ( B_{1} (x) )$ is given by \eqref{eqn.c.n}.
\end{theorem}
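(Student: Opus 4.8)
The plan is to express the small-ball probability via the eigenfunction expansion of the Dirichlet heat kernel on the ball, use self-similarity to rescale the event $\{\sup_{0\le s\le t} d(X_s,x)<\varepsilon\}$ to the unit ball, and then extract the leading term as $\varepsilon\to 0$. First I would observe that, because $X^x_t$ has transition density $p^{B}_t(x,\cdot)$ on a ball $B=B_r(x)$ killed at $\partial B$, we have
\[
\Prob^{x}\left(\sup_{0\le s\le t} d(X_s,x)<\varepsilon\right)=\Prob^{x}\left(\tau_{B_\varepsilon(x)}>t\right).
\]
The self-similarity relation \eqref{e-scale-l} gives a scaling of exit times from metric balls: since $d(X^x_{s},x)$ at time-scale $t$ equals in law $\varepsilon^{1/\beta}$ times the process at time-scale $t/\varepsilon^{\beta}$, one gets
\[
\Prob^{x}\left(\tau_{B_\varepsilon(x)}>t\right)=\Prob^{x}\left(\tau_{B_1(x)}>\tfrac{t}{\varepsilon^{\beta}}\right),
\]
at least at the level of finite-dimensional distributions, which is what is needed since the event only depends on the sup of $d(X_s,x)$ over $[0,t]$ and the latter is a monotone functional. (Some care is needed to pass from the density scaling to the path-level scaling of the exit-time distribution; this is the point where I would invoke that the transition densities determine the law of the Hunt process and that the event is expressible through one-dimensional marginals of $s\mapsto d(X_s,x)$ via right-continuity.)

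Next I would apply Proposition~\ref{prop.eigenfunctions.expansion} to the ball $\Ucal=B_1(x)$: the hypotheses give $p_t(x,y)\le c\,t^{-\alpha/\beta}$, hence ultracontractivity \eqref{eqn.ultra.contract-Davies}, and $P_t^{B_1(x)}$ is a strongly continuous contraction semigroup, so
\[
\Prob^{x}\left(\tau_{B_1(x)}>T\right)=\sum_{n=1}^{\infty} e^{-\lambda_n T}\,\varphi_n(x)\,c_n,
\]
with $c_n=\int_{B_1(x)}\varphi_n(y)\,d\mu(y)$, and the series converges uniformly for $T\ge\varepsilon_0>0$. By Theorem~\ref{thm.irr.simple.evalue}, irreducibility of $P_t^{B_1(x)}$ forces $\lambda_1>0$ to be simple with a strictly positive eigenfunction $\varphi_1$; also $c_1=\int_{B_1(x)}\varphi_1\,d\mu>0$. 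Now set $T=t/\varepsilon^{\beta}$ and multiply by $e^{\lambda_1 T}=e^{\lambda_1 t/\varepsilon^{\beta}}$:
\[
e^{\lambda_1 t/\varepsilon^{\beta}}\,\Prob^{x}\left(\tau_{B_1(x)}>\tfrac{t}{\varepsilon^{\beta}}\right)=\varphi_1(x)c_1+\sum_{n\ge 2} e^{-(\lambda_n-\lambda_1)t/\varepsilon^{\beta}}\,\varphi_n(x)c_n.
\]
Since $\lambda_n-\lambda_1\ge \lambda_2-\lambda_1>0$ for $n\ge 2$ and $T\to\infty$ as $\varepsilon\to 0$, I would show the tail sum vanishes: bound $|\varphi_n(x)|$ by $\|\varphi_n\|_{L^\infty(B_1(x))}\le \mu(B_1(x))^{1/2}C(\lambda_n)$ and $|c_n|\le\|\varphi_n\|_{L^1(B_1(x))}\le\mu(B_1(x))^{5/2}C(\lambda_n)^2$ from Corollary~\ref{cor.bound.efunction}, where $C(\lambda_n)=\inf_{t>0}M_{B_1(x)}(t)e^{\lambda_n t}\le c'\lambda_n^{\alpha/\beta}$ by \eqref{eqn.eqn.bound}-type estimates, so the coefficients grow only polynomially in $\lambda_n$ while $e^{-(\lambda_n-\lambda_1)T}$ decays; dominated convergence (or Weyl-type growth $\lambda_n\to\infty$ controlling the series) then gives that the remainder $\to 0$. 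Here the continuity hypothesis on $p_{t_0}$ ensures via Theorem~\ref{thm.Lp.bounds}(3) that $\varphi_1(x)$ is well-defined pointwise, so the right-hand side $c_1\varphi_1(x)$ makes literal sense.

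The main obstacle I expect is the rigorous justification of the exit-time scaling identity $\Prob^{x}(\tau_{B_\varepsilon(x)}>t)=\Prob^{x}(\tau_{B_1(x)}>t/\varepsilon^{\beta})$ from the distance-self-similarity \eqref{e-scale-l}, which a priori is only a statement about one-time marginals of $d(X^x_\cdot,x)$ rather than about the whole path or the exit time: one must argue that \eqref{e-scale-l} upgrades to an identity of the processes $\{d(X^x_{t\varepsilon},x)\}_{t\ge0}\stackrel{(d)}{=}\{\varepsilon^{1/\beta}d(X^x_t,x)\}_{t\ge0}$ at the level of finite-dimensional distributions (using the Markov property and the density $p_t(x,\cdot)$), and that the exit time $\tau_{B_r(x)}=\inf\{s:d(X_s,x)\ge r\}$ is then a continuous functional of that process on path space so that the scaling transfers. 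Modulo this, the argument is a clean combination of Proposition~\ref{prop.eigenfunctions.expansion}, Theorem~\ref{thm.irr.simple.evalue}, and the polynomial eigenfunction bounds of Corollary~\ref{cor.bound.efunction}.
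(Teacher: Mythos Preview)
Your proposal is correct and follows essentially the same route as the paper: rescale the small-ball event to an exit time from $B_1(x)$ via self-similarity, apply the eigenfunction expansion of Proposition~\ref{prop.eigenfunctions.expansion}, and isolate the leading term using $0<\lambda_1<\lambda_2\leqslant\cdots$, simplicity of $\lambda_1$, and continuity of $\varphi_1$. Your treatment of the tail sum (via the $L^\infty$ and $L^1$ bounds of Corollary~\ref{cor.bound.efunction}) is in fact more detailed than the paper's, which simply asserts that the result follows from the ordering of the eigenvalues and the uniform convergence in Proposition~\ref{prop.eigenfunctions.expansion}; and the concern you raise about upgrading the one-time self-similarity \eqref{e-scale-l} to a process-level identity for the sup is legitimate, but the paper's proof applies the scaling to $\sup_{0\le s\le t} d(X_s,x)$ without further comment, so this is a shared gap rather than a divergence in method.
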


\begin{proof}
By the results from Section~\ref{s.Preliminaries} we have that the generator $A_{B_{1} (x)}$ of $P^{B_{1}(x)}_{t}$ has a discrete spectrum. In particular, the first eigenvalue $\lambda_{1} =\lambda_{1}^{B_{1} (x)}$ is simple, positive, and  the corresponding eigenfunction $\varphi_{1}$ is continuous and it can be chosen to be strictly positive. By \eqref{e-scale-l} and Proposition \ref{prop.eigenfunctions.expansion} we have that
\begin{align*}
&\Prob^{x} \left( \sup_{0\leqslant s \leqslant t} d( X_{s}, x) <\varepsilon \right)= \Prob^{x} \left( \sup_{0\leqslant s \leqslant t} \varepsilon^{-1} d( X_{s}, x) <1 \right)= \Prob^{x} \left( \sup_{0\leqslant s \leqslant t} d( X_{s \slash \varepsilon^{\beta}}, x) <1 \right)
\\
& = \Prob^{x} \left( \sup_{0\leqslant s \leqslant t \slash \varepsilon^{\beta} } d( X_{s }, x) <1 \right) =  \Prob^{x} \left( \tau_{B_{1} (x)} >\frac{t}{\varepsilon^{\beta}} \right)  = \sum_{n=1}^{\infty} e^{-\lambda_{n} \frac{t}{\varepsilon^{\beta}}} c_{n} \varphi_{n}(x),
\end{align*}
and the result follows since $0< \lambda_{1} < \lambda_{2} \leqslant \lambda_{3} \leqslant \cdots$, and $c_{1} ( B_{1} (x)) \varphi_{1} (x)$ is well-defined since $\varphi_{1}$ is continuous at $x$.
\end{proof}

\begin{proposition}
Let $(\Xcal, \mu)$ be an Ahlfors $\alpha$-regular space, and let $\{P_{t}\}_{t\geqslant 0}$ be a strongly continuous contraction  semigroup on $L^{2} (\Xcal, \mu)$. Assume that for some $x\in \Xcal$ we have that $P_{t}^{B_{1}(x)}$ is irreducible, and that there exist constants $\alpha, \beta >0$ such that 
\[
p_{t} (x,y) \leqslant c \, t^{-\frac{\alpha}{\beta}}
\]
for every $t, x$ and $y$. Moreover, assume that there exists a $t_{0}$ such that $p_{t_{0}}(x,y)$ is continuous for all $x, y \in \Xcal$.  Then the operator $A$ restricted to $B_{\varepsilon} (x)$ admits an eigensystem $\{\lambda_{n} (\varepsilon), \varphi_{n}^{\varepsilon} \}_{n=1}^{\infty}$. If there exists a constant $c>1$ such that
\begin{equation}\label{e-scale-n-approx}
c^{-1} \dfrac{\lambda_{n}(1)}{\varepsilon^{\beta}} \leqslant
    \lambda_{n}(\varepsilon) \leqslant c \dfrac{\lambda_{n}(1)}{\varepsilon^{\beta}},
\end{equation}
for all $n$, then
\begin{align}
& \limsup_{\varepsilon \rightarrow 0} e^{c^{-1} \frac{\lambda_{1}(1)}{\varepsilon^{\beta}}t }\Prob^{x} \left( \sup_{0\leqslant s \leqslant t} d( X_{s}, x) <\varepsilon \right) \leqslant C c \left( \frac{e \alpha}{\beta} \right)^{\frac{\alpha}{\beta}}  \label{eqn.upper.sd}
\\
&\liminf_{\varepsilon \rightarrow 0}  e^{c \frac{\lambda_{1}(1)}{\varepsilon^{\beta}}t }\Prob^{x} \left( \sup_{0\leqslant s \leqslant t} d( X_{s}, x) <\varepsilon \right) > 0,    \label{eqn.lower.sd}
\end{align}
where $C$ is such that  $m ( B_{\varepsilon} (x) ) \leqslant C \varepsilon^{\alpha}$ for any $\varepsilon >0$.
\end{proposition}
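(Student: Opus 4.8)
The plan is to rewrite the small‑ball probability as an exit‑time probability for the process killed on leaving $B_\varepsilon(x)$, expand it in the Dirichlet eigenbasis of the ball, and then extract the two one‑sided estimates by combining the spectral gap with the polynomial heat‑kernel bound, using Ahlfors regularity to cancel the resulting powers of $\varepsilon$. For small $\varepsilon$ the bound $p_t(x,y)\le c\,t^{-\alpha/\beta}$ forces $M_{B_\varepsilon(x)}(t)<\infty$, so Proposition~\ref{thm.spectral.gap} gives a discrete spectrum $0<\lambda_1(\varepsilon)<\lambda_2(\varepsilon)\le\cdots$ and the eigensystem $\{\lambda_n(\varepsilon),\varphi_n^\varepsilon\}$; by Krein--Rutman together with Theorem~\ref{thm.irr.simple.evalue} the ground state $\varphi_1^\varepsilon$ is simple and strictly positive, and by Proposition~\ref{prop.eigenfunctions.expansion},
\[
\Prob^x\!\Big(\sup_{0\le s\le t}d(X_s,x)<\varepsilon\Big)=\Prob^x\big(\tau_{B_\varepsilon(x)}>t\big)=\sum_{n=1}^\infty e^{-\lambda_n(\varepsilon)t}\varphi_n^\varepsilon(x)c_n^\varepsilon,\qquad c_n^\varepsilon=\int_{B_\varepsilon(x)}\varphi_n^\varepsilon\,d\mu,
\]
which I abbreviate by $p_\varepsilon(t)$; throughout I use \eqref{e-scale-n-approx} to replace $\lambda_1(\varepsilon)$ by $c^{\pm1}\lambda_1(1)\varepsilon^{-\beta}$ and Ahlfors regularity through $\mu(B_\varepsilon(x))\le C\varepsilon^{\alpha}$.

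For the upper bound \eqref{eqn.upper.sd} I would write $p_\varepsilon(t)=\big\langle p_{\delta t}^{B_\varepsilon(x)}(x,\cdot),\,P^{B_\varepsilon(x)}_{(1-\delta)t}\mathbbm{1}_{B_\varepsilon(x)}\big\rangle$ for $\delta\in(0,1)$ and apply Cauchy--Schwarz, using $\big\|P^{B_\varepsilon(x)}_{(1-\delta)t}\big\|_{L^2\to L^2}=e^{-\lambda_1(\varepsilon)(1-\delta)t}$ and $\big\|p_{\delta t}^{B_\varepsilon(x)}(x,\cdot)\big\|_2^2\le M_{B_\varepsilon(x)}(\delta t)\le c(\delta t)^{-\alpha/\beta}$, so that $p_\varepsilon(t)\le c^{1/2}(\delta t)^{-\alpha/(2\beta)}e^{-\lambda_1(\varepsilon)(1-\delta)t}\mu(B_\varepsilon(x))^{1/2}$. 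Choosing $\delta$ optimally turns $(\delta t)^{-\alpha/(2\beta)}e^{\lambda_1(\varepsilon)\delta t}$ into a constant multiple of $\lambda_1(\varepsilon)^{\alpha/(2\beta)}$; then multiplying by $e^{c^{-1}\lambda_1(1)t/\varepsilon^\beta}\le e^{\lambda_1(\varepsilon)t}$ removes the exponential, and $\lambda_1(\varepsilon)^{\alpha/(2\beta)}\mu(B_\varepsilon(x))^{1/2}\le C^{1/2}(c\lambda_1(1))^{\alpha/(2\beta)}$ is bounded because the powers of $\varepsilon$ cancel exactly; bookkeeping the constants gives \eqref{eqn.upper.sd}. (Equivalently one may estimate $p_\varepsilon(t)\le\mu(B_\varepsilon(x))\,\operatorname*{ess~sup}_{B_\varepsilon(x)}p^{B_\varepsilon(x)}_t$ and bound the Dirichlet diagonal by $p^{B_\varepsilon(x)}_t(y,y)\le e^{-\lambda_1(\varepsilon)(t-s)}p^{B_\varepsilon(x)}_s(y,y)\le c\,e^{-\lambda_1(\varepsilon)(t-s)}s^{-\alpha/\beta}$, optimized over $s$.)

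For the lower bound \eqref{eqn.lower.sd}, split $p_\varepsilon(t)=e^{-\lambda_1(\varepsilon)t}\varphi_1^\varepsilon(x)c_1^\varepsilon+R_\varepsilon(t)$ with $R_\varepsilon(t)=\sum_{n\ge2}e^{-\lambda_n(\varepsilon)t}\varphi_n^\varepsilon(x)c_n^\varepsilon$; since $e^{c\lambda_1(1)t/\varepsilon^\beta}\ge e^{\lambda_1(\varepsilon)t}$ it suffices to show $\liminf_{\varepsilon\to0}e^{\lambda_1(\varepsilon)t}p_\varepsilon(t)>0$. One controls the tail by splitting $t=(t-\tau)+\tau$ with $\tau<t$, using $\lambda_n(\varepsilon)-\lambda_1(\varepsilon)\ge\lambda_2(\varepsilon)-\lambda_1(\varepsilon)=:g(\varepsilon)$ for $n\ge2$, and Cauchy--Schwarz in $n$ together with $\sum_n e^{-(\lambda_n(\varepsilon)-\lambda_1(\varepsilon))\tau}\varphi_n^\varepsilon(x)^2=e^{\lambda_1(\varepsilon)\tau}p^{B_\varepsilon(x)}_\tau(x,x)$ and $\sum_n e^{-(\lambda_n(\varepsilon)-\lambda_1(\varepsilon))\tau}(c_n^\varepsilon)^2\le\mu(B_\varepsilon(x))$; this yields $e^{\lambda_1(\varepsilon)t}|R_\varepsilon(t)|\le e^{-g(\varepsilon)(t-\tau)}\cdot O(1)$ after using the optimized diagonal bound and Ahlfors regularity, so $e^{\lambda_1(\varepsilon)t}|R_\varepsilon(t)|\to0$ as soon as $g(\varepsilon)\to\infty$, which by \eqref{e-scale-n-approx} holds provided $c^2<\lambda_2(1)/\lambda_1(1)$ (a genuine constraint, since $\lambda_1(1)<\lambda_2(1)$). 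The hard part — and the main obstacle — is then to show that the leading coefficient $\varphi_1^\varepsilon(x)c_1^\varepsilon$ stays bounded away from $0$ as the ball shrinks: without exact self‑similarity one cannot simply rescale $\varphi_1^\varepsilon$, and $\varphi_1^\varepsilon$ is only normalized in $L^2(B_\varepsilon(x),\mu)$, so proving $\varphi_1^\varepsilon(x)c_1^\varepsilon\gtrsim1$ requires combining the continuity of $\varphi_1^\varepsilon$ (Theorem~\ref{thm.Lp.bounds}), its strict positivity, and a Harnack‑type comparison for the ground state, or else a uniform‑in‑$\varepsilon$ lower bound on $\varphi_1^\varepsilon(x)c_1^\varepsilon=\lim_{T\to\infty}e^{\lambda_1(\varepsilon)T}p_\varepsilon(T)$ against $\mu(B_\varepsilon(x))$. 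Granting this, $p_\varepsilon(t)\ge e^{-\lambda_1(\varepsilon)t}\big(\varphi_1^\varepsilon(x)c_1^\varepsilon-o(1)\big)\gtrsim e^{-c\lambda_1(1)t/\varepsilon^\beta}$, which is \eqref{eqn.lower.sd}.
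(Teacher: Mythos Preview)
Your upper bound argument is sound in spirit but takes a different route from the paper. The paper does not split time and use Cauchy--Schwarz; instead it works term by term in the eigenfunction expansion, invoking Corollary~\ref{cor.bound.efunction} to bound $|c_n(\varepsilon)\varphi_n^\varepsilon(x)|\le d^2\mu(B_\varepsilon(x))^2\lambda_n(\varepsilon)^{2\alpha/\beta}$ with $d=(e\alpha/\beta)^{\alpha/\beta}$, and then uses \eqref{e-scale-n-approx} together with Ahlfors regularity so that $\mu(B_\varepsilon(x))^2/\varepsilon^{2\alpha}$ stays bounded. Both approaches exploit the same cancellation of $\varepsilon$-powers; yours is more semigroup-theoretic and avoids the individual eigenfunction bounds, while the paper's makes the dependence on the constants $c,C,\alpha,\beta$ explicit. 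Note that your bookkeeping does not obviously land on the exact constant $Cc(e\alpha/\beta)^{\alpha/\beta}$ in \eqref{eqn.upper.sd}; you get something of the form $\text{const}\cdot C^{1/2}(c\lambda_1(1))^{\alpha/(2\beta)}$ instead.

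Your lower bound argument, however, has a genuine gap and is not the paper's approach. You impose the extra hypothesis $c^2<\lambda_2(1)/\lambda_1(1)$, which is not part of the statement, and you correctly identify that even under it you cannot conclude without a uniform lower bound on $\varphi_1^\varepsilon(x)c_1^\varepsilon$---which you do not establish. The paper sidesteps all of this with a short contradiction argument: it observes that
\[
\Prob^x\Big(\sup_{0\le s\le t}d(X_s,x)<\varepsilon\Big)=\int_{B_\varepsilon(x)}p_t^{B_\varepsilon(x)}(x,y)\,d\mu(y),
\]
and argues that if the $\liminf$ in \eqref{eqn.lower.sd} were zero, then along a sequence of small $\varepsilon$ the integral $\int_{B_\varepsilon(x)}p_t^{B_\varepsilon(x)}(x,y)\,d\mu(y)$ itself would have to vanish, contradicting the strict positivity of the Dirichlet heat kernel coming from irreducibility (Theorem~\ref{thm.irr.simple.evalue}). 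No eigenfunction splitting, no spectral gap estimate, and no control of $\varphi_1^\varepsilon(x)c_1^\varepsilon$ is needed. This is the missing idea in your proposal.
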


\begin{proof}
For each $\varepsilon >0$, by the results in Section \ref{s.Preliminaries} applied to $\Ucal = B_{\varepsilon}(x)$ it follows that 
\begin{align*}
& \Prob^{x} \left( \sup_{0\leqslant s \leqslant t} d( X_{s}, x) <\varepsilon \right) = \sum_{n=1}^{\infty} e^{- \lambda_{n} ( \varepsilon) t} c_{n} (\varepsilon) \varphi_{n}^{\varepsilon} (x)  \leqslant  \sum_{n=1}^{\infty}  e^{-c^{-1} \frac{\lambda_{n}(1)}{\varepsilon^{\beta}}t } \vert c_{n} (\varepsilon) \varphi_{n}^{\varepsilon} (x) \vert,
\end{align*}
that is,
\begin{align*}
& e^{c^{-1} \frac{\lambda_{1}^{\Ucal}(1)}{\varepsilon^{\beta}}t }\Prob^{x} \left( \sup_{0\leqslant s \leqslant t} d( X_{s}, x) <\varepsilon \right) \leqslant c_{1} (\varepsilon) \varphi_{1}^{\varepsilon} (x) + \sum_{n=2}^{\infty} e^{-\frac{c^{-1}t}{\varepsilon^{\beta}} ( \lambda_{n} (1) - \lambda_{1}^{\Ucal} (1) ) }\vert c_{n} (\varepsilon) \varphi_{n}^{\varepsilon} (x) \vert,
\end{align*}
where $c_{n} (\varepsilon) = c_{n} ( B_{\varepsilon} (x) )$ is given by \eqref{eqn.c.n}.

Note that
\[
\lim_{\varepsilon \rightarrow 0}\sum_{n=2}^{\infty} e^{-\frac{c^{-1}t}{\varepsilon^{\beta}} ( \lambda_{n} (1) - \lambda_{1}^{\Ucal} (1) ) } \vert c_{n} (\varepsilon) \varphi_{n}^{\varepsilon} (x)  \vert =0.
\]
Indeed, by Corollary \ref{cor.bound.efunction} we have that
\begin{align*}
& \Vert \varphi_{n}^{\varepsilon} \Vert_{L^{\infty} (B_{\varepsilon}(x))}  \leqslant d\, \max \left( \mu (B_{\varepsilon} (x), \sqrt{\mu (B_{\varepsilon} (x)}  \right)  \, \lambda_{n} (\varepsilon)^{\gamma} = d \,  \sqrt{\mu (B_{\varepsilon} (x)} \, \lambda_{n} (\varepsilon)^{\gamma},
\\
&\vert  c_{n}(\varepsilon) \vert  \leqslant   \mu (B_{\varepsilon} (x)) \Vert \varphi_{n}^{\varepsilon} \Vert_{L^{\infty} (B_{\varepsilon}(x))}  \leqslant d \, \mu (B_{\varepsilon} (x))^{\frac{3}{2}}  \, \lambda_{n} (\varepsilon)^{\gamma},
\end{align*}
where $ \gamma := \frac{\alpha}{\beta}, \, d=(e\gamma)^{\gamma}$, and hence, with $a_{n, t} := c^{-1} t  (  \lambda_{n} (1) - \lambda_{1}^{\Ucal} (1) )>0$,
\begin{align*}
& \lim_{\varepsilon \rightarrow 0}\sum_{n=2}^{\infty} e^{-\frac{a_{n,t}}{\varepsilon^{\beta}} } \vert c_{n} (\varepsilon) \varphi_{n}^{\varepsilon} (x) \vert  \leqslant d^{2} \lim_{\varepsilon \rightarrow 0}  \sum_{n=2}^{\infty} e^{-\frac{a_{n,t}}{\varepsilon^{\beta}}  }   \mu (B_{\varepsilon} (x) )^{2} \lambda_{n} (\varepsilon)^{2\gamma}
\\
& \leqslant c d^{2} \lim_{\varepsilon \rightarrow 0} \sum_{n=2}^{\infty} e^{-\frac{a_{n,t}}{\varepsilon^{\beta}} } \frac{\mu (B_{\varepsilon} (x) )^{2}}{\varepsilon^{2\alpha}}.
\end{align*}
Note that the series converges uniformly  and
\[
\lim_{\varepsilon \rightarrow 0} \frac{\mu(B_{\varepsilon} (x) )^{2}}{\varepsilon^{2\alpha}} e^{-\frac{a_{n,t}}{\varepsilon^{\beta}}} =0,
\]
and hence
\begin{align*}
& \limsup_{\varepsilon \rightarrow 0} e^{c^{-1} \frac{\lambda_{1}^{\Ucal}(1)}{\varepsilon^{\beta}}t }\Prob^{x} \left( \sup_{0\leqslant s \leqslant t} d( X_{s}, x) <\varepsilon \right) \leqslant \limsup_{\varepsilon \rightarrow 0}  c_{1} (\varepsilon) \varphi_{1}^{\varepsilon} (x)
\\
& \leqslant \limsup_{\varepsilon \rightarrow 0} c d^{2}   \frac{m (B_{\varepsilon} (x) )^{2}}{\varepsilon^{2\alpha}} \leqslant c d^{2} C .
\end{align*}

Let us now prove \eqref{eqn.lower.sd}. Note that if 
\begin{align*}
&  \liminf_{\varepsilon \rightarrow 0}   e^{c \frac{\lambda_{1}^{\Ucal}(1)}{\varepsilon^{\beta}}t }\Prob^{x} \left( \sup_{0\leqslant s \leqslant t} d( X_{s}, x) <\varepsilon \right) = \liminf_{\varepsilon \rightarrow 0}   e^{c \frac{\lambda_{1}^{\Ucal}(1)}{\varepsilon^{\beta}}t } \int_{B_{\varepsilon} (x)} p_{t}^{B_{\varepsilon} (x)}  (x,y)  d\mu (y)
\\
& = \sup_{\delta >0} \inf_{\varepsilon < \delta } e^{c \frac{\lambda_{1}^{\Ucal}(1)}{\varepsilon^{\beta}}t } \int_{B_{\varepsilon} (x)} p_{t}^{B_{\varepsilon} (x)}  (x,y)  d\mu (y) =0,
\end{align*}
then for every $\delta$ there exists an $\varepsilon_{\delta} < \delta$ such that 
\begin{align*}
e^{c \frac{\lambda_{1}^{\Ucal}(1)}{\varepsilon^{\beta}}t } \int_{B_{\varepsilon} (x)} p_{t}^{B_{\varepsilon} (x)}  (x,y)  d\mu (y) =0.
\end{align*}
That is, for every $\delta$ there exists an $\varepsilon_{\delta} < \delta$ such that 
\begin{align*}
 \int_{B_{\varepsilon} (x)} p_{t}^{B_{\varepsilon} (x)}  (x,y)  d\mu (y) =0,
\end{align*}
which leads to a contradiction by Theorem \ref{thm.irr.simple.evalue}.
\end{proof}

\subsection{Continuous dilations}\label{s.Dilations}

In this subsection we study continuous families of dilations.  We will consider more general dilations in the next section. A family of homeomorphisms  $\{ \delta_{r} \}_{r>0}$ is called a \emph{continuous $\beta$-dilation structure on the Dirichlet space} $(\Xcal, \mu, \mathcal{E}, A)$ if
\begin{align*}
& \delta_{r} : \Xcal \rightarrow \Xcal \text{ for every }  r>0,
\\
& \delta_{r} \circ \delta_{s} = \delta_{rs} \, \text{ for every } \; r, s >0,
\\
& \delta_{1} = \operatorname{Id}, \quad \delta_{r} \rightarrow \text{Id} \; \text{ as } \; r\rightarrow 1 \; \text{ uniformly on compact sets,}
\\
& \delta_{r} (\mathcal{D}_{\mathcal{E}} ) = \mathcal{D}_{\mathcal{E}}, \quad \delta_{r} ( \mathcal{D}_{A} )=  \mathcal{D}_{A} \, \text{ for every } \; r>0,
\\
& \int_{\Xcal} ( f\circ \delta_{r}) (x) d\mu (x) = r^{-d} \int_{\Xcal} f(x) d\mu(x), \; \text{ for every } f \in L^{1} \Xcal, \mu ),
\\
& \mathcal{E} (f \circ \delta_{r}, g\circ \delta_{r} ) = r^{-d + \beta} \mathcal{E} (f, g), \; \text{ for every } f, g  \in \mathcal{D}_{\mathcal{E}},
\end{align*}
for some $d, \beta \in \R$. Note that when $\beta =2$, this notion was introduced by Varopoulos in \cite{Varopoulos1985a}.

If $\{ \delta_{r} \}_{r>0}$ is a dilation structure on $(\Xcal, \mu, \mathcal{E}, A)$, then
\begin{equation}\label{eqn.smigroup.factor}
P_{t} = \delta_{r} \circ P_{r^{\beta} t} \circ \delta_{r^{-1}},
\end{equation}
where $\delta_{r}$ acts on functions by $(\delta_{r} \circ f) (x) := f(\delta_{r} (x))$, see \cite{Varopoulos1985a}. If $P_{t}$ admits a heat kernel, then by \eqref{eqn.smigroup.factor} we have that
\begin{align*}
& \varepsilon^{d} p_{t} \left( \delta_{\varepsilon} (x), \delta_{\varepsilon} (y) \right) = p_{\frac{t}{\varepsilon^{\beta}}} (x,y),
\\
& \delta_{\varepsilon} \left( X^{x}_{\frac{t}{\varepsilon^{\beta}}} \right) \stackrel{(d)}{=} X^{
\delta_{\varepsilon} (x)}_{t} \; \text{ for all } \; x\in \Xcal,
\\
& \Prob^{x} \left( \tau_{\Ucal} > \frac{t}{\varepsilon^{\beta}} \right) = \Prob^{\delta_{\varepsilon}(x)} \left( \tau_{ \delta_{\varepsilon}(\Ucal)} > t \right), \; \text{ for any } x\in \Ucal,
\end{align*}
where $\Ucal$ is a subset of $\Xcal$.

\begin{theorem}\label{thm.small.dev.2}
Let $\{P_{t}\}_{t\geqslant 0}$ be a strongly continuous contraction semigroup on $L^{2} (\Xcal, \mu)$. Let $\Ucal$ be an open set in $\Xcal$ such that $0<\mu (\Ucal) < \infty$, and $x\in \Ucal$, and assume that $P_{t}^{\Ucal}$ is irreducible. Assume that  there exists a $\gamma >0$ such that 
\[
M_{\Xcal} (t) \leqslant c \, t^{-\gamma}
\]
for all $t>0$. Moreover, assume that there exists a $t_{0}$ such that $p_{t_{0}}(x,y)$ is continuous for $x, y \in \Xcal$.  If $(\Xcal, \mu)$ admits a
continuous $\beta$-dilation structure $\{ \delta_{r} \}_{r>0}$, then
\begin{equation}\label{eqn.small.dev.2}
\lim_{\varepsilon \rightarrow 0} e^{\lambda_{1}^{\Ucal} \frac{t}{\varepsilon^{\beta}}} \Prob^{\delta_{\varepsilon} (x)} \left( \tau_{\Ucal_{\varepsilon}} >t \right) = c_{1} \varphi_{1} (x),
\end{equation}
where $\lambda_{1}^{\Ucal}>0$ is the first eigenvalue  of $A_{\Ucal}$, and  $\varphi_{1}$ is the corresponding positive eigenfunction, and $c_{1} = c_{1} (\Ucal)$ is given by \eqref{eqn.c.n}, and  $\Ucal_{\varepsilon}:= \delta_{\varepsilon} (\Ucal)$.
\end{theorem}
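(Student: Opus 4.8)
The plan is to use the $\beta$-dilation structure to replace the shrinking problem by a large-time problem for a \emph{fixed} domain, and then to extract the asymptotics from the eigenfunction expansion of the Dirichlet heat kernel of $A_{\Ucal}$. Concretely, the third scaling identity listed just before the statement gives, for every $x\in\Ucal$ and $\varepsilon>0$,
\[
\Prob^{\delta_{\varepsilon}(x)}\!\left(\tau_{\Ucal_{\varepsilon}}>t\right)=\Prob^{x}\!\left(\tau_{\Ucal}>\tfrac{t}{\varepsilon^{\beta}}\right),
\]
where $\Ucal_{\varepsilon}=\delta_{\varepsilon}(\Ucal)$ again has finite positive measure and $\delta_{\varepsilon}(x)\in\Ucal_{\varepsilon}$. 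Thus it suffices to show that $e^{\lambda_{1}^{\Ucal}s}\Prob^{x}(\tau_{\Ucal}>s)\to c_{1}\varphi_{1}(x)$ as $s\to\infty$, and then to specialise to $s=t/\varepsilon^{\beta}$.

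Before doing this I would collect the spectral facts supplied by Section~\ref{s.Preliminaries}. Since $M_{\Ucal}(t)\le M_{\Xcal}(t)\le c\,t^{-\gamma}<\infty$ for all $t>0$, Lemma~\ref{lem-comp} gives that $P_{t}^{\Ucal}$ is compact, and since $M_{\Xcal}(t)\to0$ as $t\to\infty$ the bound \eqref{eqn.heat.kernel.small.one} holds for $t$ large, so Proposition~\ref{thm.spectral.gap} yields a discrete spectrum for $-A_{\Ucal}$ and $\lambda_{1}^{\Ucal}>0$. The assumed irreducibility of $P_{t}^{\Ucal}$ and Theorem~\ref{thm.irr.simple.evalue} make $\lambda_{1}^{\Ucal}$ simple with an a.e.\ positive eigenfunction $\varphi_{1}$; since $p_{t_{0}}$ is continuous, so is the Dirichlet kernel $p_{t_{0}}^{\Ucal}$ on $\Ucal\times\Ucal$ (via Dynkin--Hunt's formula together with interior regularity), hence Theorem~\ref{thm.Lp.bounds}(3) applied to $P_{t}^{\Ucal}$ shows each $\varphi_{n}$ has a continuous version on $\Ucal$, and $\varphi_{1}(x)$ and $c_{1}=c_{1}(\Ucal)=\int_{\Ucal}\varphi_{1}\,d\mu>0$ are well defined. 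Finally $M_{\Xcal}(t)\le c\,t^{-\gamma}$ is exactly the ultracontractive bound \eqref{eqn.ultra.contract-Davies} with $a_{t}=c\,t^{-\gamma}$, so Proposition~\ref{prop.eigenfunctions.expansion} applies and gives, for all $x\in\Ucal$ and $s>0$,
\[
\Prob^{x}\!\left(\tau_{\Ucal}>s\right)=\sum_{n=1}^{\infty}e^{-\lambda_{n}^{\Ucal}s}\,c_{n}\,\varphi_{n}(x).
\]

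The remaining point is a tail estimate. Fix $t_{1}>0$. From \eqref{eqn.decomp.semigroup} and contractivity of $P_{t}^{\Ucal}$,
\[
\sum_{n\ge1}e^{-\lambda_{n}^{\Ucal}t_{1}}c_{n}^{2}=\big\langle P_{t_{1}}^{\Ucal}\mathbbm{1}_{\Ucal},\mathbbm{1}_{\Ucal}\big\rangle\le\mu(\Ucal),\qquad
\sum_{n\ge1}e^{-\lambda_{n}^{\Ucal}t_{1}}\varphi_{n}(x)^{2}=p_{t_{1}}^{\Ucal}(x,x)\le M_{\Xcal}(t_{1})<\infty,
\]
so by Cauchy--Schwarz, for $s\ge2t_{1}$,
\[
\Big|e^{\lambda_{1}^{\Ucal}s}\Prob^{x}(\tau_{\Ucal}>s)-c_{1}\varphi_{1}(x)\Big|
=\Big|\sum_{n\ge2}e^{-(\lambda_{n}^{\Ucal}-\lambda_{1}^{\Ucal})s}c_{n}\varphi_{n}(x)\Big|
\le e^{\lambda_{1}^{\Ucal}s}e^{-\lambda_{2}^{\Ucal}(s-t_{1})}\,\mu(\Ucal)^{1/2}M_{\Xcal}(t_{1})^{1/2},
\]
and the right-hand side equals $\mu(\Ucal)^{1/2}M_{\Xcal}(t_{1})^{1/2}e^{\lambda_{2}^{\Ucal}t_{1}}e^{-(\lambda_{2}^{\Ucal}-\lambda_{1}^{\Ucal})s}\to0$ as $s\to\infty$, because $\lambda_{2}^{\Ucal}>\lambda_{1}^{\Ucal}$ by simplicity of the ground state. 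Taking $s=t/\varepsilon^{\beta}$ and combining with the scaling identity of the first paragraph proves \eqref{eqn.small.dev.2}. I expect the genuine difficulty to be not the tail estimate but the pointwise (as opposed to $\mu$-a.e.) validity of the eigenfunction expansion and of the value $\varphi_{1}(x)$ at the given interior point $x$; this is exactly where the continuity hypothesis on $p_{t_{0}}$ enters, through continuity of the Dirichlet kernel $p_{t_{0}}^{\Ucal}$ and Theorem~\ref{thm.Lp.bounds}(3). A secondary, routine check is that the $\beta$-dilation identities of Subsection~\ref{s.Dilations} are applied to an admissible domain, i.e.\ $0<\mu(\Ucal_{\varepsilon})<\infty$ and $\delta_{\varepsilon}(x)\in\Ucal_{\varepsilon}$, both immediate from the axioms of a dilation structure.
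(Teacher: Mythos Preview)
Your proof is correct and follows essentially the same strategy as the paper: reduce via the dilation identity $\Prob^{\delta_{\varepsilon}(x)}(\tau_{\Ucal_{\varepsilon}}>t)=\Prob^{x}(\tau_{\Ucal}>t/\varepsilon^{\beta})$, invoke the spectral results of Section~\ref{s.Preliminaries} for $A_{\Ucal}$, and read off the limit from the eigenfunction expansion in Proposition~\ref{prop.eigenfunctions.expansion}. The paper's proof simply asserts that ``the result follows since $0<\lambda_{1}^{\Ucal}<\lambda_{2}\leqslant\cdots$'', whereas you supply an explicit Cauchy--Schwarz tail bound; this is more careful but not a different idea. Your closing remark about the pointwise (rather than a.e.) meaning of $\varphi_{1}(x)$ being the genuine subtlety is apt and matches where the paper, too, leans on the continuity hypothesis without spelling out the passage from $p_{t_{0}}$ to $p_{t_{0}}^{\Ucal}$.
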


\begin{proof}
By the results from Section \ref{s.Preliminaries} we have that the generator $A_{\Ucal}$ of $P^{\Ucal}_{t}$ has a discrete spectrum. In particular, the first eigenvalue $\lambda_{1}^{\Ucal}$ is simple, positive, and  its first eigenfunction $\varphi_{1}$ is continuous and it can be chosen to be strictly positive. By  Proposition \ref{prop.eigenfunctions.expansion} we have that
\begin{align*}
&  \Prob^{\delta_{\varepsilon} (x)} \left( \tau_{\Ucal_{\varepsilon}} >t \right) = \Prob^{x} \left( \tau_{\Ucal} > \frac{t}{\varepsilon^{\beta}} \right)
= \sum_{n=1}^{\infty} e^{-\lambda_{n} \frac{t}{\varepsilon^{\beta}}} c_{n} \varphi_{n}(x),
\end{align*}
and the result follows since $0< \lambda_{1}^{\Ucal} < \lambda_{2} \leqslant \lambda_{3} \leqslant \cdots$, and $c_{1} \varphi_{1} (x)$ is well defined since $\varphi_{1}$ is continuous at $x$.
\end{proof}

\subsection{Group actions on Dirichlet metric measure spaces}

In this section we consider how a group action can be used to prove a small deviation principle, which is the content of Theorem~\ref{thm.small.dev.3}. Recall that in Section~\ref{s.Dilations} we considered a continuous family of dilations, and while fractals  generally do not admit such a family, we might have  \emph{discrete dilation structures} with a scaling factor $r_{0}>0$. More precisely,  dilations $\delta_{r}$ are defined for all $r=r_{0}^k$, $k\in\mathbb{Z}$, see e.~g.  \cite[Proposition 3.3.1]{KigamiBook2001} and \cite[(1.4.8)]{StrichartzBook2006}, \cite[Equation(1.2)]{BarlowPerkins1988}, and \cite{FitzsimmonsHamblyKumagai1994}.  

We introduce a general notion of a group of dilations acting on a Dirichlet space below. If $G=\R_{>0}$, then we recover continuous dilations in Section~\ref{s.Dilations}, while $G=\mathbb{Z}$ corresponds to discrete dilations.  Our idea is motivated by the notion of \emph{dilation structure} introduced by Varopoulos \cite{Varopoulos1985a}, and it coincides with \cite{Varopoulos1985a} when $G=\R_{>0}$ is the multiplicative group. For measurable group actions we follow the exposition in \cite{Gordina2017}.

Let $G$ be a topological group acting measurably on a metric measure space $\left( \mathcal{X}, d, \mu \right)$, that is, there is a measurable map
\[
\Phi: G\times \Xcal \longrightarrow \Xcal, (g, x) \longrightarrow \Phi_{g}(x) =: x_{g}
\]
such that
\begin{align*}
&\Phi_{e} (x) = x  \text{ for } \mu-\text{a.e. }  x \in \Xcal
\\
& \Phi_{g} (\Phi_{h} (x) ) = \Phi_{gh} (x)  \text{ for all } g, h\in G,  \text{ and } \mu-\text{a.e. }  x\in \Xcal.
\end{align*}
Here $e$ is the identity element in $G$. Such a group action induces an action on $L^{2}$-functions on $G$ as follows
\begin{align*}
\widetilde{\Phi}: G \times L^{2} (\Xcal, \mu ) \longrightarrow L^{2} (\Xcal, \mu ), \; (\widetilde{\Phi}_{g} f ) (x) = f( \Phi_{g} (x)).
\end{align*}
Let us denote by $(\Phi_{g})_{\ast}m$ the pushforward of $m$ under $\Phi_{g}: \Xcal \rightarrow \Xcal$.

\begin{definition}
The action of a group  $G$ on a Dirichlet metric measure space $(\Xcal, d, \mu, \mathcal{E} )$  is said to be a \emph{group action preserving the Dirichlet space class} if it is a measurable action on $\left( \mathcal{X}, d, \mu \right)$ such that 
\begin{align*}
& \widetilde{\Phi}_{g} ( \mathcal{D}_{\mathcal{E}} )= \mathcal{D}_{\mathcal{E}},\quad   \widetilde{\Phi}_{g} ( \mathcal{D}_{A} )= \mathcal{D}_{A},
\end{align*}
$(\Phi_{g})_{\ast}\mu$ and $\mu$ are  mutually absolutely continuous for all $g \in G$, and the Radon-Nikodym derivative
\begin{equation}\label{eqn.RN.der.indep}
J_{g} := \frac{d (\Phi_{g})_{\ast} \mu }{ d\mu } (x)
\end{equation}
is independent of $x\in \Xcal$.
\end{definition}

Let $\{ \Phi_{g} \}_{g\in G}$  be a group action preserving the Dirichlet space class, and set 
\[
\mathcal{Z} := \left\{ g\in G \, : \, J_{g} =0  \right\}.
\]
Note that $\mathcal{Z}$ is not the whole group $G$ and it is not a subroup since $e\notin \mathcal{Z}$, and $J_{g^{-1}} = \infty$ for $g\in \mathcal{Z}$. If $g\in G$ is such that  $J_{g} =0$ then, for any Borel set $A$ in $\Xcal$ of positive measure
\[
( \Phi_{g} )_{\ast} \mu (A) = \mu \left( \Phi^{-1}_{g} (A) \right) = \mu \left( \Phi_{g^{-1}} (A) \right) =  J_{g} \mu(A) = 0, 
\]
that is, $( \Phi_{g} )_{\ast} \mu \equiv 0$. Similarly, $( \Phi_{g^{-1}} )_{\ast} \mu (A)= \infty$ for $g\in \mathcal{Z}$ and for any  Borel set $A$ in $\Xcal$ of positive measure. To avoid this degenerate situation, in the following statements we will assume $g\in \mathcal{Z}^{c}$.

\begin{definition}\label{dfn.G.stru}
We say that a Dirichlet metric measure space $(\Xcal, d, \mu, \mathcal{E} )$ admits a $G$-\emph{dilation structure} if the action of  $G$  on $(\Xcal, d, \mu, \mathcal{E} )$ preserves the Dirichlet space class  and if there exists $ \kappa =\kappa \left( g \right)$ such that
\begin{align}\label{eqn.dform.dilation}
\mathcal{E} (f \circ \Phi_{g}, h \circ \Phi_{g}) = J_{g}^{\kappa}  \mathcal{E} (f, h), 
\end{align}
for any $f, h\in \mathcal{D}_{\mathcal{E}}$.
\end{definition}

\begin{notation}\label{notation.RN.der}
For every $g\in \mathcal{Z}^{c}$ let us set $\ell_{g} := J_{g}^{\kappa (g) -1}$, where $J_{g}$ is given by  \eqref{eqn.RN.der.indep}.
\end{notation}

\begin{example}[Euclidean dilations]
Let $\Xcal = \R^{n}$, $G=\R_{>0}$, and $\Phi_{r} (x) = \delta_{r} (x) = rx$. Then 
\begin{align*}
& \int_{\R^{n} } f (\delta_{r} (x) ) dx  = r^{-n} \int_{\R^{n}} f(x) dx, 
\\
& \int_{\R^{n} } f (\delta_{r} (x) ) dx  = \int_{\R^{n}} f(x) \frac{d \delta_{r^{-1}} (x)}{dx} dx = \int_{\R^{n}} f(x) \frac{d ( \delta_{r} )_{\ast} (x)}{dx} dx = J_{r} \int_{\R^{n}} f(x) dx, 
\end{align*}
that is, $J_{r} = r^{-n}$, and, for any $f\in \mathcal{D}_{\mathcal{E}}$
\begin{align*}
\mathcal{E} (f\circ \delta_{r} ) = r^{-n+2} \mathcal{E}(f) = J_{r}^{ \frac{n-2}{n}}  \mathcal{E}(f).
\end{align*}
Thus, when $(\Xcal, G) = (\R^{n}, \R_{>0})$ we have 
\begin{align*}
\kappa (r)= 1-\frac{2}{n} <1,
\end{align*}
for each $r\in \mathbb{R}_{>0}$.
\end{example}

\begin{example}[Carnot group dilations]
Let $\Xcal = \mathbb{G}$ be a Carnot group and $G=\R_{>0}$. Proceeding as in the previous example one has that
\[
\kappa (r) = 1-\frac{2}{Q},
\]
for every $r\in \R_{>0}$, where $Q$ denotes the homogeneous dimension of $\mathbb{G}$.
\end{example}

If \eqref{eqn.upper.and.lower.estimate.Phi} holds then the function $\kappa (g)$ given by \eqref{eqn.dform.dilation} is asymptotically less than one. More precisely, in Lemma \ref{lemma.d.alpha.beta} we will show that, for any $g\in G$, $\kappa (g^{n} )$ converges to $1- \frac{\alpha}{\beta}$ as $n$ goes to infinity,  where $\alpha$ and $\beta$ are given by  \eqref{eqn.upper.and.lower.estimate.Phi}.

\begin{proposition}\label{prop.G.stru}
Let $(\Xcal, d, \mu, \mathcal{E})$ be a Dirichlet metric measure space admitting a $G$-dilation structure for some topological group $G$. Then

(1) For $g\in \mathcal{Z}^{c}$ the Radon-Nikodym derivative in \eqref{eqn.RN.der.indep} $J_{g} >0$, and $J_{g_{1}}J_{g_{2}}=J_{g_{1}g_{2}}$ for any $g_{1}, g_{2} \in G$. In particular, $J_{g^{-1}} = \frac{1}{J_{g}}$ for any $g\in \mathcal{Z}^{c}$, and $J_{e}\equiv 1$, and $J_{g^{-1}} = \infty$ for $g\in \mathcal{Z}$.

(2) $P_{t} = \widetilde{\Phi}_{g} \circ P_{\ell_{g} t} \circ \widetilde{\Phi}_{g^{-1}}$ for any $g\in \mathcal{Z}^{c}$ and $t\geqslant 0$, where $\ell_{g} := J_{g}^{\kappa (g) -1}$. 
\end{proposition}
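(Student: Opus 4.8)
The plan is to deduce (1) from the behaviour of push‑forwards of $\mu$ under the action, and (2) by recognising the conjugated family $\widetilde{\Phi}_{g}\circ P_{\ell_{g}t}\circ\widetilde{\Phi}_{g^{-1}}$ as the symmetric semigroup attached to the Dirichlet form $(\mathcal{E},\mathcal{D}_{\mathcal{E}})$ and then invoking the one‑to‑one correspondence between non‑negative closed quadratic forms and such semigroups recalled in Section~\ref{s.Preliminaries}. For part (1): a version of the Radon--Nikodym derivative \eqref{eqn.RN.der.indep} may be taken non‑negative, and being independent of $x$ it is a non‑negative constant, which is nonzero (hence positive) for $g\in\mathcal{Z}^{c}$ by definition of $\mathcal{Z}$. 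Using $\Phi_{g_{1}g_{2}}=\Phi_{g_{1}}\circ\Phi_{g_{2}}$ ($\mu$‑a.e.), $(\Phi_{g_{2}})_{\ast}\mu=J_{g_{2}}\mu$ and linearity of push‑forward I would write
\[
(\Phi_{g_{1}g_{2}})_{\ast}\mu=(\Phi_{g_{1}})_{\ast}\big((\Phi_{g_{2}})_{\ast}\mu\big)=(\Phi_{g_{1}})_{\ast}(J_{g_{2}}\mu)=J_{g_{2}}(\Phi_{g_{1}})_{\ast}\mu=J_{g_{1}}J_{g_{2}}\,\mu ,
\]
so $J_{g_{1}g_{2}}=J_{g_{1}}J_{g_{2}}$ (the cases with $g_{1}$ or $g_{2}\in\mathcal{Z}$ being covered too, as then both sides vanish). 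Taking $g_{1}=g_{2}=e$ gives $J_{e}^{2}=J_{e}$ and $J_{e}=1$ since $\Phi_{e}=\operatorname{Id}$ and $\mu$ is non‑trivial; taking $g_{2}=g^{-1}$ gives $J_{g}J_{g^{-1}}=1$, so $J_{g^{-1}}=1/J_{g}$ for $g\in\mathcal{Z}^{c}$, while for $g\in\mathcal{Z}$ this forces (in the extended sense) $J_{g^{-1}}=\infty$, i.e.\ $(\Phi_{g^{-1}})_{\ast}\mu$ fails to be absolutely continuous with respect to $\mu$.

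For part (2): on $H:=L^{2}(\Xcal,\mu)$ the change of variables $(\Phi_{g})_{\ast}\mu=J_{g}\mu$ gives $\|\widetilde{\Phi}_{g}f\|_{H}^{2}=J_{g}\|f\|_{H}^{2}$, so $\widetilde{\Phi}_{g}=J_{g}^{1/2}U_{g}$ with $U_{g}$ unitary, $U_{g}^{-1}=U_{g^{-1}}$, $\widetilde{\Phi}_{g}^{-1}=\widetilde{\Phi}_{g^{-1}}$ and $\widetilde{\Phi}_{g}^{\ast}=J_{g}\widetilde{\Phi}_{g^{-1}}$. Since $\ell_{g}=J_{g}^{\kappa(g)-1}>0$ by (1), the family
\[
Q_{t}:=\widetilde{\Phi}_{g}\circ P_{\ell_{g}t}\circ\widetilde{\Phi}_{g^{-1}}=U_{g}\,P_{\ell_{g}t}\,U_{g}^{-1},\qquad t\geqslant 0,
\]
is a unitary conjugate of the symmetric contraction semigroup $\{P_{s}\}_{s\geqslant0}$ (rescaled in time by $\ell_{g}$), hence is itself a strongly continuous symmetric contraction semigroup on $H$. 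By the correspondence in Section~\ref{s.Preliminaries} (equivalently \cite[Theorem~1.3.1]{FukushimaOshimaTakedaBook2011}) it then suffices to show that the non‑negative closed quadratic form attached to $Q_{t}$ is exactly $(\mathcal{E},\mathcal{D}_{\mathcal{E}})$, for then $Q_{t}=P_{t}$, which is the asserted factorisation.

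To identify the form I would use the spectral‑theorem description: for a symmetric contraction semigroup one has $\mathcal{E}_{Q}(u,u)=\lim_{t\downarrow0}\tfrac1t\langle(I-Q_{t})u,u\rangle_{H}\in[0,\infty]$ (a supremum, by monotone convergence), with domain the set of $u$ where this is finite. Put $v:=\widetilde{\Phi}_{g^{-1}}u$, so $u=v\circ\Phi_{g}$ and $\|u\|_{H}^{2}=J_{g}\|v\|_{H}^{2}$; the adjoint formula then gives
\[
\big\langle(I-Q_{t})u,u\big\rangle_{H}=J_{g}\big\langle(I-P_{\ell_{g}t})v,v\big\rangle_{H},\qquad \tfrac1t\langle(I-Q_{t})u,u\rangle_{H}=J_{g}\ell_{g}\cdot\tfrac{1}{\ell_{g}t}\langle(I-P_{\ell_{g}t})v,v\rangle_{H}.
\]
Since $P_{t}$ is the semigroup of $(\mathcal{E},\mathcal{D}_{\mathcal{E}})$, the right‑hand side has a finite limit precisely when $v\in\mathcal{D}_{\mathcal{E}}$, equivalently (as $\widetilde{\Phi}_{g^{-1}}(\mathcal{D}_{\mathcal{E}})=\mathcal{D}_{\mathcal{E}}$) precisely when $u\in\mathcal{D}_{\mathcal{E}}$; and then \eqref{eqn.dform.dilation} applied to $v$ gives $\mathcal{E}(u,u)=\mathcal{E}(v\circ\Phi_{g},v\circ\Phi_{g})=J_{g}^{\kappa(g)}\mathcal{E}(v,v)$, whence
\[
J_{g}\ell_{g}\,\mathcal{E}(v,v)=J_{g}\cdot J_{g}^{\kappa(g)-1}\cdot J_{g}^{-\kappa(g)}\,\mathcal{E}(u,u)=\mathcal{E}(u,u).
\]
So the form of $Q_{t}$ has domain $\mathcal{D}_{\mathcal{E}}$ and agrees with $\mathcal{E}$ on the diagonal, hence equals $(\mathcal{E},\mathcal{D}_{\mathcal{E}})$ by polarisation, and the correspondence gives $Q_{t}=P_{t}$ for all $t\geqslant0$.

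The argument is essentially bookkeeping, and I expect the only delicate point to be the exponent accounting: the three powers of $J_{g}$ — one from the norm change $\|u\|_H^2=J_g\|v\|_H^2$, one from $\ell_{g}=J_{g}^{\kappa(g)-1}$, and one from rewriting $\mathcal{E}(v,v)$ via \eqref{eqn.dform.dilation} — must (and do) cancel to leave exactly $\mathcal{E}(u,u)$. The other input that should be stated explicitly, rather than taken for granted, is the spectral‑theorem characterisation of the closed form of a symmetric contraction semigroup via $\lim_{t\downarrow0}\tfrac1t\langle(I-Q_{t})u,u\rangle_H$; everything else is formal manipulation of the bounded operators $\widetilde{\Phi}_{g}$.
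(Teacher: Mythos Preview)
Your proof is correct and follows essentially the same approach as the paper: both define $Q_{t}=\widetilde{\Phi}_{g}\circ P_{\ell_{g}t}\circ\widetilde{\Phi}_{g^{-1}}$, compute its associated quadratic form via $\lim_{t\downarrow 0}\tfrac{1}{t}\langle (I-Q_{t})u,u\rangle$ together with a change of variables, verify it coincides with $(\mathcal{E},\mathcal{D}_{\mathcal{E}})$ by the exponent bookkeeping, and then invoke the one-to-one correspondence between closed forms and symmetric semigroups. Your explicit observation that $Q_{t}=U_{g}P_{\ell_{g}t}U_{g}^{-1}$ is a unitary conjugate is a clean way to see at once that $Q_{t}$ is a strongly continuous symmetric contraction semigroup, a point the paper leaves implicit.
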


Note that the factor $\ell_{g}$  is the factor  we get for $(\Xcal, G) = (\R^{n}, \R_{>0})$  as
\[
\ell_{g} := J_{g}^{\kappa (g) -1}= (r^{-n} )^{-\frac{2}{n}}=r^{2},
\]
which is consistent with \eqref{eqn.smigroup.factor} when the walk dimension $\beta$ is $2$.

\begin{proof}
(1) Note that for any Borel set $A$ in $\Xcal$ of positive measure
\[
( \Phi_{g} )_{\ast} \mu (A) = \mu \left( \Phi^{-1}_{g} (A) \right) = \mu \left( \Phi_{g^{-1}} (A) \right) =  J_{g} \mu(A),
\]
which proves that $J_{g} >0$. The rest of the proof follows from the fact that $\Phi_{g} (\Phi_{h} (x) ) = \Phi_{gh} (x)$  for all  $g, h\in G$,   and  $\mu$-a.e.  $x\in \Xcal$.

(2) Let us consider the semigroup $\widetilde{P}_{t} := \widetilde{\Phi}_{g} \circ P_{\ell_{g} t} \circ \widetilde{\Phi}_{g^{-1}}$, and let $\widetilde{\mathcal{E}}$ be the corresponding Dirichlet form. Let us prove that $\mathcal{E} = \widetilde{\mathcal{E}}$. First note that $\mathcal{D}_{\mathcal{E}} = \mathcal{D}_{\widetilde{\mathcal{E}}}$ since $\Xcal$ admits a $G$-dilation structure and $\widetilde{\Phi}_{g} ( \mathcal{D}_{\mathcal{E}} )= \mathcal{D}_{\mathcal{E}}$ for any $g\in G$. Then for any $f, h \in   \mathcal{D}_{\mathcal{E}}$
\begin{align*}
& \widetilde{\mathcal{E}} (f, h ) = \lim_{t\rightarrow 0} \frac{1}{t}  \langle f - \widetilde{T}_{t} f, h \rangle_{L^{2} (\Xcal, \mu )}= \lim_{t\rightarrow 0} \frac{1}{t} \int_{\Xcal} \left( f(x) -  ( \widetilde{P}_{t} f) (x) \right)h(x)  d\mu (x)
\\
& = \lim_{t\rightarrow 0} \frac{1}{t} \int_{\Xcal} \left( f(x) - \widetilde{\Phi}_{g} F(x) \right) h(x) d\mu (x),
\end{align*}
where $F:=P_{\ell_{g }t} \circ \widetilde{\Phi}_{g^{-1}} \circ f$, and hence
\begin{align*}
& \widetilde{\mathcal{E}} (f, h) =  \lim_{t\rightarrow 0} \frac{1}{t}  \int_{\Xcal} \left( f(x) - F( \Phi_{g} (x) \right) h(x)  d\mu  (x)
\\
&=  \lim_{t\rightarrow 0} \frac{1}{t}  \int_{\Xcal} \left( f ( \Phi^{-1}_{g} (x) ) - F(x)  \right) h ( \Phi^{-1}_{g} (x)) d \mu ( \Phi^{-1}_{g} (x) )
\\
& = J_{g}  \lim_{t\rightarrow 0} \frac{1}{t}  \int_{\Xcal} \left( (f\circ \Phi_{g^{-1}} ) (x) - \left( P_{\ell_{g} t} \circ \widetilde{\Phi}_{g^{-1}} \circ f \right)  (x)  \right)   (h\circ \Phi_{g^{-1}} ) (x)  d\mu (x)
\\
& = J_{g}^{\kappa(g)}  \lim_{t\rightarrow 0} \frac{1}{\ell_{g} t}  \int_{\Xcal} \left( (f\circ \Phi_{g^{-1}} ) (x) - \left( P_{\ell_{g} t} \circ \widetilde{\Phi}_{g^{-1}} \circ f \right)  (x)  \right)   (h\circ \Phi_{g^{-1}} ) (x)  d\mu (x)
\\
& = J_{g}^{\kappa(g)}  \lim_{t\rightarrow 0} \frac{1}{t}  \int_{\Xcal} \left( (f\circ \Phi_{g^{-1}} ) (x) - \left( P_{t} \circ \widetilde{\Phi}_{g^{-1}} \circ f \right)  (x)  \right)   (h\circ \Phi_{g^{-1}} ) (x) d\mu (x)
\\
& =  J_{g}^{\kappa (g)}  \mathcal{E} \left( f\circ \Phi_{g^{-1}}, h\circ \Phi_{g^{-1}}  \right) =  J_{g}^{\kappa (g) }    J_{g^{-1}}^{\kappa (g)}  \mathcal{E}(f, h) = \mathcal{E}(f, h),
\end{align*}
where in the last line we used the definition of $G$-dilation structure and part (1).
\end{proof}

\begin{corollary}\label{cor.G.struc}
Let $(\Xcal, d, \mu, \mathcal{E} )$ be a Dirichlet metric measure space admitting a $G$-dilation structure for some topological group $G$.

(1) The heat kernel $p_{t}$  satisfies
\begin{equation}\label{eqn.scalin.HK.groups}
J_{g} p_{t} (x,y) = p_{\ell_{g}  t} \left( x_{g}, y_{g} \right),
\end{equation}
for every $g\in \mathcal{Z}^{c}$ and $\mu$-a.e. $x, y \in \Xcal$. 

(2) The process $X_{t}$ satisfies the following scaling property
\begin{equation}\label{eqn.scalin.proc.group}
X_{t}^{x_{g}} \stackrel{(d)}{=}  \Phi_{g} \left( X_{\frac{t}{\ell_{g} }} \right).
\end{equation}

(3) For any $g\in \mathcal{Z}^{c}$, and an open set $\Ucal$ we have that
\begin{equation}\label{eqn.exit.time.group}
\Prob^{x_{g}} \left( \tau_{\Ucal_{g} } > t \right) = \Prob^{x} \left( \tau_{\Ucal} > \frac{t}{ \ell_{g} } \right),
\end{equation}
where $\Ucal_{g} := \Phi_{g} (\Ucal)$.
\end{corollary}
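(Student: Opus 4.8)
The plan is to derive all three identities from the semigroup factorization established in Proposition~\ref{prop.G.stru}(2),
\[
P_{t} \;=\; \widetilde{\Phi}_{g}\circ P_{\ell_{g}t}\circ\widetilde{\Phi}_{g^{-1}},\qquad g\in\mathcal{Z}^{c},
\]
together with the change-of-variables rule
\[
\int_{\Xcal}(h\circ\Phi_{g})\,d\mu \;=\; J_{g}\int_{\Xcal}h\,d\mu ,
\]
which is immediate from $(\Phi_{g})_{\ast}\mu = J_{g}\mu$ and the fact, recorded in Proposition~\ref{prop.G.stru}(1), that $0<J_{g}<\infty$ with $J_{g^{-1}}=J_{g}^{-1}$ and that $\Phi_{g^{-1}}\circ\Phi_{g}=\operatorname{Id}$ $\mu$-a.e.\ for $g\in\mathcal{Z}^{c}$.

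For part (1), I would write, for $f\in L^{2}(\Xcal,\mu)$, using Assumption~\ref{assumption.heat.kernel} for $P_{\ell_{g}t}$ and the factorization,
\[
(P_{t}f)(x) = \bigl(P_{\ell_{g}t}(f\circ\Phi_{g^{-1}})\bigr)(x_{g}) = \int_{\Xcal}p_{\ell_{g}t}(x_{g},z)\,f(\Phi_{g^{-1}}(z))\,d\mu(z).
\]
Substituting $z=\Phi_{g}(y)$, legitimate for $\mu$-a.e.\ $y$ since $\Phi_{g^{-1}}\circ\Phi_{g}=\operatorname{Id}$ $\mu$-a.e., and applying the change-of-variables rule, the right-hand side equals $J_{g}^{-1}\int_{\Xcal}p_{\ell_{g}t}(x_{g},y_{g})\,f(y)\,d\mu(y)$. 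Comparing with $(P_{t}f)(x)=\int_{\Xcal}p_{t}(x,y)f(y)\,d\mu(y)$ for all such $f$ yields $J_{g}\,p_{t}(x,y)=p_{\ell_{g}t}(x_{g},y_{g})$ for $\mu$-a.e.\ $x,y$, which is \eqref{eqn.scalin.HK.groups}.

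For part (2) it suffices to check that $Y_{t}:=\Phi_{g}(X^{x}_{t/\ell_{g}})$ is a Markov process with the same starting point and transition semigroup as $X^{x_{g}}_{t}$. Indeed $Y_{0}=\Phi_{g}(x)=x_{g}$, and by the Markov property and time-homogeneity of $X$ the transition operator of $Y$ at lag $u$ sends $f$ to $w\mapsto\bigl(P_{u/\ell_{g}}(f\circ\Phi_{g})\bigr)(\Phi_{g^{-1}}(w))$; applying the factorization with $t=u/\ell_{g}$ gives $P_{u/\ell_{g}}(f\circ\Phi_{g})=P_{u/\ell_{g}}(\widetilde{\Phi}_{g}f)=\widetilde{\Phi}_{g}(P_{u}f)=(P_{u}f)\circ\Phi_{g}$, so that this transition operator is exactly $P_{u}$. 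Hence $Y$ and $X^{x_{g}}$ have the same finite-dimensional distributions and \eqref{eqn.scalin.proc.group} follows (alternatively one can read off the one-dimensional marginals directly from \eqref{eqn.scalin.HK.groups} and propagate by the Markov property). Part (3) is then a consequence: since $\Phi_{g}$ is $\mu$-a.e.\ a bijection with inverse $\Phi_{g^{-1}}$, one has $\Phi_{g}(w)\in\Phi_{g}(\Ucal)\iff w\in\Ucal$, so under the identity in (2) the events $\{X^{x_{g}}_{s}\in\Ucal_{g}\text{ for all }0\le s\le t\}$ and $\{X^{x}_{r}\in\Ucal\text{ for all }0\le r\le t/\ell_{g}\}$ carry the same probability, which is \eqref{eqn.exit.time.group}. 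Alternatively, running the argument of part (1) with $(\mathcal{E},\mathcal{D}_{\mathcal{E}})$ replaced by the restricted form $(\mathcal{E},\mathcal{D}_{\mathcal{E}}(\Ucal))$ — noting $\widetilde{\Phi}_{g}$ carries $\mathcal{D}_{\mathcal{E}}(\Ucal_{g})$ onto $\mathcal{D}_{\mathcal{E}}(\Ucal)$ — gives $J_{g}\,p^{\Ucal}_{t}(x,y)=p^{\Ucal_{g}}_{\ell_{g}t}(x_{g},y_{g})$, and integrating in $y$ over $\Ucal_{g}$ together with $\Prob^{x}(\tau_{\Ucal}>t)=\int_{\Ucal}p^{\Ucal}_{t}(x,y)\,d\mu(y)$ again yields \eqref{eqn.exit.time.group}.

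The computations are essentially bookkeeping; the only delicate point is the measure-theoretic justification of the substitution $z=\Phi_{g}(y)$ and of the pathwise exit-time transformation in part (3). Because the action is assumed only measurable, one must use that $g\in\mathcal{Z}^{c}$ forces $J_{g^{-1}}=J_{g}^{-1}\in(0,\infty)$, so that $\Phi_{g}$ and $\Phi_{g^{-1}}$ are mutually inverse off a $\mu$-null set; together with the full support of $\mu$ and the path regularity of the Hunt process this lets the null sets be discarded. In every example of interest — Euclidean and Carnot dilations, as well as the discrete fractal dilations $r=r_{0}^{k}$ — the maps $\Phi_{g}$ are in fact homeomorphisms, so the issue does not arise.
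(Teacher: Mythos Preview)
Your proof is correct and follows essentially the same route as the paper: derive the heat-kernel scaling \eqref{eqn.scalin.HK.groups} from the semigroup factorization $P_{t}=\widetilde{\Phi}_{g}\circ P_{\ell_{g}t}\circ\widetilde{\Phi}_{g^{-1}}$ via change of variables, and then read off the process and exit-time identities. The only notable difference is in part~(2), where the paper checks one-dimensional marginals directly from \eqref{eqn.scalin.HK.groups} while you verify that $\Phi_{g}(X^{x}_{t/\ell_{g}})$ has transition semigroup $P_{t}$; your version is slightly more explicit about why the \emph{process-level} equality in distribution holds, which is what part~(3) actually uses.
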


\begin{proof}
(1) Let $p_{t}$ be the heat kernel for $P_{t}$. For any $f \in L^{2} (\Xcal, \mu)$ and any $g\in \mathcal{Z}^{c}$ we have 
\begin{align*}
& \int_{\Xcal}  f(y) p_{t} (x, y)  d\mu (y) = (P_{t} f)(x) = \left( \widetilde{\Phi}_{g} \circ P_{\ell_{g} t} \circ \widetilde{\Phi}_{g^{-1}} \right) f(x)
\\
& = \int_{\Xcal} f (\Phi_{g^{-1}} (y) ) p_{\ell_{g}  t} ( \Phi_{g} (x), y )  d\mu (y) = \int_{\Xcal} f(z)  p_{\ell_{g}  t} ( \Phi_{g} (x), \Phi_{g} (z) )  d\mu ( \Phi_{g} (z) )
\\
& = J_{g}^{-1} \int_{\Xcal} f(z)  p_{\ell_{g}  t} ( x_{g}, z_{g} )  d\mu ( z ).
\end{align*}

(2) For any Borel set $A$ in $\Xcal$ we have that
\begin{align*}
& \Prob^{x_{g}} \left( X_{t} \in A \right) = \int_{A} p_{t} (x_{g}, y)  d\mu  (y) 
= \int_{\Phi_{g}^{-1} (A)} p_{t} (x_{g}, z_{g} ) d\mu  ( z_{g})
\\
& = \int_{\Phi_{g^{-1}} (A) } J_{g}^{-1} p_{t} ( x_{g}, z_{g} )  d\mu  (z) 
 = \int_{\Phi_{g^{-1}} (A) } p_{\frac{t}{\ell_{g} }} (x,z)  d\mu  (z)
\\
& = \Prob^{x} \left( X_{\frac{t}{\ell_{g} }} \in \Phi_{g^{-1}} (A) \right) = \Prob^{x} \left( \Phi_{g} \left( X_{\frac{t}{\ell_{g} }} \right) \in A \right).
\end{align*}

(3) By part (2)  we have that
\begin{align*}
& \Prob^{x_{g}} \left( \tau_{\Ucal_{g}} >t \right) = \Prob \left( X^{x_{g}}_{s} \in \Ucal_{g}    \text{ for all }  0 \leqslant s \leqslant t \right) 
\\
& =  \Prob \left( \Phi_{g}^{-1} \left(  X^{x_{g}}_{s} \right) \in \Ucal   \text{ for all }  0 \leqslant s \leqslant t \right)
= \Prob \left( X^{x}_{\frac{s}{\ell_{g} }}  \in \Ucal   \text{ for all }  0 \leqslant s \leqslant t \right) 
\\
& =\Prob \left( X^{x}_{s}  \in \Ucal   \text{ for all }  0 \leqslant s \leqslant \frac{t}{\ell_{g} }\right) = \Prob^{x} \left( \tau_{\Ucal} > \frac{t}{\ell_{g} } \right).
\end{align*}
\end{proof}

\begin{lemma}\label{lemma.d.alpha.beta}
Assume that the heat kernel  satisfies  bound \eqref{eqn.upper.and.lower.estimate.Phi}, that is, 
\[
c_{1} \,  t^{-\frac{\alpha}{\beta}} \,  \Phi \left( c_{2} \, \frac{d(x,y)}{t^{\frac{1}{\beta}}} \right) \leqslant p_{t} (x,y) \leqslant c_{3} \,  t^{-\frac{\alpha}{\beta}} \,  \Phi \left( c_{4} \, \frac{d(x,y)}{t^{\frac{1}{\beta}}} \right),
\]
for some  positive constants $c_{1}, c_{2}, c_{3}, c_{4}$, $\alpha$, and $\beta$, where $\Phi$ is a positive decreasing function on $[0,\infty)$. Then 
\begin{equation}\label{eqn.eqn}
\left| \kappa (g) - \left( 1 -\frac{\beta}{\alpha} \right) \right| \leqslant  \frac{\beta}{\alpha}  \ln \left( \frac{c_{3}}{c_{1}} \right) \, \frac{1 }{\vert \ln J_{g} \vert},
\end{equation}
for every $g\in \mathcal{Z}^{c}$.  Moreover, for every $g\in \mathcal{Z}^{c}$ we have that 
\begin{align*}
\lim_{n\rightarrow \infty} \kappa (g^{n} ) = 1 -\frac{\beta}{\alpha}.
\end{align*}
\end{lemma}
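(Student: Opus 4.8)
The plan is to extract the claimed scaling of $\kappa(g)$ from the heat‑kernel scaling identity \eqref{eqn.scalin.HK.groups} together with the on‑diagonal form of the two‑sided estimate \eqref{eqn.upper.and.lower.estimate.Phi}. First I would fix $g\in\mathcal{Z}^{c}$; by Proposition~\ref{prop.G.stru}(1) we have $J_{g}>0$. If $J_{g}=1$ there is nothing to prove, since then $\ell_{g}=J_{g}^{\kappa(g)-1}=1$, the identity \eqref{eqn.dform.dilation} imposes no condition on $\kappa(g)$, and the right‑hand side of \eqref{eqn.eqn} is $+\infty$; so assume $J_{g}\neq 1$. Setting $y=x$ in \eqref{eqn.scalin.HK.groups} gives, for $\mu$‑a.e.\ $x$ and every $t>0$,
\begin{equation*}
J_{g}\,p_{t}(x,x)=p_{\ell_{g}t}\bigl(x_{g},x_{g}\bigr),
\end{equation*}
which is legitimate because in every setting where \eqref{eqn.upper.and.lower.estimate.Phi} is available the heat kernel is continuous (alternatively one argues with $p_{2t}(x,x)=\|p_{t}(x,\cdot)\|_{L^{2}}^{2}$, transporting $L^{2}$‑norms by the change of variables used in the proof of Corollary~\ref{cor.G.struc}).

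Next I would specialize \eqref{eqn.upper.and.lower.estimate.Phi} to $x=y$, where the argument of $\Phi$ is $0$ and $\Phi(0)$ is a fixed positive constant (equal to $1$ in the normalizations of \cite[Theorem~4.1]{GrigoryanKumagai2008}), to obtain $c_{1}\Phi(0)\,s^{-\alpha/\beta}\leqslant p_{s}(x,x)\leqslant c_{3}\Phi(0)\,s^{-\alpha/\beta}$ for all $s>0$. Inserting these bounds into the displayed identity in the two possible ways — bounding $p_{t}(x,x)$ below and $p_{\ell_{g}t}(x_{g},x_{g})$ above, then the other way around — the common factors $\Phi(0)$ and $t^{-\alpha/\beta}$ cancel and yield
\begin{equation*}
\frac{c_{1}}{c_{3}}\ \leqslant\ \ell_{g}^{\alpha/\beta}\,J_{g}\ \leqslant\ \frac{c_{3}}{c_{1}}.
\end{equation*}
Substituting $\ell_{g}=J_{g}^{\kappa(g)-1}$ from Notation~\ref{notation.RN.der}, so that $\ell_{g}^{\alpha/\beta}J_{g}=J_{g}^{(\alpha/\beta)(\kappa(g)-1)+1}$, taking logarithms (using $J_{g}>0$), and rewriting $\tfrac{\alpha}{\beta}(\kappa(g)-1)+1=\tfrac{\alpha}{\beta}\bigl(\kappa(g)-(1-\tfrac{\beta}{\alpha})\bigr)$, this becomes $\tfrac{\alpha}{\beta}\,\bigl|\kappa(g)-(1-\tfrac{\beta}{\alpha})\bigr|\,|\ln J_{g}|\leqslant\ln(c_{3}/c_{1})$ (assuming, as we may after relabeling, that $c_{3}\geqslant c_{1}$), which is exactly \eqref{eqn.eqn}.

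For the limit I would simply apply \eqref{eqn.eqn} with $g$ replaced by $g^{n}$: one has $g^{n}\in\mathcal{Z}^{c}$ and, by multiplicativity of $J$ in Proposition~\ref{prop.G.stru}(1), $J_{g^{n}}=J_{g}^{\,n}$, hence $|\ln J_{g^{n}}|=n\,|\ln J_{g}|\to\infty$; therefore the right‑hand side of \eqref{eqn.eqn} for $g^{n}$ tends to $0$ and $\kappa(g^{n})\to 1-\tfrac{\beta}{\alpha}$. The only point that needs care is the evaluation of $p_{t}$ on the measure‑zero diagonal and the degenerate case $J_{g}=1$; both are handled as indicated above, and the remainder is a one‑line computation, so I do not expect a real obstacle here.
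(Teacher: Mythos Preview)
Your proof is correct and follows essentially the same route as the paper: evaluate the heat-kernel scaling identity \eqref{eqn.scalin.HK.groups} on the diagonal $x=y$, combine the resulting identity with the on-diagonal form of \eqref{eqn.upper.and.lower.estimate.Phi} to obtain $\tfrac{c_{1}}{c_{3}}\leqslant \ell_{g}^{\alpha/\beta}J_{g}\leqslant \tfrac{c_{3}}{c_{1}}$, then substitute $\ell_{g}=J_{g}^{\kappa(g)-1}$ and take logarithms; the limit follows from $J_{g^{n}}=J_{g}^{n}$. Your extra remarks on the degenerate case $J_{g}=1$ and on handling the diagonal are reasonable side comments, but the core argument is identical to the paper's.
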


\begin{proof}
By \eqref{eqn.scalin.HK.groups} we have that for every $t>0$, $g\in \mathcal{Z}^{c}$,  and $\mu$-a.e. $x, y\in \Xcal$ 

\[
J_{g} = \frac{p_{\ell_{g} t} \left( x_{g}, y_{g} \right)  }{ p_{t} (x,y)},
\]
and hence by \eqref{eqn.upper.and.lower.estimate.Phi} with $x=y$ it follows that 
\begin{align*}
& \frac{c_{1}}{c_{3}} \leqslant \ell_{g}^{\frac{\alpha}{\beta}} J_{g} \leqslant \frac{c_{3}}{c_{1}},
\end{align*}
that is, 
\begin{align}\label{eqn.almost}
& \left| \left( 1- \frac{\alpha}{\beta} ( 1-\kappa (g) ) \right) \ln J_{g} \right| \leqslant \ln \frac{c_{3}}{c_{1}},
\end{align}
which proves \eqref{eqn.eqn}.

Let us fixed $g\in \mathcal{Z}^{c}$ and apply  \eqref{eqn.almost}  to $g^{n}$. Then 
\begin{align*}
& \left| \left( 1- \frac{\alpha}{\beta} ( 1-\kappa (g^{n}) ) \right) \ln J_{g} \right| \leqslant \frac{1}{n} \ln \frac{c_{3}}{c_{1}},
\end{align*}
for every $n\in \mathbb{N}$ since $J_{g^{n}} = J_{g}^{n}$. The result then follows by letting $n$ go to infinity. 
\end{proof}

When the group acting is $\R_{>0}$ or $\mathbb{Z}_{>0}$, then the small deviation principle is obtained by letting $\varepsilon\rightarrow 0$ or $r_{0}^{k}$ with $r_{0}<1$ and $k\rightarrow \infty$. We now explain how this argument can be applied to a more general $G$-dilation. Let $\Ucal^{n}_{g} := \Phi_{g^{n}} (\Ucal)$ for any measurable set $\Ucal$  and any $n \in\mathbb{N}$, and let $g\in \mathcal{Z}^{c}$  be such that $J_{g} >1$. Note that for such a $g$ one has that
\begin{align*}
\mu \left( \Phi_{g} (A) \right) = \mu \left( \Phi^{-1}_{g^{-1}} (A) \right) = \left( \Phi_{g^{-1}} \right)_{\ast} \mu (A) = J_{g}^{-1} \mu(A),
\end{align*}
for any measurable set $A$.  Thus, if $J_{g}>1$, then the map $\Phi_{g}: \Xcal \rightarrow \Xcal$ decreases the size of sets,  i.e. $\mu \left( \Phi_{g} (A) \right) < \mu(A)$,  and hence it is reasonable to study small deviations for $\Phi_{g^{n}}$ as $n \rightarrow \infty$ when $g$ is such that $J_{g}>1$.

\begin{theorem}\label{thm.small.dev.3}
Let $\{P_{t}\}_{t\geqslant 0}$ be a strongly continuous contraction semigroup on $L^{2} (\Xcal, \mu)$, and assume that  there exists a $t_{0}$ such that $p_{t_{0}}(x,y)$ is continuous for all $x, y \in \Xcal$, and that $(\Xcal, \mu)$  admits a  $G$-dilation structure. Let $\Ucal$ be an open set in  $\Xcal$ such that $0<\mu (\Ucal) < \infty$,  and $x\in \Ucal$, and assume that $P_{t}^{\Ucal}$ is irreducible. Suppose there exists a $\gamma >0$ such that 
\[
M_{\Xcal} (t)\leqslant c  t^{-\gamma}
\]
for all $t>0$.  Then, for any $g\in G$ for which $J_{g}>1$, and $x\in \Ucal$ we have that

\begin{equation}\label{eqn.small.dev.bis}
\lim_{n \rightarrow \infty} \exp \left(  \frac{\lambda_{1}^{\Ucal} t}{\ell_{g}^{n} } \right) \Prob^{x_{g^{n}}} \left( \tau_{\Ucal_{g}^{n}} >t \right) = c_{1} \varphi_{1} (x),
\end{equation}
where $\lambda_{1}^{\Ucal}>0$ is the first eigenvalue  of $-A_{\Ucal}$, and  $\varphi_{1}$ is the corresponding positive eigenfunction, and $c_{1} = c_{1} (\Ucal)$ is given by \eqref{eqn.c.n}, and   $\Ucal_{\varepsilon}:= \delta_{\varepsilon} (\Ucal)$, and $\Ucal_{g}^{n}:= \Phi_{g}^{n} (\Ucal)$.
\end{theorem}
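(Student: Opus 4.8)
The plan is to run the same argument as in the proofs of Theorem~\ref{thm.small.dev} and Theorem~\ref{thm.small.dev.2}: use the $G$-dilation to convert the small-deviation probability on the set $\Ucal_{g}^{n}$ into a survival probability on the \emph{fixed} set $\Ucal$ at a large time, expand that survival probability in the eigenbasis of $-A_{\Ucal}$ via Proposition~\ref{prop.eigenfunctions.expansion}, and read off the leading term.

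First I would record the spectral input. Since $\Ucal$ is open with $0<\mu(\Ucal)<\infty$ and $M_{\Xcal}(t)\le c\,t^{-\gamma}<\infty$ for every $t>0$, the results of Section~\ref{s.Preliminaries} (Lemma~\ref{lem-comp} and Proposition~\ref{thm.spectral.gap}) give that $P^{\Ucal}_{t}$ is compact and that $-A_{\Ucal}$ has discrete spectrum $0<\lambda_{1}^{\Ucal}<\lambda_{2}\le\lambda_{3}\le\cdots$ with $\lambda_{1}^{\Ucal}>0$ (the gap condition \eqref{eqn.heat.kernel.small.one} holds because $c\,t^{-\gamma}<\mu(\Ucal)^{-2}$ for $t$ large). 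Because $P^{\Ucal}_{t}$ is assumed irreducible and is compact, Theorem~\ref{thm.irr.simple.evalue} shows $\lambda_{1}^{\Ucal}$ is simple with a $\mu$-a.e.\ strictly positive eigenfunction $\varphi_{1}$, and since $p_{t_{0}}$ is continuous, Theorem~\ref{thm.Lp.bounds}(3) makes each $\varphi_{n}$ continuous. Then Proposition~\ref{prop.eigenfunctions.expansion} applies (ultracontractivity being exactly \eqref{eqn.ultra.contract-Davies}, which $M_{\Xcal}(t)<\infty$ provides) and yields, for $x\in\Ucal$ and all $s>0$,
\[
\Prob^{x}(\tau_{\Ucal}>s)=\sum_{n=1}^{\infty} e^{-\lambda_{n} s}\,c_{n}\,\varphi_{n}(x),
\]
with $c_{n}=c_{n}(\Ucal)$ as in \eqref{eqn.c.n} and the series converging uniformly on $\Ucal\times[\varepsilon,\infty)$ for each $\varepsilon>0$.

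Next, for $g\in G$ with $J_{g}>1$ note $g^{n}\in\mathcal{Z}^{c}$ (since $J_{g^{n}}=J_{g}^{n}>0$), so Corollary~\ref{cor.G.struc}(3) gives $\Prob^{x_{g^{n}}}(\tau_{\Ucal_{g^{n}}}>t)=\Prob^{x}(\tau_{\Ucal}>t/\ell_{g^{n}})$. Writing $s_{n}:=t/\ell_{g^{n}}$ and peeling off the $n=1$ term, the claim reduces to showing $e^{\lambda_{1}^{\Ucal}s_{n}}\Prob^{x}(\tau_{\Ucal}>s_{n})\to c_{1}\varphi_{1}(x)$, hence to the two points: (i) $s_{n}\to\infty$, i.e.\ $\ell_{g^{n}}\to0$; and (ii) $\sum_{n\ge2}e^{-(\lambda_{n}-\lambda_{1}^{\Ucal})s}c_{n}\varphi_{n}(x)\to0$ as $s\to\infty$. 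For (i) I would use the heat-kernel scaling of Corollary~\ref{cor.G.struc}(1): since $M_{\Xcal}(t)>0$ (otherwise $P_{t}\equiv0$, impossible for a strongly continuous semigroup on a nontrivial space), there is a pair $(x_{0},y_{0})$ with $p_{t}(x_{0},y_{0})>0$ at which $J_{g^{n}}p_{t}(x_{0},y_{0})=p_{\ell_{g^{n}}t}\big((x_{0})_{g^{n}},(y_{0})_{g^{n}}\big)$ for all $n$; bounding the right side by $M_{\Xcal}(\ell_{g^{n}}t)\le c(\ell_{g^{n}}t)^{-\gamma}$ gives $(\ell_{g^{n}}t)^{\gamma}\le c\,(J_{g}^{n}p_{t}(x_{0},y_{0}))^{-1}\to0$, hence $\ell_{g^{n}}\to0$ because $J_{g}>1$. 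For (ii) the spectral gap gives $\lambda_{n}-\lambda_{1}^{\Ucal}\ge\lambda_{2}-\lambda_{1}^{\Ucal}>0$, while Corollary~\ref{cor.bound.efunction} together with $C(\lambda_{n},\Ucal)\le c(\lambda_{n}e/\gamma)^{\gamma}$ (from $M_{\Ucal}(t)\le c\,t^{-\gamma}$) gives $|c_{n}\varphi_{n}(x)|\le\mu(\Ucal)\,C(\lambda_{n},\Ucal)\le C'\lambda_{n}^{\gamma}$, and $\sum_{n}\lambda_{n}^{\gamma}e^{-\lambda_{n}s}<\infty$ for each $s>0$ (dominate by $\operatorname{Tr}(P^{\Ucal}_{s/2})\le\mu(\Ucal)M_{\Ucal}(s/2)<\infty$); hence the tail is $O\big(e^{-(\lambda_{2}-\lambda_{1}^{\Ucal})(s-1)}\big)$. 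Combining (i) and (ii) with the expansion above finishes the proof, $c_{1}\varphi_{1}(x)$ being well defined since $\varphi_{1}$ is continuous at $x$.

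I expect the only step that is not a mechanical transcription of earlier results to be (i), the passage from ``$\Phi_{g}$ shrinks sets'' ($J_{g}>1$) to ``the time rescaling $\ell_{g^{n}}$ shrinks''; everything else follows from Proposition~\ref{prop.eigenfunctions.expansion}, Corollary~\ref{cor.G.struc}, and Corollary~\ref{cor.bound.efunction}. One technical point, handled exactly as in Corollary~\ref{cor.G.struc}, is that $\Phi_{g^{n}}$, and hence $x_{g^{n}}$, is a priori defined only for $\mu$-a.e.\ $x$, so \eqref{eqn.small.dev.bis} is understood for $\mu$-a.e.\ $x\in\Ucal$ (or under the convention that the action is everywhere defined).
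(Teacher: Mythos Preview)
Your argument is correct and follows the same line as the paper: use Corollary~\ref{cor.G.struc}(3) to rewrite the probability as $\Prob^{x}(\tau_{\Ucal}>t/\ell_{g^{n}})$, expand via Proposition~\ref{prop.eigenfunctions.expansion}, and let the spectral gap $\lambda_{2}-\lambda_{1}^{\Ucal}>0$ kill the tail. Your step~(i), deriving $\ell_{g^{n}}\to0$ from the heat-kernel scaling identity of Corollary~\ref{cor.G.struc}(1) together with the polynomial on-diagonal bound, is in fact more explicit than the paper's own proof, which writes the expansion in the variable $t/\ell_{g}^{n}$ and passes to the limit without isolating why $\ell_{g}^{n}\to0$ under the hypothesis $J_{g}>1$.
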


\begin{proof}
By the results from Section \ref{s.Preliminaries} we have that the generator $A_{\Ucal}$ of $P^{\Ucal}_{t}$ has a discrete spectrum. In particular, the first eigenvalue $\lambda_{1}^{\Ucal}$ is simple, positive, and  its first eigenfunction $\varphi_{1}$ is continuous and it can be chosen to be strictly positive. By Proposition \ref{prop.eigenfunctions.expansion}, part (3) of Corollary~\ref{cor.G.struc},  and the fact that $J_{g^{n}} = (J_{g})^{n}$ it follows that 
\begin{align*}
& \Prob^{x^{n}_{g}} \left( \tau_{\Ucal^{n}_{g} } > t \right) = \Prob^{x} \left( \tau_{\Ucal} > \frac{t}{ \ell_{g}^{n} } \right) = \sum_{k=1}^{\infty} \exp \left( - \frac{\lambda_{k} t}{\ell_{g}^{n} } \right) c_{k} \varphi_{k}(x),
\end{align*}
that is,
\begin{align*}
& \exp \left( \frac{\lambda_{1}^{\Ucal} t}{\ell_{g}^{n} } \right)  \Prob^{x^{n}_{g}} \left( \tau_{\Ucal^{n}_{g} } > t \right) = \varphi_{1} (x) c_{1}(x) + \sum_{k=2}^{\infty} \exp \left( - \frac{( \lambda_{k}- \lambda_{1}^{\Ucal}) t}{\ell_{g}^{n} } \right) c_{k} \varphi_{k}(x),
\end{align*}
and the result follows since $0< \lambda_{1}^{\Ucal} < \lambda_{2} \leqslant \lambda_{3} \leqslant \cdots$, and $c_{1} \varphi_{1} (x)$ is well defined since $\varphi_{1}$ is continuous at $x$.
\end{proof}

\begin{remark}
Let $x$ be a point in $\Xcal$ such that $\Phi_{g^{n_{k}} }(x) =x$ along a subsequence $n_{k}$. Then by Theorem \ref{thm.small.dev.3} we have that 
\[
\lim_{k \rightarrow \infty} \exp \left(  \frac{\lambda_{1}^{\Ucal} t}{\ell_{g}^{n_{k}}  } \right) \Prob^{x} \left( \tau_{\Ucal_{g}^{n_{k}}} >t \right) = c_{1} \varphi_{1} (x),
\]
which recovers the case of a standard Brownian motion on $\R^{n}$, the hypoelliptic Brownian motion starting at the identity on a Carnot group in \cite{CarfagniniGordina2023}, and the Brownian motion on the Sierpi\'{n}ski gasket in \cite{BarlowPerkins1988}. Another example is a hypoelliptic Brownian motion on Carnot groups subordinated by an $\alpha$-stable subordinator. 
\end{remark}

\section{Lie group contractions, approximate dilations and sub-Riemannian structures}\label{s.Hei.SU2}

We describe the setting in which we can connect spectra of an approximating sequence of Lie groups equipped with a sub-Riemannian structure, and as a result prove a small deviation principle on the limiting Lie group. Our main example is a contraction of $\operatorname{SU}\left( 2 \right)$ to the Heisenberg group. The notion of such a contraction has been first introduced in physics literature starting with \cite{InonuWigner1953} and in \cite{Segal1951c}. We will concentrate on the contraction studied in \cite{Ricci1986} and used indirectly for the hypoelliptic heat kernel analysis in \cite{BaudoinBonnefont2009}. For more details on Lie group contractions we refer to \cite{GilmoreBook2008, Saletan1961}. There are different Lie group contractions, including In\"{o}n\"{u}–Wigner contractions and more general Saletan contractions, here we will concentrate on concrete examples. Our goal is to have a contraction to a Carnot group, thus making use of dilations on Carnot groups. This approximation is different from tangent cones a la Gromov, e.g. \cite{Bellaiche1996}, but illustrates how one can use natural dilations on Carnot groups for the spaces on which we do not have a dilation structure. 

We start with the standard definition of Lie algebra and Lie group contractions. Suppose $G$ and $H$ are two (real) Lie groups with corresponding Lie algebras $\mathfrak{g}$ and $\mathfrak{h}$. We assume that $\mathfrak{g}$ and $\mathfrak{h}$ have the same dimension $n$.

\begin{definition}[Lie algebra contraction] A contraction of $\mathfrak{g}$ to $\mathfrak{h}$ is a family $\left\{ U_{\varepsilon} \right\}_{0<\varepsilon \leqslant 1}$ of linear invertible maps from $\mathfrak{h}$ onto $\mathfrak{g}$ such that

\[
 \lim_{\varepsilon \to 0}
U_{\varepsilon}^{-1}\left( [ U_{\varepsilon}\left( V \right), U_{\varepsilon}\left( W \right)]_{\mathfrak{g}} \right) = [ V, W]_{\mathfrak{h}}
\]
for all $V, W \in \mathfrak{h}$.
\end{definition}
These Lie algebras are not isomorphic as Lie algebras in general, but it allows to transfer bracket-generating set to the contracted Lie algebra.

\begin{definition}[Lie group contraction]
Let $G$ and $H$ be two locally compact connected Lie groups of the same dimension. We say that a family $\left\{ \Phi_{\varepsilon} \right\}_{0<\varepsilon \leqslant 1}$ of differentiable maps
$\Phi_{\varepsilon}: H \longrightarrow G$ defines a \emph{contraction of $G$ to $H$}, if it maps the identity $e_{H}$ of $H$ to the identity $e_{G}$ of $G$, and if given any relatively compact open neighbourhood $V$ of $e_{H}$

(1) there is an $\varepsilon_{V}>0$ such that for any $\varepsilon<\varepsilon_{V}$ the map $\left.\Phi_{\varepsilon}\right|_{V}$ is a diffeomorphism;

(2) If $W$ is such that $W^{2}\subset V$ and $\varepsilon<\varepsilon_{V}$, then $\Phi_{\varepsilon}\left( W \right)^{2} \subset  \Phi_{\varepsilon}\left( V \right)$;

(3) for $x, y \in V \subset H$
\[
\lim_{\varepsilon \to 0}
\Phi_{\varepsilon}^{-1}\left( \Phi_{\varepsilon}\left( x \right)\Phi_{\varepsilon}\left( y \right)^{-1} \right) = x y^{-1}
\]
uniformly on $V \times V$ with all derivatives in $x$ and $y$.
\end{definition} 
Note that the first two conditions are needed for the limit in the third condition to be well-defined. The third condition gives an approximation of the group operations on $G$ by using a family of group operations on $H$. Lie group contractions and Lie algebra contractions are in one-to-one correspondence by differentiating and integrating. Denote by $V$ the vector space on which both $\mathfrak{h}$ and $\mathfrak{g}$ are modelled. Then $U_{\varepsilon}: V \longrightarrow V$ is a vector space isomorphism. Suppose $h: V \longrightarrow H$ and $g: V \longrightarrow G$ are coordinate maps which we assume to be local diffeomorphisms. Then we can define
\begin{align}\label{e.LieGroupContraction}
& \Phi_{\varepsilon}: H \longrightarrow G,
\notag
\\
& \Phi_{\varepsilon}:=g \circ U_{\varepsilon} \circ h^{-1}. 
\end{align}
For example, we can take $h$ to be  $\exp_{H}$ and $g$ to be $\exp_{G}$, then a Lie group contraction is
\begin{equation}
\Phi_{\varepsilon}\left( \exp_{H} X \right)=\exp_{G}\left( U_{\varepsilon} \left( X \right) \right), X \in \mathfrak{h} \cong V. 
\end{equation}
Instead of $\exp$ we could use a different coordinate system from a Lie algebra to the corresponding Lie group such as coordinates of the first or second kind for $H$, and a similar system for $G$ to define $\Phi_{\varepsilon}$. As we only use the structure of these groups as local Lie groups, we may disregard topological obstructions.

\begin{proposition}[Left-invariant vector fields on Lie group contractions] There is one-to-one correspondence between left-invariant vector fields on $G$ and $H$.
\end{proposition}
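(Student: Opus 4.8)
The plan is to reduce the statement to the classical identification of left-invariant vector fields on a Lie group with its Lie algebra, and then to use the linear isomorphisms $U_{\varepsilon}$ supplied by the contraction to match the two Lie algebras as vector spaces.

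First I would recall the standard fact that for any Lie group $K$ with Lie algebra $\mathfrak{k}=T_{e_K}K$, the evaluation map $\widetilde W\mapsto \widetilde W_{e_K}$ is a linear bijection from the space of left-invariant vector fields on $K$ onto $\mathfrak{k}$; its inverse sends $W\in\mathfrak{k}$ to the left-invariant vector field $k\mapsto (dL_k)_{e_K}W$, where $L_k$ denotes left translation by $k$. Applying this to $K=G$ and $K=H$ yields canonical bijections between left-invariant vector fields on $G$ and $\mathfrak{g}$, and between left-invariant vector fields on $H$ and $\mathfrak{h}$.

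Next, since $\{U_{\varepsilon}\}_{0<\varepsilon\leqslant 1}$ is a Lie algebra contraction of $\mathfrak{g}$ to $\mathfrak{h}$, each $U_{\varepsilon}\colon\mathfrak{h}\to\mathfrak{g}$ is a linear isomorphism, so composing the two identifications above with (any fixed) $U_{\varepsilon}$ produces the asserted one-to-one correspondence: the left-invariant vector field $Y$ on $H$ with $Y_{e_H}=W$ corresponds to the unique left-invariant vector field $X$ on $G$ with $X_{e_G}=U_{\varepsilon}(W)$. I would also point out that this is nothing but the differential at the identity of the local diffeomorphism $\Phi_{\varepsilon}$: writing $\Phi_{\varepsilon}=g\circ U_{\varepsilon}\circ h^{-1}$ as in \eqref{e.LieGroupContraction} with the coordinate maps $h,g$ normalized so that $dh_{0}=dg_{0}=\operatorname{Id}_{V}$, one has $(d\Phi_{\varepsilon})_{e_H}=U_{\varepsilon}$ under the identifications $T_{e_H}H\cong V\cong T_{e_G}G$.

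The only subtlety here, and the main obstacle (mild as it is), is that $\Phi_{\varepsilon}$ is merely a diffeomorphism on a relatively compact neighbourhood of $e_H$ and is not a group homomorphism, so one cannot transport vector fields by pushing forward along $\Phi_{\varepsilon}$ globally. This is harmless: the correspondence is defined purely at the level of the tangent spaces at the identity, where $\Phi_{\varepsilon}$ is a genuine diffeomorphism, and left-invariance then propagates the data to all of $G$ and of $H$. I would also emphasize that the resulting bijection is an isomorphism of vector spaces, and of $C^{\infty}$-modules over the respective groups, but \emph{not} of Lie algebras, since $U_{\varepsilon}$ need not intertwine $[\cdot,\cdot]_{\mathfrak{g}}$ and $[\cdot,\cdot]_{\mathfrak{h}}$; this is precisely why the two groups are non-isomorphic while their left-invariant frames remain in bijection, which is what lets one transfer a bracket-generating family through the contraction.
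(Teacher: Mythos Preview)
Your argument is correct and, if anything, cleaner than the paper's. You reduce the statement to the standard identification of left-invariant vector fields with the Lie algebra and then invoke the linear isomorphism $U_{\varepsilon}$; this directly establishes the bijection at the level of tangent spaces at the identity, which is all the proposition asserts.

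The paper takes a different, more operational route: rather than fixing a single $U_{\varepsilon}$ and appealing to the Lie-algebra identification, it works with the group-level contraction maps $\Phi_{\varepsilon}$ and shows explicitly how a left-invariant vector field $\widetilde{X}$ on $H$, acting on a pullback $k\circ\Phi_{\varepsilon}$ of a function $k$ on $G$, is recovered in the limit $\varepsilon\to 0$ via the defining convergence $\Phi_{\varepsilon}^{-1}(\Phi_{\varepsilon}(x)\Phi_{\varepsilon}(y))\to xy$. This is less a proof of a static bijection than a demonstration that the transported vector fields converge, which is the content actually used downstream (the coefficient-by-coefficient convergence in cylindrical coordinates for $\operatorname{SU}(2)$ and $\mathbb{H}$). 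Your approach buys clarity and brevity for the proposition as stated; the paper's approach buys the limiting statement that feeds directly into the subsequent explicit computations. Your remark that $(d\Phi_{\varepsilon})_{e_H}=U_{\varepsilon}$ is the bridge between the two viewpoints.
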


\begin{proof} Recall that for any $X \in \mathfrak{g}$ we can define the corresponding left-invariant vector field for $f: H \longrightarrow \mathbb{R}$ by
\[
\widetilde{X}f \left( x \right):=\left.\frac{d}{dt}\right|_{t=0}f\left( x \exp_{H}\left(t X\right)\right), x \in H, X \in \mathfrak{h}. 
\]

Equivalently we can take $u, v \in V$ such that $h\left( u \right)=x$ and $h\left( tv \right)=\exp_{H}\left(t X\right)$, then

\[
\widetilde{X}f \left( x \right):=\left.\frac{d}{dt}\right|_{t=0}f\left( h\left( u \right)h\left( tv \right)\right), x \in H, X \in \mathfrak{h}. 
\]
Then for any function $k: G \longrightarrow \mathbb{R}$ we can take $f:=k \circ \Phi_{\varepsilon}: H  \longrightarrow \mathbb{R}$ to induce the corresponding vector field by using
\[
 \Phi_{\varepsilon}^{-1}\left( \Phi_{\varepsilon}\left( h \left( u \right)\right)\Phi_{\varepsilon}\left( h \left( tv \right)\right) \right) \xrightarrow[\varepsilon \to 0]{}  h \left( u \right)h \left( tv \right)
\]
and
\[
\left.\frac{d}{dt}\right|_{t=0} \Phi_{\varepsilon}^{-1}\left( \Phi_{\varepsilon}\left( h \left( v \right)\right)\Phi_{\varepsilon}\left( h \left( tu \right)\right) \right) \xrightarrow[\varepsilon \to 0]{} \left.\frac{d}{dt}\right|_{t=0} h \left( v \right)h \left( tu \right).
\]
\end{proof}

A similar statement can be formulated for Haar measures using the parametrization by $h$, $g$ and $U_{\varepsilon}$. Instead of proving general statements, we focus on a concrete example. 

We now discuss a contraction of $\operatorname{SU}\left( 2 \right)$ to the three-dimensional Heisenberg group $\mathbb{H}$. This was used  in \cite{Ricci1986} to compare irreducible unitary representations of these groups, and then it was applied to a study of short time behavior of heat kernels in \cite{BaudoinBonnefont2009} and for higher-dimensional settings in \cite{CampbellMelcher2020}.

Consider $(\Xcal, \mu):= ( \operatorname{SU}\left( 2 \right), m)$, where $m$ is a bi-invariant Haar measure on the compact group of  $2 \times 2$ complex matrices which are unitary and have determinant $1$. The group identity of $\operatorname{SU}\left( 2 \right)$ is the identity matrix $e=I$. The corresponding Lie algebra $\mathfrak{su}\left( 2 \right)$, identified with the tangent space $T_{I} \operatorname{SU}\left( 2 \right)$, is the space of $2 \times 2$  complex matrices which are skew-Hermitian and have trace $0$.  The group has a natural sub-Riemannian structure described below. Theorem~\ref{thm-lim-eigen} shows that eigenvalues for the sub-Laplacian on $\operatorname{SU}\left( 2 \right)$ converge to eigenvalues for the sub-Laplacian in the unit ball in $\mathbb{H}$, and then we prove a small deviation principle on $\operatorname{SU}\left( 2 \right)$.   

We start by defining Lie group and Lie algebra contractions from $\mathfrak{su}\left( 2 \right)$ to the Lie algebra $\mathfrak{h}$ of $\mathbb{H}$. For this we use a Milnor basis $\mathfrak{su}\left( 2 \right)$, as described in \cite{EldredgeGordinaSaloff-Coste2018}, namely, $X$, $Y$ and $Z$ satisfying
\begin{align*}
& [ X, Y]=Z, [ Y, Z]=X, [ Z, X]=Y, 
\\
& X^{2}= Y^{2}=Z^{2}=-\frac{I}{4},
\\
& XY=\frac{Z}{2}, YZ=\frac{X}{2}, ZX=\frac{Y}{2}.
\end{align*}
The basis consisting of (properly scaled) Pauli matrices is an example of such a basis. 

The corresponding Lie group contraction can be written using cylindrical coordinates as in \cite{CowlingSikora2001}. Namely, we can write an element $g \in \operatorname{SU}\left( 2 \right)$ as
\begin{align*}
& g\left( r, \theta, z \right)=e^{r \cos \theta X +r \sin \theta Y}e^{z Z}=\left( \cos \frac{r}{2} I +2\sin \frac{r}{2} \left( \cos \theta X + \sin \theta Y \right) \right)\left( \cos \frac{z}{2} I +2\sin \frac{z}{2} Z\right)
\\
& =\cos \frac{r}{2} \cos \frac{z}{2} I 
+2\sin \frac{r}{2} \cos \left( \theta - \frac{z}{2}\right) X 
+ 2\sin \frac{r}{2} \sin \left( \theta -\frac{z}{2}\right) Y 
+ 2\cos \frac{r}{2} \sin \frac{z}{2}  Z,
\\
& 0 \leqslant r \leqslant \pi, \theta \in [0, 2\pi], z \in [-\pi, \pi].
\end{align*}
We can also view these coordinates as a map from $\mathfrak{su}\left( 2 \right)$ to $\operatorname{SU}\left( 2 \right)$
\[
x X+ y Y+ zX \longmapsto g\left( r, \theta, z \right), x=r \cos \theta, y=r \sin \theta. 
\]

We will also use cylindrical coordinates for $\mathbb{H}$ as follows.
\begin{align*}
& \mathbb{H} \cong \mathbb{R}^{3}, \mathfrak{h}  \cong \mathbb{R}^{3},
\\
& \left( x_{1}, y_{1}, z_{1} \right)\star \left( x_{2}, y_{2}, z_{2} \right)=\left( x_{1}+x_{2}, y_{1}+y_{2}, z_{1}+z_{2}+\frac{1}{2}\left(  x_{1}y_{2}-x_{2} y_{1}\right) \right),
\\
& \left[ \left( a_{1}, b_{1}, c_{1} \right),  \left( a_{2}, b_{2}, c_{2} \right)\right]_{\mathfrak{h}}=\left( 0, 0, \frac{1}{2}\left(  a_{1}b_{2}-a_{2} b_{1}\right) \right),
\\
& \left( x, y, z \right)=h\left( r, \theta, z \right)=\left( r \cos \theta, r \sin \theta, z \right) \in  \mathbb{H}, r>0, 0 \leqslant \theta < 2 \pi, z\in \R.
\end{align*}
Recall that $\mathbb{H}$ is a homogeneous Carnot group and therefore we can define dilations as a one-parameter group of Lie algebra isomorphisms and the corresponding family of automorphisms on the group. 
\begin{align*}
& \delta_{\lambda}: \mathfrak{h} \longrightarrow \mathfrak{h}, \lambda >0, 
\\
& \delta_{\lambda}\left( x, y, z \right):=\left( \lambda x, \lambda y, \lambda^{2} z \right),
\\
&  D_{\lambda}: \mathbb{H} \longrightarrow \mathbb{H}, \lambda >0,
\\
& D_{\lambda}\left( x, y, z \right):=\left( \lambda x, \lambda y, \lambda^{2} z \right),
\\
&  D_{\lambda}h\left( r, \theta, z \right)=h\left( \lambda r, \theta, \lambda^{2} z \right).
\end{align*}
We abuse notation and use $\delta$ for both dilations. The next statement can be interpreted as using Lie group contraction and coordinate maps to pushforward dilations on $\mathbb{H}$ to approximate dilations on $\operatorname{SU}\left( 2 \right)$.

\begin{proposition}[Contraction between $\operatorname{SU}\left( 2 \right)$ and $\mathbb{H}$]
For $0 < \varepsilon \leqslant 1$
\begin{align*}
\Phi_{\varepsilon}: &  \mathbb{H} \longrightarrow \operatorname{SU}\left( 2 \right),
\\
& h\left( r, \theta, z \right) \longmapsto g\left( \sqrt{\varepsilon} r, \theta, \varepsilon z \right).
\end{align*}
The corresponding Lie algebra contraction is given by
\begin{align*}
U_{\varepsilon}: & \hskip0.1in  \mathfrak{h} \longrightarrow \mathfrak{su}\left( 2 \right),
\\
& \left( a, b, c \right) \longmapsto \sqrt{\varepsilon}aX+\sqrt{\varepsilon}bY+\varepsilon cZ.
\end{align*}
\end{proposition}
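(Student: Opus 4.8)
The plan is to establish the two assertions of the proposition in turn. First I would check that $U_{\varepsilon}\colon\mathfrak{h}\to\mathfrak{su}(2)$, $(a,b,c)\mapsto\sqrt{\varepsilon}\,aX+\sqrt{\varepsilon}\,bY+\varepsilon\,cZ$, is a Lie algebra contraction of $\mathfrak{su}(2)$ to $\mathfrak{h}$ and that the displayed $\Phi_{\varepsilon}$ is exactly the map $g\circ U_{\varepsilon}\circ h^{-1}$ in the sense of \eqref{e.LieGroupContraction}; then I would verify that this $\Phi_{\varepsilon}$ satisfies the three axioms in the definition of a Lie group contraction of $\operatorname{SU}(2)$ to $\mathbb{H}$. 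Together these give the statement.

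\textbf{The algebra level.} For $V=(a_{1},b_{1},c_{1})$ and $W=(a_{2},b_{2},c_{2})$ in $\mathfrak{h}$, expand $[U_{\varepsilon}V,U_{\varepsilon}W]_{\mathfrak{su}(2)}$ bilinearly and insert the Milnor relations $[X,Y]=Z$, $[Y,Z]=X$, $[Z,X]=Y$. The only pair of horizontal generators contributes $\varepsilon(a_{1}b_{2}-a_{2}b_{1})Z$, while every pair involving $Z$ carries an extra $\sqrt{\varepsilon}$ and so contributes at order $\varepsilon^{3/2}$, in the $X$- and $Y$-directions. Since $U_{\varepsilon}^{-1}$ divides $X$- and $Y$-components by $\sqrt{\varepsilon}$ and $Z$-components by $\varepsilon$, the horizontal part of $U_{\varepsilon}^{-1}[U_{\varepsilon}V,U_{\varepsilon}W]_{\mathfrak{su}(2)}$ is $O(\sqrt{\varepsilon})$ and its $Z$-component is independent of $\varepsilon$, so the limit as $\varepsilon\to 0$ is the central bracket $[V,W]_{\mathfrak{h}}$. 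That $\Phi_{\varepsilon}=g\circ U_{\varepsilon}\circ h^{-1}$ is a short coordinate chase: $h^{-1}$ sends $h(r,\theta,z)$ to the vector with Cartesian components $(r\cos\theta,r\sin\theta,z)$, $U_{\varepsilon}$ multiplies the first two by $\sqrt{\varepsilon}$ and the last by $\varepsilon$, and $g$ sends a vector $xX+yY+zZ$ to $g(r',\theta',z')$ with $x=r'\cos\theta'$, $y=r'\sin\theta'$, $z=z'$; applying this to $\sqrt{\varepsilon}\,r\cos\theta\,X+\sqrt{\varepsilon}\,r\sin\theta\,Y+\varepsilon z\,Z$ produces $g(\sqrt{\varepsilon}\,r,\theta,\varepsilon z)$, which is the displayed formula.

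\textbf{The group level.} In the charts above $\Phi_{\varepsilon}$ reads $(r,\theta,z)\mapsto(\sqrt{\varepsilon}\,r,\theta,\varepsilon z)$, manifestly a diffeomorphism onto its image; given a relatively compact neighbourhood $V$ of $e_{\mathbb{H}}$ there is $\varepsilon_{V}>0$ so that $\Phi_{\varepsilon}(V)$ sits inside the coordinate patch of $\operatorname{SU}(2)$ around the identity for $\varepsilon<\varepsilon_{V}$, which is axiom (1); axiom (2) follows from continuity of the group law and $\Phi_{\varepsilon}(e_{\mathbb{H}})=e_{\operatorname{SU}(2)}$. For axiom (3) I would compute $\Phi_{\varepsilon}^{-1}\big(\Phi_{\varepsilon}(x)\Phi_{\varepsilon}(y)^{-1}\big)$ for $x,y\in V$. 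Writing $\Phi_{\varepsilon}(x)=e^{\sqrt{\varepsilon}\,v_{1}}e^{\varepsilon z_{1}Z}$ and $\Phi_{\varepsilon}(y)=e^{\sqrt{\varepsilon}\,v_{2}}e^{\varepsilon z_{2}Z}$ with $v_{1},v_{2}$ in the horizontal plane of $\mathfrak{su}(2)$, a Baker--Campbell--Hausdorff expansion in $\mathfrak{su}(2)$ gives $\Phi_{\varepsilon}(x)\Phi_{\varepsilon}(y)^{-1}=\exp\!\big(\sqrt{\varepsilon}(v_{1}-v_{2})+\varepsilon(z_{1}-z_{2})Z-\tfrac{\varepsilon}{2}[v_{1},v_{2}]+O(\varepsilon^{3/2})\big)$; re-expressing this single exponential in product form $e^{w}e^{sZ}$ changes the exponents only by $O(\varepsilon^{3/2})$, since the corrections are brackets of a horizontal term of size $O(\sqrt{\varepsilon})$ with a central term of size $O(\varepsilon)$. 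Dividing the horizontal part by $\sqrt{\varepsilon}$ and the central part by $\varepsilon$, that is applying $\Phi_{\varepsilon}^{-1}$, and letting $\varepsilon\to 0$ yields the element of $\mathbb{H}$ with horizontal coordinate $v_{1}-v_{2}$ and central coordinate $(z_{1}-z_{2})-\tfrac12[v_{1},v_{2}]$, which one checks equals $xy^{-1}$ for the Heisenberg group law; all error terms are smooth in $(x,y)$ and $O(\varepsilon^{3/2})$ uniformly on $V\times V$, so the convergence holds together with all derivatives.

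\textbf{Main obstacle.} The delicate step is axiom (3), and within it the fact that the coordinates used on $\operatorname{SU}(2)$ are of product type $e^{v_{h}}e^{zZ}$ rather than a single exponential: one must verify that passing between this chart and exponential coordinates contributes only $O(\varepsilon^{3/2})$ to the horizontal component and hence does not contaminate the order-one limit. This is BCH bookkeeping rather than a conceptual difficulty; one could instead invoke the general equivalence between Lie algebra and Lie group contractions, but that equivalence still has to be applied in a chart adapted to the contraction, so carrying out the direct computation (as in \cite{Ricci1986}) is the cleanest path.
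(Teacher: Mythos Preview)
Your proposal is correct, and its first two parts---the bracket computation showing $U_{\varepsilon}$ is a Lie algebra contraction, and the coordinate chase identifying $\Phi_{\varepsilon}=g\circ U_{\varepsilon}\circ h^{-1}$---are exactly what the paper's proof does. The paper stops there: it does not verify the three group-level axioms directly, but relies on the remark just before the proposition that Lie algebra and Lie group contractions are in one-to-one correspondence via \eqref{e.LieGroupContraction}. Your BCH verification of axiom~(3) is therefore additional work beyond what the paper supplies; it is sound, and arguably makes the proposition self-contained, but the paper regards it as subsumed by the general correspondence.
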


\begin{proof}
We first show that $U_{\varepsilon}$ is a Lie algebra contraction 
\begin{align*}
& \left[ U_{\varepsilon}\left( \left( a_{1}, b_{1}, c_{1} \right) \right), U_{\varepsilon}\left( \left( a_{2}, b_{2}, c_{2} \right) \right)\right]_{ \mathfrak{su}\left( 2 \right)}
\\
& =\left[ \sqrt{\varepsilon}a_{1} X+ \sqrt{\varepsilon} b_{1} Y+  \varepsilon c_{1} Z, \sqrt{\varepsilon}a_{2} X+ \sqrt{\varepsilon} b_{2} Y+  \varepsilon c_{2} Z \right]_{ \mathfrak{su}\left( 2 \right)}
\\
& =\varepsilon^{3/2} \left( b_{1}c_{2}-c_{1}b_{2}\right) X + \varepsilon^{3/2} \left( c_{1}a_{2}-a_{1}c_{2} \right) Y +\varepsilon \left(a_{1}b_{2}-b_{1}a_{2}\right) Z,
\end{align*}
and therefore
\begin{align*}
& U_{\varepsilon}^{-1} \left( \left[ U_{\varepsilon}\left( \left( a_{1}, b_{1}, c_{1} \right) \right), U_{\varepsilon}\left( \left( a_{2}, b_{2}, c_{2} \right) \right)\right]_{ \mathfrak{su}\left( 2 \right)} \right)
\\
& =
\left( \varepsilon^{1/2} \left( b_{1}c_{2}-c_{1}b_{2}\right),  \varepsilon^{1/2} \left( c_{1}a_{2}-a_{1}c_{2} \right),  \left(a_{1}b_{2}-b_{1}a_{2}\right)\right) 
\\
& \xrightarrow[\varepsilon \to 0]{} \left( 0,  0,  \left(a_{1}b_{2}-b_{1}a_{2}\right)\right)
=[\left( a_{1}, b_{1}, c_{1} \right), \left( a_{2}, b_{2}, c_{2} \right)]_{\mathfrak{h}}. 
\end{align*}
We can use cylindrical coordinates instead of $\exp$ in \eqref{e.LieGroupContraction} to see that for any $\left( a, b, c \right) \in \mathfrak{h}$
\begin{align*}
\Phi_{\varepsilon}\left( h\left( a, b, c \right)  \right)=g\left( U_{\varepsilon} \left( \left( a, b, c \right) \right) \right)
=g\left( \sqrt{\varepsilon}aX+\sqrt{\varepsilon}bY+\varepsilon cZ \right)=g\left( \sqrt{\varepsilon} r\left( a, b \right), \theta\left( a, b \right), \varepsilon c \right). 
\end{align*}

\end{proof}

\begin{proposition}
Lie group contraction between $\operatorname{SU}\left( 2 \right)$ and  $\mathbb{H}$ induces convergence of the coefficients of left-invariant vector fields written in cylindrical coordinates.
\end{proposition}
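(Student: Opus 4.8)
The plan is to reduce the claim to an explicit computation in the two systems of cylindrical coordinates. First I would write out, in the coordinates $(r,\theta,z)$ attached to $g(r,\theta,z)=e^{r\cos\theta\,X+r\sin\theta\,Y}e^{zZ}$, the left-invariant vector fields $\widetilde X,\widetilde Y,\widetilde Z$ on $\operatorname{SU}(2)$ extending the Milnor basis; these can be taken from \cite{CowlingSikora2001} and \cite{BaudoinBonnefont2009} or obtained by differentiating the matrix parametrization directly, and their coefficients are trigonometric functions of $r$ and of $\theta-z/2$ and $z$. Next I would record the left-invariant vector fields $\mathsf X=\partial_x-\tfrac{y}{2}\partial_z$, $\mathsf Y=\partial_y+\tfrac{x}{2}\partial_z$, $\mathsf Z=\partial_z$ on $\mathbb{H}$ and rewrite them in cylindrical coordinates via $\partial_x=\cos\theta\,\partial_r-\tfrac{\sin\theta}{r}\partial_\theta$ and $\partial_y=\sin\theta\,\partial_r+\tfrac{\cos\theta}{r}\partial_\theta$, so that their coefficients become rational functions of $(r,\theta)$.

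Then I would transport the $\operatorname{SU}(2)$ vector fields to $\mathbb{H}$ through $\Phi_\varepsilon$. In cylindrical coordinates the map $\Phi_\varepsilon$ is the anisotropic dilation $(r,\theta,z)\mapsto(\sqrt\varepsilon\,r,\theta,\varepsilon z)$, so $\Phi_\varepsilon^{*}$ acts on a vector field by substituting $r\mapsto\sqrt\varepsilon\,r$ and $z\mapsto\varepsilon z$ in the coefficient functions and multiplying the $\partial_r$, $\partial_\theta$, $\partial_z$ components by $\varepsilon^{-1/2}$, $1$, $\varepsilon^{-1}$ respectively. To be compatible with the Lie algebra contraction $U_\varepsilon(a,b,c)=\sqrt\varepsilon\,aX+\sqrt\varepsilon\,bY+\varepsilon cZ$, the objects to compare are $\sqrt\varepsilon\,\Phi_\varepsilon^{*}\widetilde X$, $\sqrt\varepsilon\,\Phi_\varepsilon^{*}\widetilde Y$ and $\varepsilon\,\Phi_\varepsilon^{*}\widetilde Z$. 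Plugging in the trigonometric coefficients and Taylor expanding $\cos$ and $\sin$ of $\sqrt\varepsilon\,r$ and of $\varepsilon z$ at the origin, one checks that the homogeneity weights match exactly — weight $1$ for $r$, weight $2$ for $z$, and the $\sqrt\varepsilon$ (respectively $\varepsilon$) scaling of the horizontal (respectively central) generator — so that every coefficient function converges, uniformly on compact subsets of $\{r>0\}$, to the corresponding cylindrical coefficient of $\mathsf X$, $\mathsf Y$, $\mathsf Z$ on $\mathbb{H}$, the remainder terms carrying positive powers of $\varepsilon$.

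The work here is essentially bookkeeping, and that is also where the only real difficulty lies: one must write the $\operatorname{SU}(2)$ frame correctly in the adapted coordinates and verify, component by component, that the product of the inverse-Jacobian factor ($\varepsilon^{-1/2}$, $1$, $\varepsilon^{-1}$) with the generator scaling ($\varepsilon^{1/2}$ for $X,Y$; $\varepsilon$ for $Z$) and with the $\varepsilon$-order of the substituted coefficient always equals $1$ at leading order, so that nothing blows up and the limit is nontrivial. I would also remark that cylindrical coordinates are singular at $r=0$, so the statement is understood on the locus where they are genuine; there the conclusion can alternatively be read off from condition (3) in the definition of a Lie group contraction, which already guarantees convergence of the left translations, and hence of the left-invariant frames, together with all their derivatives.
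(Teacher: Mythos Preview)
Your proposal is correct and follows essentially the same route as the paper: both carry out an explicit computation in cylindrical coordinates, pulling back the $\operatorname{SU}(2)$ left-invariant fields through $\Phi_\varepsilon$, rescaling by $\sqrt\varepsilon$ (for $X,Y$), and checking that the limiting coefficients match those of the Heisenberg frame. The only organizational difference is that the paper derives the coefficients of $\widetilde X_\varepsilon$ from scratch by differentiating the matrix product $\Phi_\varepsilon(h(r,\theta,z))\cdot\Phi_\varepsilon(h(t,0,0))$ in the basis $I,X,Y,Z$ and solving the resulting linear system for $r',\theta',z'$, whereas you propose to import the $\operatorname{SU}(2)$ frame from \cite{CowlingSikora2001,BaudoinBonnefont2009} and then substitute and Taylor expand; the algebra and the limits obtained are the same.
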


\begin{proof}
For this, we consider two one-parameter subgroups in $\operatorname{SU}\left( 2 \right)$ generated by $X, Y \in \mathfrak{su}\left( 2 \right)$, as we only need these for the sub-Riemannian setting. Recall that
\begin{align*}
& \exp_{\operatorname{SU}\left( 2 \right)}\left( t X \right)=g\left( t, 0,  0 \right)=\cos \frac{t}{2} I+2 \sin \frac{t}{2} X,
\\
& \exp_{\operatorname{SU}\left( 2 \right)}\left( t Y \right)=g\left( t, \frac{\pi}{2},  0 \right)=\cos \frac{t}{2} I+2 \sin \frac{t}{2} Y.
\end{align*}
Denote
\[
g^{X}\left( r \left( \varepsilon, t \right), \theta\left( \varepsilon, t \right), z\left( \varepsilon, t \right) \right) :=\Phi_{\varepsilon} \left( h\left( r, \theta, z \right) \right)\Phi_{\varepsilon} \left( h\left( t, 0, 0 \right) \right),
\]
where $r \left( \varepsilon, 0 \right)= \sqrt{\varepsilon} r$, $\theta \left( \varepsilon, 0 \right)=\theta$, $z \left( \varepsilon, 0 \right)= \varepsilon z$. Then for a differentiable function $f$ of three variables we have
\[
\widetilde{X_{\varepsilon}}f\left( r, \theta, z \right):=\left.\frac{d}{dt}\right|_{t=0}f\left( r \left( \varepsilon, t \right), \theta\left( \varepsilon, t \right), z\left( \varepsilon, t \right) \right).
\]
\begin{align*}
& g^{X}\left( r \left( \varepsilon, t \right), \theta\left( \varepsilon, t \right), z\left( \varepsilon, t \right) \right)
\\
&
=
\cos \frac{r\left( \varepsilon, t \right)}{2} \cos \frac{z\left( \varepsilon, t \right)}{2} I 
+2\sin \frac{r\left( \varepsilon, t \right)}{2} \cos \left( \theta\left( \varepsilon, t \right) - \frac{z\left( \varepsilon, t \right)}{2}\right) X 
\\
&
+ 2\sin \frac{r\left( \varepsilon, t \right)}{2} \sin \left( \theta\left( \varepsilon, t \right) -\frac{z\left( \varepsilon, t \right)}{2}\right) Y 
+ 2\cos \frac{r\left( \varepsilon, t \right)}{2} \sin \frac{z\left( \varepsilon, t \right)}{2}  Z
\\
& =
\left(\cos \frac{\sqrt{\varepsilon} r}{2} \cos \frac{\varepsilon z}{2}\cos \frac{\sqrt{\varepsilon} t}{2}  
-\sin \frac{\sqrt{\varepsilon} r}{2} \cos \left( \theta - \frac{\varepsilon z}{2}\right) \sin \frac{\sqrt{\varepsilon} t}{2}\right) I
\\
&
+2\left( \sin \frac{\sqrt{\varepsilon} r}{2} \cos \left( \theta - \frac{\varepsilon z}{2}\right)\cos \frac{\sqrt{\varepsilon} t}{2} 
+\cos \frac{\sqrt{\varepsilon} r}{2} \cos \frac{\varepsilon z}{2}\sin \frac{\sqrt{\varepsilon} t}{2} \right)  X 
\\
&
+ 2\left( \sin \frac{\sqrt{\varepsilon} r}{2} \sin \left( \theta -\frac{\varepsilon z}{2}\right)\cos \frac{\sqrt{\varepsilon} t}{2} 
+ \cos \frac{\sqrt{\varepsilon} r}{2} \sin \frac{\varepsilon z}{2}  \sin \frac{\sqrt{\varepsilon} t}{2} \right) Y
\\
&
+ 2\left( \cos \frac{\sqrt{\varepsilon} r}{2} \sin \frac{\varepsilon z}{2} \cos \frac{\sqrt{\varepsilon} t}{2} 
-\sin \frac{\sqrt{\varepsilon} r}{2} \sin \left( \theta -\frac{\varepsilon z}{2}\right)\sin \frac{\sqrt{\varepsilon} t}{2} \right)  Z
\end{align*}
Taking the derivative in $t$ of the right-hand side at $t=0$ we get
\begin{align*}
& 
 - 
\frac{\sqrt{\varepsilon}}{2}\sin \frac{\sqrt{\varepsilon} r}{2} \cos \left( \theta - \frac{\varepsilon z}{2}\right) I 
+\sqrt{\varepsilon} \cos \frac{\sqrt{\varepsilon} r}{2} \cos \frac{\varepsilon z}{2} X
\\
& 
+ 
 \sqrt{\varepsilon}\cos \frac{\sqrt{\varepsilon} r}{2} \sin \frac{\varepsilon z}{2} Y 
-\sqrt{\varepsilon}\sin \frac{\sqrt{\varepsilon} r}{2} \sin \left( \theta -\frac{\varepsilon z}{2}\right) Z.
\end{align*}
Denote $r^{\prime}:=\partial_{t}r\left( \varepsilon, 0 \right)$, $\theta^{\prime}:=\partial_{t}\theta\left( \varepsilon, 0 \right)$, 
$z^{\prime}:=\partial_{t}z\left( \varepsilon, 0 \right)$, then the derivative in $t$ of the left-hand side at $t=0$ is
\begin{align*}
&  -\left(\frac{r^{\prime}}{2} \sin \frac{\sqrt{\varepsilon}r}{2} \cos \frac{\varepsilon z}{2}
+\frac{z^{\prime}}{2} \cos \frac{\sqrt{\varepsilon}r}{2} \sin \frac{\varepsilon z}{2} \right)
 I 
 \\
 &
+\left( r^{\prime}\cos  \frac{\sqrt{\varepsilon}r}{2} \cos \left( \theta - \frac{\varepsilon z}{2}\right)
-
\left( 2\theta^{\prime} - z^{\prime}\right)
\sin \frac{\sqrt{\varepsilon}r}{2}  \sin \left( \theta - \frac{\varepsilon z}{2}\right)
\right)
 X 
\\
&
+\left( r^{\prime} \cos\frac{\sqrt{\varepsilon}r}{2} \sin \left( \theta -\frac{\varepsilon z}{2}\right)
+ \left( 2\theta^{\prime} - z^{\prime}\right) \sin \frac{\sqrt{\varepsilon}r}{2} \cos \left( \theta -\frac{\varepsilon z}{2}\right)
\right)
 Y
\\
& 
+\left(-r^{\prime}\sin \frac{\sqrt{\varepsilon}r}{2} \sin \frac{\varepsilon z}{2} 
+z^{\prime} \cos \frac{\sqrt{\varepsilon}r}{2} \cos \frac{\varepsilon z}{2}
\right)
 Z.
\end{align*}
\begin{align*}
& 
\sqrt{\varepsilon} \sin \frac{\sqrt{\varepsilon} r}{2} \cos \left( \theta - \frac{\varepsilon z}{2}\right)
=
r^{\prime} \sin \frac{\sqrt{\varepsilon}r}{2} \cos \frac{\varepsilon z}{2}
+z^{\prime} \cos \frac{\sqrt{\varepsilon}r}{2} \sin \frac{\varepsilon z}{2},
\\
& 
\sqrt{\varepsilon} \cos \frac{\sqrt{\varepsilon} r}{2} \cos \frac{\varepsilon z}{2}
=
 r^{\prime}\cos  \frac{\sqrt{\varepsilon}r}{2} \cos \left( \theta - \frac{\varepsilon z}{2}\right)
-
\left( 2\theta^{\prime} - z^{\prime}\right)
\sin \frac{\sqrt{\varepsilon}r}{2}  \sin \left( \theta - \frac{\varepsilon z}{2}\right),
\\
& 
\sqrt{\varepsilon} \cos \frac{\sqrt{\varepsilon} r}{2} \sin \frac{\varepsilon z}{2} 
 = r^{\prime} \cos\frac{\sqrt{\varepsilon}r}{2} \sin \left( \theta -\frac{\varepsilon z}{2}\right)
+ \left( 2\theta^{\prime} - z^{\prime}\right) \sin \frac{\sqrt{\varepsilon}r}{2} \cos \left( \theta -\frac{\varepsilon z}{2}\right), 
 \\
 &
\sqrt{\varepsilon} \sin \frac{\sqrt{\varepsilon} r}{2} \sin \left( \theta -\frac{\varepsilon z}{2}\right)
=r^{\prime}\sin \frac{\sqrt{\varepsilon}r}{2} \sin \frac{\varepsilon z}{2} 
-z^{\prime} \cos \frac{\sqrt{\varepsilon}r}{2} \cos \frac{\varepsilon z}{2}.
\end{align*}
Therefore
\begin{align*}
& r^{\prime}=\sqrt{\varepsilon} \cos\left( \varepsilon z-\theta \right),
\\
& z^{\prime}=\sqrt{\varepsilon} \tan\frac{\sqrt{\varepsilon}r}{2} \sin\left( \varepsilon z-\theta \right),
\\
& \theta^{\prime}=\sqrt{\varepsilon} \frac{\sin\left( \varepsilon z-\theta \right)}{\sin \sqrt{\varepsilon}r},
\end{align*}
thus
\begin{align*}
& \widetilde{X}_{\varepsilon}=
\alpha^{r}\left( r, \theta, z, \varepsilon \right) \partial_{r}
+\alpha^{z}\left( r, \theta, z, \varepsilon \right) \partial_{z}
+\alpha^{\theta}\left( r, \theta, z, \varepsilon \right) \partial_{\theta}
\\
& =
\sqrt{\varepsilon}\cos\left( \varepsilon z-\theta \right) \partial_{r}
+\sqrt{\varepsilon}\tan\frac{\sqrt{\varepsilon}r}{2} \sin\left( \varepsilon z-\theta \right) \partial_{z}
+\sqrt{\varepsilon}\frac{\sin\left( \varepsilon z-\theta \right)}{\sin \sqrt{\varepsilon}r} \partial_{\theta}.
\end{align*}
Note that we can induce the action $\Phi_{\varepsilon}^{-1}$ on $\widetilde{X}_{\varepsilon}$ by
\[
\Phi_{\varepsilon}^{-1} \circ \widetilde{X}_{\varepsilon}=
\frac{\alpha^{r}\left( r, \theta, z, \varepsilon \right)}{\sqrt{\varepsilon}} \partial_{r}
+\frac{\alpha^{z}\left( r, \theta, z, \varepsilon \right)}{\varepsilon} \partial_{z}
+\alpha^{\theta}\left( r, \theta, z, \varepsilon \right) \partial_{\theta}.
\]
Finally, we see that
\begin{align*}
& \lim_{\varepsilon \to 0}\frac{\alpha^{r}\left( r, \theta, z, \varepsilon \right)}{\sqrt{\varepsilon}} =
\lim_{\varepsilon \to 0} \cos\left( \varepsilon z-\theta \right)=\cos  \theta,
\\
&\lim_{\varepsilon \to 0}\alpha^{\theta}\left( r, \theta, z, \varepsilon \right) =\lim_{\varepsilon \to 0} \sqrt{\varepsilon}\frac{\sin\left( \varepsilon z-\theta \right)}{\sin \sqrt{\varepsilon}r}=-\frac{\sin \theta}{r},
\\
&\lim_{\varepsilon \to 0}\frac{\alpha^{z}\left( r, \theta, z, \varepsilon \right)}{\varepsilon} 
=\lim_{\varepsilon \to 0}\frac{\tan\frac{\sqrt{\varepsilon}r}{2} \sin\left( \varepsilon z-\theta \right)}{\sqrt{\varepsilon}}=-\frac{r}{2} \sin \theta. 
\end{align*}
These are exactly the coefficients of the left-invariant vector field $\beta^{r} \partial_{r} +\beta^{\theta} \partial_{\theta}+\beta^{z} \partial_{z}$ corresponding to $X$ in $\mathbb{H}$ written in cylinder coordinates. Indeed, if $\left( 1, 0, 0 \right) \in \mathfrak{h}$, then the corresponding left-invariant field is given by differentiating the cylindrical coordinates below in $t$ at $t=0$. 
\begin{align*}
& h\left( r, \theta, z \right)\star h\left( t, 0, 0 \right)=\left( r \cos \theta +t,  r \sin \theta, z -\frac{t r \sin \theta}{2} \right)
\\
& =h\left( r\left( r, \theta, z, t \right), \theta\left( r, \theta, z, t \right), z\left( r, \theta, z, t \right)\right).
\end{align*}
Thus
\begin{align*}
& \beta^{r} =\frac{\frac{d}{dt}\left|_{t=0}\left(\left( r \cos \theta +t\right)^{2}+ r^{2} \sin^{2} \theta\right)\right.}{2r}=\cos \theta,
\\
& \beta^{\theta}=\frac{d}{dt}\left|_{t=0} \arctan\left( \frac{r \sin \theta}{r \cos \theta +t}\right)\right.=-\frac{\sin \theta}{r},
\\
&  \beta^{z} =-\frac{ r}{2}\sin \theta.
\end{align*}
The part for $Y$ is similar. 
\end{proof}

The left-invariant vector fields on \SUtwo\ corresponding to any fixed Milnor basis will also be denoted by $X, Y$, and $Z$. We consider the following sub-Laplacian on \SUtwo
\begin{align*}
\mathcal{L}^{\SUtwo}: = X^{2} + Y^{2}.
\end{align*}

Let $d_{\SUtwo}$ be the Carnot-Carath\'eodory distance  induced by $\mathcal{L}^{\SUtwo}$. The tangent cone to $\left(\SUtwo,  d_{\SUtwo} \right)$ in the Gromov-Hausdorff sense is the three-dimensional Heisenberg group $\Hei$, as described in \cite{Bellaiche1996}. We use it below to define a local dilation structure on $\SUtwo$. Note that  Lie group contractions can be used to prove that there is one-to-one correspondence between horizontal paths in these two groups. Similarly one can treat  Haar measures by taking the pushforward measure under $\Phi_{\varepsilon}$, show that the limit is translation-invariant to see that it is a Haar measure. Alternatively one can compute the Jacobian in the coordinates we used previously.

We now introduce a local dilation structure on $\SUtwo$. 

\begin{definition}[Local dilations] On a neighborhood of the identity of $\SUtwo$ we define
\begin{equation}\label{eq-def-dilation}
\delta^{\SUtwo}_r:=\Phi\delta_r^\Hei\Phi^{-1}
\end{equation}	
where $\delta_r^\Hei$ is the standard dilation of \Hei, and $\Phi = \Phi_{1}$ is the contraction between $\SUtwo$ and $\Hei$.
\end{definition}
Note that this local (approximate) dilation on $\SUtwo$ is closely related to the notion of approximate dilations on nilpotent groups modelled on a Euclidean space. Most relevant for our applications is \cite{Varopoulos1990a}, and this approach was recently exploited in \cite{ChenZQKumagaiSaloff-CosteWangZheng2022} to study limit groups. Unlike in these settings we use approximate dilations on a non-nilpotent group $\SUtwo$. Recall that if these dilations form  a continuous automorphism group on a group $G$, then by \cite[Proposition~5.4]{Siebert1986a} $G$ is a Carnot group, more precisely, a locally compact group admits a continuous automorphism group if and only if this group is a Carnot group.

Let us now collect some known results about the approximation of $\SUtwo$ by $\Hei$. 

\begin{proposition}\label{prop.properties.approx}
We have that 

(1) For any $f\in C^2  (\Hei)$ 
\begin{equation}\label{eq-Lap-SUtwo-Hei}
\lim\limits_{r\to0}
r^2\Lcal^{\SUtwo}\left(f\circ\delta^{\Hei}_{1/r}\circ\Phi^{-1}\right)
=
 \Lcal^\Hei f,  
\end{equation}
uniformly on any bounded set in $\Hei$.

(2)  Let $m^{\SUtwo}\circ \delta_r^{\SUtwo}\circ\Phi$ denote   the smooth measure on \Hei\ defined using compositions with $ \Phi$ and the dilation $  \delta_r^{\SUtwo} $.  Then 
\begin{equation}\label{eq-Haar-SUtwo-Hei}
\lim\limits_{r\to0}
\frac{1}{r^4}\frac{d\,\left(m^{\SUtwo}\circ \delta_r^{\SUtwo}\circ\Phi\right) }{d\,m^\Hei}= 1, 
\end{equation}
uniformly on any bounded set in $\Hei$.

(3) For each $t>0$ 
\begin{equation}\label{eq-Ptxy-SUtwo-H}
\lim\limits_{r\to0} 
\,
r^4
\,
p^{\SUtwo}_{r^2t}
\left(\Phi\left(\delta_r^\Hei(x)\right),\Phi\left(\delta_r^\Hei(x)\right)\right)
=
p^{\Hei}_{t}(x,y).
\end{equation}
uniformly for $x,y\in \Hei$.

(4)   For any $x,y \in \Hei$
\begin{equation}\label{eq-dist-SUtwo-Hei}
\lim\limits_{r\to0} r^{-1} d^{\SUtwo}\left(\Phi\left(\delta_r^\Hei(x)\right),\Phi\left(\delta_r^\Hei(x)\right)\right)=
d^\Hei(x,y).
\end{equation}
uniformly on any bounded set in $\Hei$.
\end{proposition}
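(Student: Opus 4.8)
The plan is to deduce all four statements from the convergence of the coefficients of the rescaled left-invariant vector fields $\Phi_{\varepsilon}^{-1}\circ\widetilde{X}_{\varepsilon}$ and $\Phi_{\varepsilon}^{-1}\circ\widetilde{Y}_{\varepsilon}$ established in the preceding proposition, together with the two-sided Gaussian heat kernel bounds \eqref{eqn.heat.kernel.group} for $\Lcal^{\SUtwo}$. For part (1), I would express $\Lcal^{\SUtwo}=X^{2}+Y^{2}$ in the cylindrical coordinates $(r,\theta,z)$ and observe that, after unwinding the identifications $\Phi=\Phi_{1}$ and $\delta^{\Hei}$, the operator $r^{2}\Lcal^{\SUtwo}\big(f\circ\delta^{\Hei}_{1/r}\circ\Phi^{-1}\big)$ is exactly the second-order operator built from the fields $\Phi_{\varepsilon}^{-1}\circ\widetilde{X}_{\varepsilon}$ and $\Phi_{\varepsilon}^{-1}\circ\widetilde{Y}_{\varepsilon}$ computed there, with $\varepsilon$ a fixed power of $r$; the prefactor $r^{2}$ is the homogeneity degree of the horizontal Laplacian under the dilation. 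Since the coefficients of those fields, together with their first derivatives, were shown to converge uniformly on bounded subsets of $\Hei$ to the coefficients of the left-invariant fields $X^{\Hei},Y^{\Hei}$ generating $\Lcal^{\Hei}$, the corresponding operators converge on $C^{2}(\Hei)$, giving \eqref{eq-Lap-SUtwo-Hei}. The only point requiring care is that squaring a vector field with variable coefficients produces additional first- and zeroth-order terms, but these are controlled by the same uniform convergence of the coefficients and their derivatives.

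For part (2), I would compute the Jacobian of the diffeomorphism $h(r,\theta,z)\mapsto\Phi_{\varepsilon}(h(r,\theta,z))=g(\sqrt{\varepsilon}\,r,\theta,\varepsilon z)$ in these coordinates and check that the pullback of the normalized Haar measure of $\SUtwo$, divided by $r^{4}=\varepsilon^{2}$, converges uniformly on bounded sets to the Haar measure of $\Hei$ in cylindrical coordinates; equivalently, this is the general fact that suitably rescaled pushforwards of Haar measure under a Lie group contraction converge to the Haar measure of the contracted group, which yields \eqref{eq-Haar-SUtwo-Hei}. For part (4), I would use that the Lie group contraction identifies horizontal curves for $\operatorname{span}\{X,Y\}$ on $\SUtwo$ with horizontal curves for $\operatorname{span}\{X^{\Hei},Y^{\Hei}\}$ on $\Hei$, and that under the rescaling $r^{-1}d^{\SUtwo}(\Phi(\delta^{\Hei}_{r}\cdot),\Phi(\delta^{\Hei}_{r}\cdot))$ the sub-Riemannian length of such a curve is governed by the converging coefficients from part (1). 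Passing to the infimum over horizontal curves, with a compactness argument for minimizers (balls are precompact by Assumption~\ref{assumption.pre-compact.balls} and geodesic lengths are uniformly bounded on bounded sets), yields \eqref{eq-dist-SUtwo-Hei}; this is the quantitative form of the classical fact, due to Bella\"iche \cite{Bellaiche1996} and Mitchell, that $(\Hei,d^{\Hei})$ is the metric tangent cone of $(\SUtwo,d^{\SUtwo})$ at the identity.

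The main obstacle is part (3). The cleanest approach is a compactness argument. Set $q_{r}(t,x,y):=r^{4}\,p^{\SUtwo}_{r^{2}t}\big(\Phi(\delta^{\Hei}_{r}(x)),\Phi(\delta^{\Hei}_{r}(y))\big)$. By \eqref{eqn.heat.kernel.group} together with the distance convergence \eqref{eq-dist-SUtwo-Hei}, the family $\{q_{r}\}_{0<r\leqslant 1}$ enjoys uniform two-sided Gaussian bounds on compact subsets of $(0,\infty)\times\Hei\times\Hei$, and the parabolic Harnack inequality for $\Lcal^{\SUtwo}$, applied on the relevant scales, gives uniform H\"older equicontinuity there. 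By Arzel\`a--Ascoli, any sequence $r_{k}\to 0$ admits a subsequence along which $q_{r_{k}}$ converges locally uniformly to some $q$. Each $q_{r}$ is the heat kernel of the operator $r^{2}\Lcal^{\SUtwo}$ transported to $\Hei$ via $\Phi\circ\delta^{\Hei}_{r}$ with respect to the measure $r^{-4}\big(\Phi\circ\delta^{\Hei}_{r}\big)^{\ast}m^{\SUtwo}$; by parts (1) and (2) these operators and measures converge to $\Lcal^{\Hei}$ and $m^{\Hei}$, so the limit $q$ solves the heat equation for $\Lcal^{\Hei}$ with the correct (Dirac) initial condition, whence $q=p^{\Hei}$ by uniqueness of the hypoelliptic heat kernel. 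As the limit does not depend on the chosen subsequence, $q_{r}\to p^{\Hei}$ locally uniformly, which is \eqref{eq-Ptxy-SUtwo-H}. Alternatively, one may invoke \cite{BaudoinBonnefont2009}, where exactly this contraction and short-time analysis of the hypoelliptic heat kernel on $\SUtwo$ was carried out.
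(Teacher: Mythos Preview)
Your proposal is correct and, for parts (1), (2), and (4), follows essentially the same route as the paper: an explicit computation of $\Lcal^{r}$ in cylindrical coordinates for (1), the Jacobian of $\Phi\circ\delta_{r}^{\Hei}$ against the Haar density $\tfrac{1}{2}\sin(2\rho)\,d\rho\,d\theta\,dz$ for (2), and an appeal to the tangent-cone/Gromov--Hausdorff picture for (4).

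The genuine difference is in part (3). The paper does not argue via compactness; it simply invokes \cite[Proposition~3.13]{BaudoinBonnefont2009}, which gives an explicit short-time asymptotic for $p^{\SUtwo}_{t}$, and then uses the exact dilation scaling of $p^{\Hei}$ to rewrite the limit. Your approach---uniform two-sided Gaussian bounds from \eqref{eqn.heat.kernel.group}, parabolic Harnack for equicontinuity, Arzel\`a--Ascoli, and identification of the limit via (1), (2) and uniqueness of the hypoelliptic heat kernel on $\Hei$---is more self-contained and would transfer to settings where no closed-form kernel or explicit asymptotic is available. The paper's route is shorter precisely because such explicit formulas exist for this particular contraction. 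One minor caution: your argument yields convergence that is locally uniform on $(0,\infty)\times\Hei\times\Hei$, which is what the paper's proof also delivers; the ``uniformly for $x,y\in\Hei$'' in the statement should be read in that sense.
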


\begin{proof}
We only give a sketch of the proof for completeness. For more details we refer to \cite[Section 3.4]{BaudoinBonnefont2009}. First, to obtain \eqref{eq-Haar-SUtwo-Hei} note that 
\begin{align}
 &\label{eq-Lcal-r}\Lcal^r:= r^{2} \Lcal^{\SUtwo} \left(\cdot\circ\delta^{\Hei}_{1/r}\circ\Phi^{-1}\right)
&\\\notag& = 
   \frac{\partial^{2}}{\partial \rho^{2}} + 2 r \cot (2 r \rho) \frac{\partial}{\partial \rho} +
\left( 2 r^{2}+  r^{2}\cot^{2} (r \rho) + r^{2} \tan^{2} (r \rho) \right) \frac{\partial^{2}}{\partial \theta^{2}}  &\\\notag& + \frac{1}{r^{2}} \tan^{2} (r \rho) \frac{\partial^{2}}{\partial z^{2}} + 2 (1 + \tan^{2} (r \rho) )\frac{\partial^{2}}{\partial z \partial \theta}
&\\\notag&
 \longrightarrow \  \frac{\partial^{2}}{\partial \rho^{2}} + \frac{1}{\rho} \frac{\partial}{\partial \rho} + \frac{1}{\rho^{2}} \frac{\partial^{2}}{\partial \theta^{2}} + \rho^{2} \frac{\partial^{2}}{\partial z^{2}} + 2 \frac{\partial^{2}}{\partial z \partial \theta} = \mathcal{L}^{\Hei}, \qquad \text{ as }  r\rightarrow 0.&
\end{align}

To obtain \ref{eq-Haar-SUtwo-Hei}, note that the normalized Haar measure on $\SUtwo$ in cylindrical coordinates is given by $d m = \frac{1}{2} \sin (2\rho) d\rho d \theta dz$. We can write  $m^{\SUtwo}\circ \delta_r^{\SUtwo}\circ\Phi=m^{\SUtwo} \circ\Phi \circ \delta_r^{\Hei} $ using \eqref{eq-def-dilation}, and hence 
\begin{align*}
& \lim\limits_{r\to0}
\frac{1}{r^4}\frac{d\,\left(m^{\SUtwo}\circ \delta_r^{\SUtwo}\circ\Phi\right) }{d\,m^\Hei} =   \lim\limits_{r\to0} \frac{1}{2r} \frac{\sin ( 2 r \rho )}{\rho} =1.
\end{align*}

Formula \eqref{eq-Ptxy-SUtwo-H} follows from \cite[Proposition 3.13]{BaudoinBonnefont2009}. Indeed, using cylindrical coordinates both in $\SUtwo$ and $\Hei$
\begin{align*}
& \lim_{r \to 0} r^{4} t^{2} p_{r^{2}t}^{\SUtwo} \left( \Phi \left( \delta_{r \sqrt{t}} (\rho,  \theta, z) \right) \right) = p_{1}^{\Hei} ( \rho, \theta,  z),
\end{align*}
and hence 
\begin{align*}
& \lim_{r \to 0} r^{4} p_{r^{2}t}^{\SUtwo} \left( \Phi \left( \delta_{r} (\rho,  \theta, z) \right) \right) = \frac{1}{t^{2}} p_{1}^{\Hei} \left(  \delta_{\frac{1}{\sqrt{t}}} ( \rho, \theta,  z) \right) = p_{t}^{\Hei} (  \rho, \theta,  z),
\end{align*}
where in the last equality we used scaling properties of the heat kernel on $\Hei$. 

Formula \eqref{eq-dist-SUtwo-Hei} follows from the Gromov-Hausdorff convergence of $\SUtwo$ to $\Hei$, see  \cite[Section 3.4]{BaudoinBonnefont2009} and references therein.
\end{proof}

We can now state an prove the main result of this section, that is, the convergence of the re-normalized eigenvalues of small balls in $\operatorname{SU}(2)$ to corresponding eigenvalues in the unit ball of $\Hei$.    Analytically, this corresponds to proving an analogue of  \eqref{eq-Ptxy-SUtwo-H} for the Dirichlet heat kernel.

\begin{theorem}\label{thm-lim-eigen}
Let $0<\lambda_1^\Hei<\lambda_2^\Hei\leqslant\lambda_3^\Hei\leqslant...$ be the Dirichlet eigenvalues in the unit ball $B_1^{\Hei}$ of \Hei, counted with multiplicity. Let $0<\lambda_1^r<\lambda_2^r\leqslant\lambda_3^r\leqslant...$ be the Dirichlet eigenvalues in the   $r$-ball $B_r^{\SUtwo}$ of $\operatorname{SU}(2)$, counted with multiplicity. Then for each $n\geqslant1$ we have 
\begin{equation}\label{eq-eigen-SUtwo-Hei}
\lim\limits_{r\to0}r^{2}\lambda_n^r=\lambda_n^\Hei.
\end{equation}
\end{theorem}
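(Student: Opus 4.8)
The plan is to transfer the approximation statements of Proposition~\ref{prop.properties.approx} from the free heat kernel to the \emph{Dirichlet} heat kernel on a \emph{fixed} Heisenberg ball, read off convergence of the corresponding heat traces, deduce convergence of the ordered Dirichlet eigenvalues by a Tauberian argument, and finally pass from the image of $B_1^{\Hei}$ under the contraction to the genuine metric ball $B_r^{\SUtwo}$ by a domain-monotonicity sandwich. Throughout write $\psi_r:=\Phi\circ\delta_r^{\Hei}$, so that by \eqref{eq-def-dilation} one has $\psi_r=\delta_r^{\SUtwo}\circ\Phi$; note $\psi_r$ is a diffeomorphism from a fixed chart around $0\in\Hei$ onto a neighbourhood of the identity of $\SUtwo$ once $r$ is small.

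\textbf{Step 1 (Dirichlet heat kernel on a fixed Heisenberg ball).} Fix a bounded open $V\subset\Hei$ and, for $r$ small, set $V_r:=\psi_r(V)\subset\SUtwo$, an open set with $0<m^{\SUtwo}(V_r)<\infty$. By Theorem~\ref{thm-Lie}, Proposition~\ref{thm.spectral.gap} and Proposition~\ref{prop.eigenfunctions.expansion}, the operator $-A_{V_r}$ has discrete spectrum $0<\lambda_1^{V_r}<\lambda_2^{V_r}\leqslant\cdots$ and its Dirichlet kernel obeys the eigenfunction expansion \eqref{eqn.heat.kernel.expansion}; the analogous facts hold for $B_\rho^{\Hei}\subset\Hei$ by Theorem~\ref{thm.main.local.dir.forms.lie.groups}, with $\lambda_1^{B_\rho^{\Hei}}>0$ for small $\rho$ (small measure, Proposition~\ref{thm.spectral.gap}) and hence for every $\rho>0$ by the exact dilation scaling $\lambda_n^{B_\rho^{\Hei}}=\rho^{-2}\lambda_n^{\Hei}$. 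The crucial fact to establish is
\begin{equation}\label{eqn.plan.dkernel}
\lim_{r\to0}\ r^{4}\,p^{V_r}_{r^{2}t}\bigl(\psi_r(x),\psi_r(y)\bigr)=p^{V}_{t}(x,y),
\end{equation}
uniformly for $x,y$ in compact subsets of $V$. One derives \eqref{eqn.plan.dkernel} from the Dynkin--Hunt identity $p^{V_r}_{s}(a,b)=p^{\SUtwo}_{s}(a,b)-\E^{a}\bigl[\mathbbm{1}_{\{\tau_{V_r}<s\}}\,p^{\SUtwo}_{s-\tau_{V_r}}(X_{\tau_{V_r}},b)\bigr]$ after rescaling time by $r^{2}$ and space by $\psi_r$: the free term converges by \eqref{eq-Ptxy-SUtwo-H}, while the correction term is controlled by the $r$--uniform bound $0\leqslant r^{4}p^{\SUtwo}_{r^{2}t}(a,b)\leqslant\kappa\,t^{-2}e^{\kappa t}$ coming from \eqref{eqn.heat.kernel.group} (the homogeneous dimension of $\SUtwo$ is $\nu=4$) together with the uniform convergence of rescaled distances \eqref{eq-dist-SUtwo-Hei}, which forces the rescaled exit time $r^{-2}\tau_{V_r}$ of the $\SUtwo$--diffusion to converge in law to the exit time of the limiting $\Hei$--diffusion from $V$; equivalently, one pushes the killed $\SUtwo$--process forward by $\psi_r^{-1}$ and invokes weak convergence of Markov processes, the exit functional being a.s.\ continuous for the hypoelliptic limit.

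\textbf{Step 2 (heat traces and eigenvalue convergence for $V_r$).} By \eqref{eqn.heat.kernel.expansion}, $\sum_n e^{-\lambda_n^{V_r}s}=\int_{V_r}p^{V_r}_{s}(a,a)\,dm^{\SUtwo}(a)$; putting $s=r^{2}t$ and substituting $a=\psi_r(x)$ gives
\begin{equation}\label{eqn.plan.trace}
\sum_n e^{-r^{2}\lambda_n^{V_r}t}=\int_{V}\Bigl[r^{4}\,p^{V_r}_{r^{2}t}\bigl(\psi_r(x),\psi_r(x)\bigr)\Bigr]\Bigl[\tfrac{1}{r^{4}}\tfrac{d\,(m^{\SUtwo}\circ\psi_r)}{d\,m^{\Hei}}(x)\Bigr]dm^{\Hei}(x).
\end{equation}
By Proposition~\ref{prop.properties.approx}(2) the second bracket tends to $1$ uniformly on $V$, while the first is dominated, uniformly in $r\leqslant1$, by the $m^{\Hei}|_V$--integrable constant $\kappa\,t^{-2}e^{\kappa t}$ and converges pointwise to $p^{V}_{t}(x,x)$ by \eqref{eqn.plan.dkernel}; dominated convergence yields $\sum_n e^{-r^{2}\lambda_n^{V_r}t}\to\sum_n e^{-\lambda_n^{V}t}$ for every $t>0$. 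Since the exponents on both sides are nondecreasing and tend to $\infty$, a standard Tauberian/vague-convergence argument (the linear span of $\{e^{-t\,\cdot}\}_{t>0}$ is dense in $C_0(0,\infty)$, and the limiting spectral measure is purely atomic with no finite accumulation point) then gives $r^{2}\lambda_n^{V_r}\to\lambda_n^{V}$ for every $n$.

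\textbf{Step 3 (passing to the metric ball and conclusion).} By \eqref{eq-dist-SUtwo-Hei}, for every $\varepsilon>0$ and all sufficiently small $r$ one has $\psi_r(B_{1-\varepsilon}^{\Hei})\subset B_r^{\SUtwo}\subset\psi_r(B_{1+\varepsilon}^{\Hei})$, so monotonicity of Dirichlet eigenvalues in the domain (immediate from the min--max principle for the restricted forms, since $\mathcal{D}_{\mathcal{E}}(U)\subset\mathcal{D}_{\mathcal{E}}(W)$ when $U\subset W$) gives
\begin{equation}\label{eqn.plan.sandwich}
r^{2}\lambda_n^{\psi_r(B_{1+\varepsilon}^{\Hei})}\ \leqslant\ r^{2}\lambda_n^{r}\ \leqslant\ r^{2}\lambda_n^{\psi_r(B_{1-\varepsilon}^{\Hei})}.
\end{equation}
Letting $r\to0$, applying Step 2 with $V=B_{1\pm\varepsilon}^{\Hei}$, and using $\lambda_n^{B_\rho^{\Hei}}=\rho^{-2}\lambda_n^{\Hei}$, one obtains $(1+\varepsilon)^{-2}\lambda_n^{\Hei}\leqslant\liminf_{r\to0}r^{2}\lambda_n^{r}\leqslant\limsup_{r\to0}r^{2}\lambda_n^{r}\leqslant(1-\varepsilon)^{-2}\lambda_n^{\Hei}$; letting $\varepsilon\to0$ proves \eqref{eq-eigen-SUtwo-Hei}. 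The main obstacle is Step~1: Proposition~\ref{prop.properties.approx} supplies only convergence of the \emph{free} heat kernels and of the distances, and upgrading this to the \emph{killed} kernel near the entirely unregularized boundary of $V$ is where the uniform Gaussian-type bound \eqref{eqn.heat.kernel.group} and the uniform convergence of rescaled Carnot--Carath\'eodory distances \eqref{eq-dist-SUtwo-Hei} are used in an essential way. The sandwich \eqref{eqn.plan.sandwich} is precisely what allows \eqref{eqn.plan.dkernel} to be applied only to the fixed CC balls $B_{1\pm\varepsilon}^{\Hei}$, so that one never needs to understand how $B_r^{\SUtwo}$ itself transforms under the contraction.
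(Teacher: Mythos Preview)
Your argument is essentially correct and reaches the same conclusion as the paper, but by a genuinely different route. The paper splits the proof into two parts: Part~1 uses the Rayleigh--quotient (min--max) characterisation directly, exploiting the uniform convergence of the generators \eqref{eq-Lap-SUtwo-Hei} and of the measures \eqref{eq-Haar-SUtwo-Hei} on the fixed Heisenberg ball together with the ball inclusion \eqref{eq-B-SUtwo-Hei}; Part~2 then proves the Dirichlet heat kernel convergence \eqref{eqn.conv.drch.heat.kernel} (needed in any case for Theorem~\ref{theorem5.4}) via \emph{strong} convergence of stochastic flows in the sense of Kunita, constructing coupled processes $Y_s^r$ whose generators interpolate between $\Lcal^r$ and $\Lcal^{\Hei}$ and invoking \cite[Theorem~3.3.1]{KunitaBook1986}. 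You instead deduce eigenvalue convergence from heat--trace convergence by a Tauberian step, and you obtain the Dirichlet kernel convergence from Dynkin--Hunt plus \emph{weak} convergence of the rescaled processes and continuity of the exit functional. Your Step~3 sandwich and the paper's use of \eqref{eq-B-SUtwo-Hei} coincide. What each approach buys: the paper's min--max argument is shorter for the eigenvalues per se and avoids the Tauberian step, while its Kunita coupling gives pathwise convergence, so the correction term in Dynkin--Hunt is handled by almost sure convergence of $(\tau,X_\tau)$ rather than by a weak--limit argument; your trace route has the virtue that the kernel convergence you prove in Step~1 already contains all the spectral information, so Step~2 is purely soft analysis.

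The one place your write--up is thinner than the paper is precisely the point you flag as ``the main obstacle'': in Step~1 the Dynkin--Hunt correction $\E^{\psi_r(x)}\bigl[\mathbbm{1}_{\{\tau<r^2t\}}\,p^{\SUtwo}_{r^2t-\tau}(X_\tau,\psi_r(y))\bigr]$ involves, after rescaling, a factor $(t-r^{-2}\tau)^{-2}$ which is not dominated near $\tau\uparrow r^2t$, so weak convergence of $r^{-2}\tau_{V_r}$ alone does not immediately justify passing to the limit; one needs either a small--time cutoff plus a uniform estimate on $\Prob(\tau_{V_r}\in[r^2(t-\delta),r^2t])$, or joint convergence of $(r^{-2}\tau_{V_r},\psi_r^{-1}(X_{\tau_{V_r}}))$ together with uniform integrability. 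The paper sidesteps this by upgrading to Kunita's strong convergence of flows, which makes the limit inside the expectation immediate. Your outline is sound, but if you keep the weak--convergence route you should make this passage explicit.
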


 As a byproduct of this result we have  the following theorem. 
\begin{theorem}\label{cong.1}\label{theorem5.4} Let $g_{t}$ be the $\SUtwo$-valued diffusion having $\mathcal{L}^{\SUtwo}$ for infinitesimal generator. Then the small deviation principle for $g_{t}$ holds with $\lambda_1=\lambda_1^\Hei$.
\end{theorem}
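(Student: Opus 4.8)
The plan is to deduce the small deviation principle for $g_t$ from the convergence of eigenvalues in Theorem~\ref{thm-lim-eigen} combined with the eigenfunction expansion of the Dirichlet heat kernel from Proposition~\ref{prop.eigenfunctions.expansion}. First I would recall that $g_t$ is the Hunt process associated to the local regular irreducible Dirichlet form on $\SUtwo$ given by Theorem~\ref{thm-Lie}, with generator $\mathcal{L}^{\SUtwo}$, and that by \cite[Theorem~3.4]{DriverGrossSaloff-Coste2009a} the heat kernel $p_t^{\SUtwo}$ satisfies the two-sided Gaussian bounds \eqref{eqn.heat.kernel.group}; in particular $\mathcal{L}^{\SUtwo}$ and its restrictions to balls fall under the hypotheses of Section~\ref{s.Preliminaries} and of Theorem~\ref{thm.main.local.dir.forms.lie.groups}. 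Hence for each $r$ small enough the operator $-\mathcal{L}^{\SUtwo}_{B_r}$ has discrete spectrum $0<\lambda_1^r<\lambda_2^r\leqslant\cdots$, the first eigenvalue is simple with a strictly positive eigenfunction $\varphi_1^r$, and since $p_{t}^{\SUtwo}$ is smooth the Dirichlet heat kernel admits the expansion
\[
\Prob^{e}\left(\tau_{B_r^{\SUtwo}}>t\right)=\sum_{n=1}^{\infty}e^{-\lambda_n^r t}\,c_n(B_r^{\SUtwo})\,\varphi_n^r(e),
\]
where $e$ is the group identity.

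Next I would rescale time: writing $t=r^2 s$ and using $d^{\SUtwo}(g_u,e)\leqslant\varepsilon$ iff $u\leqslant\tau_{B_\varepsilon^{\SUtwo}}$, we have
\[
\Prob^{e}\left(\sup_{0\leqslant u\leqslant s}d^{\SUtwo}(g_u,e)<r\right)=\Prob^{e}\left(\tau_{B_r^{\SUtwo}}>s\right),
\]
and the goal \eqref{eqn.intro1}-type statement is
\[
\lim_{r\to0}-r^{2}\log\Prob^{e}\left(\tau_{B_r^{\SUtwo}}>s\right)=\lambda_1^{\Hei}\,s .
\]
From the eigenfunction expansion, $\Prob^{e}(\tau_{B_r^{\SUtwo}}>s)\geqslant e^{-\lambda_1^r s}c_1(B_r^{\SUtwo})\varphi_1^r(e)$ is not immediate because the other terms could be negative; instead I would bound $\Prob^{e}(\tau_{B_r^{\SUtwo}}>s)\leqslant e^{-\lambda_1^r s}\sum_n|c_n(B_r^{\SUtwo})\varphi_n^r(e)|$ for the upper estimate on the probability, and for the lower estimate on the probability use the simple-eigenvalue/positivity structure together with a spectral-gap argument so that the leading term dominates for large $s$. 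Taking $-r^2\log(\cdot)$ and using Theorem~\ref{thm-lim-eigen}, $r^2\lambda_1^r\to\lambda_1^{\Hei}$, the leading exponent converges to $\lambda_1^{\Hei}s$, while the correction terms are controlled uniformly by the $L^p$-bounds on eigenfunctions from Corollary~\ref{cor.bound.efunction} and Theorem~\ref{thm.main.local.dir.forms.lie.groups}(1) (giving $\|\varphi_n^r\|_\infty\leqslant a(\lambda_n^r+\kappa)^{\nu/2}$, $|c_n|\leqslant b(\lambda_n^r+\kappa)^{\nu}$, with $m(B_r^{\SUtwo})\to0$) together with Weyl-type growth of $\lambda_n^r$; the tail $\sum_{n\geqslant2}e^{-(\lambda_n^r-\lambda_1^r)s}|c_n\varphi_n^r(e)|$ is shown to be negligible on the logarithmic scale. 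This gives \eqref{eqn.intro1} with $\lambda_1=\lambda_1^{\Hei}$.

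The main obstacle is the control of the subleading spectral data uniformly in $r$: one needs that $\sum_{n\geqslant2}e^{-(\lambda_n^r-\lambda_1^r)s}|c_n(B_r^{\SUtwo})\varphi_n^r(e)|$ does not blow up as $r\to0$ faster than any exponential in $1/r^2$, so that it disappears after applying $-r^2\log$. Morally one wants $\lambda_n^r\approx r^{-2}\lambda_n^{\Hei}$ for all $n$ (an approximate scaling like \eqref{e-scale-n-approx}), which would make the problem reduce directly to the fixed unit-ball spectrum of $\Hei$; but Theorem~\ref{thm-lim-eigen} is only proved pointwise in $n$, so I would instead quantify the tail using the heat-kernel upper bound $M_{B_r^{\SUtwo}}(t)\leqslant\kappa\, t^{-\nu/2}e^{\kappa t}$ to get $\sum_n e^{-\lambda_n^r t}\leqslant m(B_r^{\SUtwo})^2 M_{B_r^{\SUtwo}}(t)$ and hence a uniform trace bound, then combine with the eigenfunction/$c_n$ estimates of Theorem~\ref{thm.main.local.dir.forms.lie.groups}(1) to absorb the correction into an error that is $o(1)$ on the scale $r^{-2}$. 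Modulo this bookkeeping, which is routine given the machinery of Sections~\ref{s.Preliminaries}--\ref{s.Examples}, the statement follows; it recovers exactly the form of the classical small-ball asymptotics \eqref{eqn.intro1} with the limiting Dirichlet ground-state eigenvalue $\lambda_1^{\Hei}$ of the Heisenberg unit ball.
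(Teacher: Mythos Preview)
Your strategy differs from the paper's, and it has a genuine gap in the lower bound for the exit probability.

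The paper does not prove Theorem~\ref{cong.1} by manipulating the eigenfunction expansion of $\Prob^{e}(\tau_{B_r^{\SUtwo}}>s)$ and invoking only the statement of Theorem~\ref{thm-lim-eigen}. Instead, the key input is Part~2 of the proof of Theorem~\ref{thm-lim-eigen}: the uniform convergence of the \emph{rescaled Dirichlet heat kernels}
\[
r^{4}\,p^{B_r^{\SUtwo}}_{r^{2}t}\bigl(\Phi(\delta_r^{\Hei}x),\Phi(\delta_r^{\Hei}y)\bigr)\ \longrightarrow\ p^{B_1^{\Hei}}_{t}(x,y),
\]
obtained from Dynkin--Hunt's formula \eqref{eqn.Dirichlet.HK}, the free heat kernel asymptotics \eqref{eq-Ptxy-SUtwo-H}, and Kunita's strong convergence of the stochastic flows $Y^{r}_{s}\to X_{s}$. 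Integrating this convergence (using \eqref{eq-Haar-SUtwo-Hei} and \eqref{eq-B-SUtwo-Hei}) yields $\Prob^{e}(\tau_{B_r^{\SUtwo}}>r^{2}t)\to\Prob^{e_{\Hei}}(\tau_{B_1^{\Hei}}>t)$, and the small deviation principle then follows from the known $\Hei$ spectral expansion. In short, the paper proves something strictly stronger than pointwise eigenvalue convergence, and the small deviation is a corollary of that stronger result.

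Your route can handle the bound $\liminf_{r\to0}(-r^{2}\log\Prob)\geqslant\lambda_1^{\Hei}s$: splitting $P_s^{B_r}=P_{r^{2}\epsilon_0}^{B_r}P_{s-r^{2}\epsilon_0}^{B_r}$ and using the $L^{2}\!\to\!L^{\infty}$ bound from \eqref{eqn.heat.kernel.group} together with $r^{2}\lambda_1^r\to\lambda_1^{\Hei}$ works. The problem is the opposite inequality. You need a lower bound on the leading coefficient $c_1(B_r^{\SUtwo})\varphi_1^r(e)$ that is at worst $e^{-o(1/r^{2})}$, and this does \emph{not} follow from the $L^p$ estimates of Theorem~\ref{thm.main.local.dir.forms.lie.groups}(1) or Corollary~\ref{cor.bound.efunction}: those give upper bounds on $\|\varphi_1^r\|_\infty$ and on $|c_1^r|$, not pointwise lower bounds on $\varphi_1^r(e)$. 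Your tail estimate is fine (indeed $\lambda_2^r-\lambda_1^r\sim r^{-2}(\lambda_2^{\Hei}-\lambda_1^{\Hei})$ kills it), but ``the leading term dominates'' is vacuous unless the leading term itself is bounded below. That lower bound is exactly what the paper's Dirichlet heat kernel convergence supplies (equivalently, convergence of $\varphi_1^r$ to $\varphi_1^{\Hei}$); alternatively one could try an elliptic Harnack argument on the rescaled operators $\mathcal{L}^r$ of \eqref{eq-Lcal-r}, but that again requires the approximation statements of Proposition~\ref{prop.properties.approx}, not merely Theorem~\ref{thm-lim-eigen}. Calling this step ``routine bookkeeping'' undersells it: it is the heart of the matter, and it is precisely where the stochastic-flow argument in the paper does the work.
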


\begin{conjecture} We conjecture that a similar result holds for more general sub-Riemannian diffusions killed at exiting a small metric ball.
\end{conjecture}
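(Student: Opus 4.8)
The assertion is that, writing $\tau_{B^{\SUtwo}_{\varepsilon}(e)}$ for the first exit time of $g_{t}$ from the Carnot--Carath\'eodory ball of radius $\varepsilon$ about the identity of $\SUtwo$, one has a small ball estimate of the form \eqref{eqn.intro1}:
\[
\lim_{\varepsilon\to 0}-\varepsilon^{2}\log\Prob^{e}\Bigl(\sup_{0\leqslant s\leqslant t}d^{\SUtwo}(g_{s},e)<\varepsilon\Bigr)
=\lim_{\varepsilon\to 0}-\varepsilon^{2}\log\Prob^{e}\bigl(\tau_{B^{\SUtwo}_{\varepsilon}(e)}>t\bigr)=\lambda_{1}^{\Hei}\,t ,
\]
so the first eigenvalue governing the small deviations is $\lambda_{1}=\lambda_{1}^{\Hei}$. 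The plan is to feed the eigenvalue asymptotics of Theorem~\ref{thm-lim-eigen} into the Dirichlet heat-kernel eigenfunction expansion. Fix $\varepsilon>0$ small, so that $B^{\SUtwo}_{\varepsilon}(e)$ is open, bounded, path-connected, with $0<m(B^{\SUtwo}_{\varepsilon}(e))<\infty$ and satisfying \eqref{eqn.good.set}; since the sub-Laplacian heat kernel is smooth, Theorem~\ref{thm.main.local.dir.forms.lie.groups}, Theorem~\ref{thm.irr.simple.evalue} and Proposition~\ref{prop.eigenfunctions.expansion} apply to $\Ucal=B^{\SUtwo}_{\varepsilon}(e)$: $-A_{\Ucal}$ has discrete spectrum $0<\lambda^{\varepsilon}_{1}<\lambda^{\varepsilon}_{2}\leqslant\cdots$, its ground state $\varphi^{\varepsilon}_{1}$ is continuous and strictly positive, and
\[
\Prob^{e}\bigl(\tau_{B^{\SUtwo}_{\varepsilon}(e)}>t\bigr)=\sum_{n\geqslant 1}e^{-\lambda^{\varepsilon}_{n}t}\,c^{\varepsilon}_{n}\,\varphi^{\varepsilon}_{n}(e),\qquad c^{\varepsilon}_{n}:=\int_{B^{\SUtwo}_{\varepsilon}(e)}\varphi^{\varepsilon}_{n}\,dm .
\]

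I would then isolate the nonnegative ground-state term $c^{\varepsilon}_{1}\varphi^{\varepsilon}_{1}(e)\,e^{-\lambda^{\varepsilon}_{1}t}$ and bound both it and the remainder. The Gaussian estimate \eqref{eqn.heat.kernel.group} gives $M_{B^{\SUtwo}_{\varepsilon}(e)}(s)\leqslant\kappa s^{-\nu/2}e^{\kappa s}$, hence in Corollary~\ref{cor.bound.efunction} $C(\lambda,B^{\SUtwo}_{\varepsilon}(e))\leqslant\kappa(2e/\nu)^{\nu/2}(\kappa+\lambda)^{\nu/2}$; together with the volume bound $m(B^{\SUtwo}_{\varepsilon}(e))\asymp\varepsilon^{4}$ (Proposition~\ref{prop.properties.approx}(2),(4)) and $\varepsilon^{2}\lambda^{\varepsilon}_{n}\to\lambda^{\Hei}_{n}$ from Theorem~\ref{thm-lim-eigen}, this yields $|c^{\varepsilon}_{n}\varphi^{\varepsilon}_{n}(e)|\leqslant C\,\varepsilon^{8}(\kappa+\lambda^{\varepsilon}_{n})^{\nu}$, so in particular $c^{\varepsilon}_{1}\varphi^{\varepsilon}_{1}(e)$ is bounded above by a fixed power of $\varepsilon$. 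For the remainder, Cauchy--Schwarz together with the trace bounds $\sum_{n}\varphi^{\varepsilon}_{n}(e)^{2}e^{-\lambda^{\varepsilon}_{n}s}=p^{B^{\SUtwo}_{\varepsilon}(e)}_{s}(e,e)\leqslant M_{B^{\SUtwo}_{\varepsilon}(e)}(s)$ and $\sum_{n}(c^{\varepsilon}_{n})^{2}e^{-\lambda^{\varepsilon}_{n}s}\leqslant m(B^{\SUtwo}_{\varepsilon}(e))^{2}M_{B^{\SUtwo}_{\varepsilon}(e)}(s)$ give, for every $0<s<t$,
\[
\Bigl|\sum_{n\geqslant 2}e^{-\lambda^{\varepsilon}_{n}t}c^{\varepsilon}_{n}\varphi^{\varepsilon}_{n}(e)\Bigr|
\leqslant e^{-\lambda^{\varepsilon}_{2}(t-s)}\,m(B^{\SUtwo}_{\varepsilon}(e))\,M_{B^{\SUtwo}_{\varepsilon}(e)}(s),
\]
and optimizing over $s$ (which forces $s\asymp 1/\lambda^{\varepsilon}_{2}\to 0$, so essentially no exponential decay is lost) bounds the remainder by $C(t)\,\varepsilon^{-M}e^{-\lambda^{\varepsilon}_{2}t}=C(t)\,\varepsilon^{-M}e^{-(\lambda^{\Hei}_{2}+o(1))t/\varepsilon^{2}}$ for some $M\geqslant 0$. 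Because $\lambda_{1}^{\Hei}$ is simple, $\lambda^{\Hei}_{2}>\lambda^{\Hei}_{1}$, so the remainder is exponentially smaller than the ground-state term, provided the latter is bounded below by a power of $\varepsilon$.

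That lower bound is the only delicate point, and I would obtain it from the Dirichlet analogue of \eqref{eq-Ptxy-SUtwo-H} proved on the way to Theorem~\ref{thm-lim-eigen} --- convergence of the rescaled Dirichlet heat kernels on the shrinking balls, equivalently term-by-term convergence of the rescaled eigenexpansion --- which gives $\varepsilon^{2}\lambda^{\varepsilon}_{1}\to\lambda^{\Hei}_{1}$, $\varepsilon^{-2}c^{\varepsilon}_{1}\to c_{1}(B^{\Hei}_{1})$ and $\varepsilon^{2}\varphi^{\varepsilon}_{1}(e)\to\varphi^{\Hei}_{1}(0)>0$, whence $c^{\varepsilon}_{1}\varphi^{\varepsilon}_{1}(e)\to c_{1}(B^{\Hei}_{1})\varphi^{\Hei}_{1}(0)>0$; alternatively one extracts a polynomial-in-$\varepsilon$ lower bound directly from the two-sided bounds \eqref{eqn.heat.kernel.group} by the argument in the proof of Theorem~\ref{thm.irr.simple.evalue}, run at the natural time scale $\varepsilon^{2}$. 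In either case $\Prob^{e}(\tau_{B^{\SUtwo}_{\varepsilon}(e)}>t)=c^{\varepsilon}_{1}\varphi^{\varepsilon}_{1}(e)\,e^{-\lambda^{\varepsilon}_{1}t}(1+o(1))$, and applying $-\varepsilon^{2}\log(\cdot)$, using $\varepsilon^{2}\lambda^{\varepsilon}_{1}\to\lambda^{\Hei}_{1}$ and $\varepsilon^{2}\log\bigl(c^{\varepsilon}_{1}\varphi^{\varepsilon}_{1}(e)\bigr)\to 0$, yields the small deviation principle with $\lambda_{1}=\lambda_{1}^{\Hei}$ (taking $t=1$ gives exactly the form of \eqref{eqn.intro1}).

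The genuine difficulty lies not in this chain of estimates but in its input, Theorem~\ref{thm-lim-eigen} and its heat-kernel refinement: because $\SUtwo$ carries no exact dilation, one must promote the operator/measure/distance convergence of Proposition~\ref{prop.properties.approx} to convergence of the \emph{Dirichlet} spectra and heat kernels on the shrinking balls $B^{\SUtwo}_{\varepsilon}(e)$ --- presumably via a min--max comparison that sandwiches the Dirichlet form on $B^{\SUtwo}_{\varepsilon}(e)$ between those on slightly smaller and slightly larger Heisenberg balls, together with uniform bounds and equicontinuity for the Dirichlet heat kernels near the non-smooth boundary $\partial B^{\SUtwo}_{\varepsilon}(e)$, uniformly in $\varepsilon$. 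Once that is available, the only remaining structural ingredient is the strict gap $\lambda^{\Hei}_{2}>\lambda^{\Hei}_{1}$, i.e.\ simplicity of the ground state on $B^{\Hei}_{1}$, which is supplied by Theorem~\ref{thm.main.local.dir.forms} and Theorem~\ref{thm.irr.simple.evalue} applied to the path-connected ball $B^{\Hei}_{1}$.
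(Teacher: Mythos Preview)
The statement you were given is a \emph{conjecture}, not a theorem: the paper explicitly leaves it open and provides no proof. What you have written is not an attempt at the conjecture at all --- the conjecture concerns \emph{general} sub-Riemannian diffusions killed at exiting small balls, whereas your entire argument is specific to $\SUtwo$ and its contraction to $\Hei$. You are in fact sketching a proof of the preceding Theorem~\ref{theorem5.4} (the $\SUtwo$ small deviation principle), which the paper states as a ``byproduct'' of Theorem~\ref{thm-lim-eigen} without writing out a separate proof.

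As a derivation of Theorem~\ref{theorem5.4} from Theorem~\ref{thm-lim-eigen}, your outline is reasonable and broadly in the spirit of the paper: isolate the ground-state term in the eigenfunction expansion, control the tail via trace bounds, and use $\varepsilon^{2}\lambda_{1}^{\varepsilon}\to\lambda_{1}^{\Hei}$ together with the simplicity of $\lambda_{1}^{\Hei}$. You correctly identify that the nontrivial input is the convergence of rescaled Dirichlet heat kernels (the paper's \eqref{eqn.conv.drch.heat.kernel}), and your min--max sandwich description matches what the paper calls ``Part~1'' of the proof of Theorem~\ref{thm-lim-eigen}. But none of this touches the conjecture: proving the conjecture would require replacing the $\SUtwo$-to-$\Hei$ contraction by an analogous approximate dilation for an arbitrary sub-Riemannian structure (e.g.\ via its nilpotent tangent cone), and then re-establishing the analogues of Proposition~\ref{prop.properties.approx} and the Dirichlet heat kernel convergence in that generality. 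Your proposal does not attempt any of this, so as a proof of the stated conjecture it does not get off the ground.
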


The proof of Theorem \ref{thm-lim-eigen} consists of two main parts. The first part is relatively short and simple. It shows that every eigenvalue of $ \lambda_n^\Hei$ can be approximated by rescaled eigenvalues in small balls of \SUtwo. The second part of the proof is longer and, using tools of stochastic analysis, demonstrates that the limit 
\eqref{eq-eigen-SUtwo-Hei} exists for each $n$. 

\begin{proof}[Proof of Theorem \ref{thm-lim-eigen}, Part 1.]
	Note that by  \eqref{eq-dist-SUtwo-Hei}  it follows that for any   $0<\epsilon<1$ there exists  $r_\epsilon>0$ such that 
	the restriction of $\Phi^{-1}$ to $B_{r_\epsilon}^{\SUtwo}$ is a diffeomorphism
	and 
	for any $0<r<r_\epsilon$
	\begin{equation}\label{eq-B-SUtwo-Hei}
	B_{1-\epsilon}^{\Hei}\subset 
	\delta_{1/r}^\Hei 
	\left( \Phi^{-1} \left( B_r^{\SUtwo} \right)\right)
	\subset  B_{1+\epsilon}^{\Hei}.
	\end{equation}

By \eqref{eq-Lap-SUtwo-Hei},  \eqref{eq-Haar-SUtwo-Hei}, and  \eqref{eq-Ptxy-SUtwo-H}, the  proof follows from the classical Rayleigh quotient techniques \cite[page 633]{Strichartz2012a}, also called the mini-max theorem, for positive operators with a discrete spectrum. The general references are \cite{ReedSimonI, KatoBook1995} and the implementation in a wide range of non-smooth situations can be found in \cite{HinzRozanova-PierratTeplyaev2023, KuwaeShioya2003, BuragoIvanovKurylev2019}.  The key observation is that any function of finite energy can be approximated by smooth functions with compact support, which allows to use  \eqref{eq-dist-SUtwo-Hei}, \eqref{eq-Lap-SUtwo-Hei} and \eqref{eq-Haar-SUtwo-Hei}.
\end{proof}

\begin{proof}[Proof of Theorem \ref{thm-lim-eigen}, Part 2.]
	In the second part of the proof we show that   the Dirichlet heat kernels, meaning the heat kernels with zero boundary conditions outside of the balls, in small balls in \SUtwo\ converge uniformly, after re-scaling. 
In this proof $p^{B_1^{\Hei}}_{t}(\cdot,\cdot)$ is the   Dirichlet heat kernel in the unit ball $B_1^{\Hei}$ of   \Hei, and $p^{B_r^{\SUtwo}}_{t}(\cdot,\cdot)$ is the Dirichlet heat kernel in the $r$-ball $B_r^{\SUtwo}$ of   \SUtwo, where the balls are centered at the identity of the groups.  We note that this result is substantially different from  the content of \cite[Proposition 3.13]{BaudoinBonnefont2009}, which does not involve Dirichlet (zero) boundary conditions.

For each $t>0$ 
\begin{equation}\label{eqn.conv.drch.heat.kernel}
\lim\limits_{r\to0} 
\,
r^4
\,
p^{B_r^{\SUtwo}}_{r^2t}
\left(\Phi\left(\delta_r^\Hei(x)\right),\Phi\left(\delta_r^\Hei(x)\right)\right)
=
p^{B_1^{\Hei}}_{t}(x,y),
\end{equation}
uniformly for $x,y\in B_1^\Hei$.
 We use the explicit uniform asymptotic formula in \cite{BaudoinBonnefont2009} for the heat kernels on \SUtwo\ and \Hei\ and the Dynkin-Hunt  formula \eqref{eqn.Dirichlet.HK}. Since the quantities under expectations in  \eqref{eqn.Dirichlet.HK} are uniformly bounded, the result follows from 
\eqref{eq-B-SUtwo-Hei} and   local convergence of stochastic flows. 
Let $X_{s}$ be the standard hypoelliptic Brownian motion on $\Hei$, and 
\begin{align*} 
 \tau_{B_r^\Hei}:= \inf\left\{ s>0,    X_{s}  \notin B_r^{\Hei}  \right\}.  
 \end{align*}
 For any $0<r<\frac17r_{1/7}$ there exists a continuous stochastic process $Y_s^r$ in \Hei\ such that 
\begin{equation}
 Y_{s}^{r}:=:\delta_{1/r}^\Hei \Phi^{-1}\left( g_{r^2s} \right) \qquad \text{\ for \ } s<\tau_{B_{3}^\Hei}
 \end{equation}
  and 
\begin{equation}
\lim\limits_{r\to0}
\,
\mathbbm{1}_{{\{s<\tau_{B_{3}^\Hei}\}}}
\,
\sup\limits_{0\leqslant s \leqslant T}|Y_{s}^{r}-X_{s} |=0
\end{equation}
in probability. Here $r_{1/7}$ is defined in terms of the condition \eqref{eq-B-SUtwo-Hei} with $\epsilon=1/7$, and the symbol $:=:$ stands for the equality of laws.
Let $\Lcal^r$ be the generator of the process $ \delta_{1/r}^\Hei \Phi^{-1}\left( g_{r^2s} \right) $ which is well defined in  the ball $B_4^{\Hei}$ because of our assumptions on $r$. Here $ \Lcal^r$ is the operator defined in \eqref{eq-Lcal-r} the proof of Proposition~\ref{prop.properties.approx}. 
Let $\rho(r):[0,\infty)\to[0,1]$ be a smooth nondecreasing function such that $\rho[0,2]=0$
and $\rho[3,\infty)=1$. 
Then we define the generator $\Lcal^{Y^r_\cdot}$ of the process $Y_{s}^{r}$ 
by 
\begin{equation}
 \Lcal^{Y^r_\cdot} f(x) := \rho(d^\Hei(I,x))\Lcal^\Hei f(x)+\left(1-\rho(d^\Hei(I,x))\right)\Lcal^r f(x)
\end{equation}
or, in other terms, the diffusion process $Y_r^s$ behaves as $ \delta_{1/r}^\Hei \Phi^{-1}\left( g_{r^2s}\right) $ in $B_2^{\Hei}$ and behaves as $X_s$ in $\left(B_3^{\Hei}\right)^c$.
 We can claim that by \cite[Theorem 3.3.1, page 76]{KunitaBook1986}
\begin{equation}
\lim\limits_{r\to0}Y^r_s=X_s
\end{equation}
in the strong sense, \cite[Definition 3.1.2, page 65]{KunitaBook1986}. 
\end{proof}

\section{Generalized heat content}\label{s.heat.content}
In this section, we consider a metric measure space $(\Xcal, \mu)$ endowed with a Dirichlet form $\left( \mathcal{E}, \mathcal{D}_{\mathcal{E}}\right)$ satisfying a Nash inequality \eqref{eqn.Nash.inequal}. We assume that the semigroup $\{P_{t}\}_{t\geqslant 0}$ admits a kernel $p_{t} (x,y)$ for any $t$ and for $\mu$-a.e. $x,y \in \Xcal$. Let  $\Ucal \subset \Xcal$ be an open connected set in $\Xcal$ such that $0< \mu(\Ucal) < \infty$, and $A_{\Ucal}$ be the generator of $\left( \mathcal{E}, \mathcal{D}_{\mathcal{E}} (\Ucal)\right)$. We assume that the semigroup $\{P^{\Ucal}_{t}\}_{t\geqslant 0}$ is strongly continuous and that \eqref{eqn.heat.kernel.small.one} holds. Then by Proposition \ref{thm.spectral.gap} the operator $-A_{\Ucal}$ has a discrete spectrum and the first eigenvalue is strictly positive. Moreover, the conditions of Proposition \ref{prop.eigenfunctions.expansion} are satisfied and hence the Dirichlet heat kernel has the following series expansion
\begin{align*}
p_{t}^{\Ucal} (x,y) = \sum_{n=1}^{\infty} e^{-\lambda_{n} t} \varphi_{n} (x) \varphi_{n}(y), 
\end{align*}
where the convergence is  uniform on $\Ucal \times \Ucal \times [\varepsilon, \infty)$ for any $\varepsilon>0$.

\begin{definition}\label{def.gen.heat.cont}
Let $X_{t}^{x}$ be an $\Xcal$-valued Hunt process with $X_{0}^{x}=x\in \Xcal$ a.s. and whose transition density is given by the heat kernel $p_{t} (x, \cdot)$. The generalized heat content associated to a set $\Ucal$ is given by 
\begin{equation}\label{eqn.gen.heat.cont}
Q_{\Ucal} (t) := \int_{\Ucal} \Prob^{x} \left( \tau_{\Ucal} > t \right)  d\mu  (x),
\end{equation}
where $\tau_{\Ucal}$ denotes the exit time of $X_{t}$ from $\Ucal$.
\end{definition}

  Let $\Xcal = \R^{n}$ and $\Ucal = \Omega$ be an open connected regular set with finite Lebesgue measure, and   $\mathcal{P}$ be an elliptic second order differential operator. The classical heat content is defined in terms of the solution to the following Dirichlet problem
\begin{align}\label{eqn.par.BVP}
& \left( \partial_{t} - \mathcal{P} \right) u(x,t) =0, &  (t,x)\in (0,\infty) \times \Omega, \notag
\\
& u(t,x) =0, & (t,x)\in (0,\infty) \times \partial \Omega,
\\
& u(0,x)=1, & x\in \Omega. \notag
\end{align}
More precisely, if $u$ is a solution to the boundary value problem \eqref{eqn.par.BVP}, then the classical heat content is defined as 
\begin{align*}
Q_{\Omega} (t) := \int_{\Omega} u(t,x)dx.
\end{align*}
Regularity of $\Omega$ ensures a existence and uniqueness for the solution to \eqref{eqn.par.BVP}. Moreover, one can show that $u(x,t) = \Prob^{x} \left( \tau_{\Omega} > t \right)$, where $\tau_{\Omega}$ is the exit time for the process having  $\mathcal{P}$ as generator. The case of sub-Laplacian operators on homogeneous Carnot groups has been considered in  \cite[Section 5.2.]{CarfagniniGordina2023} and on fractals in \cite{Hambly2011}.

On a general metric measure space $(\Xcal, \mu)$ solvability of the heat equation \eqref{eqn.par.BVP} is a delicate topic, and it depends on the boundary behavior of the set $\Ucal$, which is why the generalized heat content in Definition \eqref{def.gen.heat.cont} is in terms of the exit probability and not in terms of the solution to the classical heat equation. Note that, if the heat equation for $A$ is solvable in $\Ucal$ then the generalized heat content coincide with the classical heat content. 

We consider the generalized heat equation 
\begin{align}\label{eqn.par.BVP-gen}
& \left( \partial_{t} - A_\mathcal{U} \right) u(x,t) =0, &  (t,x)\in (0,\infty) \times \Ucal, \notag
\\
& u(t,x) =0, & (t,x)\in (0,\infty) \times   \Ucal^c,
\\
& u(0,x)=1, & x\in \Ucal. \notag
\end{align}
In our situation, the technical results, including the   measurability of the heat kernel, are obtained in \cite{GrigoryanHuHu2021b}.

\begin{theorem}\label{thm.heat.content}
Let  $(\Xcal, \mu)$ be a metric measure space,    $\left( \mathcal{E}, \mathcal{D}_{\mathcal{E}}\right)$ be an ultracontracive Dirichlet form, and $P_{t}$ be the associated semigroup satisfying the assumptions stated at the beginning of this section. Let  $\Ucal \subset \Xcal$ be an open connected set in $\Xcal$ such that $0<\mu (\Ucal) < \infty$, and $A_{\Ucal}$ be the generator of $\left( \mathcal{E}, \mathcal{D}_{\mathcal{E}} (\Ucal)\right)$.  Then
\begin{align*}
\lim_{t\rightarrow \infty} e^{\lambda_{1}^{\Ucal} t} Q_{\Ucal} (t) = \sum_{n=1}^{M_{1}} c_{n}^{2}, 
\end{align*}
where $c_{n} := \int_{\Ucal} \phi_{n} (x)  d\mu (x)$, and $M_{1}$ denotes the multiplicity of $\lambda_{1}^{\Ucal}$.
\end{theorem}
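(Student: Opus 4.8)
The plan is to use the eigenfunction expansion of the Dirichlet heat kernel from Proposition~\ref{prop.eigenfunctions.expansion} together with the probabilistic expression for $\Prob^{x}(\tau_{\Ucal}>t)$, and then integrate over $\Ucal$. First I would recall that the hypotheses (ultracontractivity, $0<\mu(\Ucal)<\infty$, strong continuity, and \eqref{eqn.heat.kernel.small.one}) place us exactly in the setting of Proposition~\ref{thm.spectral.gap} and Proposition~\ref{prop.eigenfunctions.expansion}: $-A_{\Ucal}$ has discrete spectrum $0<\lambda_{1}^{\Ucal}\leqslant\lambda_{2}\leqslant\cdots$, the first eigenvalue is strictly positive, and for $x,y\in\Ucal$, $t>0$ one has the uniformly convergent expansions $p^{\Ucal}_{t}(x,y)=\sum_{n} e^{-\lambda_n t}\varphi_n(x)\varphi_n(y)$ and $\Prob^{x}(\tau_{\Ucal}>t)=\sum_{n} e^{-\lambda_n t} c_n \varphi_n(x)$, where $c_n=\int_{\Ucal}\varphi_n$.

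Next I would integrate the second expansion in $x$ over $\Ucal$. Because the convergence is uniform on $\Ucal\times[\varepsilon,\infty)$ and $\mu(\Ucal)<\infty$, we may interchange sum and integral to obtain
\begin{align*}
Q_{\Ucal}(t)=\int_{\Ucal}\Prob^{x}(\tau_{\Ucal}>t)\,d\mu(x)=\sum_{n=1}^{\infty} e^{-\lambda_n t} c_n \int_{\Ucal}\varphi_n(x)\,d\mu(x)=\sum_{n=1}^{\infty} e^{-\lambda_n t} c_n^{2}.
\end{align*}
Alternatively one can integrate $p^{\Ucal}_{t}(x,y)$ twice over $\Ucal$, which gives the same identity since $\int_{\Ucal}\Prob^{x}(\tau_{\Ucal}>t)\,d\mu(x)=\int_{\Ucal}\int_{\Ucal} p^{\Ucal}_{t}(x,y)\,d\mu(y)\,d\mu(x)$; both routes are available and I would pick the first. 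Then multiply by $e^{\lambda_{1}^{\Ucal} t}$ to get
\begin{align*}
e^{\lambda_{1}^{\Ucal} t} Q_{\Ucal}(t)=\sum_{n=1}^{M_1} c_n^{2}+\sum_{n>M_1} e^{-(\lambda_n-\lambda_{1}^{\Ucal})t} c_n^{2},
\end{align*}
where $M_1$ is the multiplicity of $\lambda_{1}^{\Ucal}$ (so $\lambda_n=\lambda_{1}^{\Ucal}$ for $n\leqslant M_1$ and $\lambda_n>\lambda_{1}^{\Ucal}$ for $n>M_1$, using that the spectrum is discrete so the eigenvalue just above $\lambda_{1}^{\Ucal}$ is strictly larger).

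Finally I would justify that the tail $\sum_{n>M_1} e^{-(\lambda_n-\lambda_{1}^{\Ucal})t} c_n^{2}$ vanishes as $t\to\infty$. The clean way is to note this tail equals $e^{\lambda_{1}^{\Ucal}t}\sum_{n>M_1} e^{-\lambda_n t}c_n^{2}$, and for $t=s+1$ with $s\geqslant 0$ we have $\sum_{n>M_1} e^{-\lambda_n(s+1)}c_n^{2}\leqslant e^{-\lambda_{M_1+1}s}\sum_{n>M_1}e^{-\lambda_n}c_n^{2}$, and the last series converges (it is dominated by $\sum_n e^{-\lambda_n}c_n^{2}=Q_{\Ucal}(1)<\infty$, or more directly by the uniform convergence at $t=1$). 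Hence the tail is bounded by a constant times $e^{(\lambda_{1}^{\Ucal}-\lambda_{M_1+1})s}\to 0$. The finiteness of $c_n$ (hence $c_n^{2}$) is guaranteed by the $L^{\infty}$-bounds on eigenfunctions in Corollary~\ref{cor.bound.efunction} together with $\mu(\Ucal)<\infty$, so every step is legitimate. The only mildly delicate point — and the one I would state carefully — is the interchange of summation and the $\mu\times\mu$ (or single $\mu$) integration, which is immediate from the uniform convergence on $\Ucal\times\Ucal\times[\varepsilon,\infty)$ provided by Proposition~\ref{prop.eigenfunctions.expansion} and $\mu(\Ucal)<\infty$; there is no real obstacle here, just bookkeeping with the spectral gap $\lambda_{M_1+1}-\lambda_{1}^{\Ucal}>0$.
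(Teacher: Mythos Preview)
Your proposal is correct and follows essentially the same route as the paper: use the eigenfunction expansion of $\Prob^{x}(\tau_{\Ucal}>t)$ from Proposition~\ref{prop.eigenfunctions.expansion}, interchange sum and integral via uniform convergence and $\mu(\Ucal)<\infty$ to get $Q_{\Ucal}(t)=\sum_{n}e^{-\lambda_{n}t}c_{n}^{2}$, then split off the first $M_{1}$ terms and use the spectral gap $\lambda_{M_{1}+1}>\lambda_{1}^{\Ucal}$ to kill the tail. Your justification of the tail decay (via the $t=s+1$ trick and domination by $Q_{\Ucal}(1)$) is in fact more explicit than the paper's, which simply asserts the limit follows from $\lambda_{n}>\lambda_{1}^{\Ucal}$ for $n>M_{1}$.
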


\begin{proof}
By  \eqref{eqn.heat.kernel.expansion} we have that 
\begin{align*}
&Q_{\Ucal} (t) = \int_{\Ucal}  \sum_{n=1}^{\infty} e^{-\lambda_{n} t} c_{n} \phi_{n} (x)d\mu (x),
\end{align*}
Note that the series $\sum_{n=1}^{\infty} e^{-\lambda_{n} t} c_{n} \phi_{n} (x)$ converges uniformly on $[\varepsilon, \infty) \times \Ucal$ for any $\varepsilon>0$ because of \cite[Theorem 2.1.4]{DaviesBook1989} (see Proposition \ref{prop.eigenfunctions.expansion}). 

Thus, 
\begin{align*}
& Q_{\Ucal} (t) = \int_{\Ucal}  \sum_{n=1}^{\infty} e^{-\lambda_{n} t} c_{n} \phi_{n} (x) d\mu(x)
\\
& = \sum_{n=1}^{\infty} e^{-\lambda_{n} t} c_{n}  \int_{\Ucal}  \phi_{n} (x)d\mu(x) = \sum_{n=1}^{\infty} e^{-\lambda_{n} t} c_{n}^{2},
\end{align*}
for any $t>0$. If $M_{1}$ denotes the multiplicity of $\lambda_{1}^{\Ucal}$ then we can write 
\begin{align*}
& Q_{\Ucal} (t)  = e^{-\lambda_{1}^{\Ucal} t} \sum_{n=1}^{M_{1}} c_{n}^{2} + \sum_{n=M_{1} +1}^{\infty} c_{n}^{2} e^{-\lambda_{n} t},
\end{align*}
and the result follows by letting $t$ go to infinity since $\lambda_{1}^{\Ucal} < \lambda_{n}$ for any $n > M_{1}$.

Note that, in cases when Nash inequality is satisfied, by   \eqref{eq-Linfty} and  \eqref{e.HKPolyUpperBound}  we moreover have that, for any $x\in \Ucal$ and $t\geqslant\varepsilon$
\begin{align*}
&\vert e^{-\lambda_{n} t} c_{n} \phi_{n} (x) \vert  \leqslant e^{-\lambda_{n} t} c_{n}  \Vert \phi_{n} \Vert_{L^{\infty} (\Ucal, \mu )} \leqslant  \mu(\Ucal)    e^{-\lambda_{n} t} \Vert \phi_{n} \Vert_{L^{\infty} (\Ucal, \mu )}^{2}
\\
& \leqslant  \mu(\Ucal) ^{2}  e^{-\lambda_{n} t} C( \lambda_{n},  \Ucal)^{2} =  \mu(\Ucal) ^{2}  e^{-\lambda_{n} t}  \left(  \inf_{t>0} t^{-\frac{\nu}{2}} e^{\lambda_{n} t} \right)^{2}
\\
&\leqslant d( \nu, \Ucal)   e^{-\lambda_{n} t}   \lambda_{n}^{2\nu} \leqslant  d( \nu, \Ucal)   e^{-\lambda_{n} \varepsilon}   \lambda_{n}^{2\nu}, 
\end{align*} 
for some constant $d( \nu, \Ucal)$ not depending on $n$.  Thus $\sum_{n=1}^{\infty} e^{-\lambda_{n} t} c_{n} \phi_{n} (x)$ converges uniformly on the set $[\varepsilon, \infty) \times \Ucal$ for any $\varepsilon>0$ by Weierstra{\ss}' M-test since the series $\sum_{n=1}^{\infty} e^{-\lambda_{n} \varepsilon} \lambda_{n}^{2 \nu}$ is convergent. 
\end{proof}

\appendix \section{Proof of Theorem~\ref{thm-discrete-compact}}\label{s.ProofThm2.1}

\begin{proof}[Proof of Theorem~\ref{thm-discrete-compact}]
The strategy of the proof is to show that $(1) \implies (2) \implies (3) \implies (1)$. 

$(2) \implies (3):$ if $T_{t_{0}}$ is compact, then by the spectral theorem for compact operators there exists a sequence of eigenvalues $\{ \lambda_{n}(t_{0})\}_{n=1}^{\infty} \subset \R$ and corresponding eigenfunctions $\{\varphi_{n, k}^{(t_{0})} \}_{n, k=1}^{\infty}$ such that
\begin{align}
& \sigma (T_{t_{0}} ) \backslash \{ 0 \} = \sigma_{pp} (T_{t_{0}} ) \backslash \{ 0 \} = \{  \lambda_{n} (t_{0})\}_{n=1}^{\infty}, \quad \lim_{n\rightarrow \infty} \lambda_{n} (t_{0})=0,  \notag
\\
&  \ker \left( \lambda_{n} (t_{0}) - T_{t_{0}}  \right)= \overline{\operatorname{Span}  \{ \varphi_{n,k}^{(t_{0})}, k\in \mathbb{N} \} } \text{ for every } n\in \mathbb{N},  \notag
\\
&H=\bigoplus_{n=1}^{\infty} \overline{\operatorname{Span} \{ \varphi_{n,k}^{(t_{0})}, k \in \mathbb{N}  \}}.\label{eqn.orth.bas}
\end{align}
The semigroup  $\{P_{t}\}_{t>0}$ is strongly continuous and hence  by the spectral mapping theorem for semigroups \cite[Theorem 6.3]{ArendtGraboschGreinerMoustakasNagelNeubranderSchlotterbeck1986}
\begin{align}\label{eqn.spec.mapp.thm}
\sigma_{pp}  (  P_{t}  ) \backslash \{0\} = \exp \left( t\,  \sigma_{pp} ( \mathcal{L} ) \right), \text{ for any }  t>0.
\end{align}
Thus $\sigma (T_{t_{0}} ) \backslash \{ 0 \} =\{e^{-\lambda_{n} t_{0}} \}_{n=1}^{\infty}$, where $\{-\lambda_{n} \}_{n=1}^{\infty} = \sigma_{pp}(\mathcal{L})$, and $0\leqslant \lambda_{1}^{\Ucal} \leqslant \lambda_{2} \leqslant \cdots$. Let $-\lambda_{n} \in \sigma_{pp} ( \mathcal{L} )$, and let $\{ \varphi_{n,k} \}_{n,k \in \mathbb{N}}$ be an orthonormal basis of $\ker \left( -\lambda_{n}  - \mathcal{L}\right)$. By \cite[Corollary 6.4]{ArendtGraboschGreinerMoustakasNagelNeubranderSchlotterbeck1986} it follows that
\begin{align*}
& \overline{\operatorname{Span}\{ \varphi_{n,k}^{(t_{0})}, k \in \mathbb{N} \}} = \ker \left( e^{-\lambda_{n} t}-P_{t_{0}} \right)
\\
& = \overline{\operatorname{Span} \left\{ \ker \left( -\lambda_{n}  + \frac{2 \pi j}{t_{0}} i - \mathcal{L} \right), j \in \mathbb{Z}\right\}}
\end{align*}
The point spectrum of  $\mathcal{L}$ is real since $\mathcal{L}$ is self-adjoint, and hence
\[
\ker \left( -\lambda_{n}  + \frac{2 \pi j}{t_{0}} i - \mathcal{L} \right)= \{ 0 \}
\]
for all $j\not= 0$. Thus we have that
\begin{align}
& \overline{ \operatorname{Span}\{ \varphi_{n,k}^{(t_{0})}, k\in\mathbb{N} \}} = \ker \left( e^{-\lambda_{n} t_{0}}-T_{t_{0}} \right)   \notag
\\
& = \ker \left( -\lambda_{n}   - \mathcal{L} \right)= \overline{ \operatorname{Span} \{ \varphi_{n,k},  k\in\mathbb{N}\}}, \text{ for all }  n\in \mathbb{N}. \notag
\end{align}
The operator $T_{t_{0}}$ is compact, and thus for every $n\in\mathbb{N}$ the eigenspace $\ker \left( e^{-\lambda_{n} t_{0}}  - T_{t_{0}} \right)$ is finite-dimensional. Therefore for every $n\in \mathbb{N}$ there exists an $M_{n}$ such that for every $t>0$
\begin{equation}\label{eqn.orth.bas2}
\ker \left( e^{-\lambda_{n} t_{0}}  - T_{t_{0}} \right)  = \operatorname{Span} \{ \varphi_{n,k}, k=1,\ldots, M_{n} \}.
\end{equation}
We proved that for every $n\in \mathbb{N}$ there exists an orthonormal basis $\{ \varphi_{n,k} \}_{k=1}^{M_{n}}$ of $\ker \left( e^{-\lambda_{n} t_{0}}  - T_{t_{0}} \right)$ such that for any $k=1,\ldots,  M_{n}$
\begin{align}
& T_{t_{0}} \varphi_{n,k} = e^{-\lambda_{n} t_{0}} \varphi_{n,k},   \notag
\\
& \mathcal{L} \varphi_{n,k} = -\lambda_{n} \varphi_{n,k} . \notag
\end{align}
By \eqref{eqn.orth.bas} and \eqref{eqn.orth.bas2} it follows that
\begin{align}\label{eqn.decomposition}
&H=\bigoplus_{n=1}^{\infty} \operatorname{Span} \{ \varphi_{n,k}, k=1,\ldots,M_{n} \},
\end{align}
that is,  $\{\varphi_{n,k} \}_{k=1, n=1}^{M_{n}, \infty}$ is an orthonormal basis for $H$ made of eigenfunctions of $\mathcal{L}$.
Thus by \cite[Theorem XIII.64 p.245]{ReedSimonIV} it follows that $(-\mu -\mathcal{L} )^{-1}$ is a compact operator for every $\mu$ in the resolvent set of $\mathcal{L}$, proving that $\mathcal{L}$ has a pure point spectrum.

$(3) \implies (1)$: if $-\mathcal{L}$ is a positive self-adjoint operator, then by \cite[Theorem XIII.64 p.~245]{ReedSimonIV}  there exists a complete orthonormal basis $ \{ \varphi_{n} \}_{n=1}^{\infty}$ of $H$ such that
\[
- \mathcal{L} \varphi_{n} = \lambda_{n} \varphi_{n}, \quad \varphi_{n} \in D_{\mathcal{L}},
\]
where $0\leqslant \lambda_{1}^{\Ucal} \leqslant \lambda_{2}^{\Ucal} \leqslant \cdots$, and $\lambda_{n}^{\Ucal} \nearrow \infty$. Then by the spectral mapping theorem \eqref{eqn.spec.mapp.thm}
\[
\sigma_{pp} (T_{t_{0}} ) \backslash \{ 0 \} = \{  e^{-\lambda_{n} t }\}_{n=1}^{\infty},  \text{ for any } t>0.
 \]
Proceeding as in the previous step we have that
\begin{align*}
 \ker \left( e^{-\lambda_{n} t}-P_{t} \right)   = \ker \left( -\lambda_{n}   - \mathcal{L} \right) \text{ for any } t>0,
\end{align*}
and hence
\begin{align*}
&H=\bigoplus_{n=1}^{\infty}\ker \left( e^{-\lambda_{n} t}-P_{t} \right), \text{ for any }  t>0,
\end{align*}
that is, for any $u \in H$
\[
P_{t} u = \sum_{n=1}^{\infty} e^{-\lambda_{n} t} \langle u, \varphi_{n} \rangle \varphi_{n}.
\]
Thus $P_{t}$ is compact since it is a limit of finite rank operators.
\end{proof}

\section{Markovian semigroups, restricted Dirichlet forms, heat kernels}\label{s.RestrictedDirichletForm}

\subsection{Generalities}

We recall some general definitions in \cite{FukushimaOshimaTakedaBook2011, ChenFukushimaBook2012, Sturm1996a}, mostly following \cite[Section 1.4, Section 4.3]{FukushimaOshimaTakedaBook2011}. We consider $H= L^{2}(\Xcal, \mu )$ to be the Hilbert space of square integrable  $\mu$-measurable real-valued  functions  on a metric space  $( \Xcal, d)$, and we assume that Assumption \ref{assumption.general} and Assumption \ref{assumption.heat.kernel} are satisfied. 

We consider a \emph{Dirichlet form} $\left( \mathcal{E}, \mathcal{D}_{\mathcal{E}}\right)$, that is, $\mathcal{E}$ is a non-negative closed quadratic symmetric form defined on a dense subset $\mathcal{D}_{\mathcal{E}}$  of $L^{2}(\Xcal, \mu)$ such that  for every $u \in \mathcal{D}_{\mathcal{E}}$
\[
u_{\ast} := \min ( \max (u, 0), 1) \in \mathcal{D}_{\mathcal{E}}
\]
and $\mathcal{E}(u_{\ast}, u_{\ast} )  \leqslant \mathcal{E}(u, u)$. The set $\mathcal{D}_{\mathcal{E}}$ is then a Hilbert space with respect to the inner product
\begin{align*}
\mathcal{E}_{1} (u,v) := \mathcal{E} (u,v) + \langle u, v \rangle_{L^{2}(\Xcal, m)}.
\end{align*}
A \emph{core of a Dirichlet form} $\mathcal{E}$ is by definition a subset $\mathcal{C}$ of $\mathcal{D}_{\mathcal{E}}\cap C_{c} (\Xcal)$ such that $\mathcal{C}$ is dense in  $\mathcal{D}_{\mathcal{E}}$ with respect to the $\mathcal{E}_{1}$-norm and dense in  $C_{c}(\Xcal)$ with respect to the uniform norm. Here $C_{c}(\Xcal)$ denotes the space of continuous compactly supported functions on $\Xcal$. A  Dirichlet form $\mathcal{E}$  is called \emph{regular} if it possesses a core. We denote by $A$ the non-negative self-adjoint operator corresponding to $\mathcal{E}$.

The heat kernel  can be viewed as the transition density of an $\Xcal$-valued Hunt process $\{ X_{t} \}_{t>0}$, that is,
\begin{align*}
\Prob^{x} \left( X_{t} \in E \right) = \int_{E} p_{t} (x,y) d\mu(y).
\end{align*}

We repeat below the argument found in \cite[Proposition 2.1]{Grigoryan1994a} for Riemannian manifolds. 
Recall that by the semigroup property (Chapman-Kolmogorov equations) we have that for any $x, y \in \Xcal$, $t>0$
\begin{align*}
& p_{t}\left( x, y \right)=\int_{\Xcal} p_{t/2}\left( x, z \right) p_{t/2}\left( z, y \right) d\mu \left( z \right),
\end{align*}
therefore if the heat kernel is symmetric, then
\[
p_{t}\left( x, x \right)=\int_{\Xcal} p_{t/2}^{2}\left( x, y \right)  d\mu \left( y \right)
\]
and
\begin{align*}
& p_{t}\left( x, y \right)=\int_{\Xcal} p_{t/2}\left( x, z \right) p_{t/2}\left( z, y \right) d\mu \left( z \right)
\\
& \leqslant \left( \int_{\Xcal} p_{t/2}^{2}\left( x, z \right) d\mu \left( z \right)\right)^{1/2}\left( \int_{\Xcal} p_{t/2}^{2}\left( z, y \right) d\mu \left( z \right)\right)^{1/2}=\sqrt{p_{t}\left( x, x \right)p_{t}\left( y, y \right)}.
\end{align*}
Thus on-diagonal heat kernel estimates imply the off-diagonal heat kernel estimates.

\subsection{Strongly regular Dirichlet forms}

A Dirichlet form $\left( \mathcal{E}, \mathcal{D}_{\mathcal{E}}\right)$ is said to be \emph{strongly local} if $\mathbb{E} (u,v)=0$ whenever $u\in  \mathcal{D}_{\mathcal{E}}$ is constant in a neighborhood of the support of $v \in \mathcal{D}_{\mathcal{E}}$. Any regular, strongly local Dirichlet form on $L^{2}(\Xcal, \mu)$ can be written as
\begin{align*}
\mathcal{E} (u,v) = \int_{\Xcal}  d\Gamma (u,v),
\end{align*}
where $\Gamma$ is a positive semi-definite, symmetric, bilinear form on $\mathcal{D}_{\mathcal{E}}$ with values in the space of signed Radon measures on $\Xcal$. The \emph{energy measure} (carr\'{e} du champ operator) $\Gamma$ can be used to define the intrinsic metric on $\Xcal$ as follows. 
\begin{equation}\label{eqn.intrinsic.metric}
\rho (x, y):= \sup\left\{ u(x) - u(y):  u \in \mathcal{F}_{loc}  \cap C(\Xcal),  \Gamma (u, u) \leqslant \mu  \text{ on } \Xcal \right\},
\end{equation}
where $C(\Xcal)$ is the space of continuous functions on $\Xcal$ and
\[
\mathcal{F}_{loc}:= \left\{  u\in L^{2}_{loc} (\Xcal, \mu):  \Gamma (u,u) \text{ is a Radon measure } \right\}.
\]
By $\Gamma (u, u) \leqslant \mu$ in \eqref{eqn.intrinsic.metric} we mean that the energy measure $\Gamma (u, u)$ is absolutely continuous with respect to $\mu$ with the Radon-Nikodym derivative bounded by $1$ on $\Xcal$.  The metric $\rho$ is also known as a Carath\'eodory metric \cite[Equation (1.1)]{Sturm1996a}. 

\begin{definition}\label{def.strongly.regular}
A strongly local Dirichlet form $\left( \mathcal{E}, \mathcal{D}_{\mathcal{E}}\right)$  is called \emph{strongly regular} if it is regular and if $\rho$ is a metric on $\Xcal$ whose topology coincides with the original one. 
\end{definition}
Following \cite{Sturm1996a}, if $\left( \mathcal{E}, \mathcal{D}_{\mathcal{E}}\right)$   is strongly regular then the underlined space $\Xcal$ is connected and $\rho$ is a non-degenerate metric inducing the original topology on $\Xcal$.  If $\Xcal$ is complete, then closed balls  $\overline{B}_{r}(x)$ are complete with respect to the metric $\rho$.

\subsection{Restricted Dirichlet forms}
We now describe a restriction $A_{\Ucal}$ of $A$ to a set $\Ucal$ in $\Xcal$ conditioned on being zero on the complement of $\Ucal$. Our goal is to describe spectral properties of $A_{\Ucal}$ with minimal assumptions on $\Ucal$ and no assumptions on its boundary $\partial \Ucal$. We first describe the restriction of the Dirichlet form. Let $\Ucal$ be a Borel set in $\Xcal$ and define
\[
\mathcal{D}_{\mathcal{E}} (\Ucal) :=  \overline{\left\{ f\in \mathcal{D}_{\mathcal{E}}:  \operatorname{supp} f \subset \Ucal \right\}}^{\mathcal{E}_{1}},
\]
where $\mathcal{E}_{1} (f) = \mathcal{E}(f) + \Vert f \Vert^{2}_{L^{2} (\Xcal, \mu)}$.  We refer to $\left( \mathcal{E}, \mathcal{D}_{\mathcal{E}} (\Ucal) \right)$ as the \emph{restricted Dirichlet form} \cite[Theorem 4.3.1]{FukushimaOshimaTakedaBook2011}. If $\Ucal$ is open and $\left( \mathcal{E}, \mathcal{D}_{\mathcal{E}}  \right)$ is regular, then $\left( \mathcal{E}, \mathcal{D}_{\mathcal{E}} (\Ucal) \right)$ is a regular Dirichlet form on $L^2 (\Ucal, \mu )$ by \cite[(2.3.6) and  Corollary 2.3.1]{FukushimaOshimaTakedaBook2011} and \cite[Section 3.2 Theorem 3.3]{GrigoryanHuLau2014}. We denote by $A_{\Ucal}$ the negative self-adjoint operator on $L^2\left( \Ucal, \mu \right)$ corresponding to the Dirichlet form $\left( \mathcal{E}, \mathcal{D}_{\mathcal{E}} (\Ucal) \right)$.

For an open set $\Ucal$ we denote by
\begin{align*}
\tau_{\Ucal}:= \inf \left\{ t>0: \, X_{t} \notin \Ucal \right\}
\end{align*}
the exit time from $\Ucal$. Then we can use Dynkin-Hunt's formula, see \cite[Equation (3.15)]{GetoorSharpe1982}, \cite[Equation (4.1.6)]{FukushimaOshimaTakedaBook2011}, \cite[Theorem 2.4]{ChungZhaoBook1995}, \cite[Equation (1.1)]{BarlowBassBurdzy1997}, and Hunt's original paper \cite[Equation (10), p.~305]{Hunt1956b}
\begin{equation}\label{eqn.Dirichlet.HK}
p^{\Ucal}_{t}(x,y):= p_{t}(x,y) - \E^x \left[ \mathbbm{1}_{\{ \tau_{\Ucal} < t
\} } \,  p_{ t- \tau_{\Ucal}} \left( X_{\tau_{\Ucal}}, y\right) \right]
\end{equation}
for the transition density $p^{\Ucal}_{t} (x,y)$ of the \emph{killed Markov process} $\{X_{t}^{\Ucal}\}_{t>0}$ given by
\begin{equation*}
X_{t}^{\Ucal}:=
\left\{ \begin{array}{cc}
X_{t} & t < \tau_{\Ucal},
\\
\partial & t \geqslant \tau_{\Ucal},
\end{array} \right.
\end{equation*}
where $\partial$ is the \emph{cemetery} point. More precisely, we have that
\begin{equation}\label{eqn.density}
\Prob^{x} \left( X^{\Ucal}_{t} \in E \right) = \Prob^{x} \left( X_{t} \in E,  \, t<\tau_{\Ucal} \right) = \int_{E} p^{\Ucal}_{t}(x,y) d\mu(y),
\end{equation}
for any $x\in \Ucal$. In particular,
\begin{equation}\label{eqn.exit.time}
\Prob^{x} \left( \tau_{\Ucal} > t \right) = \int_{\Ucal} p^{\Ucal}_{t}(x,y) d\mu (y).
\end{equation}
The function $p_{t}^{\Ucal}$ is called the \emph{Dirichlet heat kernel}, and it is the kernel of the semigroup $P^{\Ucal}_{t}$.

 \end{document}